\documentclass{amsart}
\usepackage[margin=1.2in]{geometry}
\setlength\parindent{0pt}
\usepackage{amssymb,amsmath,amsthm,,graphicx,amscd,thmtools,tikz,multicol,float}
\usepackage[shortlabels]{enumitem}
\usepackage{vwcol}
\setlength{\columnsep}{-2cm}

\theoremstyle{plain}
\newtheorem*{thm*}{Theorem}
\newtheorem*{lm*}{Lemma}
\newtheorem*{ex*}{Example}
\newtheorem{thm}{Theorem}[section]
\newtheorem{lm}[thm]{Lemma}

\newtheorem{prop}[thm]{Proposition}
\newtheorem{conj}[thm]{Conjecture}

\newtheorem{que}[thm]{Question}

\DeclareSymbolFont{extraup}{U}{zavm}{m}{n}
\DeclareMathSymbol{\varheart}{\mathalpha}{extraup}{86}
\DeclareMathSymbol{\vardiamond}{\mathalpha}{extraup}{87}

\theoremstyle{definition}

\declaretheorem[sibling=thm,name=Definition,qed={$\vardiamond$}]{de}
\declaretheorem[sibling=thm,name=Example,qed={$\spadesuit$}]{ex}
\declaretheorem[sibling=thm,name=Remark,qed={$\clubsuit$}]{re}

\DeclareMathOperator{\CC}{\mathbb{C}}
\DeclareMathOperator{\PP}{\mathbb{P}}

\DeclareMathOperator{\As}{\mathcal{A}}
\DeclareMathOperator{\Bs}{\mathcal{B}}

\DeclareMathOperator{\Es}{\mathcal{E}}

\DeclareMathOperator{\Js}{\mathcal{J}}

\DeclareMathOperator{\Ls}{\mathcal{L}}

\DeclareMathOperator{\Ps}{\mathcal{P}}

\let\S\relax\DeclareMathOperator{\S}{\mathbb{S}}

\DeclareMathOperator{\Gr}{Gr}
\DeclareMathOperator{\GL}{GL}
\DeclareMathOperator{\Jo}{Jo}
\let\O\relax\DeclareMathOperator{\O}{O}
\DeclareMathOperator{\Diag}{Diag}
\DeclareMathOperator{\Jord}{Jord}

\DeclareMathOperator{\rk}{rk}

\DeclareMathOperator{\codim}{codim}
\DeclareMathOperator{\spann}{span}
\DeclareMathOperator{\lcm}{lcm}
\DeclareMathOperator{\adj}{adj}

\DeclareMathOperator{\rddots}{\mbox{\reflectbox{$\ddots$}}}

\title{The Geometries of Jordan nets and Jordan webs}

\author{Arthur Bik}
\address{MPI for Mathematics in the Sciences, Leipzig, Germany}
\email{arthur.bik@mis.mpg.de}

\author{Henrik Eisenmann}
\address{MPI for Mathematics in the Sciences, Leipzig, Germany}
\email{henrik.eisenmann@mis.mpg.de}

\begin{document}

\begin{abstract}
A Jordan net (resp. web) is an embedding of a unital Jordan algebra of dimension $3$ (resp. $4$) into the space $\mathbb{S}^n$ of symmetric $n\times n$ matrices. We study the geometries of Jordan nets and webs: we classify the congruence-orbits of Jordan nets (resp. webs) in $\mathbb{S}^n$ for $n\leq 7$ (resp. $n\leq 5$), we find degenerations between these orbits and list obstructions to the existence of such degenerations. For Jordan nets in $\mathbb{S}^n$ for $n\leq5$, these obstructions show that our list of degenerations is complete. For $n=6$, the existence of one degeneration is still undetermined.

To explore further, we used an algorithm that indicates numerically whether a degeneration between two orbits exists. We verified this algorithm using all known degenerations and obstructions, and then used it to compute the degenerations between Jordan nets in $\mathbb{S}^7$ and Jordan webs in $\mathbb{S}^n$ for $n=4,5$.
\end{abstract}
\maketitle

\section{Introduction}\label{sec:intro}

Let $\S^n$ be the space of symmetric $n\times n$ matrices over the complex numbers $\CC$. We say that a subspace $\Ls\subseteq\S^n$ is {\em regular} when it contains an invertible matrix. For a regular subspace $\Ls\subseteq\S^n$, we define its {\em reciprocal variety} $\Ls^{-1}$ to be the Zariski-closure of the set $\{X^{-1}\mid X\in\Ls,\det(X)\neq0\}$. The goal of this paper is to follow-up on \cite{BES:JordanSpaces} and study the geometry of the regular subspaces $\Ls\subseteq\S^n$ such that $\Ls^{-1}$ is again a linear subspace of $\S^n$. Denote the Grassmannian of $m$-dimensional subspaces of $\S^n$ by $\Gr(m,\S^n)$. The following theorem, which is a more general formulation of a result of Jensen \cite[Lemma 1]{J:LinCov}, was the starting point of our investigation.

\begin{thm}[{\cite[Theorem 1.1]{BES:JordanSpaces}}]\label{thm:equiv_conditions}
Let $\Ls\in\Gr(m,\S^n)$ be a regular subspace and let $U\in\Ls$ be an invertible matrix. Then the following are equivalent: 
\begin{itemize}
\item[$\mathrm{(a)}$] The reciprocal variety $\Ls^{-1}$ is also a linear space in $\S^n$.
\item[$\mathrm{(b)}$] The subspace $\Ls$ is a subalgebra of the Jordan algebra $(\S^n,\bullet_U)$.
\item[$\mathrm{(c)}$] We have $\Ls^{-1}=U^{-1}\Ls U^{-1}$.
\end{itemize}
\end{thm}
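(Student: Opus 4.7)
I would take (c) as a hub and prove $(c) \Rightarrow (a)$ directly, $(a) \Rightarrow (c)$ by a tangent-space argument, and $(c) \Leftrightarrow (b)$ by exploiting the Jordan structure $A \bullet_U B = \tfrac{1}{2}(AU^{-1}B + BU^{-1}A)$ on $\S^n$, whose unit is $U$. The implication $(c) \Rightarrow (a)$ is immediate, since $U^{-1}\Ls U^{-1}$ is the image of $\Ls$ under the linear automorphism $X \mapsto U^{-1}XU^{-1}$. For $(a) \Rightarrow (c)$, observe that $\Ls^{-1}$ is the Zariski closure of the image of the birational map $\Ls \dashrightarrow \S^n$, $X \mapsto X^{-1}$, and is therefore irreducible of dimension $m = \dim\Ls$. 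Inversion is smooth at the invertible matrix $U$ with derivative $A \mapsto -U^{-1}AU^{-1}$, so $U^{-1}$ is a smooth point of $\Ls^{-1}$ whose tangent space equals $U^{-1}\Ls U^{-1}$; if $\Ls^{-1}$ is linear it must coincide with this tangent space.

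For $(c) \Rightarrow (b)$ I would fix $A \in \Ls$ and consider $f(t) := U(U+tA)^{-1}U$. For all but finitely many $t \in \CC$ the matrix $U + tA \in \Ls$ is invertible, so by (c) one has $(U+tA)^{-1} \in U^{-1}\Ls U^{-1}$ and hence $f(t) \in \Ls$. The expansion
\[
f(t) \;=\; U - tA + t^{2}AU^{-1}A - t^{3}AU^{-1}AU^{-1}A + \cdots
\]
converges near $t=0$, and since $\Ls$ is a closed linear subspace, the second-order coefficient $AU^{-1}A = A \bullet_U A$ lies in $\Ls$; combined with $U \in \Ls$ and polarization, this makes $\Ls$ a $\bullet_U$-subalgebra. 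For the reverse $(b) \Rightarrow (c)$, I would apply the Cayley--Hamilton theorem to the matrix $U^{-1}X$ for invertible $X \in \Ls$, obtaining
\[
(U^{-1}X)^{n}+c_{n-1}(U^{-1}X)^{n-1}+\cdots+c_{0}I \;=\; 0, \qquad c_{0}=(-1)^{n}\det(U^{-1}X)\neq 0.
\]
Left-multiplying by $U$ turns each $U(U^{-1}X)^{k}$ into an iterated $\bullet_U$-product $X \bullet_U X \bullet_U \cdots \bullet_U X$ of $k$ copies of $X$, which lies in the subalgebra $\Ls$. Rearranging the resulting identity lets me solve for $UX^{-1}U$ as an explicit $\Ls$-linear combination of these Jordan powers, so $X^{-1} \in U^{-1}\Ls U^{-1}$. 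Passing to the Zariski closure yields $\Ls^{-1} \subseteq U^{-1}\Ls U^{-1}$, and equality follows because both sides are irreducible of dimension $m$.

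The main obstacle I anticipate lies in $(b) \Rightarrow (c)$: the decisive insight is that left-multiplication by $U$ converts the classical matrix powers $(U^{-1}X)^{k}$ into $\bullet_U$-powers of $X$, turning the Cayley--Hamilton relation into a Jordan-polynomial identity from which $UX^{-1}U$ can be recovered inside $\Ls$. Once this algebraic bridge is in place, the remaining implications reduce to a routine linear-algebraic and differential-geometric computation together with the trivial observation that the linear image of a linear space is linear.
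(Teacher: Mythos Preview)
The paper does not itself prove this theorem; it is quoted from \cite{BES:JordanSpaces}, so there is no in-paper argument to compare against. Judged on its own, your plan is correct, with one step in $(b)\Rightarrow(c)$ that needs to be stated more carefully.

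The identity $U(U^{-1}X)^k=X^{\bullet_U k}$ is correct and is indeed the decisive bridge. However, if you simply left-multiply the Cayley--Hamilton relation for $U^{-1}X$ by $U$, you obtain
\[
X^{\bullet_U n}+c_{n-1}X^{\bullet_U(n-1)}+\cdots+c_1X+c_0U=0,
\]
an identity in which $UX^{-1}U$ does not appear; ``rearranging'' this alone cannot produce it. The clean fix is to first multiply Cayley--Hamilton on the right by $(U^{-1}X)^{-1}=X^{-1}U$ (equivalently, solve it for the inverse in the usual way), getting
\[
X^{-1}U=-\tfrac{1}{c_0}\bigl((U^{-1}X)^{n-1}+c_{n-1}(U^{-1}X)^{n-2}+\cdots+c_1 I\bigr),
\]
and only then left-multiply by $U$, which yields $UX^{-1}U=-\tfrac{1}{c_0}\bigl(X^{\bullet_U(n-1)}+\cdots+c_1U\bigr)\in\Ls$. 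Alternatively, you may observe that the displayed Jordan-polynomial relation with $c_0\neq0$ makes $X$ invertible in the power-associative subalgebra $\Ls\subseteq(\S^n,\bullet_U)$, and that the Jordan inverse of $X$ in $(\S^n,\bullet_U)$ is precisely $UX^{-1}U$. Either way the chain closes; the implications $(c)\Rightarrow(a)$, $(a)\Rightarrow(c)$ via the tangent space at $U^{-1}$, and $(c)\Rightarrow(b)$ via the power-series coefficient are fine as written.
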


Here the operation $-\bullet_U-\colon\S^n\times\S^n\to\S^n$ is defined as
\[
X\bullet_U Y:= \frac{XU^{-1}Y+YU^{-1}X}{2}\in\S^n
\]
for all $X,Y\in\S^n$ and satisfies the {\em Jordan axiom}:
\[
(X\bullet_U X)\bullet_U(X\bullet_U Y)=X\bullet_U((X\bullet_UX)\bullet_U Y)\mbox{ for all $X,Y\in\S^n$.}
\]
We call a subspace $\Ls\in\Gr(m,\S^n)$ a {\em Jordan space} when these equivalent conditions are satisfied. Jordan spaces of dimensions $2,3$ and $4$ are also called {\em Jordan pencils}, {\em nets} and {\em webs}, respectively.\bigskip

Denote the adjoint of a matrix $U\in\S^n$ by $\adj(U)$. Define the {\em Jordan locus} $\Jo(m,\S^n)$ to be the subvariety of $\Gr(m,\S^n)$ consisting of all subspaces $\Ls=\CC\{X_1,\ldots,X_m\}$ such that
\[
X_1,X_2,\ldots,X_m, \,\,X_i\adj(U)X_j+X_j\adj(U)X_i
\] 
are linearly dependent for all matrices $U\in\Ls$ and all indices $1\leq i\leq j\leq m$. Theorem~\ref{thm:equiv_conditions} shows that a regular subspace $\Ls\in\Gr(m,\S^n)$ is a Jordan space if and only if $\Ls\in\Jo(m,\S^n)$. The group $\GL_n(\CC)$ acts on $\Gr(m,\S^n)$ by congruence:
\[
g\cdot \Ls:=g\Ls g^\top\in\Gr(m,\S^n)\mbox{ for all $\Ls\in\Gr(m,\S^n)$ and $g\in\GL_n(\CC)$.}
\] 
The subvariety $\Jo(m,\S^n)$ is closed under the congruence action. It is the goal of this paper to classify the orbits $\GL_n(\CC)\cdot\Ls$ of regular subspaces $\Ls\in\Jo(m,S^n)$ and to understand the degenerations between them. 

\subsection{Results}

\subsubsection*{Classification of orbits}
For $5$ out of $6$ isomorphism types of unital Jordan algebras of dimension $3$, we determine the congruence-orbits of Jordan nets in $\S^n$ for all $n$. For the last isomorphism type, we do this for $n\leq 7$. We also determine the congruence-orbits of embeddings of Jordan webs in $\S^n$ for~$n\leq 5$. These results are summarized in Section~\ref{sec:summary_embeddings}.

\subsubsection*{Degenerations and obstructions}
In the Appendix, we give lists of degenerations between orbits of Jordan spaces and in Section~\ref{sec:degenerations_obstructions} we list a series of obstructions to the existence of such degenerations. These obstructions suffice to show that our lists of degenerations Jordan nets of the first $5$ isomorphism types are complete.
We also determine the degeneration diagrams of Jordan nets in $\S^5$ and $\S^6$ (up to one degeneration) in Section~\ref{sec:Jordan_nets_n<=6}.

\subsubsection*{Numerical results}
We give an algorithm that indicates numerically whether a degeneration between two orbits of Jordan spaces exists. We verified this algorithm using all known degenerations and obstructions between Jordan nets in $\S^n$ for $n\leq 6$, and use the algorithm to compute the degenerations between Jordan nets in $\S^7$ and Jordan webs in $\S^n$ for $n=4,5$.

\subsection{Applications}
Subspaces $\Ls\subseteq\S^n$ such that $\Ls^{-1}$ is also a linear spaces arise naturally in statistics \cite{J:LinCov,STZ}: in many statistical applications, one studies normally distributed random variables $X_1,\ldots,X_n$ with linear conditions on their covariance matrix $\Sigma$ or their concentration matrix $\Sigma^{-1}$. The condition that the matrix $\Sigma$ lies in a subspace $\Ls\subseteq\S^n$ whose reciprocal is also a linear space is a mixture of these two conditions. Seely \cite{S:quadratic_completeness,S:completeness_family}  proved that such models are the only models of multivariate normal distributions with zero mean that have a complete sufficient statistic.\bigskip

In \cite{PP:DimensionReduction}, Parrilo and Permenter showed that minimal subspaces which contain primal and dual solutions of a semidefinite optimization problem are Jordan algebras. Indeed, Jordan algebras are in some sense the more general space for optimization problems. It is well known that linear optimization problems are equivalent to semidefinite optimization problems over diagonal matrices, however both are instances of symmetric cones~\cite{FK:SymmetricCones}. Symmetric cones are given by the squares of an Euclidean Jordan algebra, i.e. a Jordan algebra where sum of squares are always nonzero.

\subsection*{Structure of the paper.}
In Section~\ref{sec:abstract_jordan}, we recall the basic properties of abstract unital Jordan algebras and list the degenerations between them in dimensions $2$, $3$ and $4$. In Section~\ref{sec:summary_embeddings}, we list the orbits of embeddings of Jordan nets into $\S^n$ for $n\leq 7$ and Jordan webs into $\S^n$ for $n\leq 5$. In Section~\ref{sec:Jordan_nets_n<=6}, we list all degenerations of Jordan nets in $\S^n$ for $n=5,6$ assuming one degeneration does not exist. In Section~\ref{sec:numerical_results}, we find all degenerations between Jordan nets in $\S^7$ and Jordan webs in $\S^n$ for $n=4,5$ numerically. In Section~\ref{sec:degenerations_obstructions}, we give a list of obstructions to the existence of a degeneration between two orbits. In Section~\ref{sec:family_degenerations_Jordan_nets}, we find all degenerations between some families of Jordan nets. In Section~\ref{sec:proofs_main_results}, we prove the results from Section~\ref{sec:Jordan_nets_n<=6}. In Appendix~\ref{sec:irr_embeddings}, we classify embeddings of indecomposable Jordan algebras into $\S^n$ and prove the results from Section~\ref{sec:summary_embeddings}. In Appendix~\ref{sec:appendix_degenerations}, we list some families of degenerations between Jordan algebras.

\subsection*{Acknowledgements}
We would like to thank Jan Draisma for finding the example in Remark~\ref{re:sz_pencils_n=8} and Aline Marti for help with the proof of Proposition~\ref{prop:B231_not_to_C_6}. The first author was partially supported by Postdoc.Mobility Fellowship P400P2\_199196 from the Swiss National Science Foundation.

\section{Abstract Jordan algebras}\label{sec:abstract_jordan}

The goal of this section is to review the basic notions concerning (abstract unital) Jordan algebras. For Jordan algebras of dimension $\leq 4$, we give their classification up to isomorphism and the degenerations between the different isomorphism classes.

\begin{de}
An (abstract) {\em Jordan algebra} $\As$ is a complex vector space equipped with a symmetric bilinear operation $-\cdot-\colon\As\times\As\to\As$ such that
\[
(x\cdot x)\cdot(x\cdot y)=x\cdot((x\cdot x)\cdot y)
\]
holds for all $x,y\in\As$. This condition is called the {\em Jordan axiom}. A Jordan algebra $\As$ is called {\em unital} if there exists an $u\in\As$ such that $u\cdot x=x$ for all $x\in\As$.
\end{de}

All Jordan algebras in this paper are assumed to be unital. We often write $xy$ instead of $x\cdot y$ and $x^d$ instead of $x\cdot x^{d-1}$ for $d\geq 2$.

\begin{ex}
For any invertible matrix $U\in\S^n$, the operation
\begin{eqnarray*}
-\bullet_U-\colon\As\times\As&\to&\As\\
(X,Y)&\mapsto&\frac{1}{2}\left(XU^{-1}Y+YU^{-1}X\right)
\end{eqnarray*}
defines a Jordan algebra structure on $\S^n$ where the matrix $U$ is the unit.
\end{ex}

A morphism of Jordan algebras $\varphi\colon\As\to\Bs$ is a linear map that sends the unit of $\As$ to the unit of $\Bs$ such that $\varphi(x\cdot y)=\varphi(x)\cdot\varphi(y)$ for all $x,y\in\As$. An isomorphism is a morphism which is invertible as a linear map.

\begin{ex}
Let $d\geq 1$ and $0\leq r\leq d$ be integers and let $\beta\colon \CC^d\times\CC^d\to\CC$ be a symmetric bilinear form of rank~$r$.
Define $\Js^d_{r,\beta}$ to be the vector space $\CC\times\CC^d$ equipped with the operation 
\[
(\lambda,v)\cdot(\mu,w):=(\lambda\mu+\beta(v,w),\lambda w+\mu v).
\]
Then $\Js^d_{r,\beta}$ is a Jordan algebra. One can check that the isomorphism type of $\Js^d_{r,\beta}$ only depends on~$(d,r)$. When $\beta(v,w)=v_1w_1+\ldots+v_rw_r$, we also denote $\Js^d_{r,\beta}$ by $\Js^d_r$.
\end{ex}

Let $\As$ be a Jordan algebra with unit $u$.

\begin{de}
The {\em rank} of $\As$ is the minimal number $\rk(A):=k\geq1$ such that $u,x,x^2,\ldots,x^k$ are linearly dependent for all $x\in\As$.
\end{de}

\begin{ex}
Let $d\geq 1$ and $0\leq r\leq d$ again be integers and consider the Jordan algebra $\Js^d_r$. It has unit $u=(1,0)$. Let $x=(\lambda,v)\in\As$ be any element. Then 
\[
\spann(u,x,x^2,\ldots,x^k)=\spann(u,x-\lambda u,(x-\lambda u)^2,\ldots,(x-\lambda u)^k)
\]
for each $k\geq 1$. So for the purposes of determining the rank of $\Js^d_r$, it suffices to consider the case where $\lambda=0$. We see that $u=(1,0)$ and $x=(0,v)$ are linearly independent in general and that $u=(1,0)$ and $x^2=(\beta(v,v),0)$ are linearly dependent for all $v\in\CC^d$. Hence $\rk(\Js^d_r)=2$.
\end{ex}

\begin{de}
We say that $\As$ is {\em decomposable} when $\As\cong\Bs_1\times\Bs_2$ where $\Bs_1,\Bs_2$ are Jordan algebras and $(a_1,a_2)\cdot(b_1,b_2)=(a_1b_1,a_2b_2)$ for all $a_1,b_1\in\Bs_1$ and $a_2,b_2\in\Bs_2$.
\end{de}

\begin{prop}
Let $\Bs_1,\Bs_2$ be Jordan algebras with units $u_1,u_2$ and suppose that $\As\cong\Bs_1\times\Bs_2$. Then 
\[
\rk(A)=\rk(\Bs_1)+\rk(\Bs_2).
\]
\end{prop}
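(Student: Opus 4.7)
The plan is to reformulate the rank via minimal polynomials and reduce the statement to polynomial arithmetic in $\CC[t]$. Power associativity in Jordan algebras (a standard consequence of the Jordan axiom) guarantees that for every $x \in \As$ the subalgebra generated by $u$ and $x$ is associative and commutative. This makes the evaluation map $p \mapsto p(x) := \sum c_i x^i$ (with $x^0 := u$) a well-defined algebra homomorphism $\CC[t] \to \As$, so $x$ has a \emph{minimal polynomial} $m_x$ defined as the monic generator of the ideal $\{p \in \CC[t] : p(x) = 0\}$. Unpacking the definition of the rank then yields the reformulation $\rk(\As) = \max_{x \in \As} \deg m_x$.

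Next I would transport this through the product decomposition. Writing $x = (x_1, x_2) \in \Bs_1 \times \Bs_2$, the factor projections are unital Jordan homomorphisms, so evaluation commutes with the decomposition: $p(x) = (p(x_1), p(x_2))$. Consequently $m_x$ is the monic polynomial of least degree vanishing on both $x_1$ and $x_2$, i.e.\ $m_x = \lcm(m_{x_1}, m_{x_2})$. The upper bound is now immediate:
\[
\deg m_x \;\leq\; \deg m_{x_1} + \deg m_{x_2} \;\leq\; \rk(\Bs_1) + \rk(\Bs_2),
\]
so that $\rk(\As) \leq \rk(\Bs_1) + \rk(\Bs_2)$.

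For the matching lower bound I would choose $x_i \in \Bs_i$ with $\deg m_{x_i} = \rk(\Bs_i)$ for $i = 1,2$ and replace $x_2$ by $x_2 + c u_2$ for a generic scalar $c \in \CC$. A short calculation using that $u_2$ is the unit of $\Bs_2$ gives $m_{x_2 + c u_2}(t) = m_{x_2}(t - c)$, so the root set of this minimal polynomial is shifted by $c$; for all but finitely many $c \in \CC$ it is disjoint from the (finite) root set of $m_{x_1}$, making the two polynomials coprime. For such $c$ the $\lcm$ coincides with the product, and $x := (x_1, x_2 + c u_2)$ satisfies $\deg m_x = \rk(\Bs_1) + \rk(\Bs_2)$, yielding the reverse inequality.

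The only genuinely nontrivial step is the evaluation formalism — power associativity in each factor and compatibility of evaluation with the product decomposition. Once those identities are in place the argument reduces to the elementary observation that two monic polynomials in $\CC[t]$ can be made coprime by a generic translation of one of them, and the proof is essentially bookkeeping.
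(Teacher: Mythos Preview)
Your proof is correct and follows essentially the same approach as the paper: both reformulate rank via minimal polynomials, use $m_{(x_1,x_2)}=\lcm(m_{x_1},m_{x_2})$ for the upper bound, and obtain the lower bound by translating one factor by a scalar multiple of the unit to force the two minimal polynomials to have disjoint root sets. The only difference is that you spell out the justification (power associativity, projections as unital homomorphisms) slightly more explicitly than the paper does.
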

\begin{proof}
Let $x\in\As$ be an element and $P(X)=a_0+a_1X+\ldots+a_dX^d\in\CC[X]$ a polynomial. Then we write $P(x):= a_0u+a_1x+\ldots+a_dx^d$. The minimal polynomial $P_x$ of $x$ is the monic generator of the ideal $\{P\in\CC[X]\mid P(x)=0\}$ of $\CC[X]$. Note that 
\[
\rk(\As)=\max_{x\in\As}(\deg P_x).
\]
Now let $b_1,b_2$ be elements of $\Bs_1,\Bs_2$. Then $P_{(b_1,b_2)}=\lcm(P_{b_1},P_{b_2})$ and so $\rk(\As)\leq\rk(\Bs_1)+\rk(\Bs_2)$. Assume that $\rk(\Bs_i)=\deg P_{b_i}$. Then there exists a $\lambda\in\CC$ such that $P_{b_1}, P_{b_2}(X-\lambda)=P_{b_2+\lambda u_2}$ have distinct roots. It follows that 
\[
\rk(\As)\geq \deg P_{(b_1,b_2+\lambda u_2)}=\deg \lcm(P_{b_1},P_{b_2}(X-\lambda))=\deg P_{b_1}+\deg P_{b_2}=\rk(\Bs_1)+\rk(\Bs_2)
\]
and hence $\rk(\As)=\rk(\Bs_1)+\rk(\Bs_2)$.
\end{proof}

Let $n\geq1$ be an fixed integer and let $x_1,\ldots,x_n$ be a basis of a Jordan algebra $\As$. Then the Jordan algebra structure is determined by the constants $c_i^{j_1,j_2}\in\CC$ such that
\[
x_{j_1}\cdot x_{j_2}= \sum_{i=1}^k c_i^{j_1,j_2}\cdot x_i.
\]
We define the space $\Jord_n$ of abstract Jordan algebras of dimension $n$ to be the subvariety of $\CC^{n\times n\times n}$ consisting of all elements $(c_i^{j_1,j_2})_{i,j_1,j_2}$ defining a Jordan algebra structure. This means that $c_i^{j_1,j_2}=c_i^{j_2,j_1}$ and that the Jordan axiom is satisfied. Note that $\GL_n$ acts on $\Jord_n$ via base change. The orbit of a Jordan algebra $\As$ consists of all Jordan algebras isomorphic to $\As$.

\begin{de}
We say that a Jordan algebra $\As$ {\em degenerates to} a Jordan algebra $\Bs$ when there exists a matrix $g(t)\in\GL_n(\CC[t^{\pm1}])$ such that $\Bs=\lim_{t\to0}g(t)\cdot \As$. We denote this by $\As\to\Bs$. Equivalently, we say that $\As$ {\em degenerates to} $\Bs$ ({\em topologically}) when $\Bs$ is in the orbit-closure of $\As$. 
\end{de}

For a proof that these definitions are equivalent, see the proof of \cite[Theorem 20.24]{BCS:complextheory}.

\begin{ex}
There are two isomorphism classes of $2$-dimensional Jordan algebras, namely $\CC\times\CC$ and $\Js^1_0$. The former has a basis $x,y$ with $x^2=x$, $xy=0$ and $y^2=y$. The latter has a basis $u,z$ where $u$ is the unit and $z^2=0$. For $t\neq0$, the basis $(u,z_t)=(x+y,ty)$ satisfies
\begin{eqnarray*}
u^2&=&1\cdot u+0\cdot z_t,\\
uz_t&=&0\cdot u+1\cdot z_t, \\
z_t^2&=&0\cdot u+t\cdot z_t
\end{eqnarray*} 
Taking the limit of the structure constants $(1,0,0,1,0,t)$ for $t\to0$, we find the structure constants $(1,0,0,1,0,0)$ of $\Js^1_0$. Hence $\CC\times\CC\to\Js^1_0$.
\end{ex}

The Jordan algebras of dimension $4$ were classified by Martin.

\begin{thm}[{Martin \cite{M:FourDimJordan}}]
Let $\As$ be an indecomposable unital Jordan algebra of dimension $\leq 4$. Then $\As$ is isomorphic to one of the following Jordan algebras:
\begin{enumerate}
\item $\CC$
\item $\Js_0^1$
\item $\Js_0^2,\Js_1^2,\Js_2^2,\CC[x]/(x^3)$
\item $\Js_0^3,\Js_1^3,\Js_2^3,\Js_3^3,\CC[x]/(x^4)$, the subalgebras
\[
\Es_1:=\begin{pmatrix}v&w&x\\w&y\\x\end{pmatrix},\Es_2:=\begin{pmatrix}v&x&w\\x\\w&&&y\\&&y\end{pmatrix},\Es_3:=\begin{pmatrix}y&x&u&z\\x&u\\u\\z&&&u\end{pmatrix}
\]
of $\S^3,\S^4$ and $\Es_4:=\CC[x,y]/(x^2,xy,y^2)$.
\end{enumerate}
\end{thm}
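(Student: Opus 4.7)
The plan is to classify indecomposable unital Jordan algebras $\As$ by dimension, using the rank $k=\rk(\As)$ as the primary invariant in each dimension. For $\dim\As=1$ we have $\As=\CC$. For $\dim\As=2$, pick a basis $(u,z)$ with $u$ the unit; then $z^2=\alpha u+\beta z$ for some $\alpha,\beta$, and the substitution $z\mapsto z-\tfrac{\beta}{2}u$ followed by a rescaling reduces to $z^2=0$ or $z^2=u$. The first case gives $\Js^1_0$, while in the second $\tfrac{1}{2}(u\pm z)$ are orthogonal idempotents and $\As\cong\CC\times\CC$ is decomposable.

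For $\dim\As=3$ I would split on the rank. If $k=3$, choose $x$ with $u,x,x^2$ linearly independent; these span $\As$ and $x^3\in\As$ yields a cubic minimal polynomial, so $\As\cong\CC[X]/(P)$ for some cubic $P$. By the Chinese remainder theorem the only indecomposable case is $P=(X-\lambda)^3$, producing $\CC[x]/(x^3)$. If $k=2$, every $x\in\As$ satisfies a quadratic $x^2=t(x)x+n(x)u$; a standard linearisation (substituting $x+sy$ and comparing coefficients in $s$) shows $t$ is linear and $n$ is quadratic. Translating $x\mapsto x-\tfrac{1}{2}t(x)u$ achieves $t\equiv 0$, realising $\As$ as a spin factor $\Js^{2}_{r,\beta}$ whose isomorphism class depends only on $r=\rk(\beta)$. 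This yields $\Js^2_0,\Js^2_1,\Js^2_2$.

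For $\dim\As=4$, the same cyclic argument in the rank-$4$ case produces $\CC[x]/(x^4)$, and the same spin-factor argument in the rank-$2$ case produces $\Js^3_0,\Js^3_1,\Js^3_2,\Js^3_3$. The main obstacle is the rank-$3$ case. I would pick $x$ with $u,x,x^2$ linearly independent, obtaining a three-dimensional unital subalgebra $\Bs=\spann(u,x,x^2)$, and complete the basis by some $w$ with $uw=w$. The products $xw$, $x^2w$ and $w^2$ are then expressed in the basis $(u,x,x^2,w)$, giving finitely many unknown scalars subject to the polynomial constraints imposed by the Jordan axiom. After normalising $w$ via substitutions $w\mapsto w+\alpha u+\beta x+\gamma x^2$ and rescaling, the resulting variety of structure constants collapses, modulo change of basis, to the four isomorphism classes $\Es_1,\Es_2,\Es_3,\Es_4$; each of these satisfies the Jordan axiom directly from its given matrix (resp.\ quotient) presentation.

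The hardest step is guaranteeing this last case analysis is exhaustive and that the four algebras produced are pairwise non-isomorphic and indecomposable. For indecomposability I would use the Peirce decomposition of each $\Es_i$ relative to an arbitrary idempotent and show that no orthogonal pair summing to the unit yields a direct-product decomposition. For pairwise non-isomorphism I would compare numerical invariants such as $\dim\As^2$, $\dim\rad(\As)$, $\dim\rad(\As)^2$ and the dimension of the subalgebra generated by a generic element; these take different values on $\Es_1,\ldots,\Es_4$, also distinguishing them from all already-classified algebras.
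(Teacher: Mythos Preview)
The paper does not prove this theorem: it is stated with attribution to Martin and no argument is given. Your outline therefore has nothing in the paper to be compared against.

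Your sketch follows the standard route (split on rank; power-associativity handles the cyclic case; the spin-factor construction handles rank $2$; a structure-constant analysis handles the residual rank-$3$, dimension-$4$ case) and is broadly correct as a plan. Two points deserve more care. First, in the rank-$2$ argument the scalars $t(x),n(x)$ in $x^2=t(x)x+n(x)u$ are only uniquely determined for $x\notin\CC u$, so the claim that $t$ is linear and $n$ quadratic must be argued as a polynomial identity on $\As$ rather than pointwise. Second, and more substantively, in the rank-$3$, dimension-$4$ case your subalgebra $\Bs=\spann(u,x,x^2)\cong\CC[X]/(P_x)$ need not be $\CC[x]/(x^3)$: the cubic $P_x$ may split, so $\Bs$ could equally well be $\CC\times\CC\times\CC$ or $\CC\times\Js^1_0$. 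A complete proof must either exhibit a nilpotent $x$ of index $3$ in every such $\As$ (which is not automatic) or run the structure-constant analysis separately for each possible $\Bs$. This is precisely where the work in Martin's classification lies, and your phrase ``the resulting variety of structure constants collapses'' hides it rather than carrying it out.
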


We know the degeneration diagrams for Jordan algebras of dimension $3$ and $4$.

\begin{samepage}
\begin{thm}\label{thm:abstract_diagram3}
The following diagram gives all degenerations between $3$-dimensional Jordan algebras.
\end{thm}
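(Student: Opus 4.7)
The plan is to split the theorem into its two complementary assertions --- every arrow drawn is an actual degeneration, and every degeneration appears as a directed path in the diagram --- and handle each by different means: explicit one-parameter families for the first, and semi-continuous $\GL_3$-invariants for the second.

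For the existence half, the six isomorphism classes of $3$-dimensional unital Jordan algebras are
\[
\CC^3,\ \CC\times\Js_0^1,\ \CC[x]/(x^3),\ \Js_2^2,\ \Js_1^2,\ \Js_0^2.
\]
For each arrow $\As\to\Bs$ I would exhibit, in the spirit of the worked example $\CC\times\CC\to\Js_0^1$ preceding the theorem, a one-parameter family $g(t)\in\GL_3(\CC[t^{\pm 1}])$ whose $t\to 0$ limit of structure constants recovers $\Bs$. I anticipate three recurring recipes will cover all arrows: rescaling a nilpotent generator by $t$ (for instance $x\mapsto tx$ in $\CC[x]/(x^3)$ kills $x\cdot x$ in the limit and yields $\Js_0^2$), merging two orthogonal idempotents into a $\Js_0^1$-summand as in the two-dimensional worked example (giving $\CC^3\to\CC\times\Js_0^1$ and, with a more delicate linear combination, $\CC\times\Js_0^1\to\CC[x]/(x^3)$), and scaling one row of the defining bilinear form $\beta$ of $\Js_r^2$ by $t$ (producing $\Js_r^2\to\Js_{r-1}^2$).

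For the non-existence half, I would record three $\GL_3$-invariant numerical invariants on $\Jord_3$: the rank $\rk(\As)$, the nilradical dimension $\dim\rad(\As)$, and the maximal number $i(\As)$ of pairwise orthogonal nonzero idempotents summing to $u$. These are all semi-continuous: $\{\As:\rk(\As)\leq r\}$ is closed because it is cut out by the vanishing, universally in $x$, of the $(r+1)\times(r+1)$ minors of $(u,x,x^2,\ldots,x^r)$; $\dim\rad$ is upper semi-continuous because $\rad(\As)=\{x:x^3=0\}$ and fibre dimension is upper semi-continuous in the closed incidence $\{(\As,x):x^3=0\}\subset\Jord_3\times\CC^3$; and $i(\As)$ equals the number of distinct roots of the minimal polynomial of a generic element (via the Peirce decomposition of that element), so it inherits the lower semi-continuity of the number of distinct roots of a polynomial in the coefficients. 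A degeneration $\As\to\Bs$ therefore forces
\[
\rk(\As)\geq\rk(\Bs),\qquad \dim\rad(\As)\leq\dim\rad(\Bs),\qquad i(\As)\geq i(\Bs).
\]
Tabulating $(\rk,\dim\rad,i)$ for the six classes in the order above yields
\[
(3,0,3),\ (3,1,2),\ (3,2,1),\ (2,0,2),\ (2,1,2),\ (2,2,1),
\]
which separates all six classes and, by inspection, yields at least one failing inequality for every pair of classes not linked by a directed path in the diagram.

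The hard part will be constructing the family $g(t)$ for the arrow $\CC\times\Js_0^1\to\CC[x]/(x^3)$, since scaling a single basis vector does not suffice: a short computation in the basis $e_1,e_2,z$ of $\CC\times\Js_0^1$ shows that the right choice is $v_1=u$, $v_2=-2t\,e_1+t\,e_2+z$, $v_3=v_2^2$, with all remaining structure constants in the new basis going to those of $\CC[x]/(x^3)$ as $t\to 0$. Once the three semi-continuity statements are in place, the bulk of the proof is a short list of such explicit $g(t)$'s together with verification of a $6\times 6$ inequality table.
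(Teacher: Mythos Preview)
Your overall strategy is sound and more self-contained than the paper's proof, which simply cites Martin's thesis for the degenerations and for the obstruction $\CC\times\CC\times\CC\not\to\Js_1^2$, and only argues $\Js_2^2\not\to\CC[x]/(x^3)$ directly via rank. However, your claim that the triple $(\rk,\dim\rad,i)$ ``yields at least one failing inequality for every pair of classes not linked by a directed path'' is false. For the pairs
\[
\CC^3\to\Js_2^2,\qquad \CC^3\to\Js_1^2,\qquad \CC\times\Js_0^1\to\Js_1^2,
\]
your table gives $(3,0,3)\to(2,0,2)$, $(3,0,3)\to(2,1,2)$, $(3,1,2)\to(2,1,2)$ respectively, and in each case all three of your inequalities $\rk(\Bs)\leq\rk(\As)$, $\dim\rad(\As)\leq\dim\rad(\Bs)$, $i(\As)\geq i(\Bs)$ are satisfied. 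These are precisely the non-degenerations the paper has to import from the literature (the single fact $\CC^3\not\to\Js_1^2$ implies the other two by transitivity through $\Js_2^2\to\Js_1^2$ and $\CC^3\to\CC\times\Js_0^1$).

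The fix is easy: add \emph{associativity} as a fourth invariant. The vanishing of the associator $(xy)z-x(yz)$ for all $x,y,z$ is a polynomial condition on structure constants, hence Zariski-closed on $\Jord_3$ and preserved under degeneration. The algebras $\CC^3$, $\CC\times\Js_0^1$, $\CC[x]/(x^3)$, $\Js_0^2$ are associative, while $\Js_2^2$ and $\Js_1^2$ are not (with $v_1^2=u$ and $v_1v_2=0$ one has $(v_1v_1)v_2=v_2\neq 0=v_1(v_1v_2)$). This single closed condition rules out exactly the three missing pairs. As a minor side issue, your justification of upper semi-continuity of $\dim\rad$ via $\rad(\As)=\{x:x^3=0\}$ is not correct as stated: for $\Js_2^2$ the set $\{x:x^3=0\}$ is the union of two lines, not a linear subspace, so it is not the radical. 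Once associativity is in the toolkit, though, you no longer need $\dim\rad$ at all: rank, orbit dimension, and associativity suffice.
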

\begin{center}
\begin{tikzpicture}
	\node () at (-2,9) {dim $9$};
	\node () at (-2,8) {dim $8$};
	\node () at (-2,7) {dim $7$};
	\node () at (-2,6) {dim $5$};
	
    \node (1a) at (1,9) {$\CC\times\CC\times\CC$};
    \node (1b) at (3,8) {$\Js_2^2$};
    \node (2a) at (1,8) {$\CC\times\Js_0^1$};
    \node (2b) at (3,7) {$\Js_1^2$};
    \node (3a) at (1,7) {$\CC[x]/(x^3)$};
    \node (3b) at (2,6) {$\Js_0^2$};
    
    \draw [thick] (1a) -- (2a);
    \draw [thick] (2a) -- (3a);
    \draw [thick] (3a) -- (3b);
    \draw [thick] (1b) -- (2b);
    \draw [thick] (2b) -- (3b);
\end{tikzpicture}
\end{center}
\end{samepage}

To prove the theorem, we need the following proposition.

\begin{prop}
If $\As\to\Bs$, then $\rk(B)\leq\rk(\As)$.
\end{prop}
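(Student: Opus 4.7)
The statement is an upper-semicontinuity result: the function $\rk\colon\Jord_n\to\NN$ does not increase under degeneration. My plan is to prove that for each $k\geq 1$ the set
\[
V_k:=\{\Cs\in\Jord_n : \rk(\Cs)\leq k\}
\]
is a Zariski-closed, $\GL_n$-invariant subvariety of $\Jord_n$. With this in hand, applied to $k=\rk(\As)$ we get $\As\in V_k$, hence $\overline{\GL_n\cdot\As}\subseteq V_k$, and since $\Bs$ lies in this orbit-closure by the topological description of degeneration, we conclude $\rk(\Bs)\leq k=\rk(\As)$.

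The $\GL_n$-invariance of $V_k$ is immediate because rank is an isomorphism invariant. For closedness, the key observation is that $\rk(\Cs)\leq k$ is the simultaneous vanishing of polynomials in the structure constants. I would restrict (via a $\GL_n$-move) to the sublocus where $e_1$ is the unit, which is the linear condition $c_i^{1,j}=\delta_{i,j}$ and is $\GL_n$-saturating in the unital locus. On this sublocus an easy induction on $d$ shows that the coordinates of $x^d$ are polynomials of degree $d$ in $x_1,\ldots,x_n$ with coefficients in $\CC[c_i^{j_1,j_2}]$. Linear dependence of $e_1,x,x^2,\ldots,x^k$ is then the vanishing of every $(k+1)\times(k+1)$ minor of the explicit matrix $M(x;c)$ whose rows are these powers. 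Requiring this for every $x\in\Cs$ is equivalent to the vanishing of every coefficient (as a polynomial in $x_1,\ldots,x_n$) of every such minor, which is a finite system of polynomial equations in the $c_i^{j_1,j_2}$ and cuts out $V_k$ as a closed subvariety.

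The main obstacle I foresee is really only a bookkeeping issue: the unit is not preserved by the full $\GL_n$-action, so one has to justify that the above argument on the $e_1$-unital sublocus is sufficient. The cleanest route is to work throughout with the variety of pointed unital algebras $\{(c,u)\in\CC^{n^3}\times\CC^n : c \text{ defines a Jordan algebra with unit } u\}$, where both the Jordan axiom and the unit condition $L_u=\mathrm{id}$ are closed and the $\GL_n$-action on both factors is compatible, and then push the closedness statement down along the forgetful projection. Once this setup is in place, the semicontinuity argument goes through directly and yields $\rk(\Bs)\leq\rk(\As)$.
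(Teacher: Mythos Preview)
Your proposal is correct and follows exactly the paper's approach: the paper's proof is the single sentence ``This follows from the fact that bounded rank is a closed condition,'' and your argument is a careful unpacking of precisely that claim, showing that $V_k=\{\Cs\in\Jord_n:\rk(\Cs)\leq k\}$ is Zariski-closed and $\GL_n$-stable. Your attention to the bookkeeping around the unit (working with the pointed variety of $(c,u)$) is more care than the paper gives, but not a different idea.
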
 
\begin{proof}
This follows from the fact that bounded rank is a closed condition.
\end{proof}

\begin{proof}[Proof of Theorem~\ref{thm:abstract_diagram3}]
The degenerations and the fact that $\CC\times\CC\times\CC\not\to\Js^2_1$ are obtained by \cite{M:DeformationsThesis} and based on partial results from \cite{KS:DeformationsJordan3}. We have $\rk(\Js^2_2)=2<3=\rk(\CC[x]/(x^3))$ and hence $\Js_2^2\not\to\CC[x]/(x^3)$ by the previous proposition.
\end{proof}

\begin{thm}[{Martin \cite{M:DeformationsThesis}, Kashuba-Martin \cite{KM:DeformationsJordan4}}]\label{thm:abstract_diagram4}
The following diagram gives all degenerations between $4$-dimensional Jordan algebras.
\end{thm}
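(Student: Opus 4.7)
The plan is to split the verification into two parts: establishing every degeneration encoded by a directed path in the diagram, and excluding every pair $(\As,\Bs)$ that is not connected by such a path. Both parts reduce to finite checks because there are only finitely many isomorphism classes in dimension~$4$ by Martin's classification.

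For the existence part, I would, for each direct edge $\As\to\Bs$, exhibit an explicit one-parameter family $g(t)\in\GL_4(\CC[t^{\pm1}])$ whose limit realizes the degeneration. Concretely, one writes $\As$ in a $t$-dependent basis $x_1(t),\ldots,x_4(t)$, computes the resulting structure constants $c_i^{j_1,j_2}(t)\in\CC[t^{\pm1}]$, and verifies that $\lim_{t\to 0}c_i^{j_1,j_2}(t)$ equals the structure constants of $\Bs$. Such families are already constructed in \cite{M:DeformationsThesis,KM:DeformationsJordan4}, so this step reduces to collecting them and checking transitivity.

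For the non-existence part, I would use semicontinuous invariants: if $\As\to\Bs$, then $I(\Bs)\leq I(\As)$ for any lower semicontinuous invariant $I$ on $\Jord_4$, and dually for upper semicontinuous ones. The basic toolkit consists of (i) the rank, lower semicontinuous by the preceding proposition; (ii) the dimension of the automorphism group $\mathrm{Aut}(\As)$, upper semicontinuous since every orbit is open in its closure; (iii) the dimensions of the iterated products $\As^2,\As^3,\ldots$ and of the annihilator $\{x\in\As\mid x\As=0\}$; (iv) the number and multiplicities of primitive idempotents in a complete orthogonal decomposition of the unit, which control the Peirce decomposition.

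The hard step will be distinguishing algebras of the same dimension and rank whose gross invariants coincide, in particular the four indecomposable rank-$2$ algebras $\Es_1,\Es_2,\Es_3,\Es_4$, the family $\Js^3_0,\Js^3_1,\Js^3_2,\Js^3_3$, and the decomposable algebras of matching rank profile. For these I would introduce finer invariants, such as the module structure of the nilpotent part over the semisimple quotient, the minimal number of generators of the nilpotent ideal, and the dimension of the image of the Jordan multiplication $\As\otimes\rad(\As)\to\As$. Once the full table of invariants is assembled and matched against every missing edge, the resulting obstructions together with the explicit families of the existence part complete the proof.
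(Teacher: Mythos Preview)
The paper does not give its own proof of this theorem: it is stated as a result of Martin \cite{M:DeformationsThesis} and Kashuba--Martin \cite{KM:DeformationsJordan4}, and the paper simply records the diagram without further argument. (Contrast this with Theorem~\ref{thm:abstract_diagram3}, where the paper does supply a short proof by citing \cite{M:DeformationsThesis,KS:DeformationsJordan3} for most of the edges and non-edges and then adding one rank obstruction.) So there is no ``paper's own proof'' to compare against here.

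Your outline is a faithful description of how the cited references themselves proceed: explicit one-parameter families for each edge, and semicontinuous invariants (orbit dimension, rank, dimensions of radical powers and annihilators, idempotent structure) to rule out the missing edges. As a sketch it is correct in spirit, but be aware that the hard non-degenerations in dimension~$4$ --- particularly separating the $\Es_i$'s and $\CC\times\Js^2_0$ from one another and from $\CC[x]/(x^4)$ --- require invariants more delicate than the ones you listed (Kashuba--Martin use ad-hoc closed conditions tailored to specific orbits). Your proposal names the right \emph{kind} of tool but does not yet identify the specific invariants that actually separate the problematic pairs, which is where the real work lies.
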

\begin{center}
\begin{tikzpicture}
	\node () at (-2,16) {dim $16$};
	\node () at (-2,15) {dim $15$};
	\node () at (-2,14) {dim $14$};
	\node () at (-2,13) {dim $13$};
	\node () at (-2,12) {dim $12$};
	\node () at (-2,11) {dim $11$};	
	\node () at (-2,10) {dim $10$};
	\node () at (-2,9) {dim $7$};
	
    \node (1) at (1,16) {$\CC\times\CC\times\CC\times\CC$};
    \node (2) at (1,15) {$\CC\times\CC\times\Js_0^1$};
    \node (3) at (0,14) {$\Js_0^1\times\Js_0^1$};
    \node (4) at (4.5,15) {$\CC\times\Js_2^2$};
    \node (5) at (4,14) {$\CC\times\Js_1^2$};
    \node (6) at (3,12) {$\CC\times\Js_0^2$};
    \node (7) at (2,14) {$\CC\times\CC[x]/(x^3)$};
    \node (8) at (7.5,13) {$\Js_3^3$};
    \node (9) at (7,12) {$\Js_2^3$};
    \node (10) at (6,10) {$\Js_1^3$};
    \node (11) at (5.5,9) {$\Js_0^3$};
    \node (12) at (1,13) {$\CC[x]/(x^4)$};
    \node (13) at (6,14) {$\Es_1$};
    \node (16) at (5.5,13) {$\Es_2$};
    \node (18) at (5,12) {$\Es_3$};
    \node (20) at (4.5,11) {$\Es_4$};

    \draw [thick] (1) -- (2);
    \draw [thick] (2) -- (3);
    \draw [thick] (2) -- (7);
    \draw [thick] (3) -- (12);
    \draw [thick] (4) -- (5);
    \draw [thick] (4) -- (13);
    \draw [thick] (5) -- (6);
    \draw [thick] (5) -- (16);
    \draw [thick] (6) -- (20);
    \draw [thick] (7) -- (6);
    \draw [thick] (7) -- (12);
    \draw [thick] (8) -- (9);
    \draw [thick] (9) -- (10);
    \draw [thick] (10) -- (11);
    \draw [thick] (12) -- (18);
    \draw [thick] (13) -- (16);
    \draw [thick] (16) -- (18);
    \draw [thick] (18) -- (20);
    \draw [thick] (20) -- (11);
\end{tikzpicture}
\end{center}

\section{Jordan nets in $\S^n$ for $n\leq 7$ and Jordan webs in $\S^n$ for $n\leq 5$}\label{sec:summary_embeddings}

\begin{de}
Let $n\geq 1$ be an integer. An {\em embedding} of a Jordan algebra $\As$ into $\S^n$ is the image of an injective morphism of Jordan algebras $\As\to\S^n$, where $\S^n$ is equipped with the product $\bullet_U$ for any invertible $U\in\S^n$. A subspace $\Ls\subseteq\S^n$ is called a {\em Jordan space} if it is the embedding of some Jordan algebra into $\S^n$. We denote the set of $m$-dimensional Jordan spaces in $\S^n$ by $\Jo(m,\S^n)$. We denote the subset of $\Jo(m,\S^n)$ of subspaces containing ${\bf 1}_n$ by $\Jo_{\bf 1}(m,\S^n)$.
\end{de}

Jordan spaces of dimensions $2,3$ and $4$ are also called {\em Jordan pencils}, {\em nets} and {\em webs}, respectively.

\begin{de}
Two subspaces $\Ls,\Ls'\subseteq\S^n$ are {\em congruent} if $\Ls'=P\Ls P^\top$ for some $P\in\GL(n)$. When $P\in\O(n)$, the spaces $\Ls,\Ls'$ are called {\em orthogonally congruent}.
\end{de}

The sets $\Jo(m,\S^n)$ and $\Jo_{\bf 1}(m,\S^n)$ are varieties \cite{BES:JordanSpaces} that are stable under congruence and orthogonal congruence, respectively. The goal of this section is to classify elements of $\Jo(m,\S^n)$ up to congruence. Every element of $\Jo(m,\S^n)$ is congruent to an element of $\Jo_{\bf 1}(m,\S^n)$. So equivalently, we wish to classify the elements of $\Jo_{\bf 1}(m,\S^n)$ up to orthogonal congruence. In this section, we list the orbits of Jordan nets and webs in low dimension. 

\subsection{Jordan nets}
Every Jordan net is the embedding of one of the following Jordan algebras:
\[
\CC\times\CC\times\CC,\CC\times\Js^1_0,\CC[x]/(x^3),\Js^2_2,\Js^2_1,\Js^2_0
\]
For the first five of these algebras, we classify the orbits for general $n$.

\begin{samepage}
\begin{thm}\label{thm:classification_nets_general}
Let $n\geq 1$ be an integer.
\begin{itemize}
\item[(1)] Every embedding of $\CC\times\CC\times\CC$ into $\S^n$ is congruent to\medskip

$
A^{(1)}_{k_1,k_2,k_3}:=\Diag(x{\bf 1}_{k_1+k_2+k_3},y{\bf 1}_{k_2+k_3},z{\bf 1}_{k_3})
$\medskip

\noindent for some $k_1,k_2\geq0$ and $k_3\geq1$ with $k_1+2k_2+3k_3=n$.
\item[(2)] Every embedding of $\CC\times\Js^1_0$ into $\S^n$ is congruent to\medskip

$
A^{(2)}_{r,k_1,k_2}:=\Diag\left(x{\bf 1}_r,{\bf 1}_{k_2}\otimes\begin{pmatrix}z&y\\y\end{pmatrix},y{\bf 1}_{k_1}\right)
$\medskip

\noindent for some $r,k_2\geq1$ and $k_1\geq0$ with $k_1+2k_2=n-r$.
\item[(3)] Every embedding of $\CC[x]/(x^3)$ into $\S^n$ is congruent to\medskip

$
A^{(3)}_{k_1,k_2,k_3}:=\Diag\left({\bf 1}_{k_3}\otimes\begin{pmatrix}z&y&x\\y&x\\x\end{pmatrix},{\bf 1}_{k_2}\otimes\begin{pmatrix}y&x\\x\end{pmatrix},x{\bf 1}_{k_1}\right)
$\medskip

\noindent for some $k_1,k_2\geq0$ and $k_3\geq1$ with $k_1+2k_2+3k_3=n$.
\item[(4)] The Jordan algebra $\Js^2_2$ has no embeddings into $\S^n$ when $n$ is odd. When $n$ is even, every embedding of $\Js^2_2$ into $\S^n$ is congruent to $B^{(1)}_{n/2}:=\S^2\otimes{\bf 1}_{n/2}$.
\item[(5)] Every embedding of $\Js^2_1$ into $\S^n$ is congruent to\medskip

$
B^{(2)}_{k,\ell_1,\ell_2}:=\Diag\left(\begin{pmatrix}xJ_{\ell_2}&z\Diag({\bf 1}_k,{\bf 0}_{\ell_2-k})\\z\Diag({\bf 1}_k,{\bf 0}_{\ell_2-k})&yJ_{\ell_2}\end{pmatrix},y {\bf 1}_{\ell_1}\right)
$\medskip

\noindent for some $\ell_1\geq0$, $\ell_2\geq2$ and $1\leq k\leq \ell_2/2$ such that $\ell_1+2\ell_2=n$.
\end{itemize}
\end{thm}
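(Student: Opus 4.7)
The plan is to assemble the five cases directly from the classification results already proved in this section. For the two decomposable algebras $\CC\times\CC\times\CC$ and $\CC\times\Js^1_0$ (parts (1) and (2)), the key tool will be Proposition~\ref{prop:embedding_product}, which splits any embedding into a block-diagonal direct sum of embeddings of the factors. For the three indecomposable cases (parts (3), (4), (5)), the statements are direct transcriptions of Propositions~\ref{prop:classify_CC[x]/x^m}, \ref{prop:classify_J22}, and \ref{prop:classify_J21} respectively, so no new argument is required beyond matching notation.

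For part (1), I would apply Proposition~\ref{prop:embedding_product} twice---first decomposing as $\CC\times(\CC\times\CC)$, then splitting the second factor---to obtain a three-block form $\Diag(\Ls_1,\Ls_2,\Ls_3)$ where each $\Ls_i$ is an embedding of $\CC$ into some $\S^{n_i}$. Proposition~\ref{prop:classify_CC} identifies each $\Ls_i$ with $\CC{\bf 1}_{n_i}$, and after permuting the three blocks via a suitable congruence one may arrange $n_1\geq n_2\geq n_3\geq 1$ with $n_1+n_2+n_3=n$. The substitution $n_1=k_1+k_2+k_3$, $n_2=k_2+k_3$, $n_3=k_3$, uniquely solvable with $k_1,k_2\geq 0$ and $k_3\geq 1$, then yields the normal form $A^{(1)}_{k_1,k_2,k_3}$ with $k_1+2k_2+3k_3=n$. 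Part (2) is entirely parallel: Proposition~\ref{prop:embedding_product} gives $\Diag(\Ls_1,\Ls_2)$ with $\Ls_1$ an embedding of $\CC$ and $\Ls_2$ an embedding of $\Js^1_0\cong\CC[x]/(x^2)$; Proposition~\ref{prop:classify_CC} identifies $\Ls_1$ with $\CC{\bf 1}_r$, and Proposition~\ref{prop:classify_CC[x]/x^m} with $m=2$ gives $\Ls_2$ as a block-diagonal sum of $k_2$ copies of the $2\times 2$ Jordan block and $k_1$ scalar blocks, subject to $k_1+2k_2=n-r$, producing $A^{(2)}_{r,k_1,k_2}$.

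Parts (3), (4), and (5) then follow by reading off Propositions~\ref{prop:classify_CC[x]/x^m} (with $m=3$), \ref{prop:classify_J22}, and \ref{prop:classify_J21} directly; the normal forms $A^{(3)}_{k_1,k_2,k_3}$, $B^{(1)}_{n/2}$, and $B^{(2)}_{k,\ell_1,\ell_2}$ are precisely those produced by these propositions. I do not anticipate any significant obstacle here, since this is a summary theorem whose content is exactly the union of the preceding classifications. The only point deserving attention is the bookkeeping in parts (1) and (2), specifically the bijective reparametrization of ordered partitions into the triples $(k_1,k_2,k_3)$ and $(r,k_1,k_2)$ appearing in the statement; this is elementary combinatorics once the existence of the block-diagonal form is established.
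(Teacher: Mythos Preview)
Your proposal is correct and matches the paper's proof essentially line for line: parts (1) and (2) use Proposition~\ref{prop:embedding_product} together with Propositions~\ref{prop:classify_CC} and~\ref{prop:classify_CC[x]/x^m} followed by the same reordering and reparametrization, and parts (3)--(5) are cited directly from Propositions~\ref{prop:classify_CC[x]/x^m}, \ref{prop:classify_J22}, and \ref{prop:classify_J21}. There is nothing to add.
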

\end{samepage}

For embeddings of $\Js^2_0$ we classify the orbits of embeddings into $\S^n$ for $n\leq 7$.

\begin{thm}\label{thm:classification_nets_J20}
For $n\in\{1,2,3\}$, the Jordan algebra $\Js^2_0$ has no embeddings into $\S^n$. For $4\leq n\leq7$, every embedding of $\Js^2_0$ into $\S^n$ is congruent $C_{n,i}=\CC J_n+\Diag(\mathcal{P}_{\lfloor n/2\rfloor,i},{\bf 0}_{\lceil n/2\rceil})$ for some $i$ where
\begin{align*}
&\mathcal{P}_{2,1}:=\begin{pmatrix}x\\&y\end{pmatrix},&\hspace*{-30pt}&\mathcal{P}_{2,2}:=\begin{pmatrix}y&x\\x\end{pmatrix},\\
&\mathcal{P}_{3,1}:=\begin{pmatrix}x\\&y\\&&x+y\end{pmatrix},&\hspace*{-30pt}&
\mathcal{P}_{3,2}:=\begin{pmatrix}x&y\\y\\&&x\end{pmatrix},\\
&\mathcal{P}_{3,3}:=\begin{pmatrix}&y&x\\y&x\\x\end{pmatrix},&\hspace*{-30pt}&
\mathcal{P}_{3,4}:=\begin{pmatrix}x\\&x\\&&y\end{pmatrix},\\
&\mathcal{P}_{3,5}:=\begin{pmatrix}y&x\\x\\&&x\end{pmatrix},&\hspace*{-30pt}&
\mathcal{P}_{3,6}:=\begin{pmatrix}&x&y\\x\\y\end{pmatrix},\\
&\mathcal{P}_{3,7}:=\begin{pmatrix}x\\&y\\&&0\end{pmatrix},&\hspace*{-30pt}&
\mathcal{P}_{3,8}:=\begin{pmatrix}y&x\\x\\&&0\end{pmatrix}.
\end{align*}
\end{thm}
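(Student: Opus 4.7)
My plan is to reduce the classification to that of square-zero pencils in $\S^n$ and then invoke the results already established in this section. By Proposition~\ref{prop:classify_J20}, every embedding of $\Js^2_0$ into $\S^n$ is congruent to $\CC\mathbf{1}_n \oplus \Qs$ for some square-zero pencil $\Qs \subseteq \S^n$, so the task becomes: classify square-zero pencils up to orthogonal congruence, and translate the result into the $\mathbf{J}_n$-form advertised in the theorem via Proposition~\ref{prop:J_to_I}.

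For $n \le 3$ I show non-existence. The cases $n = 1, 2$ follow by dimension, with the observation that for $n=2$ the only $3$-dimensional candidate $\S^2$ itself is isomorphic as a Jordan algebra to $\Js^2_2 \ncong \Js^2_0$. For $n=3$ I must rule out $2$-dimensional square-zero pencils in $\S^3$. A nonzero square-zero $X \in \S^3$ has rank~$1$: its Jordan form consists of nilpotent blocks of size $\le 2$, and the rank equals the number of $2\times 2$ blocks, which is at most $n/2 = 3/2$. Hence $X = vv^\top$ for an isotropic $v$. For a linearly independent $Y = ww^\top$ the condition $XY + YX = 0$ expands to $(v^\top w)(vw^\top + wv^\top) = 0$; since $vw^\top + wv^\top = 0$ would force $v, w$ to be parallel, we must have $v^\top w = 0$. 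This would produce a totally isotropic $2$-plane in $\CC^3$, contradicting the Witt index bound of~$1$.

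For $4 \le n \le 7$ I combine Proposition~\ref{prop:squarezero_pencil} with the explicit pencil classifications. By Proposition~\ref{prop:squarezero_pencil}, every square-zero pencil in $\S^n$ is orthogonally congruent to $\Ps' \otimes B \subseteq \S^{2\lfloor n/2\rfloor} \subseteq \S^n$ for some pencil $\Ps' \subseteq \S^{\lfloor n/2\rfloor}$, and congruent pencils $\Ps'$ produce orthogonally congruent square-zero pencils. Letting $\Ps'$ run through the representatives in Propositions~\ref{prop:pencilsS2} (two orbits in $\S^2$) and~\ref{prop:pencilsS3} (eight orbits in $\S^3$) yields the list of embeddings $\CC\mathbf{1}_n \oplus (\Ps' \otimes B)$ in the $\mathbf{1}_n$-form. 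I then transport each to the $\mathbf{J}_n$-form via Proposition~\ref{prop:J_to_I}: the matrix identity underlying Example~\ref{ex:J_to_I} shows that $Q_n$-conjugation carries the block-diagonal pencil $\Diag(\Ps', \mathbf{0}_{\lceil n/2\rceil})$ in the $\mathbf{J}_n$-picture to a congruence copy of $\Diag(\Ps' \otimes B, \mathbf{0}_{n - 2\lfloor n/2\rfloor})$ in the $\mathbf{1}_n$-picture, producing precisely the $C_{n,i}$ stated.

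The main obstacle I foresee is this last translation: Example~\ref{ex:J_to_I} only performs the $Q_n$-conjugation computation for the specific matrix $\Diag(\mathbf{1}_k, \mathbf{0}_{n-k})$, and I need the analogous statement for an arbitrary pencil. This amounts to a blockwise calculation powered by $Q_n \mathbf{J}_n Q_n^\top = \mathbf{1}_n$ together with the explicit shape of $Q_n$; once it is in place, the rest of the proof is simple assembly of earlier propositions.
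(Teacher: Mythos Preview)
Your proposal is correct and takes essentially the same approach as the paper: reduce to square-zero pencils via Proposition~\ref{prop:classify_J20}, then invoke Propositions~\ref{prop:squarezero_pencil}, \ref{prop:pencilsS2}, \ref{prop:pencilsS3}, and translate via Proposition~\ref{prop:J_to_I}. The paper handles $n\le 3$ more tersely---any pencil contains a matrix of rank $\ge 2$, while square-zero matrices have rank $\le n/2$, so $n<4$ is impossible---and, like you, leaves the $Q_n$-translation for a general pencil implicit (it is the straightforward extension of the computation in Example~\ref{ex:J_to_I}).
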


\begin{proof}[Proof of Theorem~\ref{thm:classification_nets_general}]
(1)
Every embedding of $\CC\times\CC\times\CC$ is congruent to $\Diag(\Ls_1,\Ls_2,\Ls_3)$ for some embeddings $\Ls_1,\Ls_2,\Ls_3$ of $\CC$ by Proposition~\ref{prop:embedding_product}. By Proposition~\ref{prop:classify_CC}, we may assume that $\Ls_i=\CC{\bf 1}_{n_i}$ for some $n_i\geq1$. After reordening, we can write $n_1=k_1+k_2+k_3$, $n_2=k_2+k_3$ and $n_3=k_3$ for some $k_1,k_2\geq0$ and $k_3\geq 1$ such that $k_1+2k_2+3k_3=n$.

(2)
Every embedding of $\CC\times\Js^1_0$ is congruent to $\Diag(\Ls_1,\Ls_2)$ for some embeddings $\Ls_1$ of $\CC$ and $\Ls_2$ of $\Js^1_0$ by Proposition~\ref{prop:embedding_product}. By Proposition~\ref{prop:classify_CC}, we may assume that $\Ls_1=\CC{\bf 1}_r$ for some $r\geq1$. By Proposition~\ref{prop:classify_CC[x]/x^m},  we may assume that 
\[
\Ls_2=\Diag\left({\bf 1}_{k_2}\otimes\begin{pmatrix}z&y\\y\end{pmatrix},y{\bf 1}_{k_1}\right)
\]
for some $k_1\geq 0$ and $k_2\geq 1$ with $k_1+2k_2=n-r$.

(3)
This is Proposition~\ref{prop:classify_CC[x]/x^m}.

(4)
This is Proposition~\ref{prop:classify_J22}.

(5)
This is Proposition~\ref{prop:classify_J21}.
\end{proof}

\begin{proof}[Proof of Theorem~\ref{thm:classification_nets_J20}]
(1)
A pencil in $\S^n$ always contains a matrix of rank $\geq 2$. On the other hand, all matrices in $\S^n$ whose squares are zero have rank $\leq n/2$. So square-zero pencils cannot exist in $\S^n$ when $n<4$.
 
(2)
This follows from Propositions~\ref{prop:squarezero_pencil} and~\ref{prop:pencilsS2}.

(3)
This follows from Propositions~\ref{prop:squarezero_pencil} and~\ref{prop:pencilsS2}.

(4)
This follows from Propositions~\ref{prop:squarezero_net} and~\ref{prop:pencilsS3}.

(5)
This follows from Propositions~\ref{prop:squarezero_net} and~\ref{prop:pencilsS3}.
\end{proof}

\subsection{Jordan webs}
Every Jordan web is the embedding of one of the following Jordan algebras:
\[
\CC\times\CC\times\CC\times\CC,\CC\times\CC\times\Js^1_0,\Js^1_0\times\Js^1_0,\CC\times\CC[x]/(x^3),\CC[x]/(x^4),\CC\times\Js^2_2,\CC\times\Js^2_1,\Js^3_3,\Js^3_2,
\]
\[
\Js^3_1,\CC\times\Js^2_0,\Es_1,\Es_2,\Es_3,\Es_4,\Js^3_0.
\]
For the algebras on the first line, we classify the orbits for general $n$.

\begin{thm}\label{thm:classification_webs_general}
Let $n\geq 1$ be an integer.
\begin{itemize}
\item[(1)] Every embedding of $\CC\times\CC\times\CC\times\CC$ into $\S^n$ is congruent to\medskip

$
\hspace{-12pt}A^{(1)}_{k_1,k_2,k_3,k_4}:=\Diag(x{\bf 1}_{k_1+k_2+k_3+k_4},y{\bf 1}_{k_2+k_3+k_4},z{\bf 1}_{k_3+k_4},w{\bf 1}_{k_4})
$\medskip

\noindent for some $k_1,k_2,k_3\geq0$ and $k_4\geq1$ with $k_1+2k_2+3k_3+4k_4=n$.

\item[(2)] Every embedding of $\CC\times\CC\times\Js^1_0$ into $\S^n$ is congruent to\medskip

$
\hspace{-12pt}A^{(2)}_{k_1,k_2,\ell_1,\ell_2}:=\Diag\left(x{\bf 1}_{k_1+k_2},y{\bf 1}_{k_2},{\bf 1}_{\ell_2}\otimes\begin{pmatrix}w&z\\z\end{pmatrix},z{\bf 1}_{\ell_1}\right)
$\medskip

\noindent for some $k_1,\ell_1\geq0$ and $k_2,\ell_2\geq1$ with $k_1+2k_2+\ell_1+2\ell_2=n$.

\item[(3)] Every embedding of $\Js^1_0\times\Js^1_0$ into $\S^n$ is congruent to\medskip

$
\hspace{-12pt}A^{(3)}_{k_1,k_2,\ell_1,\ell_2}:=\Diag\left({\bf 1}_{k_2}\otimes\begin{pmatrix}y&x\\x\end{pmatrix},x{\bf 1}_{k_1},{\bf 1}_{\ell_2}\otimes\begin{pmatrix}w&z\\z\end{pmatrix},z{\bf 1}_{\ell_1}\right)
$\medskip

\noindent for some $k_1,\ell_1\geq0$ and $k_2,\ell_2\geq1$ with $k_1+2k_2+\ell_1+2\ell_2=n$, where $A^{(3)}_{k_1,k_2,\ell_1,\ell_2}$ and $A^{(3)}_{\ell_1,\ell_2,k_1,k_2}$ are congruent.

\item[(4)] Every embedding of $\CC\times\CC[x]/(x^3)$ into $\S^n$ is congruent to\medskip

$
\hspace{-12pt}A^{(4)}_{r,k_1,k_2,k_3}:=\Diag\left(w{\bf 1}_r,{\bf 1}_{k_3}\otimes\begin{pmatrix}z&y&x\\y&x\\x\end{pmatrix},{\bf 1}_{k_2}\otimes\begin{pmatrix}y&x\\x\end{pmatrix},x{\bf 1}_{k_1}\right)
$\medskip

\noindent for some $k_1,k_2\geq0$ and $r,k_3\geq1$ with $k_1+2k_2+3k_3=n-r$.

\item[(5)] Every embedding of $\CC[x]/(x^4)$ into $\S^n$ is congruent to\medskip

$
\hspace{-12pt}A^{(5)}_{k_1,k_2,k_3,k_4}:=\Diag\left({\bf 1}_{k_4}\otimes\begin{pmatrix}w&z&y&x\\z&y&x\\y&x\\x\end{pmatrix},{\bf 1}_{k_3}\otimes\begin{pmatrix}z&y&x\\y&x\\x\end{pmatrix},{\bf 1}_{k_2}\otimes\begin{pmatrix}y&x\\x\end{pmatrix},x{\bf 1}_{k_1}\right)
$\medskip

\noindent for some $k_1,k_2,k_3\geq0$ and $k_4\geq1$ with $k_1+2k_2+3k_3+4k_4=n$.

\item[(6)] Every embedding of $\CC\times\Js^2_2$ into $\S^n$ is congruent to\medskip

$
\hspace{-12pt}B^{(1)}_{k_1,k_2}:=\Diag\left(w{\bf 1}_{k_1},\begin{pmatrix}x&y\\y&z\end{pmatrix}\otimes{\bf 1}_{k_2}\right)
$\medskip

\noindent for some $k_1,k_2\geq1$ with $k_1+2k_2=n$.

\item[(7)] Every embedding of $\CC\times\Js^2_1$ into $\S^n$ is congruent to\medskip

$
\hspace{-12pt}B^{(2)}_{r,k,\ell_1,\ell_2}:=\Diag\left(w{\bf 1}_r,\begin{pmatrix}xJ_{\ell_2}&z\Diag({\bf 1}_k,{\bf 0}_{\ell_2-k})\\z\Diag({\bf 1}_k,{\bf 0}_{\ell_2-k})&yJ_{\ell_2}\end{pmatrix},y {\bf 1}_{\ell_1}\right)
$\medskip

\noindent for some $\ell_1\geq0$, $r\geq 1$, $\ell_2\geq2$ and $1\leq k\leq \ell_2/2$ such that $\ell_1+2\ell_2=n-r$.

\item[(8)] The Jordan algebra $\Js^3_3$ has no embeddings into $\S^n$ when $4\nmid n$. When $4\mid n$, every embedding of $\Js_3^3$ into $\S^n$ is congruent to \medskip

$
\hspace{-12pt}C^{(1)}_{n/4}:=\begin{pmatrix}x{\bf 1}_{n/2}&z{\bf 1}_{n/2}+w{\bf 1}_{n/4}\otimes\begin{pmatrix}&1\\-1\end{pmatrix}\\z{\bf 1}_{n/2}+w{\bf 1}_{n/4}\otimes\begin{pmatrix}&-1\\1\end{pmatrix}
&y{\bf 1}_{n/2}\end{pmatrix}.
$

\item[(9)] The Jordan algebra $\Js^3_2$ has no embeddings into $\S^n$ when $n$ is odd. When $n$ is even, every embedding of $\Js_2^3$ into $\S^n$ is congruent to \medskip

$
\hspace{-12pt}C^{(2)}_{n/2,k}:=\begin{pmatrix}x{\bf J}_{n/2}&\!z{\bf J}_{n/2}+w\Diag\!\left(\!{\bf 1}_k\!\otimes\!\begin{pmatrix}&1\\-1\!\!\!\!\!\end{pmatrix}\!,{\bf 0}_{n/2-2k}\!\right)\!\\\!z{\bf J}_{n/2}+w\Diag\!\left(\!{\bf 1}_k\!\otimes\!\begin{pmatrix}&-1\\1\!\!\!\!\!\end{pmatrix}\!\!,{\bf 0}_{n/2-2k}\!\right)\!
&y{\bf J}_{n/2}\end{pmatrix}
$\medskip

\noindent for some integer $1\leq k\leq n/8$.
\end{itemize}
\end{thm}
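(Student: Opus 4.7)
The plan is that Theorem~\ref{thm:classification_webs_general} is essentially an assembly of classifications already proved in Section~\ref{sec:irr_embeddings}. For each of the nine parts I would invoke the appropriate single-factor classification together with Proposition~\ref{prop:embedding_product}, which reduces an embedding of a product of Jordan algebras to a block-diagonal embedding whose blocks are embeddings of the factors.

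For parts (1)--(4) the Jordan web arises from a decomposable algebra, so Proposition~\ref{prop:embedding_product} applies and produces a form $\Diag(\Ls_1,\ldots,\Ls_k)$. One then substitutes the normal form of $\CC$ from Proposition~\ref{prop:classify_CC} for each $\CC$-factor, and the normal form from Proposition~\ref{prop:classify_CC[x]/x^m} for each $\CC[x]/(x^m)$-factor with $m=2,3$. The parameters $k_i$ in the listed normal forms are chosen as consecutive differences of block sizes (or multiplicities), so that each orbit is represented exactly once. Part (5) is then a direct application of Proposition~\ref{prop:classify_CC[x]/x^m} with $m=4$.

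For parts (6) and (7), Proposition~\ref{prop:embedding_product} splits off a $\CC$-block, giving $w{\bf 1}_r$, and the remaining block is the normal form of an embedding of $\Js^2_2$ from Proposition~\ref{prop:classify_J22} or of $\Js^2_1$ from Proposition~\ref{prop:classify_J21}; the parity and divisibility constraints on $n$ are inherited from these single-factor statements. Parts (8) and (9) are restatements of Propositions~\ref{prop:classify_J33} and~\ref{prop:classify_J32}, respectively.

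The only genuinely delicate point is part (3), where the two factors of $\Js^1_0 \times \Js^1_0$ are isomorphic: swapping the factors induces the congruence $A^{(3)}_{k_1,k_2,\ell_1,\ell_2} \cong A^{(3)}_{\ell_1,\ell_2,k_1,k_2}$ that must be recorded in the statement so that the parameterization really enumerates orbits. A related but routine bookkeeping task in parts (1), (4), and (5) is to order block sizes so that the parameterization is injective on orbits; this is arranged by the ``consecutive differences'' encoding via the $k_i$. Beyond these ordering points, no new ideas past those of Section~\ref{sec:irr_embeddings} are needed, so I expect the proof to consist of a short paragraph per part, each pointing to the relevant proposition.
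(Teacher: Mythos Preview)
Your proposal is correct and matches the paper's own proof essentially line for line: each part is reduced via Proposition~\ref{prop:embedding_product} to a block-diagonal form and then the appropriate single-factor classification (Propositions~\ref{prop:classify_CC}, \ref{prop:classify_CC[x]/x^m}, \ref{prop:classify_J22}, \ref{prop:classify_J21}, \ref{prop:classify_J33}, \ref{prop:classify_J32}) is invoked. You are, if anything, slightly more explicit than the paper about the reordering bookkeeping in parts (1), (4), (5) and the factor-swap symmetry in part (3).
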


For embeddings of $\Js^3_1,\CC\times\Js^2_0,\Es_1,\Es_2,\Es_3,\Es_4,\Js^3_0$ we classify the orbits of embeddings into $\S^n$ for $n\leq 5$.

\begin{prop}\label{prop:classification_webs_n=3}
The Jordan algebras $\Js^3_1,\CC\times\Js^2_0,\Es_2,\Es_3,\Es_4,\Js^3_0$ have no embeddings into $\S^3$. Every embedding of $\Es_1$ into $\S^3$ is congruent to $E^{(1)}_3:=\Es_1$.
\end{prop}

\begin{thm}\label{thm:classification_webs_n=4}\phantom{space}
\begin{itemize}
\item[(1)] The Jordan algebra $\Js^3_1$ has no embedding into $\S^4$.
\item[(2)] The Jordan algebra $\CC\times\Js^2_0$ has no embedding into $\S^4$.
\item[(3)] Every embedding of $\Es_1$ into $\S^4$ is congruent to one of
\[
E^{(1)}_{4,1}:=\begin{pmatrix}v&w&x\\w&y\\x\\&&&x\end{pmatrix},E^{(1)}_{4,2}:=\begin{pmatrix}v&w&x\\w&y\\x\\&&&y\end{pmatrix}.
\]
\item[(4)] Every embedding of $\Es_2$ into $\S^4$ is congruent to
\[
E^{(2)}_4:=\begin{pmatrix}v&x&w\\x\\w&&&y\\&&y\end{pmatrix}.
\]
\item[(5)] Every embedding of $\Es_3$ into $\S^4$ is congruent to
\[
E^{(3)}_4:=\begin{pmatrix}y&x&u&z\\x&u\\u\\z&&&u\end{pmatrix}.
\]
\item[(6)] The Jordan algebra $\Es_4$ has no embeddings into $\S^4$.
\item[(7)] Every embedding of $\Js^3_0$ into $\S^4$ is congruent to
\[
F_4 := \begin{pmatrix}
x&y&&u\\y&z&u\\&u\\u
\end{pmatrix}.
\]
\end{itemize}
\end{thm}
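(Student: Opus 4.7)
The plan is to reduce each of the seven statements to a proposition already proved in Section~\ref{sec:irr_embeddings}, together with the product decomposition Proposition~\ref{prop:embedding_product} for the one product case. Six of the seven parts are direct specializations to $n=4$, and only part (7) requires a small additional identification.

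For (1), I invoke Proposition~\ref{prop:classify_J31}, which shows an embedding of $\Js^3_1$ into $\S^n$ requires an integer $2\leq r\leq n/2$, with the supplementary condition that $r=2$ forces $n\geq 6$. For $n=4$ the bound $r\leq 2$ forces $r=2$, incompatible with $n\geq 6$. For (2), Proposition~\ref{prop:embedding_product} would decompose any embedding of $\CC\times\Js^2_0$ into $\S^4$ as $\Diag(\Ls_1,\Ls_2)$ with $\Ls_2$ an embedding of $\Js^2_0$ into $\S^{n_2}$ for some $n_2\leq 3$. But Proposition~\ref{prop:classify_J20} requires $\Ls_2=\CC{\bf 1}_{n_2}\oplus\Ps$ with $\Ps\subseteq\S^{n_2}$ a square-zero pencil, and every pencil in $\S^{n_2}$ contains a matrix of rank $\geq 2$ while a symmetric square-zero matrix has rank at most $n_2/2$; so $n_2\geq 4$, a contradiction.

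For parts (3)--(6), the listed representatives are exactly the $n=4$ cases of Propositions~\ref{prop:classify_E1}, \ref{prop:classify_E2}, \ref{prop:classify_E3}, and~\ref{prop:classify_E4} respectively; each statement is a direct rewriting, so nothing further is required.

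For (7), Propositions~\ref{prop:classify_J30} and~\ref{prop:squarezero_net} together show that every embedding of $\Js^3_0$ into $\S^4$ is congruent to $\CC{\bf 1}_4\oplus\S^2\otimes B$, giving a single congruence orbit. It therefore suffices to check that $F_4$ is itself an embedding of $\Js^3_0$ into $\S^4$, after which uniqueness forces $F_4$ to represent that orbit. This check is direct: take $U={\bf J}_4$ as the unit; then the three non-unit generators $E_{11}$, $E_{12}+E_{21}$, $E_{22}$ all annihilate each other under $\bullet_{{\bf J}_4}$, because any product $E_{ab}{\bf J}_4 E_{cd}$ with $a,b,c,d\in\{1,2\}$ vanishes (its only potentially nonzero entry equals $({\bf J}_4)_{bc}$, and $b+c\leq 4<5$). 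The main obstacle, if any, is this verification in part~(7); the uniqueness route sidesteps an explicit conjugation by the matrix $Q_4$ of Proposition~\ref{prop:J_to_I} and reduces everything to a one-line entry computation.
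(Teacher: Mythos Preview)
Your proposal is correct and follows essentially the same route as the paper: each part is reduced to the corresponding proposition from Section~\ref{sec:irr_embeddings} (Propositions~\ref{prop:classify_J31}, \ref{prop:embedding_product}, \ref{prop:classify_E1}--\ref{prop:classify_E4}, \ref{prop:classify_J30}, \ref{prop:squarezero_net}). Your treatment of~(7) even fills in a detail the paper leaves implicit---identifying the unique orbit $\CC{\bf 1}_4\oplus\S^2\otimes B$ with the stated representative $F_4$---and your uniqueness-plus-verification argument is a clean way to do this without tracking the conjugation by $Q_4$ explicitly.
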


\begin{thm}\label{thm:classification_webs_n=5}\phantom{space}
\begin{itemize}
\item[(1)] The Jordan algebra $\Js^3_1$ has no embedding into $\S^5$.
\item[(2)] Every embedding of $\CC\times\Js^2_0$ into $\S^5$ is congruent to one of
\[
D_{5,1}:=\begin{pmatrix}x\\&v&&&y\\&&w&y\\&&y\\&
y\end{pmatrix},~ D_{5,2}:=\begin{pmatrix}x\\&v&w&&y\\&w&&y\\&&y\\&y\end{pmatrix}.
\]
\item[(3)] Every embedding of $\Es_1$ into $\S^5$ is congruent to one of
\[
E^{(1)}_{5,1}:=\begin{pmatrix}v&w&x\\w&y\\x\\&&&x\\&&&&x\end{pmatrix},E^{(1)}_{5,2}:=\begin{pmatrix}v&w&x\\w&y\\x\\&&&x\\&&&&y\end{pmatrix},E^{(1)}_{5,3}:=\begin{pmatrix}v&w&x\\w&y\\x\\&&&y\\&&&&y\end{pmatrix}.
\]
\item[(4)] Every embedding of $\Es_2$ into $\S^5$ is congruent to one of
\[
E^{(2)}_{5,1}:=\begin{pmatrix}v&x&w\\x\\w&&&y\\&&y\\&&&&x\end{pmatrix},E^{(2)}_{5,2}:=\begin{pmatrix}v&x&w\\x\\w&&&y\\&&y\\&&&&y\end{pmatrix}.
\]
\item[(5)] Every embedding of $\Es_3$ into $\S^5$ is congruent to one of
\[
E^{(3)}_{5,1}:=\begin{pmatrix}y&x&u&z\\x&u\\u\\z&&&u\\&&&&u\end{pmatrix},E^{(3)}_{5,2}:=\begin{pmatrix}y&x&u&&z\\x&u\\u\\&&&x&u\\z&&&u\end{pmatrix}.
\]
\item[(6)] Every embedding of $\Es_4$ into $\S^5$ is congruent to one of
\[
E^{(4)}_{5,1}:=\begin{pmatrix}y&x&u\\x&u\\u\\&&&z&u\\&&&u\end{pmatrix},E^{(4)}_{5,2}:=\begin{pmatrix}y&x&u&z\\x&u\\u\\z&&&&u\\&&&u\end{pmatrix}.
\]
\item[(7)] Every embedding of $\Js^3_0$ into $\S^5$ is congruent to
\[
F_5:=\begin{pmatrix}
x&y&&&u\\y&z&&u\\&&u\\&u\\u
\end{pmatrix}.
\]
\end{itemize}
\end{thm}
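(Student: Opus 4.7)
The plan is to reduce each part of the theorem to one of the classification propositions of Section~\ref{sec:irr_embeddings}, using Proposition~\ref{prop:embedding_product} to handle the product algebra appearing in part~(2). Most parts are direct citations of the $n=5$ case of an earlier proposition, so the work consists largely of assembling the pieces in the correct order.

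For part~(1), I would invoke Proposition~\ref{prop:classify_J31} with $n=5$: the allowed parameter is $2\leq r\leq n/2$, which forces $r=2$, but the proposition then requires $n\geq 6$, a contradiction. Part~(2) is the only place where additional bookkeeping is needed. By Proposition~\ref{prop:embedding_product} every embedding of $\CC\times\Js^2_0$ into $\S^5$ is congruent to $\Diag(\Ls_1,\Ls_2)$ with $\Ls_1$ an embedding of $\CC$ into $\S^r$ and $\Ls_2$ an embedding of $\Js^2_0$ into $\S^{5-r}$. By Proposition~\ref{prop:classify_CC} we may take $\Ls_1=\CC{\bf 1}_r$, and by Proposition~\ref{prop:classify_J20} we have $\Ls_2=\CC{\bf 1}_{5-r}\oplus\Ps$ for some square-zero pencil $\Ps\subseteq\S^{5-r}$. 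Since square-zero pencils in $\S^m$ require $m\geq 4$ (the first assertion of Theorem~\ref{thm:classification_nets_J20}), and $r\geq 1$, the only possibility is $r=1$. The two cases then correspond to the two orbits of square-zero pencils in $\S^4$, obtained by applying Propositions~\ref{prop:squarezero_pencil} and~\ref{prop:pencilsS2} to list the underlying pencils in $\S^2$ up to congruence; these give precisely the forms $D_{5,1}$ and $D_{5,2}$ after a $Q_n$-twist via Proposition~\ref{prop:J_to_I}.

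Parts~(3)--(6) are immediate consequences of the $n=5$ statements of Propositions~\ref{prop:classify_E1}, \ref{prop:classify_E2}, \ref{prop:classify_E3} and~\ref{prop:classify_E4} respectively; at this stage there is no further work. For part~(7), Proposition~\ref{prop:classify_J30} writes every embedding of $\Js^3_0$ into $\S^5$ as $\CC{\bf 1}_5\oplus\Ls$ for a square-zero net $\Ls\subseteq\S^5$, and Proposition~\ref{prop:squarezero_net} shows that the only such net up to orthogonal congruence is $\Diag(\S^2\otimes B,0)$. Translating via Proposition~\ref{prop:J_to_I} yields the single form $F_5$.

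The main obstacle to this proof is not in the assembly itself, which is routine, but rather is already absorbed into the component propositions of Section~\ref{sec:irr_embeddings}: in particular, the hardest input is Proposition~\ref{prop:squarezero_pencil}, whose $n=5,7$ cases require the case-by-case symbolic verification over rank strata. Once those results are granted, the proof of Theorem~\ref{thm:classification_webs_n=5} is a matter of tracking which of the parameter ranges in each earlier classification proposition are compatible with $n=5$, together with a single application of Proposition~\ref{prop:embedding_product} in part~(2).
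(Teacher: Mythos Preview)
Your proposal is correct and matches the paper's own proof essentially line for line: each part is reduced to the corresponding classification proposition from Section~\ref{sec:irr_embeddings}, with Proposition~\ref{prop:embedding_product} handling the product in part~(2) and Propositions~\ref{prop:classify_J30} and~\ref{prop:squarezero_net} handling part~(7). Your write-up is in fact slightly more detailed than the paper's (you make explicit the role of Proposition~\ref{prop:squarezero_pencil} in part~(2) and the $r=1$ forcing), but the underlying argument is identical.
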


\begin{proof}[Proof of Theorem~\ref{thm:classification_webs_general}]
(1)
Every embedding of $\CC\times\CC\times\CC\times\CC$ is congruent to $\Diag(\Ls_1,\Ls_2,\Ls_3,\Ls_4)$ for some embeddings $\Ls_1,\Ls_2,\Ls_3,\Ls_4$ of $\CC$ by Proposition~\ref{prop:embedding_product}. By Proposition~\ref{prop:classify_CC}, we may assume that $\Ls_i=\CC{\bf 1}_{n_i}$ for some $n_i\geq1$. After reordening, we can write $n_1=k_1+k_2+k_3+k_4$, $n_2=k_2+k_3+k_4$, $n_3=k_3+k_4$ and $n_4=k_4$ for some $k_1,k_2,k_3\geq0$ and $k_4\geq 1$ such that $k_1+2k_2+3k_3+4k_4=n$.

(2)
This follows from Propositions~\ref{prop:embedding_product},~\ref{prop:classify_CC} and~\ref{prop:classify_CC[x]/x^m}.

(3)
This follows from Propositions~\ref{prop:embedding_product} and~\ref{prop:classify_CC[x]/x^m}.

(4)
This follows from Propositions~\ref{prop:embedding_product},~\ref{prop:classify_CC} and~\ref{prop:classify_CC[x]/x^m}.

(5)
This is Proposition~\ref{prop:classify_CC[x]/x^m}.

(6)
This follows from Propositions~\ref{prop:embedding_product},~\ref{prop:classify_CC} and~\ref{prop:classify_J22}.

(7)
This follows from Propositions~\ref{prop:embedding_product},~\ref{prop:classify_CC} and~\ref{prop:classify_J21}.

(8)
This is Proposition~\ref{prop:classify_J33}.

(9)
This is Proposition~\ref{prop:classify_J32}.
\end{proof}

\begin{proof}[Proof of Proposition~\ref{prop:classification_webs_n=3}]
By \cite[Proposition 4.8]{BES:JordanSpaces}, we know that inside $\S^3$ there are two orbits of Jordan space, consisting of embeddings of the Jordan algebras $\CC\times\Js^2_2$ and $\Es_1$. The proposition follows.
\end{proof}

\begin{proof}[Proof of Theorem~\ref{thm:classification_webs_n=4}]
(1)
This is Proposition~\ref{prop:classify_J31}.

(2)
This follows by Proposition~\ref{prop:embedding_product} since $\Js^2_0$ has no embeddings into $\S^n$ for $n\leq 3$.

(3)
This is Proposition~\ref{prop:classify_E1}.

(4)
This is Proposition~\ref{prop:classify_E2}.

(5)
This is Proposition~\ref{prop:classify_E3}.

(6)
This is Proposition~\ref{prop:classify_E4}.

(7)
This follows from Propositions~\ref{prop:classify_J30} and~\ref{prop:squarezero_net}.
\end{proof}

\begin{proof}[Proof of Theorem~\ref{thm:classification_webs_n=5}]
(1)
This is Proposition~\ref{prop:classify_J31}.

(2)
This follows from Propositions~\ref{prop:embedding_product},~\ref{prop:classify_CC},~\ref{prop:classify_J20} and~\ref{prop:pencilsS2}.

(3)
This is Proposition~\ref{prop:classify_E1}.

(4)
This is Proposition~\ref{prop:classify_E2}.

(5)
This is Proposition~\ref{prop:classify_E3}.

(6)
This is Proposition~\ref{prop:classify_E4}.

(7)
This follows from Propositions~\ref{prop:classify_J30} and~\ref{prop:squarezero_net}.
\end{proof}

\section{Degenerations between Jordan nets}\label{sec:Jordan_nets_n<=6}

Let $\Ls,\Ls'\subseteq\S^n$ be Jordan spaces of the same dimension $m$ with bases $X_1,\ldots,X_m$ and $X'_1,\ldots,X'_m$.

\begin{de}\label{de:degeneration_jordan_space}
We say that $\Ls$ {\em degenerates to} $\Ls'$, denoted as $\Ls\to\Ls'$, when there exist matrices $P\in\GL_n(\CC[t^{\pm1}])$ and $Q\in\GL_m(\CC[t^{\pm1}])$ such that every entry of
\[
(Y_1,\ldots,Y_m):= (PX_1P^\top,\ldots,PX_mP^\top)Q
\]
is a matrix with coefficients in $\CC[t]$ and $X'_i=\lim_{t\to0}Y_i$ for all $i\in\{1,\ldots,m\}$. Note that the existence of such matrices $P,Q$ does not depend on the choice of the bases of $\Ls,\Ls'$. We say that $\Ls$ {\em degenerates to} $\Ls'$ ({\em topologically}) when $\overline{\{(Y_1,\ldots,Y_m)\mid \spann(Y_1,\ldots,Y_m)\in\GL_n\cdot \Ls\}}$ contains $(X_1',\ldots,X_m')$. 
\end{de}

For a proof that these definitions are equivalent, see the proof of \cite[Theorem 20.24]{BCS:complextheory}.

\begin{re}\label{re:over_CC((t))}
Note that $\CC[t^{\pm1}]$ is a subalgebra of $\CC((t))$ and $\CC[t]$ is a subalgebra of $\CC[[t]]$. So if $\Ls\to\Ls'$, then there exist matrices $P\in\GL_n(\CC((t)))$ and $Q\in\GL_m(\CC((t)))$ such that every entry of
\[
(Y_1,\ldots,Y_m):= (PX_1P^\top,\ldots,PX_mP^\top)Q
\]
is a matrix with coefficients in $\CC[[t]]$ and $X'_i=\lim_{t\to0}Y_i$ for all $i\in\{1,\ldots,m\}$. The converse also holds: let $\ell\geq0$ be an integer such that the coefficients of $P,Q$ are contained in $t^{-\ell}\CC[[t]]$ and write $(P,Q)=\sum_{k=-\ell}^\infty t^k(P_k,Q_k)$ with $(P_k,Q_k)\in\CC^{n\times n}\times\CC^{m\times m}$. Then $\lim_{t\to0}\det(P)$, $\lim_{t\to0}\det(Q)$ and $\lim_{t\to0}Y_i$ do not depend on the $P_k$'s, $Q_k$'s with $k>\ell\max(3,n,m)$. Hence, we are free to set these to zero and obtain matrices $P\in\GL_n(\CC[t^{\pm1}])$ and $Q\in\GL_m(\CC[t^{\pm1}])$ showing that $\Ls\to\Ls'$.
\end{re}

In this section, we give the diagrams of congruence-orbits of Jordan nets in $\S^n$ and their degenerations for $n\leq 6$. See Section~\ref{sec:proofs_main_results} for the proofs of these results.\bigskip

For $n=2$, the whole space $\S^2$ is the only Jordan net. 

\begin{figure}[H]
\begin{tikzpicture}

	\node () at (-2.5,2) {$\codim$};
	\node () at (-1.5,2) {$0$};
	
    \node (1a) at (0,2) {$B^{(1)}_1$};
    
\end{tikzpicture}
\caption{Jordan nets in $\S^2$ and their degenerations.}\label{fig:nets_n=2}
\end{figure}

For $n=3$, we have $3$ orbits which form a chain. For $n=4$, the degenerations where classified in \cite{BES:JordanSpaces}. See Figures~\ref{fig:nets_n=2}, \ref{fig:nets_n=3} and~\ref{fig:nets_n=4}.

\setlength{\columnsep}{1cm}
\begin{multicols}{2}
\begin{figure}[H]
\hspace{-30pt}
\begin{tikzpicture}

	\node () at (-2.5,4) {\phantom{$A^{(1)}_{0,0,1}$}};
	\node () at (-2.5,3) {$\codim$};
	\node () at (-2.5,2) {$\codim$};
	\node () at (-2.5,1) {$\codim$};
	\node () at (-1.5,3) {$3$};
	\node () at (-1.5,2) {$4$};
	\node () at (-1.5,1) {$5$};
	
    \node (1a) at (0,3) {$A^{(1)}_{0,0,1}$};
    \node (2a) at (0,2) {$A^{(2)}_{1,0,1}$};
    \node (3a) at (0,1) {$A^{(3)}_{0,0,1}$};

    \draw [thick] (1a) -- (2a);
    \draw [thick] (2a) -- (3a);

\end{tikzpicture}
\caption{Jordan nets in $\S^3$ and their degenerations.}\label{fig:nets_n=3}
\end{figure}
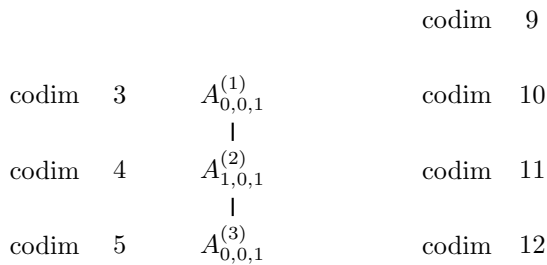

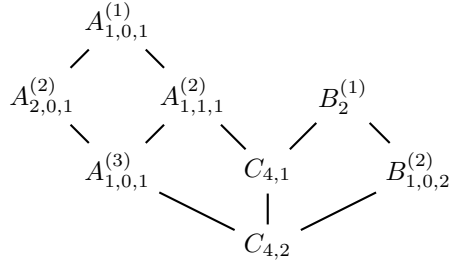
\begin{figure}[H]
\hspace{-45pt}
\begin{tikzpicture}

	\node () at (-2.5,3) {$\codim$};
    \node () at (-2.5,2) {$\codim$};
    \node () at (-2.5,1) {$\codim$};
    \node () at (-2.5,0) {$\codim$};	
	\node () at (-1.5,3) {$9$};
	\node () at (-1.5,2) {$10$};
	\node () at (-1.5,1) {$11$};
	\node () at (-1.5,0) {$12$};
	
    \node (1a) at (1,3) {$A^{(1)}_{1,0,1}$};
    \node (1b) at (4,2) {$B^{(1)}_2$};
    \node (2a1) at (0,2) {$A^{(2)}_{2,0,1}$};
    \node (2a2) at (2,2) {$A^{(2)}_{1,1,1}$};
    \node (2b) at (5.,1) {$B^{(2)}_{1,0,2}$};
    \node (3a) at (1.,1) {$A^{(3)}_{1,0,1}$};
    \node (3b1) at (3.,1) {$C_{4,1}$};
    \node (3b2) at (3.,0) {$C_{4,2}$};
    
    \draw [thick] (1a) -- (2a1);
    \draw [thick] (1a) -- (2a2);
    \draw [thick] (1b) -- (2b);
    \draw [thick] (1b) -- (3b1);
    \draw [thick] (2a1) -- (3a);
    \draw [thick] (2a2) -- (3b1);
    \draw [thick] (2b) -- (3b2);
    \draw [thick] (2a2) -- (3a);
    \draw [thick] (3a) -- (3b2);
    \draw [thick] (3b1) -- (3b2);
\end{tikzpicture}
\caption{Jordan nets in $\S^4$ and their degenerations.}\label{fig:nets_n=4}
\end{figure}
\end{multicols}
\setlength{\columnsep}{-2cm}

The first new case is that of $n=5$. We have the following result.

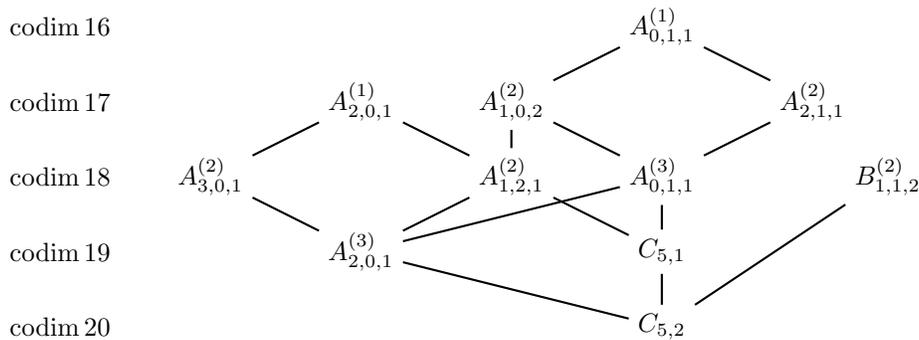
\begin{figure}[H]
\begin{center}
\begin{tikzpicture}

    \node () at (-7,-16) {$\codim 16$};
	\node () at (-7,-17) {$\codim 17$};
    \node () at (-7,-18) {$\codim 18$};
    \node () at (-7,-19) {$\codim 19$};
    \node () at (-7,-20) {$\codim 20$};	
	
    \node (1a1) at (-3,-17) {$A^{(1)}_{2,0,1}$};
    \node (1a2) at (1,-16) {$A^{(1)}_{0,1,1}$};
    \node (2a1) at (-1,-18) {$A^{(2)}_{1,2,1}$};
    \node (2a2) at (-1,-17) {$A^{(2)}_{1,0,2}$};
    \node (2a3) at (3,-17) {$A^{(2)}_{2,1,1}$};
    \node (2a4) at (-5,-18) {$A^{(2)}_{3,0,1}$};
    \node (2b) at (4,-18) {$B^{(2)}_{1,1,2}$};
    \node (3a1) at (1,-18) {$A^{(3)}_{0,1,1}$};
    \node (3a2) at (-3,-19) {$A^{(3)}_{2,0,1}$};
    \node (3b1) at (1,-19) {$C_{5,1}$};
    \node (3b2) at (1,-20) {$C_{5,2}$};
    
    \draw [thick] (1a2) -- (2a2);
    \draw [thick] (1a2) -- (2a3);
    \draw [thick] (1a1) -- (2a1);
    \draw [thick] (1a1) -- (2a4);
    \draw [thick] (2a2) -- (2a1);
    \draw [thick] (2a2) -- (3a1);
    \draw [thick] (2a3) -- (3a1);
    \draw [thick] (2a4) -- (3a2);
    \draw [thick] (2a1) -- (3a2);
    \draw [thick] (2a1) -- (3b1);
    \draw [thick] (3a1) -- (3a2);
    \draw [thick] (3a1) -- (3b1);
    \draw [thick] (2b) -- (3b2);
    \draw [thick] (3a2) -- (3b2);
    \draw [thick] (3b1) -- (3b2);    
 
\end{tikzpicture}
\end{center}
\caption{Jordan nets in $\S^5$ and their degenerations.}\label{fig:nets_n=5}
\end{figure}

\begin{thm}\label{thm:diagramNetsS5}
The diagram in Figure~\ref{fig:nets_n=5} describes all degenerations of Jordan nets in $\S^5$.
\end{thm}

Next is the case of $n=6$. We have the following conjecture and weaker statement.

\begin{conj}\label{conj:B1nottoC6}
We have $B^{(1)}_3\not\to C_{6,6}$.
\end{conj}

\begin{prop}\label{prop:B231_not_to_C_6}
We have $B^{(2)}_{1,0,3}\not\to C_{6,6}$.
\end{prop}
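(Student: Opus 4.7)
The plan is to construct a closed $\GL_6(\CC)$-invariant subvariety $W\subseteq\Jo(3,\S^6)$ that contains $B^{(2)}_{1,0,3}$ but not $C_{6,6}$, which immediately precludes the degeneration since $C_{6,6}\in\overline{\GL_6(\CC)\cdot B^{(2)}_{1,0,3}}\subseteq W$ would be a contradiction. The strategy is a higher-rank analogue of the obstruction used in Theorem~\ref{thm:diagramNetsS5} to rule out $A^{(2)}_{3,0,1},B^{(2)}_{1,1,2}\not\to C_{5,1}$: instead of rank-$\leq 2$ subpencils, we examine rank-$\leq 3$ subpencils, since $\tau_2(B^{(2)}_{1,0,3})=3$.

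Using the explicit model $X=\Diag(I_3,0_3)$, $Y=\Diag(0_3,I_3)$, $V=\begin{pmatrix}0 & Z\\Z^\top & 0\end{pmatrix}$ with $Z=vw^\top$ and $v^\top v=w^\top w=0$ from Proposition~\ref{prop:classify_J21}, one verifies that $aX+bY+cV$ has rank $6$ when $ab\ne0$, rank $3$ when exactly one of $a,b$ vanishes, and rank $2$ only on the line $\CC V$. Hence $B^{(2)}_{1,0,3}$ contains exactly two $2$-dimensional subspaces whose elements all have rank at most $3$, namely $\Pi_X=\CC\{X,V\}$ and $\Pi_Y=\CC\{Y,V\}$, meeting in the rank-$2$ line $\CC V$. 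The key calculation is that for any $Q=(Q_1\mid Q_2)\in\CC^{3\times 6}$ with $Q_1$ invertible,
\[
\det\bigl(Q(aX+bV)Q^\top\bigr)=(\det Q_1)^{2}\cdot a\cdot(a+\alpha b)^{2},\qquad \alpha=v^\top Q_1^{-1}Q_2 w,
\]
which follows from the identity $p^\top(Q_1Q_1^\top)^{-1}p=v^\top v=0$ (with $p=Q_1v$) together with the analogous trace computation for $e_2(A^{-1}B)$. So the cubic $\det(Q\Pi_X Q^\top)$ always factors as a linear form vanishing along the distinguished line $\CC V$ times the square of a linear form, uniformly in $Q$. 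By contrast, every nonzero matrix in $C_{6,6}$ has rank $2$ or $6$, so the unique $2$-plane of rank $\leq 3$ is the square-zero pencil, on which $\det(Q\Pi Q^\top)\equiv 0$ for every $Q\in\CC^{3\times 6}$.

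The invariant $W$ is then the image in $\Jo(3,\S^6)$ of the closed subvariety of triples $(\Ls,\Pi,L)\in\Jo(3,\S^6)\times\Gr(2,\S^6)\times\Gr(1,\S^6)$ with $L\subseteq\Pi\subseteq\Ls$, every element of $\Pi$ of rank $\leq 3$, every element of $L$ of rank $\leq 2$, and with the cubic $\det(Q\Pi Q^\top)$ factoring as the linear form on $\Pi$ cutting out $L$ times the square of a linear form, for every $Q\in\CC^{3\times 6}$. Each clause defines a closed subscheme and the projection is proper, so $W$ is closed and $\GL_6(\CC)$-invariant; the factorization above shows $B^{(2)}_{1,0,3}\in W$. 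The main technical obstacle will be proving $C_{6,6}\notin W$, because the identically zero cubic from $C_{6,6}$'s unique rank-$\leq 3$ $2$-plane trivially admits the factorization $0=\ell_L\cdot 0^{2}$. Overcoming this requires enforcing non-degeneracy of the squared linear factor---for instance by replacing the naive factorization condition with the statement that the projective zero scheme of $\det(Q\Pi Q^\top)$ on $\PP(\Pi)\cong\PP^1$ is of length $3$ and consists of the reduced point $\PP(L)$ together with a double point disjoint from $\PP(L)$, which remains a closed condition---and then checking that the square-zero pencil of $C_{6,6}$ fails this condition for every $Q$ since there the zero scheme of the cubic is all of $\PP^1$. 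This scheme-theoretic refinement, which rules out the trivially-zero witness for $C_{6,6}$ while preserving $B^{(2)}_{1,0,3}\in W$, is the delicate part of the argument.
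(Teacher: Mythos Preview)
Your setup is sound through the analysis of rank-$\leq 3$ pencils: $B^{(2)}_{1,0,3}$ has exactly the two pencils $\Pi_X,\Pi_Y$ meeting in $\CC V$, and your determinant formula $\det(Q(aX+bV)Q^\top)=(\det Q_1)^2\,a(a+\alpha b)^2$ is correct. You also correctly identify the obstacle: the square-zero pencil of $C_{6,6}$ makes the cubic identically zero, so it satisfies the naive factorization $0=\ell_L\cdot 0^2$, and hence $C_{6,6}$ lies in your $W$ as originally defined.

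The genuine gap is that your proposed refinement does not yield a closed condition, and in fact does not even hold for $B^{(2)}_{1,0,3}$. Requiring the zero scheme of $\det(Q\Pi Q^\top)$ on $\PP(\Pi)$ to have length $3$ is exactly the condition that the cubic be nonzero, which is \emph{open} in $Q$ and in $(\Ls,\Pi,L)$; intersecting over all $Q$ does not repair this, since for many $Q$ (e.g.\ any $Q$ with $Q_2w=0$, giving $\alpha=0$) the cubic for $\Pi_X$ degenerates to $(\det Q_1)^2\,a^3$, a single triple point rather than ``reduced point at $\PP(L)$ plus a disjoint double point''. So the refined condition fails for $B^{(2)}_{1,0,3}$ for some $Q$, while for $C_{6,6}$ the cubic is zero for \emph{every} $Q$. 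No variant along these lines seems to work: any condition that excludes the identically-zero cubic is open, and any closed condition invariant under $c_Q\mapsto 0$ will admit $C_{6,6}$. In short, your witness pencil $\Pi_X$ genuinely degenerates (under suitable $P(t)$) to the square-zero pencil of $C_{6,6}$, and the determinantal data you extract degenerates with it; the obstruction must come from elsewhere.

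The paper takes a completely different route: it works directly with a hypothetical degeneration over $\CC((t))$, uses the explicit description $B^{(2)}_{1,0,3}=\spann(2AC+B^2,\,AD,\,2DF+E^2)$ in quadratic forms, and through a sequence of normalizations (scaling $A,D$ to converge to nonzero forms, adjusting $F$ by a multiple of $D$, etc.) reduces the problem to showing that the pair $(b,ac)$ cannot be a limit of pairs $(D,2DF+E^2)$ with $D,E,F$ linear forms in three variables $a,b,c$. This last claim is then checked directly: the closure of such pairs is a proper hypersurface in $\CC\{a,b,c\}\times\CC[a,b,c]_2$ not containing $(b,ac)$.
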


Assuming the conjecture, the diagram in Figure~\ref{fig:nets_n=6} is already complete.

\begin{thm}\label{thm:diagramNetsS6}
Apart from possibly the dotted line, the diagram in Figure~\ref{fig:nets_n=6} describes all degenerations of Jordan nets in $\S^6$.
\end{thm}

\begin{figure}[H]
\begin{center}
\begin{tikzpicture}

    \node () at (-10,-24) {$\codim 24$};
    \node () at (-10,-25) {$\codim 25$};
    \node () at (-10,-26) {$\codim 26$};
	\node () at (-10,-27) {$\codim 27$};
    \node () at (-10,-28) {$\codim 28$};
    \node () at (-10,-29) {$\codim 29$};
    \node () at (-10,-30) {$\codim 30$};	
	
    \node (1a1) at (-7,-27) {$A^{(1)}_{3,0,1}$};
    \node (1a2) at (-5,-25) {$A^{(1)}_{1,1,1}$};
    \node (1a3) at (-1,-24) {$A^{(1)}_{0,0,2}$};
    
    \node (1b) at (3.5,-25) {$B^{(1)}_3$};
    
    \node (2a1) at (-6,-28) {$A^{(2)}_{1,3,1}$};
    \node (2a2) at (-4,-26) {$A^{(2)}_{1,1,2}$};
    \node (2a3) at (-2.5,-26) {$A^{(2)}_{2,2,1}$};
    \node (2a4) at (-1,-25) {$A^{(2)}_{2,0,2}$};
    \node (2a5) at (-6,-26) {$A^{(2)}_{3,1,1}$};
    \node (2a6) at (-8,-28) {$A^{(2)}_{4,0,1}$};
    
    \node (2b1) at (4,-27) {$B^{(2)}_{1,2,2}$};
    \node (2b2) at (2,-26) {$B^{(2)}_{1,0,3}$};
    
    \node (3a1) at (-1,-26) {$A^{(3)}_{0,0,2}$};
    \node (3a2) at (-4,-27) {$A^{(3)}_{1,1,1}$};
    \node (3a3) at (-6,-29) {$A^{(3)}_{3,0,1}$};
    
    \node (3b1) at (.5,-25) {$C_{6,1}$};
    \node (3b2) at (.5,-26) {$C_{6,2}$};
    \node (3b3) at (2,-27) {$C_{6,3}$};
    \node (3b4) at (-1,-27) {$C_{6,4}$};
    \node (3b5) at (-1,-28) {$C_{6,5}$};
    \node (3b6) at (2,-29) {$C_{6,6}$};
    \node (3b7) at (-1,-29) {$C_{6,7}$};
    \node (3b8) at (-1,-30) {$C_{6,8}$};
    
    \draw [thick] (1a1) -- (2a1); 
    \draw [thick] (1a1) -- (2a6); 
    
    \draw [thick] (1a2) -- (2a2); 
    \draw [thick] (1a2) -- (2a3); 
    \draw [thick] (1a2) -- (2a5); 
    
    \draw [thick] (1a3) -- (2a4); 
    \draw [thick] (1a3) -- (3b1); 
    
    \draw [thick] (1b) -- (2b2); 
    
    \draw [thick] (2a1) -- (3a3); 
    \draw [thick] (2a1) -- (3b7); 
    
    \draw [thick] (2a2) -- (2a1); 
    \draw [thick] (2a2) -- (3a2); 
    \draw [thick] (2a2) -- (3b4); 
    
    \draw [thick] (2a3) -- (3a2); 
    \draw [thick] (2a3) -- (3b4); 

    \draw [thick] (2a4) -- (2a3); 
    \draw [thick] (2a4) -- (3a1); 
    \draw [thick] (2a4) -- (3b2); 
      
    \draw [thick] (2a5) -- (3a2);   
    
    \draw [thick] (2a6) -- (3a3);   
    
    \draw [thick] (2b1) -- (3b6);   
    
    \draw [thick] (2b2) -- (3b4);   
    
    \draw [thick] (3a1) -- (3a2);   
    \draw [thick] (3a1) -- (3b3);   
    
    \draw [thick] (3a2) -- (3a3);   
    \draw [thick] (3a2) -- (3b5);   
    
    \draw [thick] (3a3) -- (3b8);   
    
    \draw [thick] (3b1) -- (3b2); 
      
    \draw [thick] (3b2) -- (3b3);   
    \draw [thick] (3b2) -- (3b4);   
    
    \draw [thick] (3b3) -- (3b5);   
    \draw [thick] (3b3) -- (3b6);   
    
    \draw [thick] (3b4) -- (3b5);   
    
    \draw [thick] (3b5) -- (3b7); 
    
    \draw [thick] (3b6) -- (3b8); 
      
    \draw [thick] (3b7) -- (3b8);   
    
    \draw [thick,dotted] (1b) -- (3b2);   
    \draw [thick,dotted] (1b) -- (3b3);   
    \draw [thick,dotted] (1b) -- (3b6);    

\end{tikzpicture}
\end{center}
\caption{Jordan nets in $\S^6$ and their degenerations. The dotted lines indicate that we do not know whether these degenerations exist, but we believe that they do not.}\label{fig:nets_n=6}
\end{figure}
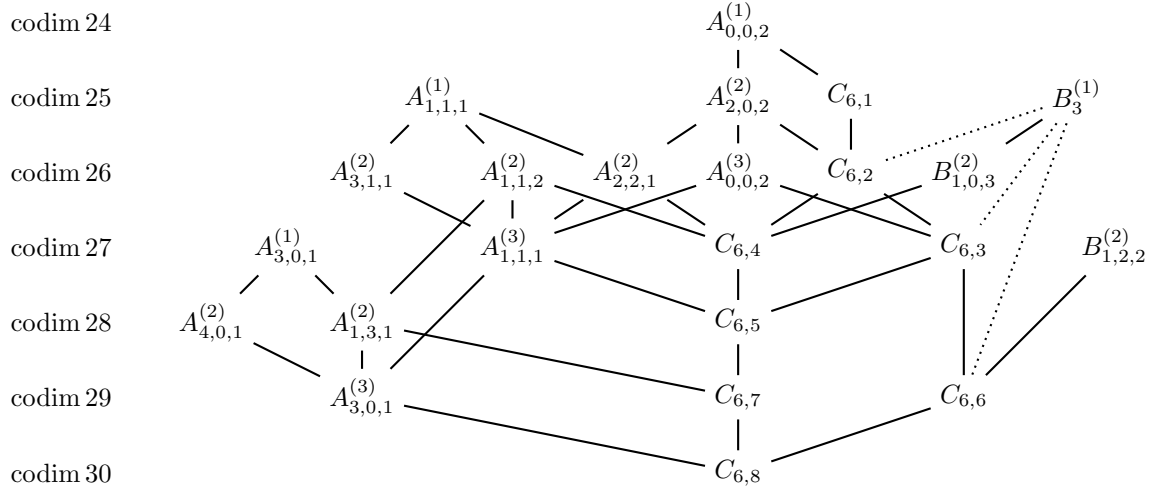

\section{Numerical results for bigger $n$ and Jordan webs}\label{sec:numerical_results}

This section is devoted to finding degenerations between embedded Jordan algebras in the sense of Definition~\ref{de:degeneration_jordan_space}. A basis $X_1,\ldots,X_m$ of an embedded Jordan algebra $\Ls\subseteq\S^n$ gives the $n\times n\times m$ tensor 
\begin{align*}
\mathbf{X}=[X_1|\dots|X_m]
\end{align*}
and, for $P_1,P_2\in\GL_n(\CC)$ and $Q\in\GL_m(\CC)$, we write $[\![\mathbf{X};P_1,P_2,Q]\!]$ for the result of acting on the rows, columns and layers of $\mathbf{X}$ by $P_1,P_2,Q$, respectively. Definition~\ref{de:degeneration_jordan_space} is then equivalent to 
\begin{align*}
\lim_{t \to 0} [\![\mathbf{X};P(t),P(t),Q(t)]\!]=\mathbf{Y}
\end{align*}
for some $P\in\GL_n(\CC[t^{\pm1}])$ and $Q\in\GL_m(\CC[t^{\pm1}])$ and a corresponding basis tensor $\mathbf{Y}$ for $\Ls'$.
Given a basis tensor $\mathbf{X}$ of $\Ls$, the basis tensors of $\Ls'$ in the orbit of $\Ls$ are $[\![\mathbf{X};P,P,Q]\!]$ for $P\in\GL_n(\CC)$ and $Q\in\GL_m(\CC)$, and if $\mathbf{Y}$ is a basis tensor of a degeneration of $\Ls$, then $\mathbf{Y}$ lies in the Zariski closure of the polynomial map
\begin{eqnarray*}
\CC^{n\times n}\times \CC^{m\times m}&\to& \CC^{n\times n\times m}\\
(P,Q)&\mapsto& [\![\mathbf{X};P,P,Q]\!].
\end{eqnarray*}
On the other hand, if $\mathbf{Y}$ does not lie in the Zariski closure, then the corresponding embedded Jordan algebra is no degeneration of $\Ls$ since the closures in the Euclidean and Zariski topologies coincide. 
A definite answer can be given by eliminating the variables $(P,Q)$ from the ideal $\mathcal{I}\subset \CC[P,Q,Z]$ generated by the equations $Z- [\![\mathbf{X};P,P,Q]\!]$: If $\mathbf{Y}\in \mathcal{V}(\mathcal{I}\cap\CC[Z])$, then $\mathbf{Y}$ is a basis tensor of a degeneration of $\Ls$. This is however only feasible for very small $n$ and $m$.\bigskip

Instead, we use gradient descent algorithms to find the distance 
\[
\inf_{\substack{P\in \CC^{n\times n}\\Q\in \CC^{m\times m}}} f(P,Q)=\inf_{\substack{P\in \CC^{n\times n}\\Q\in \CC^{m\times m}}}  \|\mathbf{Y}-[\![\mathbf{X};P,P,Q]\!]\|^2
\]
of the orbit 
\[
\{ [\![\mathbf{X};P,P,Q]\!]\mid P\in \CC^{n\times n}, \; Q\in \CC^{m\times m}\}
\]
of $\mathbf{X}$ to $\mathbf{Y}$. This way can find a sequence $(P_i,Q_i)$ such that 
$\lim_{i\to \infty}f(P_i,Q_i)=
0$
when $\mathbf{Y}$ is a basis of a degeneration of $\Ls$ with basis tensor $\mathbf{X}$.
We will say $\mathbf{X}$ degenerates to $\mathbf{Y}$ if $\inf_{P,Q} f(P,Q)=0$.
The function $f$ is a real smooth function in $2(n^2+m^2)$ real variables. We can either use gradient descent methods directly for $f$ or we can use gradient methods for 
\[
\inf_{P\in \CC^{n\times n}}  F(P)=\inf_{P\in \CC^{n\times n}}\left(\inf_{Q\in \CC^{n\times n}}f(P,Q)\right)=\inf_{P\in \CC^{n\times n}}\left(\min_{Q\in \CC^{n\times n}}f(P,Q)\right),
\]
where the infimum is attained since $f$ is quadratic and convex in $Q$. 
We observed that using gradient methods for $F$ instead of $f$ was more effective. This might be a result of having less variables to minimize over.
The value of $F$ at $P$ can be computed as the solution of a linear equation. If the minimizer $Q(P)$ is unique in a neighbourhood of $P$, we can compute the gradient of $F$ at $P$ via the chain rule using
$F(P)=f(P,Q(P))$ and we get $\nabla F(P)=\nabla_P f(P,Q)$ where $Q$ is the minimizer in the definition of $F$. If $F$ is not smooth at $P$, i.e., if the minimizer $Q$ is not unique, then its generalized gradient in the sense of~\cite[Definition 1.1]{C:generalizedgradients} is given by the convex hull of 
\[
\{\nabla_P f(P,Q)\mid\text{$Q$ is minimizer of $f(P,Q)$}\}.
\]

In practice we used the BFGS method to minimize $F$ in order to utilize second order information. A major obstacle is that both $f$ and $F$ are highly non-convex. Therefore there can be many local minima. Hence, to find degenerations we therefore used gradient descents with 50 randomly generated starting guesses $P_0\in \CC^{n\times n}$ to have higher odds of finding the global minimum of $F$.\bigskip

Another obstacle is that for degenerations only a minimizing sequence exists, i.e., the infimum value $F=0$ is not attained. It is therefore not entirely obvious at which value of $F$ we have found a degeneration. Note that infimum value for nondegenerations is not only dependent on the choice of Jordan algebra in the orbit but  even on the choice of basis tensor $\mathbf{Y}$ of $\Ls'$. To tackle one of these issues, we always use an orthonormal basis. We also use the following idea: let $\mathbf{X},\mathbf{Y},\mathbf{Z}$ be basis tensors and take
\[
f(P,Q)= \|\mathbf{Y}-[\![\mathbf{X};P,P,Q]\!]\|^2,
\quad g(P,Q)= \|\mathbf{Z}-[\![\mathbf{Y};P,P,Q]\!]\|^2,
 \quad h(P,Q)= \|\mathbf{Z}-[\![\mathbf{X};P,P,Q]\!]\|^2.
\]
If $\inf_{P,Q} f(P,Q)=0$, then 
\begin{align*}
\inf_{P,Q} h(P,Q)&=\inf_{P_1,P_2,Q_1,Q_2}   \|\mathbf{Z}-[\![\mathbf{Y};P_1,P_1,Q_1]\!]+[\![\mathbf{Y};P_1,P_1,Q_2]\!]-[\![\mathbf{X};P_2P_1,P_2P_1,Q_2Q_1]\!]\|^2\\
&\leq\inf_{P_1,P_2,Q_1,Q_2}   \left(\|\mathbf{Z}-[\![\mathbf{Y};P_1,P_1,Q_1]\!]\|+\|[\![\mathbf{Y};P_1,P_1,Q_1]\!]-[\![\mathbf{X};P_2P_1,P_2P_1,Q_2Q_1]\!]\|\right)^2\\
&=\inf_{P,Q} g(P,Q)
\end{align*}
where the last equality holds since $[\![\mathbf{Y};P_1,P_1,Q_1]\!]$ is also a basis tensor of a degeneration of $\mathbf{X}$ if $\mathbf{Y}$ is one. Now suppose that we know that $\mathbf{X}$ degenerates to $\mathbf{Y}$. Then the distance of the orbit of $\mathbf{X}$ to $\mathbf{Z}$ is not larger than the distance of the orbit of $\mathbf{Y}$ to $\mathbf{Z}$. So if $h(P_1,Q_1)\geq g(P_2,Q_2)+\varepsilon$ for some $\varepsilon>0$, we know that $h(P_1,Q_1)\geq  \inf_{P,Q} h(P,Q)+\varepsilon$ is not close to the minimum.\bigskip

As a first experiment we confirmed Theorem~\ref{thm:diagramNetsS5} numerically. For this, we used the orthonormal bases given by the Jordan algebras described in Theorems~\ref{thm:classification_nets_general} and~\ref{thm:classification_nets_J20}. The results are summarized in Table~\ref{table:resultsforS^5Nets} and confirm Theorem~\ref{thm:diagramNetsS5}.

\begin{table}[H]
\begin{tabular}{ |c|c|c|c|c|c|c|c|c|c|c|c| } 
\hline
& $A_{0,1,1}^{(1)} $ &$A_{2,0,1}^{(1)} $&$A_{1,0,2}^{(2)} $ & $A_{2,1,1}^{(2)} $ & $A_{3,0,1}^{(2)} $ &$A_{1,2,1}^{(2)} $ &$A_{0,1,1}^{(3)} $&$B_{1,1,2}^{(2)} $&$A_{2,0,1}^{(3)} $ &$C_{5,1} $ & $C_{5,2} $\\
\hline

orbit of $A_{0,1,1}^{(1)} $& - & 0.382 & 0.0 & 0.0 & 1.0 & 0.0 & 0.0 & 1.75 & 0.001 & 0.0 & 0.0 \\
orbit of $A_{2,0,1}^{(1)} $& 0.438 & - & 1.0 & 1.0 & 0.0 & 0.0 & 1.0 & 1.75 & 0.0 & 0.0 & 0.0 \\
orbit of $A_{1,1,2}^{(2)} $& 0.4 & 0.43 & - & 1.0 & 1.0 & 0.0 & 0.0 & 1.75 & 0.0 & 0.0 & 0.0 \\
orbit of $A_{2,1,1}^{(2)} $& 0.379 & 0.438 & 0.985 & - & 1.0 & 0.917 & 0.0 & 1.75 & 0.0 & 0.0 & 0.0 \\
orbit of $A_{3,0,1}^{(2)} $& 0.628 & 0.222 & 1.0 & 1.0 & - & 0.762 & 1.0 & 1.75 & 0.0 & 0.5 & 0.0 \\
orbit of $A_{1,2,1}^{(2)} $& 0.591 & 0.43 & 1.0 & 1.0 & 1.0 & - & 1.0 & 1.75 & 0.0 & 0.0 & 0.0 \\
orbit of $A_{0,1,1}^{(3)} $& 0.916 & 0.937 & 1.0 & 1.0 & 1.0 & 0.924 & - & 1.75 & 0.0 & 0.0 & 0.0 \\
orbit of $B_{1,1,2}^{(2)} $& 0.628 & 0.438 & 1.0 & 1.0 & 1.0 & 1.0 & 1.141 & - & 0.75 & 0.5 & 0.0 \\
orbit of $A_{2,0,1}^{(3)} $& 0.988 & 1.003 & 1.0 & 1.0 & 1.0 & 0.924 & 1.0 & 1.75 & - & 0.5 & 0.0 \\
orbit of $C_{5,1} $& 0.958 & 0.955 & 1.0 & 1.0 & 1.667 & 0.919 & 1.548 & 1.75 & 0.75 & - & 0.0 \\
orbit of $C_{5,2} $& 1.028 & 1.029 & 1.0 & 1.5 & 1.667 & 1.0 & 1.548 & 1.75 & 0.75 & 0.5 & - \\
\hline
\end{tabular}
\caption{Squared distances found between orbits of the Jordan nets in $\S^5$ via gradient descent.}\label{table:resultsforS^5Nets}
\end{table}

As a second experiment we confirmed all degenerations for Jordan nets in $\S^6$ in Figure~\ref{fig:nets_n=6}. For Conjecture~\ref{conj:B1nottoC6}, we have found further evidence. The smallest value of $F$ for basis tensors $\mathbf{X}$ and $\mathbf{Y}$ of the Jordan algebras~$B^{(1)}_3$ and $C_{6,6}$ was $1.0$ which suggests, that there is indeed no degeneration.\bigskip

For Jordan nets in $\S^7$ we found the Hasse diagram in Figure~\ref{fig:nets_n=7}. 

\begin{figure}[H]
\begin{center}
\begin{tikzpicture}

    \node () at (-1,-34) {$\codim 34$};
    \node () at (-1,-35) {$\codim 35$};
    \node () at (-1,-36) {$\codim 36$};
	\node () at (-1,-37.5) {$\codim 37$};
    \node () at (-1,-39) {$\codim 38$};
    \node () at (-1,-40) {$\codim 39$};
    \node () at (-1,-41) {$\codim 40$};
    \node () at (-1,-42) {$\codim 41$};
    \node () at (-1,-43) {$\codim 42$};
	
    \node (a1401) at (10,-40) {$A^{(1)}_{4,0,1}$};
    \node (a1211) at (7,-36) {$A^{(1)}_{2,1,1}$};
    \node (a1102) at (3.5,-34) {$A^{(1)}_{1,0,2}$};
    \node (a1021) at (7,-35) {$A^{(1)}_{0,2,1}$};
    
    \node (a251) at (11,-41) {$A^{(2)}_{5,0,1}$};
    \node (a241) at (10,-37.5) {$A^{(2)}_{4,1,1}$};
    \node (a232) at (5,-35) {$A^{(2)}_{3,0,2}$};
    \node (a231) at (5,-36) {$A^{(2)}_{3,2,1}$};
    \node (a222) at (2,-35) {$A^{(2)}_{2,1,2}$};
    \node (a221) at (3.5,-37.5) {$A^{(2)}_{2,3,1}$};
    \node (a213) at (9,-36) {$A^{(2)}_{1,0,3}$};
    \node (a212) at (6,-37.5) {$A^{(2)}_{1,2,2}$};
    \node (a211) at (9,-41) {$A^{(2)}_{1,4,1}$};

    \node (a3401) at (9,-42) {$A^{(3)}_{4,0,1}$};
    \node (a3211) at (6,-39) {$A^{(3)}_{2,1,1}$};
    \node (a3102) at (3.5,-36) {$A^{(3)}_{1,0,2}$};
    \node (a3021) at (8,-37.5) {$A^{(3)}_{0,2,1}$};
    
    \node (b221) at (1,-39) {$B^{(2)}_{1,3,2}$};
    \node (b231) at (1,-36) {$B^{(2)}_{1,1,3}$};

    \node (c1) at (2,-36) {$C_{7,1}$};
    \node (c2) at (2,-37.5) {$C_{7,2}$};
    \node (c3) at (2,-39) {$C_{7,3}$};
    \node (c4) at (4,-39) {$C_{7,4}$};
    \node (c5) at (4,-40) {$C_{7,5}$};
    \node (c6) at (3,-41) {$C_{7,6}$};
    \node (c7) at (6,-42) {$C_{7,7}$};
    \node (c8) at (6,-43) {$C_{7,8}$};

    \draw [thick] (a1401) -- (a211);
    \draw [thick] (a1401) -- (a251);
    
    \draw [thick] (a1211) -- (a212);
    \draw [thick] (a1211) -- (a221);
    \draw [thick] (a1211) -- (a241);
    
    \draw [thick] (a1102) -- (a222);
    \draw [thick] (a1102) -- (a232);
    
    \draw [thick] (a1021) -- (a213);
    \draw [thick] (a1021) -- (a231);
    
    \draw [thick] (a212) -- (a211);
    \draw [thick] (a213) -- (a212);
    \draw [thick] (a222) -- (a221);
    \draw [thick] (a232) -- (a231);

    \draw [thick] (a211) -- (a3401);
    \draw [thick] (a251) -- (a3401);
    
    \draw [thick] (a212) -- (a3211);
    \draw [thick] (a221) -- (a3211);
    \draw [thick] (a241) -- (a3211);
    
    \draw [thick] (a222) -- (a3102);
    \draw [thick] (a232) -- (a3102);
    
    \draw [thick] (a213) -- (a3021);
    \draw [thick] (a231) -- (a3021);
    
    \draw [thick] (a3102) -- (a3021);
    \draw [thick] (a3021) -- (a3211);
    \draw [thick] (a3211) -- (a3401);

    \draw [thick] (c1) -- (c2);
    \draw [thick] (c2) -- (c3);
    \draw [thick] (c2) -- (c4);
    \draw [thick] (c3) -- (c5);
    \draw [thick] (c3) -- (c6);
    \draw [thick] (c4) -- (c5);
    \draw [thick] (c5) -- (c7);
    \draw [thick] (c6) -- (c8);
    \draw [thick] (c7) -- (c8);
    
    \draw [thick] (a222) -- (c1);
    \draw [thick] (a3102) -- (c2);
    \draw [thick] (a221) -- (c4);
    \draw [thick] (a212) -- (c4);
    \draw [thick] (a3021) -- (c4);
    \draw [thick] (a3211) -- (c5);
    \draw [thick] (a211) -- (c7);
    \draw [thick] (a3401) -- (c8);

    \draw [thick] (b221) -- (c6);
    \draw [thick] (b231) -- (c2);

\end{tikzpicture}
\end{center}
\caption{Jordan nets in $\S^7$ and their degenerations obtained numerically.}\label{fig:nets_n=7}
\end{figure}
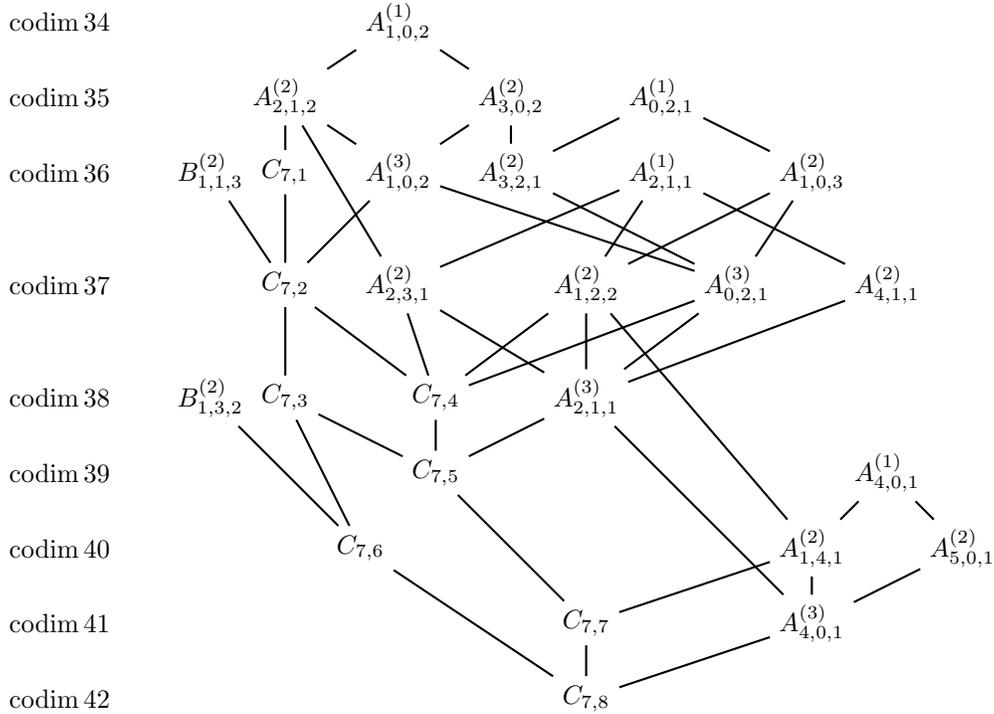

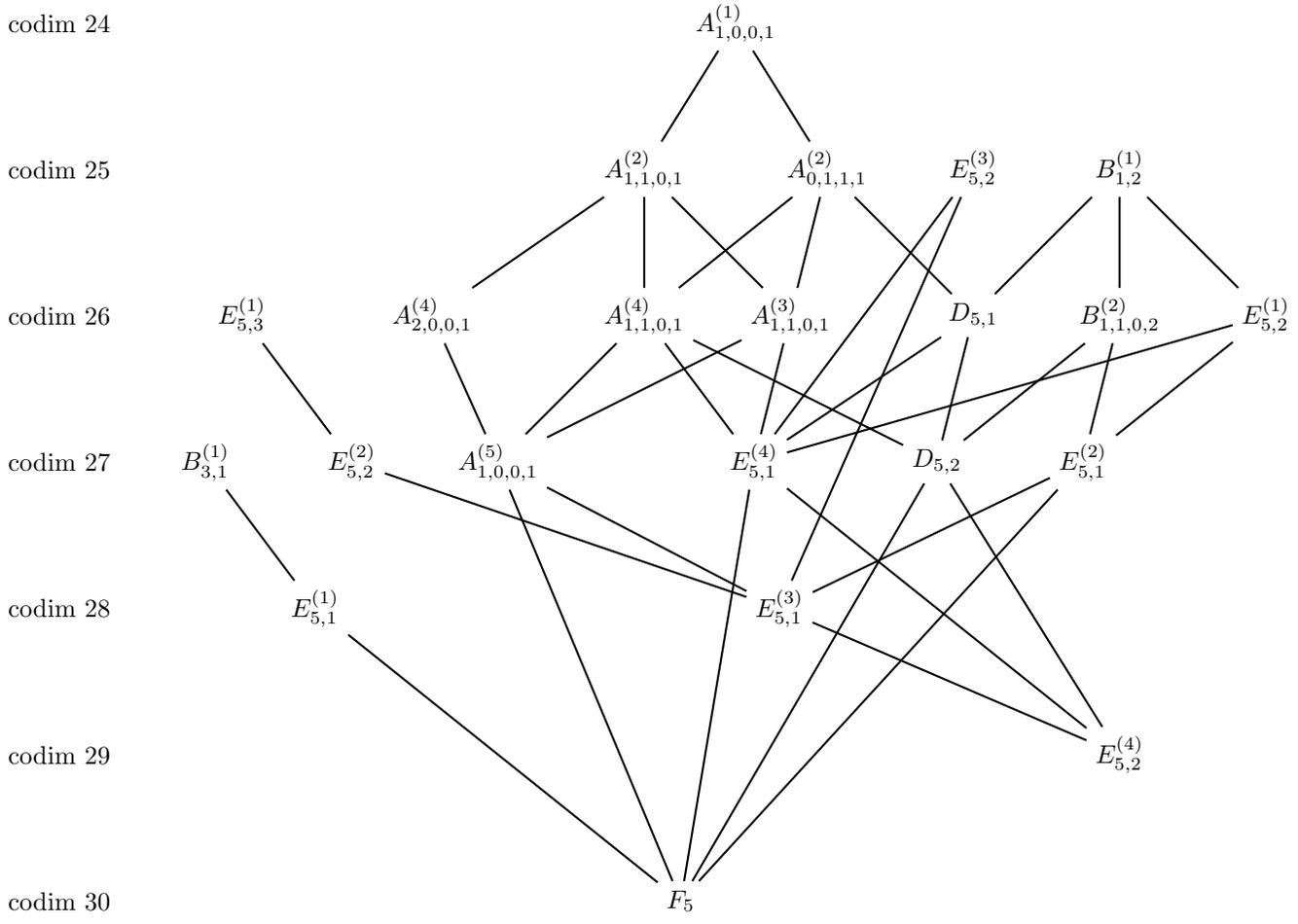
\begin{figure}[H]
\vspace{70pt}
\begin{center}
\begin{tikzpicture}[rotate=90,transform shape]

	\node () at (-2,-0) {codim $24$};
	\node () at (-2,-2) {codim $25$};
	\node () at (-2,-4) {codim $26$};
	\node () at (-2,-6) {codim $27$};
	\node () at (-2,-8) {codim $28$};
	\node () at (-2,-10) {codim $29$};	
	\node () at (-2,-12) {codim $30$};	

    \node (1) at (7.25,-0) {$A^{(1)}_{1,0,0,1}$};
    \node (2) at (6,-2) {$A^{(2)}_{1,1,0,1}$};
    \node (3) at (8.5,-2) {$A^{(2)}_{0,1,1,1}$};
    \node (4) at (8.,-4) {$A^{(3)}_{1,1,0,1}$};
    \node (5) at (3.1,-4) {$A^{(4)}_{2,0,0,1}$};   
    \node (6) at (6,-4) {$A^{(4)}_{1,1,0,1}$};
    \node (7) at (4,-6) {$A^{(5)}_{1,0,0,1}$};
    \node (8) at (0,-6) {$B^{(1)}_{3,1}$};
    \node (9) at (12.5,-2) {$B^{(1)}_{1,2}$};
    \node (10) at (12.5,-4) {$B^{(2)}_{1,1,0,2}$};
    \node (11) at (10.5,-4) {$D_{5,1}$};
    \node (12) at (10,-6) {$D_{5,2}$};
    \node (13) at (1.5,-8) {$E^{(1)}_{5,1}$};
    \node (14) at (14.5,-4) {$E^{(1)}_{5,2}$};
    \node (15) at (.5,-4) {$E^{(1)}_{5,3}$};    
    \node (16) at (12,-6) {$E^{(2)}_{5,1}$};
    \node (17) at (2,-6) {$E^{(2)}_{5,2}$};
    \node (18) at (7.85,-8) {$E^{(3)}_{5,1}$};
	\node (19) at (10.5,-2) {$E^{(3)}_{5,2}$};
    \node (20) at (7.5,-6) {$E^{(4)}_{5,1}$};
	\node (21) at (12.5,-10) {$E^{(4)}_{5,2}$};
    \node (23) at (6.5,-12) {$F_5$};
    
    \draw [thick] (1) -- (2);
    \draw [thick] (1) -- (3);
    \draw [thick] (2) -- (4);
    \draw [thick] (2) -- (5);
    \draw [thick] (2) -- (6);
    \draw [thick] (3) -- (4);
    \draw [thick] (3) -- (6);
    \draw [thick] (3) -- (11);    
    \draw [thick] (4) -- (7);   
    \draw [thick] (4) -- (20);
    \draw [thick] (5) -- (7);    
    \draw [thick] (6) -- (7);
    \draw [thick] (6) -- (12);
    \draw [thick] (6) -- (20);
    \draw [thick] (7) -- (18);
    \draw [thick] (7) -- (23);
    \draw [thick] (8) -- (13);
    \draw [thick] (9) -- (10);
    \draw [thick] (9) -- (11);   
    \draw [thick] (9) -- (14);
    \draw [thick] (10) -- (16);                  
 	\draw [thick] (10) -- (12);        
    \draw [thick] (11) -- (12);
    \draw [thick] (11) -- (20);
    \draw [thick] (12) -- (23);
    \draw [thick] (12) -- (21);
    \draw [thick] (13) -- (23);
    \draw [thick] (14) -- (16);  
    \draw [thick] (14) -- (20);
    \draw [thick] (15) -- (17);
    \draw [thick] (16) -- (18);
    \draw [thick] (16) -- (23);
    \draw [thick] (17) -- (18);    
    \draw [thick] (18) -- (21);
    \draw [thick] (19) -- (20);

    \draw [thick] (19) -- (18);

        \draw [thick] (20) -- (21);
      \draw [thick] (20) -- (23);

\end{tikzpicture}
\end{center}
\caption{Jordan webs in $\S^5$ and their degenerations obtained numerically. 
}\label{fig:webs_n=5}
\end{figure}

For Jordan webs in $\S^4$ and $\S^5$ we found the diagrams in Figures~\ref{fig:webs_n=4} and~\ref{fig:webs_n=5}.

\begin{figure}[H]
\begin{center}
\begin{tikzpicture}

	\node () at (-1,-12) {codim $12$};
	\node () at (-1,-13) {codim $13$};
	\node () at (-1,-14) {codim $14$};
	\node () at (-1,-15) {codim $15$};
	\node () at (-1,-16) {codim $16$};
	\node () at (-1,-17) {codim $17$};	
		
    \node (1) at (4,-12) {$A^{(1)}_{0,0,0,1}$};
    \node (2) at (4,-13) {$A^{(2)}_{0,1,0,1}$};
    \node (3) at (3,-14) {$A^{(3)}_{0,1,0,1}$};
    \node (4) at (5,-14) {$A^{(4)}_{1,0,0,1}$};
    \node (5) at (4,-15) {$A^{(5)}_{0,0,0,1}$};
    \node (6) at (7,-14) {$B^{(1)}_{2,1}$};
    \node (7) at (5.5,-16) {$C^{(1)}_1$};
    \node (8) at (7,-15) {$E^{(1)}_{4,1}$};
    \node (9) at (1.5,-14) {$E^{(1)}_{4,2}$};
    \node (10) at (1.5,-15) {$E^{(2)}_4$};
    \node (11) at (2.5,-16) {$E^{(3)}_4$};
    \node (12) at (5.5,-17) {$F_4$};

    \draw [thick] (1) -- (2);
    \draw [thick] (2) -- (3);
    \draw [thick] (2) -- (4);
    \draw [thick] (3) -- (5);
    \draw [thick] (6) -- (8);
	\draw [thick] (8) -- (12);
    \draw [thick] (5) -- (12);
    \draw [thick] (4) -- (5);
	\draw [thick] (7) -- (12);
    \draw [thick] (5) -- (11);
    \draw [thick] (9) -- (10);
    \draw [thick] (10) -- (11);

\end{tikzpicture}
\end{center}
\caption{Jordan webs in $\S^4$ and their degenerations obtained numerically.}\label{fig:webs_n=4}
\end{figure}

\section{Obstructions to degenerations between Jordan spaces}\label{sec:degenerations_obstructions}

The goal of this section is to make a list of obstructions to the existence of degenerations $\Ls\to\Ls'$.

\subsection{The abstract obstruction}
Suppose that $\Ls,\Ls'$ are embeddings of Jordan algebras $\As,\As'$. 

\begin{prop}\label{prop:obstr_abstract}
If $\Ls\to\Ls'$, then also $\As\to\As'$.
\end{prop}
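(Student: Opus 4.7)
The plan is to produce a rational curve $t\mapsto (c^{ij}_k(t))$ in the variety $\Jord_m$ whose generic point lies in the $\GL_m$-orbit of $\As$ and whose value at $t=0$ is isomorphic to $\As'$; by the topological reformulation of degeneration noted after Definition~\ref{de:degeneration_jordan_space}, this will give $\As\to\As'$.

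First unpack the geometric degeneration: fix bases $X_1,\ldots,X_m$ of $\Ls$ and $X'_1,\ldots,X'_m$ of $\Ls'$ and choose $P\in\GL_n(\CC[t^{\pm1}])$, $Q\in\GL_m(\CC[t^{\pm1}])$ as in Definition~\ref{de:degeneration_jordan_space}, so that the basis $Y_i(t):=\sum_j P(t)X_jP(t)^{\top}Q(t)_{ji}$ of $\Ls(t):=P(t)\Ls P(t)^{\top}$ has entries in $\CC[t]$ with $Y_i(0)=X'_i$. Pick an invertible unit $U'=\sum_i a_iX'_i\in\Ls'$ and set $U(t):=\sum_i a_iY_i(t)\in\Ls(t)$; this is a polynomial matrix with $U(0)=U'$ invertible, so $\det U(t)$ is a polynomial non-vanishing at $t=0$ and $U(t)^{-1}$ is a rational matrix defined in a Zariski neighbourhood of $0$.

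For $t\neq 0$ in that neighbourhood, $\Ls(t)$ is congruent to the Jordan space $\Ls$ and hence closed under $\bullet_{U(t)}$, while $Y_1(t),\ldots,Y_m(t)$ remain linearly independent since they do at $t=0$. Solving the linear system $Y_i(t)\bullet_{U(t)}Y_j(t)=\sum_k c^{ij}_k(t)Y_k(t)$ by Cramer's rule produces rational functions $c^{ij}_k(t)$ with no pole at $t=0$; their values at $t=0$ are exactly the structure constants of the embedding $(\Ls',\bullet_{U'})$ of $\As'$ in the basis $X'_1,\ldots,X'_m$. Thus the curve $t\mapsto(c^{ij}_k(t))$ in $\Jord_m$ specialises at $t=0$ to a point in the $\GL_m$-orbit of $\As'$.

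It remains to show that $(c^{ij}_k(t))$ lies in $\GL_m\cdot\As$ for $t\neq 0$. Via the congruence by $P(t)$, the algebra $(\Ls(t),\bullet_{U(t)})$ is isomorphic to $(\Ls,\bullet_{P(t)^{-1}U(t)P(t)^{-\top}})$, i.e., to $\Ls$ equipped with a possibly different invertible unit. The whole proposal therefore hinges on the observation that for any Jordan space the abstract isomorphism type of $(\Ls,\bullet_V)$ is independent of the invertible $V\in\Ls$ used; this isotope-invariance of the underlying unital Jordan algebra is the single non-trivial ingredient of the argument, and should be extracted from (or added to) Theorem~\ref{thm:equiv_conditions}. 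Granting it, the rational curve lies in the orbit of $\As$ for $t\neq 0$ and has a point of the orbit of $\As'$ in its closure, so $\As'\in\overline{\GL_m\cdot\As}$ and the proposition follows.
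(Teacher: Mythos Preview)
Your argument is correct and is in fact more careful than the paper's own one-line proof. The paper simply asserts that $\As'=\lim_{t\to0}Q(t)\cdot\As$, i.e.\ that the base-change matrix $Q$ alone already realises the abstract degeneration; but that claim tacitly computes the product on $\Ls(t)$ with respect to the pushed-forward unit $P(t)UP(t)^\top$, which need not remain bounded as $t\to0$, so convergence of those particular structure constants is not evident. Your device of instead taking a unit $U(t)=\sum_i a_iY_i(t)$ that converges to a chosen unit $U'$ of $\Ls'$ is exactly what makes the limit of the structure constants manifestly exist and equal those of $(\Ls',\bullet_{U'})\cong\As'$.

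You are also right that the one substantive point left is isotope-invariance: that for a Jordan space $\Ls$ the isomorphism type of $(\Ls,\bullet_V)$ is independent of the invertible $V\in\Ls$. This is true and follows from \cite[Lemma~2.3]{BES:JordanSpaces} (which the present paper already invokes in the proof of the Segre-symbol obstruction): any invertible $V\in\Ls$ is a square $V=W\bullet_UW=WU^{-1}W$ for some $W\in\Ls$, and then the quadratic map $X\mapsto WU^{-1}XU^{-1}W$ gives a Jordan isomorphism $(\Ls,\bullet_V)\to(\Ls,\bullet_U)$. With that lemma supplied, your proof is complete; the overall strategy is the same as the paper's, just with the analytic and algebraic details made explicit.
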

\begin{proof}
Suppose $\Ls\to\Ls'$. Let $P\in\GL_n(\CC[t^{\pm1}])$ and $Q\in\GL_m(\CC[t^{\pm1}])$ be matrices such that
\[
(Y_1,\ldots,Y_m):= (PX_1P^\top,\ldots,PX_mP^\top)Q
\]
is a tuple of matrices with coefficients in $\CC[t]$ and $X'_i=\lim_{t\to0}Y_i$ for all $i\in\{1,\ldots,m\}$. Then it follows that $\As'=\lim_{t\to0}Q\cdot \As$ and hence $\As\to\As'$.
\end{proof}

\subsection{The determinantal obstruction}

\begin{de}
The {\em determinant} of $\Ls$ is defined as 
\[
\det(\Ls):=\det(x_1X_1+\ldots+x_mX_m)\in\CC[x_1,\ldots,x_m]_n.
\]
When $(Y_1,\ldots,Y_m)=(X_1,\ldots,X_m)Q$ for some $Q\in\GL_m(\CC)$, then 
\[
y_1Y_1+\ldots+y_mY_m = x_1X_1+\ldots+x_mX_m
\] 
for $(x_1,\ldots,x_m)=(y_1,\ldots,y_m)Q^{-\top}$. Hence $\det(\Ls)$ is well-defined up to coordinate-change.
\end{de}

Since $\Ls$ is regular, its determinant is not the zero polynomial. Note that congruent subspaces have the same determinant up to scaling.

\begin{de}
Let $f,g\in\CC[x_1,\ldots,x_m]_n$ be forms. We say that $f$ degenerates to $g$, denoted as $f\to g$, when $g\in\overline{\GL_n(\CC)\cdot f}$.
\end{de}

\begin{prop}
If $\Ls\to\Ls'$, then $\det(\Ls)\to\det(\Ls')$.
\end{prop}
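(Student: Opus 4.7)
The plan is to translate the matrices witnessing the degeneration of Jordan spaces into matrices witnessing a degeneration of the determinantal forms. Choose bases $X_1, \ldots, X_m$ of $\Ls$ and $X'_1, \ldots, X'_m$ of $\Ls'$, and let $P \in \GL_n(\CC[t^{\pm 1}])$ and $Q \in \GL_m(\CC[t^{\pm 1}])$ be the matrices supplied by Definition~\ref{de:degeneration_jordan_space}, so that the entries of $Y_i(t) := \sum_j Q_{ji}(t)\, P(t) X_j P(t)^\top$ lie in $\CC[t]$ and $\lim_{t\to 0} Y_i(t) = X'_i$.

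The crux is the factorisation
\[
\sum_{i=1}^m y_i\, Y_i(t) \;=\; P(t)\Bigl(\sum_{j=1}^m (Q(t) y)_j\, X_j\Bigr) P(t)^\top,
\]
which, after taking determinants, gives
\[
F(t,y) \;:=\; \det\!\Bigl(\sum_i y_i Y_i(t)\Bigr) \;=\; \det(P(t))^2 \cdot f(Q(t) y),
\]
where $f := \det(\Ls) \in \CC[x_1,\ldots,x_m]_n$. Since the matrices $Y_i(t)$ themselves have entries in $\CC[t]$, the polynomial $F(t,y)$ lies in $\CC[t][y_1,\ldots,y_m]$, so its $t \to 0$ limit exists and equals $g(y) := \det(\Ls')$.

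To finish, I would absorb the factor $\det(P(t))^2$ into the linear change of variables. For each small $t \neq 0$, pick an $n$-th root $\mu(t) \in \CC^*$ of $\det(P(t))^2$ and set $R(t) := \mu(t)\, Q(t) \in \GL_m(\CC)$. Since $f$ is homogeneous of degree $n$,
\[
f(R(t) y) \;=\; \mu(t)^n\, f(Q(t) y) \;=\; F(t,y),
\]
so along any sequence $t_k \to 0$ the forms $f \circ R(t_k)$ converge to $g$, which by definition means $f \to g$.

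The only delicate point is the choice of the $n$-th root $\mu(t)$: it need not vary algebraically in $t$. This is harmless, however, since the topological formulation of $f \to g$ only requires convergence along a sequence; alternatively, one may pass to the ring $\CC[t^{\pm 1/n}]$ where $\mu(t)$ becomes a Laurent polynomial, and argue as in Remark~\ref{re:over_CC((t))} that this does not enlarge orbit closures.
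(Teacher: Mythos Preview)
Your proof is correct and follows essentially the same approach as the paper: both arguments observe that for each fixed $t\neq 0$ the form $\det(\sum_i x_i Y_i(t))$ lies in the $\GL_m$-orbit of $\det(\Ls)$ (up to the scalar $\det(P(t))^2$) and converges to $\det(\Ls')$ as $t\to 0$. The paper handles the scalar factor implicitly by writing $\det(P\Ls P^\top)$ and using that the determinant is already only defined up to coordinate change, whereas you make the absorption of $\det(P(t))^2$ explicit via the $n$-th root $\mu(t)$; this extra care is not needed but does no harm.
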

\begin{proof}
Suppose $\Ls\to\Ls'$. Let $P\in\GL_n(\CC[t^{\pm1}])$ and $Q\in\GL_m(\CC[t^{\pm1}])$ be matrices such that
\[
(Y_1,\ldots,Y_m):= (PX_1P^\top,\ldots,PX_mP^\top)Q
\]
is a tuple of matrices with coefficients in $\CC[t]$ and $X'_i=\lim_{t\to0}Y_i$ for all $i\in\{1,\ldots,m\}$. Then 
\[
\det(\Ls')=\det(x_1X_1'+\ldots+x_mX_m')=\lim_{t\to0}\det(x_1Y_1+\ldots+x_mY_m)=\lim_{t\to0}\det(P\Ls P^\top)
\]
and hence $\det(\Ls)\to\det(\Ls')$.
\end{proof}

\subsection{The rank-minimal subspace obstruction}

\begin{de}
We define the {\em minimal matrix rank} of $\Ls$ to be $\tau_1(\Ls):=\min\{\rk(X)\mid X\in\Ls\setminus\{{\bf 0}_n\}\}$. More generally, for $1\leq k\leq m$, we define 
\[
\tau_k(\Ls):=\min\{\max\{\rk(X)\mid X\in \Ps\setminus\{{\bf 0}_n\}\}\mid \Ps\in\Gr(k,\Ls)\}
\]
to be the minimal upperbound on the rank of nonzero elements of a $k$-dimensional subspace of $\Ls$.
\end{de}

When $k=2,3,4$, we call $\tau_k(\Ls)$ the {\em minimal pencil/net/web rank} of $\Ls$.

\begin{prop}
If $\Ls\to\Ls'$, then $\tau_k(\Ls')\leq\tau_k(\Ls)$ for all $1\leq k\leq m$.
\end{prop}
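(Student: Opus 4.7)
The plan is to establish that the sublevel set $C_{k,r} := \{\Ls \in \Gr(m, \S^n) : \tau_k(\Ls) \leq r\}$ is closed in $\Gr(m, \S^n)$ for every $r \geq 0$. Granting this, the result follows immediately: the congruence action of $\GL_n$ preserves matrix ranks, so each $C_{k,r}$ is $\GL_n$-invariant; setting $r := \tau_k(\Ls)$, the orbit $\GL_n \cdot \Ls$ lies in $C_{k,r}$, hence so does its closure, which contains $\Ls'$ by the topological characterization of degeneration discussed after Definition~\ref{de:degeneration_jordan_space}. This gives $\tau_k(\Ls') \leq r = \tau_k(\Ls)$.

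To establish closedness of $C_{k,r}$, I would form the incidence variety
\[
I_{k,r} := \{(\Ps, \Ls) \in \Gr(k, \S^n) \times \Gr(m, \S^n) : \Ps \subseteq \Ls \text{ and } \Ps \subseteq Z_r\},
\]
where $Z_r := \{X \in \S^n : \rk(X) \leq r\}$ is the determinantal variety cut out by the $(r+1) \times (r+1)$ minors. The inclusion $\Ps \subseteq \Ls$ is the standard closed condition defining the partial flag variety, and $C_{k,r}$ is the image of $I_{k,r}$ under the projection to the second factor. Since $\Gr(k, \S^n)$ is projective, this projection is closed, so it suffices to show that $I_{k,r}$ is closed, which reduces to the closedness of $J_{k,r} := \{\Ps \in \Gr(k, \S^n) : \Ps \subseteq Z_r\}$.

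The main technical point, and the step I expect to require the most care, is the closedness of $J_{k,r}$. I would argue this using the tautological rank-$k$ subbundle $\mathcal{T} \subseteq \Gr(k, \S^n) \times \S^n$, whose fiber over $\Ps$ is $\Ps$ itself. The pullback $\mathcal{T} \cap (\Gr(k, \S^n) \times Z_r)$ is closed, and its fiber over $\Ps$ equals the cone $\Ps \cap Z_r$. Because $Z_r$ is a cone, $\Ps \subseteq Z_r$ holds if and only if $\dim(\Ps \cap Z_r) \geq k$, since a closed cone of top dimension inside a vector space must be the whole space. The upper semicontinuity of fiber dimension therefore makes $J_{k,r}$ closed, completing the argument.
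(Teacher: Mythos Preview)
Your proof is correct and follows essentially the same approach as the paper: both establish closedness of the incidence variety $\{(\Ls,\Ps):\Ps\subseteq\Ls,\ \Ps\subseteq Z_r\}$, then use that its projection to $\Gr(m,\S^n)$ is closed (by properness of $\Gr(k,\S^n)$) and $\GL_n$-invariant. The paper's proof is a single sentence asserting this set is closed, while you additionally spell out the projection step and justify the closedness of $J_{k,r}$ via the tautological bundle and upper semicontinuity of fiber dimension; to make that last step airtight you should projectivize (work in $\mathbb{P}(\mathcal{T})$) so that the relevant projection is proper, since fiberwise dimension semicontinuity over the target requires properness.
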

\begin{proof}
This holds since the set
\[
\{(\Ls,\Ps)\in\Gr(m,\S^n)\times\Gr(k,\S^n)\mid \Ps\subseteq\Ls, \forall X\in\Ps:\rk(X)\leq \ell\}
\]
is closed for all $\ell\geq0$.
\end{proof}

\subsection{The Segre symbol obstruction}
Let $A$ be an $n\times n$ matrix in Jordan normal form. The {\em Segre symbol} corresponding to $A$ is a multiset of partitions 
\[
\sigma=\{(k^{(1)}_1,\ldots,k^{(1)}_{n_1}),\ldots,(k^{(\ell)}_1,\ldots,k^{(\ell)}_{n_\ell}\}
\]
where $A$ has $\ell$ distinct eigenvalues $\lambda_1,\ldots,\lambda_\ell$ and has Jordan blocks of sizes $k^{(i)}_1,\ldots,k^{(i)}_{n_i}$ corresponding to $\lambda_i$. In particular, we have $\sum_{i,j}k^{(i)}_{j}=n$. Denote by $\geq$ the partial order on the set of Segre symbols of $n\times n$ matrices in Jordan normal form generated by
\[
\{\tau_1,\ldots,\tau_{\ell-1},\tau_\ell,\tau_{\ell+1}\}\geq \{\tau_1,\ldots,\tau_{\ell-1},\tau_\ell+\tau_{\ell+1}\}
\]
and 
\[
\{\tau_1,\ldots,\tau_{\ell},\mu\}\geq \{\tau_1,\ldots,\tau_\ell,\mu'\} \mbox{ when } \mu\triangleright\mu'
\]
where $\triangleright$ is the dominance order on partitions and the sum $(k_1,\ldots,k_n)+(\ell_1,\ldots,\ell_m)$ is defined to be 
\[
(k_1+\ell_1,\ldots,k_{\max(n,m)}+\ell_{\max(n,m)})
\]
with $k_i=0$ for $i>n$ and $\ell_j=0$ for $j>m$. See \cite{FMS:pencil_old_new} for the history of the term Segre symbol.

\begin{prop}
Let $\Ls\subseteq\S^n$ be a Jordan space and let $U,V\in\Ls$ be invertible matrices. Then the following statements hold:
\begin{itemize}
\item[(1)] The sets of Segre symbols corresponding to the Jordan normal forms of matrices in $\Ls U^{-1}$ and in $\Ls V^{-1}$ are equal.
\item[(2)] The set of Segre symbols corresponding to the Jordan normal form of a matrix in $\Ls U^{-1}$ has a unique maximal element.
\end{itemize}

\end{prop}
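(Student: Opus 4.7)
For part (2), I would argue by upper semicontinuity of the Segre symbol. For any Segre symbol $\sigma$ of an $n\times n$ matrix, the locus $\{X \in \Ls : \mathrm{Segre}(XU^{-1}) \geq \sigma\}$ is Zariski-open in $\Ls$, because the partial order $\geq$ defined in the paper is precisely the specialization order on Jordan normal forms: merging eigenvalues or fusing Jordan blocks corresponds to an open rank condition on the powers $(XU^{-1} - \lambda I)^k$ (equivalently, on the elementary divisors of the pencil $\lambda U - X$). Since $\Ls$ is irreducible as a vector space, exactly one of these open strata is Zariski-dense, and its Segre symbol is the unique maximum $\sigma^{*}$.

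For part (1), the plan is to identify the two sets of Segre symbols via a matrix congruence that preserves $\Ls$ and sends $U$ to $V$. Write $G_\Ls := \{g \in \GL_n(\CC) : g\Ls g^{\top} = \Ls\}$. For any $g \in G_\Ls$ the map $X \mapsto gXg^{\top}$ is a linear automorphism of $\Ls$, and the computation
\[
(gXg^{\top})V^{-1} = (gXg^{\top})(gUg^{\top})^{-1} = g(XU^{-1})g^{-1}
\]
shows that if $gUg^{\top} = V$, then $(gXg^{\top})V^{-1}$ is matrix-similar to $XU^{-1}$ and has the same Segre symbol. Hence such a $g$ yields a bijection $\Ls U^{-1} \leftrightarrow \Ls V^{-1}$ preserving Segre symbols, which proves the equality of sets.

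The proof of (1) therefore reduces to the following transitivity claim: for every pair of invertible matrices $U, V \in \Ls$ there exists $g \in G_\Ls$ with $gUg^{\top} = V$. I expect this to be the main obstacle. It reflects the Jordan-theoretic statement that the two unital algebra structures $(\Ls, \bullet_U)$ and $(\Ls, \bullet_V)$ on the same underlying space are isomorphic (Koecher's isotope theorem), and that in our special, associative setting the isomorphism is realized by a congruence in $\GL_n$. I would establish it either by invoking Koecher's theorem and carefully tracking that the resulting Jordan-algebra isomorphism lifts to a matrix congruence, or by verifying transitivity orbit-by-orbit using the classification of Jordan nets and webs from Section~\ref{sec:irr_embeddings}; the latter is elementary but requires casework.
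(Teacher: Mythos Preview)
Your argument for (2) is correct and coincides with the paper's: the loci $\{X:\sigma(XU^{-1})\leq\sigma\}$ are Zariski-closed, the linear space $\Ls U^{-1}$ is irreducible, and hence only one maximal Segre symbol can occur.

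For (1), your reduction to the transitivity claim is valid, but you overestimate its difficulty. It needs neither Koecher's theorem nor casework. By \cite[Lemma~2.3]{BES:JordanSpaces} there exists $W\in\Ls$ with $WU^{-1}W=V$ (necessarily invertible). Set $g:=WU^{-1}$. Then $gUg^{\top}=WU^{-1}W=V$, and two applications of Theorem~\ref{thm:equiv_conditions}(c) give
\[
g\Ls g^{\top}=WU^{-1}\Ls U^{-1}W=W\Ls^{-1}W=(\Ls^{-1})^{-1}=\Ls,
\]
so $g\in G_\Ls$ and your transitivity claim follows. The paper's proof is essentially this same computation, phrased slightly differently: rather than observing that $g$ stabilises $\Ls$, it uses the transpose to pass from $\Ls U^{-1}$ to $U^{-1}\Ls$ and then checks directly that conjugation by $W$ carries $\Ls V^{-1}$ to $U^{-1}\Ls$.

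Your fallback of verifying transitivity orbit-by-orbit via the classification in Section~\ref{sec:irr_embeddings} would only handle $m\leq 4$ and small $n$, so it would not establish the proposition in the generality stated.
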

\begin{proof}
(1)
Since the Jordan normal form of a matrix and its transpose are the same, it suffices to prove that $\Ls V^{-1}$ is similar to $(\Ls U^{-1})^\top= U^{-1}\Ls$. By \cite[Lemma 2.3]{BES:JordanSpaces}, we have $WU^{-1}W=V$ for some matrix $W\in\Ls$. And by Theorem~\ref{thm:equiv_conditions}(c) applied to $\Ls^{-1}$ and afterwards to $\Ls$, we have
\[
\Ls=(\Ls^{-1})^{-1}=V(\Ls^{-1})V=VW^{-1}\Ls W^{-1} V.
\]
Hence $\Ls V^{-1}=VW^{-1}\Ls W^{-1}$ is indeed similar to $W^{-1}VW^{-1}\Ls=U^{-1}\Ls$.

(2)
Suppose that $\sigma_1,\ldots,\sigma_k$ are the maximal Segre symbols corresponding to the Jordan normal form of a matrix in $\Ls U^{-1}$. Then we see that
\[
\Ls U^{-1}=\bigcup_{i=1}^k\{ X\in\Ls U^{-1}\mid \mbox{the Segre symbol of $X$ is at most $\sigma_i$}\}
\]
Since $\Ls U^{-1}$ is subspace of $\CC^{n\times n}$, it is irreducible. Since each of the sets on the right hand side is closed, $k$ must be equal to $1$.
\end{proof}

\begin{de}
We define the {\em Segre symbol} $\sigma(\Ls)$ of a Jordan space $\Ls\subseteq\S^n$ to be the maximal Segre symbol corresponding to the Jordan normal form of a matrix in $\Ls U^{-1}$ for any invertible matrix $U\in\Ls$.
\end{de}

\begin{re}
The set of Segre symbols corresponding to Jordan normal form of a matrix in $\Ls U^{-1}$ depends on $U$ for general linear spaces of symmetric matrices. For example, let 
\[
\Ls=\left\{\begin{pmatrix}
x &z &y\\
z&x+y&0\\
y&0&x
\end{pmatrix}\right\}.
\]
Then $\Ls$ does not contain nilpotent matrices. However ${\bf J}_3\in\Ls$ and 
\[
\left\{\begin{pmatrix}
0&1&0\\
0&0&1\\
0&0&0
\end{pmatrix}\right\}\in \Ls{\bf J}_3
\]
is nilpotent. Hence, the sets of Segre symbols are not the same.
\end{re}

\begin{que}
Is the Segre symbol of a general linear space of symmetric matrices well-defined?
\end{que}

\begin{prop}
Congruent Jordan spaces have the same Segre symbol.
\end{prop}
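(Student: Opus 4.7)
The plan is to reduce the statement to the observation that the Segre symbol depends only on the similarity class of matrices in $\Ls U^{-1}$, and then to exhibit an explicit similarity linking $\Ls U^{-1}$ with $\Ls'(U')^{-1}$ when $\Ls' = P\Ls P^\top$.

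First I would fix $P \in \GL_n(\CC)$ with $\Ls' = P\Ls P^\top$ and choose an invertible $U \in \Ls$. Then $U' := PUP^\top$ is an invertible element of $\Ls'$, and by the previous proposition we may use it to compute $\sigma(\Ls')$. The key calculation is
\[
\Ls'(U')^{-1} = (P\Ls P^\top)(PUP^\top)^{-1} = P\Ls P^\top P^{-\top}U^{-1}P^{-1} = P(\Ls U^{-1})P^{-1},
\]
so conjugation by $P$ gives a bijection between $\Ls U^{-1}$ and $\Ls'(U')^{-1}$ that preserves the similarity class (hence Jordan normal form, hence Segre symbol) of each matrix.

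From this bijection it follows that the sets of Segre symbols attained by matrices in $\Ls U^{-1}$ and in $\Ls'(U')^{-1}$ are identical. By the previous proposition, each set has a unique maximal element; since the sets coincide, so do these maxima, giving $\sigma(\Ls) = \sigma(\Ls')$. The only mild subtlety is noting that invertibility of $U'$ and applicability of the preceding proposition to $\Ls'$ are automatic, but both are immediate from the congruence relation. No genuine obstacle arises.
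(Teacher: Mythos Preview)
Your proof is correct and follows essentially the same approach as the paper: both arguments rest on the single computation $(P\Ls P^\top)(PUP^\top)^{-1}=P(\Ls U^{-1})P^{-1}$, showing that congruence of the Jordan spaces yields similarity of the associated matrix sets and hence equality of Segre symbols. Your version simply spells out a few more details than the paper's one-line proof.
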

\begin{proof}
Let $\Ls\subseteq\S^n$ be a Jordan space and $U\in\Ls,P\in\GL_n$ invertible matrices. Then $\Ls U^{-1}$ is similar to $P\Ls U^{-1}P^{-1}=(P\Ls P^\top)(PUP^\top)^{-1}$ and hence $\sigma(\Ls)=\sigma(P\Ls P^\top)$.
\end{proof}

\begin{prop}
If $\Ls\to\Ls'$, then $\sigma(\Ls)\geq\sigma(\Ls')$.
\end{prop}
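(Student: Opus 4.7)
The plan is to transport the degeneration $\Ls\to\Ls'$ to a one-parameter family of Jordan spaces $\Ls_t$ with $\Ls_0=\Ls'$ and $\Ls_t$ congruent to $\Ls$ for $t\neq 0$, and then compare Segre symbols through the limit. Using Remark~\ref{re:over_CC((t))}, I would start from matrices $P(t)\in\GL_n(\CC((t)))$ and $Q(t)\in\GL_m(\CC((t)))$ realising the degeneration and set $\Ls_t:=P(t)\Ls P(t)^\top$. For each $t\neq 0$ close to $0$, the space $\Ls_t$ is congruent to $\Ls$ over $\CC$, so by the previous proposition $\sigma(\Ls_t)=\sigma(\Ls)$.

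Next, I would fix an invertible $U'\in\Ls'$ and lift it to $U(t)\in\Ls_t$ with $\lim_{t\to 0}U(t)=U'$: if $U'=\sum_i c_i X'_i$, then taking $U(t):=\sum_i c_i Y_i(t)$ in the notation of Definition~\ref{de:degeneration_jordan_space} works, and by continuity of $\det$ the matrix $U(t)$ is invertible for $t$ in a punctured neighbourhood of $0$. For an arbitrary $X'\in\Ls'$ I would lift analogously to $X(t)\in\Ls_t$. Then for small $t\neq 0$ the matrix $X(t)U(t)^{-1}$ lies in $\Ls_t U(t)^{-1}$, whose unique maximal Segre symbol is $\sigma(\Ls_t)=\sigma(\Ls)$; by maximality every element of $\Ls_t U(t)^{-1}$ has Segre symbol $\leq\sigma(\Ls)$, so in particular $\sigma_J(X(t)U(t)^{-1})\leq\sigma(\Ls)$. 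Letting $t\to 0$ gives $X(t)U(t)^{-1}\to X'(U')^{-1}$ in $\CC^{n\times n}$.

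The conclusion would then follow from the closedness of the locus $\{Y\in\CC^{n\times n}:\sigma_J(Y)\leq\sigma(\Ls)\}$: once this is known, $\sigma_J(X'(U')^{-1})\leq\sigma(\Ls)$ for every $X'\in\Ls'$, and taking $X'$ generic so that its Segre symbol realises $\sigma(\Ls')$ yields $\sigma(\Ls')\leq\sigma(\Ls)$.

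The hard part is establishing this closedness, and it is precisely what the definition of the order $\geq$ on Segre symbols is engineered to encode. The two generators correspond to the two basic semi-continuity phenomena for the Jordan type of a holomorphic family $Y_t$: Gerstenhaber--Hesselink partition dominance, which describes the way Jordan blocks attached to a fixed eigenvalue may only coalesce in the limit, and the addition of partitions that occurs when distinct eigenvalues of $Y_t$ collide as $t\to 0$. Checking that every limit of matrices with Segre symbol $\leq\sigma$ is again of Segre symbol $\leq\sigma$ therefore reduces to showing that these two elementary moves generate all possible jumps of the Segre symbol under $Y_t\to Y_0$, which is the step where the bulk of the technical care is required.
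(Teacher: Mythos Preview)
Your proposal is correct and follows essentially the same route as the paper: lift $(U',X')$ to a nearby pair $(U(t),X(t))$ in a congruent copy of $\Ls$, note that the Segre symbol of $X(t)U(t)^{-1}$ is bounded above by $\sigma(\Ls)$, and pass to the limit using that the locus of matrices with Segre symbol $\leq\sigma(\Ls)$ is closed. The paper's argument is phrased with a sequence $(U_n,X_n)$ rather than a one-parameter family, but the content is identical.

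The one difference is emphasis: you single out the closedness of $\{Y\in\CC^{n\times n}:\sigma_J(Y)\leq\sigma\}$ as ``the hard part'' and sketch why the two generators of the partial order correspond to the two ways the Jordan type can jump in a limit. The paper, by contrast, uses this closedness without comment, treating it as a standard fact about similarity-orbit closures (the Gerstenhaber--Hesselink dominance order together with eigenvalue collision is exactly the classical description of the bundle stratification of $\CC^{n\times n}$). So you have not found a gap in the paper; you have simply been more explicit about an ingredient the paper assumes known.
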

\begin{proof}
Suppose that $\Ls\to\Ls'$. Let $U'\in\Ls'$ an invertible matrix and $X'\in\Ls'$ any other matrix. Then $(U',X')$ is the limit of a sequence of pairs $(U_n,X_n)$ of matrices contained in a Jordan space $\Ls_n$ congruent to $\Ls$. By replacing the sequence by a subsequence, we may assume that $U_n$ is invertible. Since $XU^{-1}=\lim_{n\to\infty}X_nU_n^{-1}$ and the Segre symbol corresponding to the Jordan normal form of $X_nU_n^{-1}$ is at most $\sigma(\Ls)$, we see that the same holds for the Segre symbol corresponding to the Jordan normal form of $XU^{-1}$. Hence $\sigma(\Ls')\leq\sigma(\Ls)$.
\end{proof}

\subsection{The orbit dimension obstruction}

\begin{de}
We define the {\em orbit dimension} $d(\Ls)$ of $\Ls$ to be the dimension of the irreducible variety $\overline{\{(Y_1,\ldots,Y_m)\mid \spann(Y_1,\ldots,Y_m)\in\GL_n\cdot \Ls\}}$.
\end{de}

\begin{prop}
If $\Ls\to\Ls'$, then $d(\Ls)\geq d(\Ls')$ with equality if and only if $\Ls,\Ls'$ are congruent.
\end{prop}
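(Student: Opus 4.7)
The plan is to reduce the statement to well-known facts about algebraic group actions, applied to the orbit of $\Ls$ inside $\Gr(m,\S^n)$. For brevity, write $X_\Ls := \{(Y_1,\ldots,Y_m) \mid \spann(Y_1,\ldots,Y_m)\in\GL_n\cdot\Ls\}$, so $d(\Ls) = \dim\overline{X_\Ls}$.

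First I would check that $X_\Ls$ is irreducible. Fixing any basis $X_1,\ldots,X_m$ of $\Ls$, the set $X_\Ls$ is precisely the image of the morphism
\[
\GL_n(\CC)\times\GL_m(\CC)\to (\S^n)^m,\qquad (P,Q)\mapsto (PX_1P^\top,\ldots,PX_mP^\top)Q,
\]
since the fiber over each subspace in the orbit is a $\GL_m$-torsor corresponding to choices of basis. As the source is irreducible, $X_\Ls$ and hence $\overline{X_\Ls}$ is irreducible.

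Next I would establish the inequality. By the topological description of degeneration (cf.\ Definition~\ref{de:degeneration_jordan_space}), $\Ls\to\Ls'$ means $\Ls'\in\overline{\GL_n\cdot\Ls}$, and this closure is $\GL_n$-stable, so $\GL_n\cdot\Ls'\subseteq\overline{\GL_n\cdot\Ls}$. Taking preimages under the span map gives $X_{\Ls'}\subseteq\overline{X_\Ls}$, and passing to closures yields $\overline{X_{\Ls'}}\subseteq\overline{X_\Ls}$. Hence $d(\Ls')\leq d(\Ls)$.

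For the equality case, the key input is the standard fact that an orbit of an algebraic group action is open in its closure (a direct consequence of Chevalley's theorem that the image of a morphism of varieties contains a dense open subset of its closure, combined with homogeneity). Thus $X_\Ls$ is open in $\overline{X_\Ls}$, and likewise $X_{\Ls'}$ is open in $\overline{X_{\Ls'}}$. If $d(\Ls)=d(\Ls')$, then $\overline{X_{\Ls'}}\subseteq\overline{X_\Ls}$ combined with irreducibility and equality of dimensions forces $\overline{X_{\Ls'}}=\overline{X_\Ls}$; both $X_\Ls$ and $X_{\Ls'}$ are then nonempty open subsets of the same irreducible variety, so they intersect, which forces $\GL_n\cdot\Ls=\GL_n\cdot\Ls'$, i.e., $\Ls$ and $\Ls'$ are congruent. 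The reverse implication is immediate: congruent spaces yield identical orbits and hence equal orbit dimensions. I expect the only subtle step to be invoking the ``orbits are open in their closures'' fact, which is why I would cite it explicitly rather than reprove it.
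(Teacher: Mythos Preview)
Your proof is correct and follows essentially the same approach as the paper's: establish the containment $\overline{X_{\Ls'}}\subseteq\overline{X_\Ls}$ to get the inequality, then use irreducibility and equal dimension to force equality of closures and hence of orbits. The paper's proof is much terser---it simply asserts the containment of closures and that ``since both varieties are irreducible, equality of the dimensions implies that $\Ls,\Ls'$ are congruent''---whereas you spell out the irreducibility argument and explicitly invoke the fact that orbits are open in their closures, which is precisely the missing justification behind the paper's final sentence. Your detour through the Grassmannian to obtain $X_{\Ls'}\subseteq\overline{X_\Ls}$ is a bit roundabout (the definition of degeneration already places a basis tuple of $\Ls'$ directly in $\overline{X_\Ls}$, and $\GL_n\times\GL_m$-stability of the closure finishes it), but it is valid.
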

\begin{proof}
Assume that $\Ls\to\Ls'$. Then 
\[
\overline{\{(Y_1,\ldots,Y_m)\mid \spann(Y_1,\ldots,Y_m)\in\GL_n\cdot \Ls\}}\supseteq\overline{\{(Y_1,\ldots,Y_m)\mid \spann(Y_1,\ldots,Y_m)\in\GL_n\cdot \Ls'\}}
\]
and hence $d(\Ls)\geq d(\Ls')$. Since both varieties are irreducible, equality of the dimensions implies that $\Ls,\Ls'$ are congruent.
\end{proof}

\section{Families of degenerations of Jordan nets}\label{sec:family_degenerations_Jordan_nets}

Fix an integer $n\geq 2$. Theorem~\ref{thm:classification_nets_general} classifies all embeddings of the Jordan algebras $\CC\times\CC\times\CC$, $\CC\times\Js^1_0$, $\CC[x]/(x^3)$, $\Js^2_2$ and $\Js^2_1$ into $\S^n$. The goal of this section is to determine all degenerations between these embeddings. The abstract obstruction shows that there are no degenerations between the $A$'s and the $B$'s. So we can handle them separately. For the construction of the degenerations, see Appendix~\ref{ssec:degen_nets}. We now prove that these degeneration generate everything.

\begin{prop}
The degenerations\vspace{-10pt}
\begin{multicols}{2}
\begin{enumerate}[(a)]
\item[$\mathrm{(a)}$] $A^{(1)}_{k_1,k_2,k_3}\to A^{(2)}_{k_3,k_1,k_2+k_3}$
\item[$\mathrm{(b)}$] $A^{(1)}_{k_1,k_2,k_3}\to A^{(2)}_{k_2+k_3,k_1+k_2,k_3}$
\item[$\mathrm{(c)}$] $A^{(1)}_{k_1,k_2,k_3}\to A^{(2)}_{k_1+k_2+k_3,k_2,k_3}$
\item[$\mathrm{(d)}$] $A^{(2)}_{r,k_1,k_2}\to A^{(2)}_{r,k_1+2,k_2-1}$ for $k_2>1$
\item[$\mathrm{(e)}$] $A^{(2)}_{r,k_1,k_2}\to A^{(3)}_{k_1,k_2-r,r}$ for $r\leq k_2$
\item[$\mathrm{(f)}$] $A^{(2)}_{r,k_1,k_2}\to A^{(3)}_{k_1+k_2-r,r-k_2,k_2}$ for $k_2\leq r\leq k_1+k_2$
\item[$\mathrm{(g)}$] $A^{(2)}_{r,k_1,k_2}\to A^{(3)}_{r-(k_1+k_2),k_1,k_2}$ for $r\geq k_1+k_2$
\item[$\mathrm{(h)}$] $A^{(3)}_{k_1,k_2,k_3}\to A^{(3)}_{k_1+1,k_2+1,k_3-1}$ for $k_3>1$
\item[$\mathrm{(i)}$] $A^{(3)}_{k_1,k_2,k_3}\to A^{(3)}_{k_1+2,k_2-1,k_3}$ for $k_2>0$
\item[$\mathrm{(j)}$] $A^{(3)}_{k_1,k_2,k_3}\to A^{(3)}_{k_1-1,k_2+2,k_3-1}$ for $k_1>0,k_3>1$
\end{enumerate}
\end{multicols}\vspace{-10pt}
generate all degenerations between Jordan nets labelled with an $A$.
\end{prop}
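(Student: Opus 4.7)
The strategy is to first restrict the set of possible degenerations using the abstract obstruction (Proposition~\ref{prop:obstr_abstract}) combined with Theorem~\ref{thm:abstract_diagram3}, and then rule out all non-generated degenerations using the finer invariants of Section~\ref{sec:degenerations_obstructions}. Since the only degenerations between the underlying Jordan algebras are $\CC\times\CC\times\CC\to\CC\times\Js^1_0\to\CC[x]/(x^3)$, every degeneration between $A$-nets falls into one of the six cases $A^{(1)}\to A^{(i)}$ for $i\in\{1,2,3\}$, $A^{(2)}\to A^{(j)}$ for $j\in\{2,3\}$, and $A^{(3)}\to A^{(3)}$; furthermore, $A^{(1)}\to A^{(3)}$ already factors through $A^{(2)}$ once (a)--(g) are available, so only the five remaining cases need to be analysed directly.

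I would begin by verifying that distinct parameter tuples inside each family give non-congruent orbits, by reading off the multiset of block sizes (for $A^{(1)}$) or the pair consisting of the Segre symbol of a generic element and the dimension of the unit eigenspace of a generic semisimple element (for $A^{(2)}$ and $A^{(3)}$). The same invariants also rule out any $A^{(1)}\to A^{(1)}$ with differing parameters, since the Segre symbols $\{(1^{k_1+k_2+k_3}),(1^{k_2+k_3}),(1^{k_3})\}$ are pairwise incomparable in the partial order of Section~\ref{sec:degenerations_obstructions} when the triples differ. For the between-type cases, any $A^{(1)}\to A^{(2)}$ arises by collapsing exactly one of the three idempotents of $\CC\times\CC\times\CC$ into the $\Js^1_0$ summand, and the three choices give precisely the patterns (a), (b), (c); an analogous bookkeeping for $A^{(2)}\to A^{(3)}$ produces (e), (f), (g), with the three cases distinguished by how the semisimple summand of $\CC\times\Js^1_0$ is absorbed into the unit of $\CC[x]/(x^3)$. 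In each case the block-size matching is forced by the partition invariants, so no other parameter combination can arise.

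The main obstacle is the completeness of the within-type lists (d) and (h)--(j) for $A^{(2)}\to A^{(2)}$ and $A^{(3)}\to A^{(3)}$. Here I plan to establish that the orbit dimension $d(\Ls)$ is strictly monotone along each generator, so that any degeneration decomposes into a chain of covers in the poset of orbits generated by the listed moves. Excluding spurious covers is then a finite combinatorial check: for each pair of orbits with adjacent orbit dimensions, one either identifies a listed generator connecting them or produces an obstruction via the minimal matrix rank $\tau_1$ or the Segre symbol. In particular, $\tau_1(A^{(3)}_{k_1,k_2,k_3})$ equals $1$, $2$, or $3$ according as $k_1$, $k_2$, or $k_3$ is the first non-zero entry, and this invariant together with the triple $(k_1,k_2,k_3)$ is enough to separate any two orbits that would otherwise admit a non-generated cover.
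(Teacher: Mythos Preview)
Your case split via the abstract obstruction matches the paper's, but the execution of the individual cases has genuine gaps. The central one is in the within-type cases $A^{(2)}\to A^{(2)}$ and $A^{(3)}\to A^{(3)}$: you plan to use strict monotonicity of the orbit dimension along each generator and then check covers with adjacent dimensions, but the orbit dimensions of $A^{(2)}_{r,k_1,k_2}$ and $A^{(3)}_{k_1,k_2,k_3}$ are not computed in the paper---they appear only as a conjecture immediately after this proposition---so this step cannot be carried out with the available tools. The paper instead argues directly from the invariant table: for $A^{(3)}\to A^{(3)}$ the Segre-symbol obstruction forces the target partition $(3^{\ell_3}2^{\ell_2}1^{\ell_1})$ to lie below $(3^{k_3}2^{k_2}1^{k_1})$ in dominance order, and the covering relations of dominance on partitions with parts $\leq 3$ are precisely the moves (h),(i),(j); for $A^{(2)}\to A^{(2)}$ the determinantal obstruction gives $\{r,k_1+2k_2\}=\{r',\ell_1+2\ell_2\}$ as multisets, and then the Segre symbol forces $r=r'$ and $\ell_2\leq k_2$, so the degeneration is a power of (d). Incidentally, your claim that $\tau_1(A^{(3)}_{k_1,k_2,k_3})\in\{1,2,3\}$ depending on the first nonzero $k_i$ is wrong: the minimal matrix rank is $k_3$, the rank of the square of the nilpotent generator.

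The between-type cases are also not mere bookkeeping. For $A^{(1)}_{k_1,k_2,k_3}\to A^{(2)}_{r,\ell_1,\ell_2}$, the Segre symbol only pins down $r\in\{k_3,\,k_2+k_3,\,k_1+k_2+k_3\}$; after that one still needs a further Segre inequality (when $r=k_3$) or the minimal-rank bound $\ell_2\leq k_3$ (when $r>k_3$) to show that the degeneration factors as one of (a),(b),(c) \emph{followed by} iterations of (d). Likewise, $A^{(2)}\to A^{(3)}$ and $A^{(1)}\to A^{(3)}$ are handled by first applying (e)/(f)/(g) (resp.\ (a) then (e)) and then invoking the already-settled $A^{(3)}\to A^{(3)}$ case. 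Your sketch omits both these compositions and the specific invariants needed to justify them.
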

\begin{proof}
The following table shows for each orbit its determinant, minimal rank, minimal pencil rank and Segre symbol. 
\[
\begin{array}{l|c|c|c|c|}
&\det&\tau_1&\tau_2&\sigma\\\hline
A^{(1)}_{k_1,k_2,k_3}&x^{k_1+k_2+k_3}y^{k_2+k_3}z^{k_3}&k_3&k_2+2k_3&(\overbrace{1\cdots\cdot\cdot1}^{k_1+k_2+k_3})(\overbrace{1\cdots\cdot1}^{k_2+k_3})(\overbrace{1\cdots1}^{k_3})\\\hline
A^{(2)}_{r,k_1,k_2}&x^ry^{k_1+2k_2}&\min(r,k_2)&\min(r+k_2,k_1+2k_2)&(\overbrace{1\cdots1}^{r})(\overbrace{2\cdots2}^{k_2}\overbrace{1\cdots1}^{k_1})\\\hline
A^{(3)}_{k_1,k_2,k_3}&x^n&k_3&k_2+2k_3&(\overbrace{3\cdots3}^{k_3}\overbrace{2\cdots2}^{k_2}\overbrace{1\cdots1}^{k_1})\\\hline
\end{array}
\]
The abstract obstruction shows that we have the following 6 possible cases:

(1)
Suppose that $A^{(3)}_{k_1,k_2,k_3}\to A^{(3)}_{\ell_1,\ell_2,\ell_3}$. Then the Segre symbol obstruction shows that 
\[
(\overbrace{3\cdots3}^{\ell_3}\overbrace{2\cdots2}^{\ell_2}\overbrace{1\cdots1}^{\ell_1})\leq
(\overbrace{3\cdots3}^{k_3}\overbrace{2\cdots2}^{k_2}\overbrace{1\cdots1}^{k_1})
\]
This means that we can reach $(\ell_1,\ell_2,\ell_3)$ from $(k_1,k_2,k_3)$ by a series of moves where in each step we replace $(k_1,k_2,k_3)$ by either $(k_1+1,k_2+1,k_3-1)$, $(k_1+2,k_2-1,k_3)$ or $(k_1-1,k_2+2,k_3-1)$. Hence $A^{(3)}_{k_1,k_2,k_3}\to A^{(3)}_{\ell_1,\ell_2,\ell_3}$ is obtained as a composition of degenerations from (h),(i),(j).

(2)
Suppose that $A^{(2)}_{r,k_1,k_2}\to A^{(3)}_{\ell_1,\ell_2,\ell_3}$. The Segre symbol obstruction shows that
\[
(\overbrace{3\cdots3}^{\ell_3}\overbrace{2\cdots2}^{\ell_2}\overbrace{1\cdots1}^{\ell_1})\leq
\left\{\begin{array}{cl}
(\overbrace{3\cdots3}^{r}\overbrace{2\cdots2}^{k_2-r}\overbrace{1\cdots1}^{k_1})&\mbox{if $r\leq k_2$}\\
(\overbrace{3\cdots3}^{k_2}\overbrace{2\cdots2}^{r-k_2}\overbrace{1\cdots\cdot1}^{k_1+k_2-r})&\mbox{if $k_2\leq r\leq k_1+k_2$}\\
(\overbrace{3\cdots3}^{k_2}\overbrace{2\cdots2}^{k_1}\overbrace{1\cdots\cdots1}^{r-(k_1+k_2)})&\mbox{if $r\geq k_1+k_2$}
\end{array}\right.
\]
and hence we have:
\begin{itemize}
\item $A^{(3)}_{k_1,k_2-r,r}\to A^{(3)}_{\ell_1,\ell_2,\ell_3}$ when $r\leq k_2$;
\item $A^{(3)}_{k_1+k_2-r,r-k_2,k_2}\to A^{(3)}_{\ell_1,\ell_2,\ell_3}$ when $k_2\leq r\leq k_1+k_2$; and
\item $A^{(3)}_{r-(k_1+k_2),k_1,k_2}\to A^{(3)}_{\ell_1,\ell_2,\ell_3}$ when $r\geq k_1+k_2$
\end{itemize}
by (1). So $A^{(2)}_{r,k_1,k_2}\to A^{(3)}_{\ell_1,\ell_2,\ell_3}$ is obtained as a composition of degenerations from (1),(e),(f),(g).

(3)
Suppose that $A^{(2)}_{r,k_1,k_2}\to A^{(2)}_{r',\ell_1,\ell_2}$. Then the determinantal obstruction shows that $\{r,k_1+2k_2\}=\{r',\ell_1+2\ell_2\}$. Now, the Segre symbol obstruction shows that $r=r'$ and $\ell_2\leq k_2$. Hence the degeneration is from (d).

(4)
Suppose that $A^{(1)}_{k_1,k_2,k_3}\to A^{(3)}_{\ell_1,\ell_2,\ell_3}$. Then the Segre symbol obstruction shows that
\[
(\overbrace{3\cdots3}^{\ell_3}\overbrace{2\cdots2}^{\ell_2}\overbrace{1\cdots1}^{\ell_1})\leq
(\overbrace{3\cdots3}^{k_3}\overbrace{2\cdots2}^{k_2}\overbrace{1\cdots1}^{k_1})
\]
and hence $A^{(3)}_{k_1,k_2,k_3}\to A^{(3)}_{\ell_1,\ell_2,\ell_3}$. We have $A^{(1)}_{k_1,k_2,k_3}\to A^{(3)}_{k_1,k_2,k_3}$ using for example (a),(e). So using~(1), we are done.

(5)
Suppose that $A^{(1)}_{k_1,k_2,k_3}\to A^{(2)}_{r,\ell_1,\ell_2}$. Then the Segre symbol obstruction shows that
\[
(\overbrace{1\cdots1}^{r})(\overbrace{2\cdots2}^{\ell_2}\overbrace{1\cdots1}^{\ell_1})\leq (\overbrace{1\cdots\cdot\cdot1}^{k_1+k_2+k_3})(\overbrace{1\cdots\cdot1}^{k_2+k_3})(\overbrace{1\cdots1}^{k_3}) 
\]
and hence $r\in \{k_1+k_2+k_3,k_2+k_3,k_3\}$. When $r=k_3$, we see that
\[
(\overbrace{2\cdots2}^{\ell_2}\overbrace{1\cdots1}^{\ell_1})\leq (\overbrace{1\cdots\cdot\cdot1}^{k_1+k_2+k_3})(\overbrace{1\cdots\cdot1}^{k_2+k_3})
\]
which implies that $\ell_2\leq k_2+k_3$ and so $A^{(2)}_{k_3,k_1,k_2+k_3}\to A^{(2)}_{r,\ell_1,\ell_2}$ using (a),(d). When $r>k_3$, the minimal rank obstruction shows that $\ell_2\leq k_3$ and so $A^{(2)}_{r,n-r-2k_3,k_3}\to A^{(2)}_{r,\ell_1,\ell_2}$ using (b),(d) or (c),(d). 

(6) Suppose that $A^{(1)}_{k_1,k_2,k_3}\to A^{(1)}_{\ell_1,\ell_2,\ell_3}$. Then the determinant obstruction shows that 
\[
x^{k_1+k_2+k_3}y^{k_2+k_3}z^{k_3}\to x^{\ell_1+\ell_2+\ell_3}y^{\ell_2+\ell_3}z^{\ell_3}
\]
which implies that $(k_1,k_2,k_3)=(\ell_1,\ell_2,\ell_3)$.
\end{proof}

\begin{prop}
The degenerations
\begin{itemize}
\item[$\mathrm{(a)}$] $B^{(1)}_{n/2}\to B^{(2)}_{k,0,n/2}$ for $1\leq k\leq n/4$ if $2\mid n$
\item[$\mathrm{(b)}$] $B^{(2)}_{k,\ell_1,\ell_2}\to B^{(2)}_{k-1,\ell_1,\ell_2}$ for $k>1$ 
\end{itemize}
generate all degenerations between Jordan nets labelled with an $B$.
\end{prop}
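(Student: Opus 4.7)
The plan is to mirror the proof of the previous proposition: tabulate the standard invariants (abstract algebra type, determinant, minimal matrix rank $\tau_1$, and Segre symbol) for each of the $B$-orbits, and then use these together with the obstructions from Section~\ref{sec:degenerations_obstructions} to reduce every degeneration to a composition of (a) and (b). Since $B^{(1)}_{n/2}$ is an embedding of $\Js^2_2$ and $B^{(2)}_{k,\ell_1,\ell_2}$ an embedding of $\Js^2_1$, and Theorem~\ref{thm:abstract_diagram3} gives $\Js^2_2\to\Js^2_1$ but $\Js^2_1\not\to\Js^2_2$, the abstract obstruction already rules out $B^{(2)}\to B^{(1)}$. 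The $B^{(1)}$-orbit is unique in each even-dimensional $\S^n$, so the only substantive cases are $B^{(1)}\to B^{(2)}$ and $B^{(2)}\to B^{(2)}$.

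To compute the invariants of $B^{(2)}_{k,\ell_1,\ell_2}$, I would pass to the ${\bf 1}$-normalization of Proposition~\ref{prop:J_to_I}, in which a general element has the block form
\[
\begin{pmatrix} xI_{\ell_2} & zW \\ zW^\top & yI_{\ell_2+\ell_1} \end{pmatrix}
\]
with $\rk W = k$ and $WW^\top = W^\top W = 0$ (the latter coming from the analogue of Lemma~\ref{lm:anti-diag_square-zero}). A Schur-complement calculation that exploits the vanishing of $W^\top W$ gives $\det = x^{\ell_2} y^{\ell_2+\ell_1}$ up to scaling; a short case analysis on $(x,y,z)$ yields $\tau_1 = 2k$, attained by $V$ itself; and a direct eigenspace computation, again hinging on $WW^\top = W^\top W = 0$, shows that the generic eigenvalues $x$ and $y$ have geometric multiplicity equal to their algebraic multiplicity, producing the Segre symbol $\{(1^{\ell_2}), (1^{\ell_2+\ell_1})\}$. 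The parallel computation for $B^{(1)}_{n/2} = \S^2\otimes {\bf 1}_{n/2}$ is immediate and gives $\tau_1 = n/2$, $\det = (xz - y^2)^{n/2}$, and Segre symbol $\{(1^{n/2}), (1^{n/2})\}$.

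Each remaining case is then short. In the Segre partial order, a transition between two-eigenvalue symbols whose partitions are already of the form $(1^a)$ can only be a refinement within each partition, but $(1^a)$ is minimal under dominance; equality of the multisets of partitions is therefore forced. For $B^{(1)}_{n/2} \to B^{(2)}_{k',\ell_1',\ell_2'}$ this gives $\{\ell_2',\ell_2'+\ell_1'\} = \{n/2, n/2\}$, hence $\ell_1'=0$ and $\ell_2'=n/2$, and degeneration (a) produces every such target directly. For $B^{(2)}_{k,\ell_1,\ell_2}\to B^{(2)}_{k',\ell_1',\ell_2'}$, the same Segre argument forces $(\ell_1',\ell_2')=(\ell_1,\ell_2)$, and the minimal-rank obstruction $2k'=\tau_1(B^{(2)}_{k',\ell_1,\ell_2})\leq\tau_1(B^{(2)}_{k,\ell_1,\ell_2})=2k$ gives $k'\leq k$, so $k-k'$ iterations of (b) finish the proof.

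The main obstacle is the Segre-symbol computation for $B^{(2)}_{k,\ell_1,\ell_2}$: one must verify that the nilpotent off-diagonal perturbation $zW$ does not create Jordan blocks of size $\geq 2$ at either of the generic eigenvalues $x$ or $y$. This reduces to showing $\im W^\top \subseteq \ker W$ and $\im W \subseteq \ker W^\top$, both of which follow at once from $WW^\top = W^\top W = 0$; these are the same identities that appear in Lemma~\ref{lm:anti-diag_square-zero}, so no new input is needed beyond what is already established.
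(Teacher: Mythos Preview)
Your proof is correct and follows the paper's strategy: compute invariants from Section~\ref{sec:degenerations_obstructions} for each $B$-orbit and use them to rule out every degeneration not generated by (a) and (b). The one difference is in which obstruction pins down $(\ell_1,\ell_2)$. The paper uses the determinantal obstruction: since $\det B^{(2)}_{k,\ell_1,\ell_2}=x^{\ell_2}y^{\ell_1+\ell_2}$ and $\det B^{(1)}_{n/2}=(xz-y^2)^{n/2}$, a degeneration of one $B^{(2)}$ to another forces the multiset $\{\ell_2,\ell_1+\ell_2\}$ to be preserved (a product of two powers of linear forms cannot degenerate to one with a different exponent multiset without collapsing to a single power), and a degeneration $B^{(1)}_{n/2}\to B^{(2)}_{k,\ell_1,\ell_2}$ forces $\ell_2=\ell_1+\ell_2=n/2$. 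You reach the same conclusions via the Segre symbol instead. Your Segre computation is correct---the identity $WW^\top=W^\top W=0$ does make the generic element diagonalizable with eigenvalue multiplicities $\ell_2$ and $\ell_1+\ell_2$, and since each partition $(1^a)$ is dominance-minimal and merging drops the number of partitions, the only two-partition symbol below $\{(1^{\ell_2}),(1^{\ell_1+\ell_2})\}$ is itself. This is a minor variation rather than a genuinely different route; the determinant is slightly quicker because it avoids the Jordan-block analysis, but both arguments sit entirely inside the toolkit of Section~\ref{sec:degenerations_obstructions}.
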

\begin{proof}
The following table shows for each orbit its determinant, minimal rank and minimal pencil rank. 
\[
\begin{array}{l|c|c|c|}
&\det&\tau_1&\tau_2\\\hline
B^{(1)}_{n/2}&(xy-z^2)^{n/2}&n/2&n\\\hline
B^{(2)}_{k,\ell_1,\ell_2}&x^{\ell_2}y^{\ell_1+\ell_2}&2k&\ell_2\\\hline
\end{array}
\]
The abstract obstruction shows that we have the following 2 possible cases:

(1)
Suppose that $B^{(2)}_{k,\ell_1,\ell_2}\to B^{(2)}_{k',\ell_1',\ell_2'}$. Then the determinantal obstruction shows that $(\ell_1,\ell_2)=(\ell_1',\ell_2')$ and the minimum rank condition shows that $k'\leq k$. So the degenerations is from (b).

(2)
Suppose that $B^{(1)}_{n/2}\to B^{(2)}_{k,\ell_1,\ell_2}$. Then the determinantal obstruction shows that $(\ell_1,\ell_2)=(0,n/2)$ and the minimum rank obstruction shows that $k\leq n/4$. So the degenerations is from (a).
\end{proof}

We see that the closures of the orbits $A^{(1)}_{k_1,k_2,k_3}$ and $B^{(1)}_{n/2}$ are always components of the Jordan locus. We compute their codimensions in $\Gr(3,\S^n)$.

\begin{prop}
Write $(n_1,n_2,n_3)=(k_1+k_2+k_3,k_2+k_3,k_3)$. Then $A^{(1)}_{k_1,k_2,k_3}$ is invariant under
\[
\left\{\begin{pmatrix}\lambda_1Q_1\\&\lambda_2Q_2\\&&\lambda_3Q_3\end{pmatrix}\,\middle|\,\begin{array}{l}Q_1\in O(n_1),\lambda_1\in\CC^*,\\Q_2\in O(n_2),\lambda_2\in\CC^*,\\Q_3\in O(n_3),\lambda_3\in\CC^*\end{array}
\right\}
\]
together with
\[
P_1:=\begin{pmatrix}&{\bf 1}_{n_1}\\{\bf 1}_{n_1}\\&&{\bf 1}_{n_3}\end{pmatrix}\mbox{ and }P_2:=\begin{pmatrix}{\bf 1}_{n_1}\\&&{\bf 1}_{n_2}\\&{\bf 1}_{n_2}\end{pmatrix}
\]
when $n_1=n_2$ and when $n_2=n_3$, respectively. These matrices generate the stabilizer of $A^{(1)}_{k_1,k_2,k_3}$. The orbit of $A^{(1)}_{k_1,k_2,k_3}$ has codimension $n_1^2 + n_1n_2 + n_2^2 + n_1n_3 + n_2n_3 + n_3^2 + n_1 + n_2 + n_3 - 6$.
\end{prop}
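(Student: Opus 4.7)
The plan is to split the proposition into three pieces --- invariance of the listed matrices, generation of the full stabilizer $H$, and the codimension formula --- and handle them in that order. The first is a direct computation; the second requires understanding the component structure of $H$ via a homomorphism to $\GL_3$; and the third is a dimension count, driven by an explicit Lie-algebra calculation.

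For invariance, I would write $\Ls = \CC E_1 \oplus \CC E_2 \oplus \CC E_3$ with $E_i$ the $i$-th block-identity idempotent, and verify by block multiplication that $\Diag(\lambda_1 Q_1,\lambda_2 Q_2,\lambda_3 Q_3)$ with $Q_i \in \O(n_i)$ sends $E_i \mapsto \lambda_i^2 E_i$, while $P_1, P_2$ act as the transpositions swapping $E_1 \leftrightarrow E_2$ or $E_2 \leftrightarrow E_3$ (fixing the remaining $E_i$). For the Lie algebra $\mathfrak{h}$ of $H$, I would solve $AX + XA^\top \in \Ls$ for all $X \in \Ls$ by block-decomposing $A = (A_{jk})$ according to $(n_1,n_2,n_3)$: since $(AX+XA^\top)_{jk} = x_k A_{jk} + x_j A_{kj}^\top$ for $X = \sum x_i E_i$, varying $x_j, x_k$ independently forces $A_{jk} = 0$ for $j \neq k$, and the diagonal block condition becomes $A_{jj} + A_{jj}^\top \in \CC \cdot {\bf 1}_{n_j}$, i.e.\ $A_{jj} = \tfrac{\lambda_j}{2}{\bf 1}_{n_j} + S_j$ with $S_j$ skew-symmetric. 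This yields $\dim \mathfrak{h} = 3 + \sum_j n_j(n_j-1)/2$, matching the dimension of the group generated by the scalar-orthogonal blocks.

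To upgrade this to a description of $H$ itself (not just its identity component), I introduce the homomorphism $\rho : H \to \GL(\Ls) \cong \GL_3(\CC)$ sending $g$ to the matrix $C = (c_{ij})$ with $gE_ig^\top = \sum_j c_{ij} E_j$. The same block argument as above shows $\ker \rho = \O(n_1) \times \O(n_2) \times \O(n_3)$. For the image, the key observation is that $\rk(gE_ig^\top) = n_i$ while $\rk(\sum_j c_{ij}E_j) = \sum_{j : c_{ij} \neq 0} n_j$, so summing over $i$ gives
\[
\sum_j n_j \cdot \bigl|\{i : c_{ij} \neq 0\}\bigr| \;=\; \sum_i n_i \;=\; n \;=\; \sum_j n_j.
\]
Invertibility of $C$ forces at least one nonzero entry per column, hence equality in each summand, so each column (and, by invertibility, each row) of $C$ has exactly one nonzero entry; the rank equation then shows the underlying permutation $\sigma$ satisfies $n_{\sigma(i)} = n_i$. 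Every such $\sigma$ is a product of the transpositions realized by $P_1, P_2$, and every diagonal $C$ is realized by $\Diag(\sqrt{c_1}{\bf 1}_{n_1},\sqrt{c_2}{\bf 1}_{n_2},\sqrt{c_3}{\bf 1}_{n_3})$, so the subgroup generated by the listed matrices surjects onto $\mathrm{im}\,\rho$ and contains $\ker \rho$; therefore it equals $H$.

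Finally, using $\dim \Gr(3,\S^n) = 3(n(n+1)/2 - 3)$ and $\dim(\text{orbit}) = n^2 - \dim \mathfrak{h}$, the codimension is
\[
\codim = \tfrac{3n(n+1)}{2} - 9 - n^2 + 3 + \sum_j \tfrac{n_j(n_j-1)}{2},
\]
which, after substituting $n = n_1+n_2+n_3$ and expanding $n^2 = \sum n_j^2 + 2\sum_{j<k} n_jn_k$, collapses to the stated $n_1^2 + n_2^2 + n_3^2 + n_1n_2 + n_1n_3 + n_2n_3 + n_1 + n_2 + n_3 - 6$. The one genuinely substantive step is the monomiality of $\rho(H)$; the rest is bookkeeping, so I expect the column/row rank argument to be the main obstacle.
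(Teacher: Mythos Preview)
Your argument is correct and more complete than the paper's. Both proofs rest on the same core observation---block--decompose a stabilizer element $g$ and examine $gE_ig^\top$ for each idempotent $E_i$---but you organize it differently. The paper works column by column: it writes $g$ as a $3\times 3$ block matrix, computes $gE_1g^\top$ directly, and asserts (without fully spelling out why) that exactly one of the three blocks in the first block column is a nonzero scalar multiple of an orthogonal matrix and the other two vanish, then repeats for the remaining columns. Your packaging via the homomorphism $\rho\colon H\to\GL_3$ is cleaner: the rank identity $n_i=\sum_{j:c_{ij}\neq0}n_j$ together with the column-sum trick gives monomiality of $\rho(H)$ in one stroke, and the kernel/image decomposition makes the generation statement transparent. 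You also supply the Lie-algebra computation and the arithmetic for the codimension formula, which the paper simply states without proof. One small point: your justification of $\ker\rho=\O(n_1)\times\O(n_2)\times\O(n_3)$ as ``the same block argument as above'' is slightly glib---the Lie-algebra argument does not literally apply---but the claim is easily recovered by noting that $gE_ig^\top=E_i$ for all $i$ forces $g\in\O(n)$ (sum the three equations), whence $g$ commutes with each projection $E_i$ and is block diagonal.
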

\begin{proof}
Let 
\[
\begin{array}{rrr}
A\in\CC^{n_1\times n_1},&B\in\CC^{n_1\times n_2},&C\in\CC^{n_1\times n_3},\\
D\in\CC^{n_2\times n_1},&E\in\CC^{n_2\times n_2},&F\in\CC^{n_2\times n_3},\\
G\in\CC^{n_3\times n_1},&H\in\CC^{n_3\times n_2},&I\in\CC^{n_3\times n_3}\phantom{,}
\end{array}
\]
be matrices such that
\[
\begin{pmatrix}
A&B&C\\D&E&F\\G&H&I
\end{pmatrix}A^{(1)}_{k_1,k_2,k_3}\begin{pmatrix}
A&B&C\\D&E&F\\G&H&I
\end{pmatrix}^\top=A^{(1)}_{k_1,k_2,k_3}.
\]
Then
\[
\begin{pmatrix}
AA^\top&AD^\top&AG^\top\\DA^\top&DD^\top&DG^\top\\GA^\top&GD^\top&GG^\top
\end{pmatrix}=\begin{pmatrix}
A&B&C\\D&E&F\\G&H&I
\end{pmatrix}\begin{pmatrix}{\bf 1}_{n_1}\\&{\bf 0}_{n_2}\\&&{\bf 0}_{n_3}\end{pmatrix}
\begin{pmatrix}
A&B&C\\D&E&F\\G&H&I
\end{pmatrix}^\top\in A^{(1)}_{k_1,k_2,k_3}.
\]
So of $A,D,G$, we see that one is of the form $\lambda_1 Q_1$ with $Q_1\in O(n_1)$ and $\lambda_1\in\CC^*$. It then easily follows that the other two matrices are zero. Note that $D=\lambda_1 Q_1$ is only possible when $n_1=n_2$ and $G=\lambda Q_1$ is only possible when $n_1=n_2=n_3$. Similarly, we see that one of $B,E,H$ is of the form $\lambda_2 Q_2$ with $Q_2\in O(n_2)$ and $\lambda_2\in\CC^*$ and the other two matrices are zero. And, we see that one of $C,F,I$ is of the form $\lambda_2 Q_2$ with $Q_3\in O(n_3)$ and $\lambda_3\in\CC^*$ and the other two matrices are zero. It is straightforward to check that this matrix is in the group generated by the given matrices.
\end{proof}

\begin{prop}
The stabilizer of $B^{(1)}_{n/2}$ is
\[
\left\{\begin{pmatrix}aQ&bQ\\cQ&dQ\end{pmatrix}\,\middle|\,Q\in O(n/2),\begin{pmatrix}a&b\\c&d\end{pmatrix}\in\GL_2(\CC)\right\}.
\]
The orbit of $B^{(1)}_{n/2}$ has codimension $5(n^2/8 + n/4 - 1)$.
\end{prop}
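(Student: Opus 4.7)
The plan is to prove the two inclusions of the stabilizer separately, and then read off the codimension from its dimension. The forward containment is a direct tensor-product computation: for $g=g_2\otimes Q$ with $g_2\in\GL_2(\CC)$ and $Q\in\O(n/2)$, one has $g(S\otimes{\bf 1}_{n/2})g^\top=(g_2Sg_2^\top)\otimes{\bf 1}_{n/2}$ for every $S\in\S^2$. For the reverse, I would let $g\in\GL_n(\CC)$ stabilize $B^{(1)}_{n/2}$. Since ${\bf 1}_n\in B^{(1)}_{n/2}$, the matrix $gg^\top$ lies in $B^{(1)}_{n/2}$ and therefore equals $MM^\top\otimes{\bf 1}_{n/2}$ for some $M\in\GL_2(\CC)$. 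Then $h:=(M^{-1}\otimes{\bf 1}_{n/2})\,g$ is orthogonal and still stabilizes $B^{(1)}_{n/2}$, so after multiplying by $M\otimes{\bf 1}_{n/2}$ we may assume $g\in\O(n)$.

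Such an orthogonal $g$ induces a unital Jordan automorphism of $(B^{(1)}_{n/2},\bullet_{{\bf 1}_n})\cong\Js^2_2$. Since $\mathrm{Aut}(\Js^2_2)\cong\O(2,\CC)$ acts on $\S^2$ by $S\mapsto g_2Sg_2^\top$, one can find $g_2\in\O(2)$ such that $g_2\otimes{\bf 1}_{n/2}$ induces the same action on $B^{(1)}_{n/2}$ as $g$. Replacing $g$ by $(g_2^{-1}\otimes{\bf 1}_{n/2})\,g\in\O(n)$, we may assume $g$ acts trivially on $B^{(1)}_{n/2}$. Testing against the idempotent $\Diag({\bf 1}_{n/2},{\bf 0}_{n/2})\in B^{(1)}_{n/2}$ together with $gg^\top={\bf 1}_n$ forces $g$ to commute with this idempotent, so $g=\Diag(Q_1,Q_2)$ with $Q_i\in\O(n/2)$; then testing against $\left(\begin{smallmatrix}{\bf 0}_{n/2}&{\bf 1}_{n/2}\\{\bf 1}_{n/2}&{\bf 0}_{n/2}\end{smallmatrix}\right)\in B^{(1)}_{n/2}$ forces $Q_1Q_2^\top={\bf 1}_{n/2}$, hence $Q_1=Q_2$. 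Undoing the reductions yields $g=g_2'\otimes Q$ for some $g_2'\in\GL_2(\CC)$ and $Q\in\O(n/2)$, as claimed.

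For the codimension, the surjection $\GL_2(\CC)\times\O(n/2)\to\mathrm{Stab}(B^{(1)}_{n/2})$, $(g_2,Q)\mapsto g_2\otimes Q$, has finite kernel $\{(\pm{\bf 1}_2,\pm{\bf 1}_{n/2})\}$: solving $g_2\otimes Q={\bf 1}_n$ forces $g_2=\lambda{\bf 1}_2$ and $Q=\lambda^{-1}{\bf 1}_{n/2}$ for some $\lambda\in\CC^*$, and $Q\in\O(n/2)$ then forces $\lambda^2=1$. Hence $\dim\mathrm{Stab}(B^{(1)}_{n/2})=\dim\GL_2+\dim\O(n/2)=4+n(n-2)/8$, and the codimension of the orbit in $\Gr(3,\S^n)$ equals
\[
3\!\left(\tbinom{n+1}{2}-3\right)-\left(n^2-4-\tfrac{n(n-2)}{8}\right)=5\!\left(\tfrac{n^2}{8}+\tfrac{n}{4}-1\right).
\]
The main obstacle will be pinning down $\mathrm{Aut}(\Js^2_2)\cong\O(2,\CC)$ in the explicit form $S\mapsto g_2Sg_2^\top$; this reduces to showing that any unital Jordan automorphism of $(\S^2,\bullet_{{\bf 1}_2})$ preserves the decomposition $\S^2=\CC{\bf 1}_2\oplus V$ (with $V$ the trace-zero part) together with the non-degenerate bilinear form on $V$ induced by the Jordan product. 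Once this is in hand, everything else is bookkeeping and standard orthogonal-group manipulations.
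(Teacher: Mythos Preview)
Your proof is correct, but it takes a genuinely different route from the paper's. The paper argues directly: writing $g=\begin{pmatrix}A&B\\C&D\end{pmatrix}$ in $n/2\times n/2$ blocks, it tests against $\Diag({\bf 1}_{n/2},{\bf 0}_{n/2})$ to force $AA^\top,\,AC^\top,\,CC^\top$ to be scalar multiples of ${\bf 1}_{n/2}$, deduces $(A,C)=(aQ,cQ)$ for some $Q\in\O(n/2)$, does the same for $(B,D)=(bP,dP)$, and then checks that $\Diag(Q,P)$ stabilizing $B^{(1)}_{n/2}$ forces $P$ and $Q$ to be proportional. No abstract Jordan-algebra structure is invoked.

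Your approach is more structural: you first reduce to $g\in\O(n)$ via the observation $gg^\top\in B^{(1)}_{n/2}$, then exploit that an orthogonal stabilizer element induces a unital Jordan automorphism of $B^{(1)}_{n/2}\cong(\S^2,\bullet_{{\bf 1}_2})$, and use the surjection $\O(2)\to\mathrm{Aut}(\S^2,\bullet_{{\bf 1}_2})$ to reduce further to the pointwise stabilizer, which you then compute by commuting with the two obvious generators. This buys a cleaner conceptual picture and a template that would adapt to other embeddings once the automorphism group of the abstract algebra is known; the cost is the extra step of identifying $\mathrm{Aut}(\Js^2_2)$, which you correctly isolate as the only nontrivial ingredient (and your sketch via the invariant decomposition $\S^2=\CC{\bf 1}_2\oplus V$ with the induced form on $V$ is sound; note the map $\O(2)\to\mathrm{Aut}$ has kernel $\{\pm{\bf 1}_2\}$, but surjectivity is all you need). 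Your codimension computation agrees with the paper's.
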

\begin{proof}
Let $A,B,C,D\in\CC^{n/2\times n/2}$ be such that
\[
\begin{pmatrix}A&B\\C&D\end{pmatrix}B^{(1)}\begin{pmatrix}A&B\\C&D\end{pmatrix}^\top=B^{(1)}.
\]
Then 
\[
\begin{pmatrix}AA^\top&AC^\top\\CA^\top&CC^\top\end{pmatrix}=\begin{pmatrix}A&B\\C&D\end{pmatrix}\begin{pmatrix}{\bf 1}_{n/2}\\&{\bf 0}_{n/2}\end{pmatrix}\begin{pmatrix}A&B\\C&D\end{pmatrix}^\top\in B^{(1)}
\]
and hence $(A,C)=(a Q,c Q)$ for some $Q\in O(n/2)$ and $a,c\in\CC$. Similarly, we find that $(B,D)=(b P,d P)$ for some $P\in O(n/2)$ and $b,d\in\CC$. Now
\[
g:=\begin{pmatrix}A&B\\C&D\end{pmatrix}=\begin{pmatrix}a{\bf 1}_{n/2}&b{\bf 1}_{n/2}\\c{\bf 1}_{n/2}&d{\bf 1}_{n/2}\end{pmatrix}\begin{pmatrix}Q\\&P\end{pmatrix}
\]
and so $\Diag(Q,P)$ also lies in the stabilizer of $B^{(1)}_{n/2}$. It is straightforward to check that this is only possible when $P,Q$ are linearly dependent. So $g$ must be of the required form.
\end{proof}

We believe that the codimensions of the other orbits are also polynomials in their parameters.

\begin{conj}
The codimensions of $A^{(2)}_{r,k_1,k_2},A^{(3)}_{k_1,k_2,k_3},B^{(2)}_{k,\ell_1,\ell_2}$ in $\Gr(3,\S^n)$ are polynomials functions $f_2(r,k_1,k_2),f_3(k_1,k_2,k_3),g(k,\ell_1,\ell_2)$ of degree $\leq 2$, respectively.
\end{conj}

Assuming the conjecture, we find
\begin{eqnarray*}
f_2(r,k_1,k_2)&=&r^2 + rk_1 + k_1^2 + 2rk_2 + 3k_1k_2 + 3k_2^2 + r + k_1 + 2k_2 - 5,\\
f_3(k_1,k_2,k_3)&=&k_1^2 + 3k_1k_2 + 3k_2^2 + 4k_1k_3 + 8k_2k_3 + 6k_3^2 + k_1 + 2k_2 + 3k_3 - 4,\\
g(k,\ell_1,\ell_2)&=&3\ell_1^2/2 + 2\ell_1\ell_2 + 5\ell_2^2/2 - \ell_1/2 + 5\ell_2/2 - 5.
\end{eqnarray*}

\section{Proofs of the main results}\label{sec:proofs_main_results}

In this section, we prove the results from Section~\ref{sec:Jordan_nets_n<=6}.

\begin{re}
To prove that Figure~\ref{fig:nets_n=4} contains all degenerations, one needs to prove in particular that $A^{(1)}_{1,0,1}\not\to B^{(2)}_{1,0,2}$ and $A^{(2)}_{2,0,1}\not\to C_{4,1}$. In \cite{BES:JordanSpaces}, this was proven using equations on the orbits of $A^{(1)}_{1,0,1}$ and $A^{(2)}_{2,0,1}$ that do not hold for $B^{(2)}_{1,0,2}$ and $C_{4,1}$, respectively, found by a computer search. Now, we also see that $A^{(1)}_{1,0,1}\not\to B^{(2)}_{1,0,2}$ follows from the minimal rank obstruction and $A^{(2)}_{2,0,1}\not\to C_{4,1}$ follows from the Segre symbol obstruction. 
\end{re}

\begin{proof}[Proof of Theorem~\ref{thm:diagramNetsS5}]
To show that the diagram describes all degenerations, we need to show that $A^{(2)}_{3,0,1},B^{(2)}_{1,1,2}\not\to C_{5,1}$. Note that the set
$$
\left\{(\Ls,\Ps)\in\Gr(3,\S^n)\times\Gr(2,\S^n)\,\middle|\, \begin{array}{c}\Ps\subseteq\Ls, \rk(X)\leq 2\mbox{ for all }X\in\Ps,\\ \det(Q\Ps Q^\top)\in \{f^2\}\mbox{ for all }Q\in\CC^{2\times n}\end{array}\right\}
$$
is closed and $\GL_n(\CC)$-stable. Hence so is its projection on $\Gr(3,\S^n)$. The orbits $A^{(2)}_{3,0,1},B^{(2)}_{1,1,2}$ are contained in this projection while the orbit $C_{5,1}$ is not. So indeed $A^{(2)}_{3,0,1},B^{(2)}_{1,1,2}\not\to C_{5,1}$.
\end{proof}

\begin{proof}[Proof of Proposition~\ref{prop:B231_not_to_C_6}]
Suppose that $B^{(2)}_{1,0,3}\to C_{6,6}$ and identify $\Ls\subseteq\S^6$ with 
\[
(a,b,c,d,e,f)\Ls(a,b,c,d,e,f)^\top
\]
so that $B^{(2)}_{1,0,3}=\spann(2ac+b^2,ad,2df+e^2)$ and $C_{6,6}=\spann(ab,ac,af+be+cd)$. Using Remark~\ref{re:over_CC((t))}, there exist linear forms $A,B,C,D,E,F$ in $a,b,c,d,e,f$ over $\CC((t))$ and a matrix $Q\in\GL_3(\CC((t)))$ such that every entry of
\[
(G_1,G_2,G_3):= (2AC+B^2,AD,2DF+E^2)Q
\]
is a form with coefficients in $\CC[[t]]$ and $\lim_{t\to0}(G_1,G_2,G_2)=(ab,ac,af+be+cd)$. Note that 
\[
2AC+B^2,AD,2DF+E^2
\]
are linearly independent over $\CC((t))$ and
\[
\lambda_1(2AC+B^2)+\lambda_2AD+\lambda_3(DF+E^2), \quad \lambda_1,\lambda_2,\lambda_3\in\CC((t))
\]
converges to an element of $C_6$ as $t\to0$ whenever its coefficients lie in $\CC[[t]]$. Note that we are allowed to replace $(A,B,C,D,E,F)$ by $(\mu_1A,\mu_2B,\mu_3C,\mu_4D,\mu_5E,\mu_6F)$ as long as $\mu_1\mu_3=\mu_2^2$ and $\mu_4\mu_6=\mu_5^2$. So we may assume that $A,D$ converge to nonzero forms in $a,b,c,d,e,f$. Now $AD$ converges to a nonzero element of $C_6$. Since $AD$ has rank $2$, this element must be $a(\lambda b+\mu c)$ for some $(\lambda:\mu)\in\PP^1$. Using a basechange in $b,c$, we may assume that $(\lambda,\mu)=(1,0)$. Using symmetry and by scaling $A,D$, we may assume that $A\to a$ and $D\to b$ as $t\to0$. Next, consider the form $2DF+E^2$. Since $AD\to ab$ as $t\to0$, there exists an $\lambda\in\CC((t))$ such that
\[
\lambda AD+2DF+E^2\in\spann_{\CC((t))}(a^2,b^2,ac,bc,c^2)
\]
and we replace $F$ by $F+\lambda D/2$. We scale $E,F$ such that $2DF+E^2$ converges to a nonzero element of $C_6$. Since $2DF+E^2$ has rank $3$ and its limit lies in $\spann(a^2,b^2,ac,bc,c^2)$, we can scale so that $2DF+E^2\to ac$ as $t\to0$.\bigskip

We now see that $(b,ac)$ is a limit of pairs of the form $(D,2DF+E^2)$ where $D,E,F$ are forms in $a,b,c,d,e,f$. By setting $d,e,f$ to zero, we see that $(b,ac)$ is also a limit of pairs of the form $(D,2DF+E^2)$ where $D,E,F$ are forms in $a,b,c$. The closure of such pairs forms a hyperplane in $\CC\{a,b,c\}\times\CC\{a^2,ab,b^2,ac,bc,c^2\}$ that does not contain $(b,ac)$. This is a contradiction.
\end{proof}

\begin{proof}[Proof of Theorem~\ref{thm:diagramNetsS6}]
We have the following obstructions:
\begin{itemize}
\item We have $B^{(2)}_{1,0,3}\not\to C_{6,6}$ by Proposition~\ref{prop:B231_not_to_C_6}.
\item We have $A^{(1)}_{3,0,1}\not\to C_{6,5}$ since $\tau_2(A^{(1)}_{3,0,1})=2<3=\tau_2(C_{6,5})$.
\item We have $A^{(1)}_{3,0,1}\not\to C_{6,6}$ since $\tau_1(A^{(1)}_{3,0,1})=1<2=\tau_1(C_{6,6})$.
\item We have $A^{(2)}_{3,1,1}\not\to C_{6,4}$ since $\sigma(A^{(2)}_{3,1,1})=(111)(21)\not\geq(222)=\sigma(C_{6,4})$.
\item We have $A^{(2)}_{4,0,1}\not\to C_{6,7}$ since $\sigma(A^{(2)}_{4,0,1})=(1111)2\not\geq(2211)=\sigma(C_{6,7})$.
\item We have $B^{(2)}_{1,2,2}\not\to C_{6,5}$ since $\tau_2(B^{(2)}_{1,2,2})=2<3=\tau_2(C_{6,5})$.
\item We have $A^{(3)}_{0,0,2}\not\to C_{6,4}$ since $\sigma(A^{(3)}_{0,0,2})=(33)\not\geq(222)=\sigma(C_{6,4})$.
\end{itemize}
Using these obstructions, we see that the only possibly missing degenerations are the dotted lines.
\end{proof}

\appendix

\section{Embeddings of indecomposable Jordan algebras into $\S^n$}\label{sec:irr_embeddings}

Throughout this section, we write
\[
B:=\begin{pmatrix}1&i\\i&-1\end{pmatrix}
\]
and fix integers $n,m\geq 1$. For matrices $X=(X_{ij})_{ij},Y\in\CC^{n\times n}$, the {\em Kronecker product} $X\otimes Y$ is the matrix
\[
\begin{pmatrix}X_{11}Y&\cdots&X_{1n}Y\\\vdots&&\vdots\\X_{n1}Y&\cdots&X_{nn}Y\end{pmatrix}.
\]
Denote the matrix in $\S^n$ with $1$'s on its anti-diagonal and $0$'s everywhere else by~${\bf J}_n$ and define $\Jo_{\bf J}(m,\S^n)$ to be the subset of $\Jo(m,\S^n)$ of subspaces containing ${\bf J}_n$.

\begin{prop}\label{prop:J_to_I}
Take
\[
Q_{2m}:= \frac{1}{\sqrt{2}}\begin{pmatrix}{\bf 1}_m&{\bf J}_m\\i{\bf J}_m&-i{\bf 1}_m\end{pmatrix}\quad\mbox{and}\quad Q_{2m+1}:= \frac{1}{\sqrt{2}}\begin{pmatrix}{\bf 1}_m&&{\bf J}_m\\&\sqrt{2}\\i{\bf J}_m&&-i{\bf 1}_m\end{pmatrix}
\]
for all integers $m\geq 1$. Then the map
\begin{eqnarray*}
\Jo_{\bf J}(m,\S^n)&\to&\Jo_{\bf 1}(m,\S^n)\\
\Ls&\mapsto& Q_n\Ls Q_n^\top
\end{eqnarray*}
is a bijection.
\end{prop}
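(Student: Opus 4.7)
The plan is to observe that for any invertible matrix $P \in \GL_n(\CC)$, congruence $\Ls \mapsto P\Ls P^\top$ is a bijection of $\Gr(m,\S^n)$ that preserves $\Jo(m,\S^n)$. Once this is established, the proposition reduces to a single computation,
\[
Q_n {\bf J}_n Q_n^\top = {\bf 1}_n,
\]
since then $\Ls \ni {\bf J}_n$ if and only if $Q_n \Ls Q_n^\top \ni {\bf 1}_n$, and the inverse map is congruence by $Q_n^{-1}$.

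To see that congruence preserves the Jordan property, I would give a direct verification that if $\Ls$ is a subalgebra of $(\S^n,\bullet_U)$, then $P\Ls P^\top$ is a subalgebra of $(\S^n,\bullet_{PUP^\top})$: for all $X,Y \in \S^n$ one has
\[
(PXP^\top)\bullet_{PUP^\top}(PYP^\top) = P(X \bullet_U Y)P^\top,
\]
since $(PUP^\top)^{-1} = P^{-\top} U^{-1} P^{-1}$ and the $P^{\pm 1}$ factors cancel in the middle. The unit $U$ of $(\S^n,\bullet_U)$ is sent to $PUP^\top$, which is the unit of the new product, so the image is again the embedding of an abstract Jordan algebra, i.e.\ an element of $\Jo(m,\S^n)$. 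Applying the same statement with $P^{-1}$ shows that congruence by $P$ is a bijection on $\Jo(m,\S^n)$.

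The remaining task is the block computation $Q_n {\bf J}_n Q_n^\top = {\bf 1}_n$. For $n=2m$, I would write
\[
{\bf J}_{2m} = \begin{pmatrix} 0 & {\bf J}_m \\ {\bf J}_m & 0 \end{pmatrix}
\]
and use the identities ${\bf J}_m^\top = {\bf J}_m$ and ${\bf J}_m^2 = {\bf 1}_m$ to reduce the resulting $2\times 2$ block product to $\tfrac{1}{2}\cdot 2{\bf 1}_{2m}$. For $n=2m+1$ the central row and column have to be bookkept separately, and the factor $\sqrt{2}$ placed in the middle of $Q_{2m+1}$ is exactly what forces the $(m+1,m+1)$ entry of $Q_n {\bf J}_n Q_n^\top$ to equal $1$. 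Invertibility of $Q_n$ then comes for free: taking determinants in the identity above yields $(\det Q_n)^2 \det {\bf J}_n = 1$, so $\det Q_n \neq 0$. The only mildly delicate step is carrying out the odd-case block multiplication cleanly; nothing deeper is involved.
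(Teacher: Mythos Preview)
Your proposal is correct and follows essentially the same approach as the paper: the paper's one-line proof simply cites the identity $Q_n{\bf J}_nQ_n^\top={\bf 1}_n$, relying on the fact (already stated in the introduction) that $\Jo(m,\S^n)$ is stable under congruence. Your write-up just makes explicit the background facts the paper takes for granted.
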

\begin{proof}
This follows from the fact that $Q_n{\bf J}_nQ_n^\top={\bf 1}_n$.
\end{proof}

Using the previous proposition, we will often represent orbits using elements of $\Jo_{\bf J}(m,\S^n)$, but work with elements of $\Jo_{\bf 1}(m,\S^n)$ during proofs. 

\begin{ex}\label{ex:J_to_I}
Consider the Jordan pencil
\[
\Ps_{n,k}:=x{\bf J}_n+y\Diag({\bf 1}_k,{\bf 0}_{n-k})
\]
for $n\geq2$ and $1\leq k\leq n/2$. We have $Q_2\Diag(1,0)Q_2^\top=B$ and so Proposition~\ref{prop:J_to_I} shows that $\Ps_{2,1}$ is congruent to $x{\bf 1}_2+yB$. More generally, we have
\[
Q_n\Ps_{n,k}Q_n^\top= P(x{\bf 1}_n+y\Diag({\bf 1}_k\otimes B,{\bf 0}_{n-2k})/2)P^\top
\]
for some permutation matrix $P$. So $\Ps_{n,k}$ is congruent to $x{\bf 1}_n+y\Diag({\bf 1}_k\otimes B,{\bf 0}_{n-2k})$.
\end{ex}

\begin{lm}\label{lm:similar=orth_congruent}
Let $X,Y\in\CC^{n\times n}$ be matrices. Assume that $X,Y$ are both symmetric or both skew-symmetric. If $X,Y$ are similar, then they are orthogonally congruent.
\end{lm}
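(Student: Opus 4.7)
My plan is to upgrade the given similarity $X = SYS^{-1}$ to an orthogonal one by modifying $S$ within the centralizer of $Y$. The preparation is standard: with $\epsilon = +1$ in the symmetric case and $\epsilon = -1$ in the skew case, we have $X^\top = \epsilon X$ and $Y^\top = \epsilon Y$, so transposing $X = SYS^{-1}$ yields $X = S^{-\top} Y S^\top$. Comparing the two expressions for $X$ shows that $M := S^\top S$ is a symmetric invertible matrix that commutes with $Y$. The goal will then be to find $R$ commuting with $Y$ such that $RR^\top = M^{-1}$: then $P := SR$ satisfies $PP^\top = S M^{-1} S^\top = I$ and $PYP^{-1} = SYS^{-1} = X$, so $X = PYP^\top$ with $P \in \O(n,\CC)$.

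To produce $R$, I would look inside the commutative subalgebra $\CC[M] \subseteq \CC^{n\times n}$. Every element of $\CC[M]$ automatically commutes with $Y$ and coincides with its own transpose, since $M$ does. Hence it suffices to find a polynomial $p \in \CC[t]$ with $p(M)^2 = M^{-1}$: taking $R := p(M)$ gives $RR^\top = R^2 = M^{-1}$ and places $R$ in the centralizer of $Y$ for free.

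The heart of the argument is therefore the existence of this polynomial square root, and that is the step I expect to be the main (if modest) obstacle. Writing the minimal polynomial of $M$ as $\prod_i (t - \lambda_i)^{k_i}$, with each $\lambda_i \neq 0$ because $M$ is invertible, the Chinese Remainder Theorem identifies
\[
\CC[M] \;\cong\; \prod_i \CC[t]\bigl/\bigl((t-\lambda_i)^{k_i}\bigr).
\]
In each local factor, the class of $t^{-1}$ admits a square root obtained by truncating the binomial expansion of $\lambda_i^{-1/2}\bigl(1 + (t-\lambda_i)/\lambda_i\bigr)^{-1/2}$ modulo $(t-\lambda_i)^{k_i}$. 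Patching these local square roots via CRT yields a polynomial $p$ with $p(t)^2 \equiv t^{-1} \pmod{m(t)}$, and substituting $M$ gives the desired $R$. Once $R$ is in hand, the verification $PP^\top = I$ and $PYP^{-1} = X$ outlined in the first paragraph closes the proof, treating the symmetric and skew cases in a single stroke.
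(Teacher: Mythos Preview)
Your argument is correct. The paper does not supply a self-contained proof: it simply cites \cite[Lemma~1]{BO:symmetricnilpotent} for the symmetric case and remarks that the skew-symmetric case goes through verbatim. What you have written is the standard ``square-root trick'' that underlies that lemma: from $X=SYS^{-1}$ and $X^\top=\epsilon X$, $Y^\top=\epsilon Y$ one extracts the symmetric invertible centralizer element $M=S^\top S$, takes a polynomial square root $R=p(M)$ of $M^{-1}$ inside $\CC[M]$ (possible because all eigenvalues of $M$ are nonzero), and checks that $P=SR$ is complex orthogonal with $PYP^\top=X$. Your use of the sign $\epsilon$ to treat both parities at once is a nice unification; the paper handles them as two separate appeals to the same cited argument.
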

\begin{proof}
When $X,Y$ are both symmetric, this is \cite[Lemma 1]{BO:symmetricnilpotent}. When $X,Y$ are both skew-symmetric, the proof equals that of \cite[Lemma 1]{BO:symmetricnilpotent}.
\end{proof}

\begin{lm}\label{lm:idempotent}
Let $X\in\S^n$ be an idempotent matrix of rank $r$. Then $X$ is orthogonally congruent to the matrix $\Diag({\bf 1}_r,{\bf 0}_{n-r})$. If $X=\Diag({\bf 1}_r,{\bf 0}_{n-r})$, then we have
\[\begin{array}{rcccl}
\{Y\in\S^n\mid XY+YX=&\!\!\!0\!\!\!\!\!\!&\}&=&\Diag({\bf 0}_r,\S^{n-r}),\\
\{Y\in\S^n\mid XY+YX=&\!\!\!Y\!\!\!\!\!\!&\}&=&\left\{\begin{pmatrix}{\bf 0}_r&Z\\Z^\top&{\bf 0}_{n-r}\end{pmatrix}\,\middle|\, Z\in\CC^{r\times(n-r)}\right\},\\
\{Y\in\S^n\mid XY+YX=&\!\!\!2Y\!\!\!\!\!\!&\}&=&\Diag(\S^r,{\bf 0}_{n-r}).
\end{array}\]
\end{lm}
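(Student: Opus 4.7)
The lemma splits into two independent claims and the plan is to handle each with minimal calculation.

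For the first claim, I would use the fact that an idempotent matrix $X$ satisfies $X^2=X$, so its minimal polynomial divides $t^2-t$, i.e.\ $X$ is diagonalizable over $\CC$ with spectrum contained in $\{0,1\}$. Since $\rk(X)=r$, it must be similar to $D_r:=\Diag({\bf 1}_r,{\bf 0}_{n-r})$. Both $X$ and $D_r$ lie in $\S^n$, so Lemma~\ref{lm:similar=orth_congruent} upgrades similarity to orthogonal congruence, yielding the claim.

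For the second claim, the plan is a direct block-matrix calculation. With $X=D_r$, write an arbitrary $Y\in\S^n$ in block form
\[
Y=\begin{pmatrix}A&B\\B^\top&C\end{pmatrix},\qquad A\in\S^r,\ C\in\S^{n-r},\ B\in\CC^{r\times(n-r)}.
\]
A one-line multiplication gives
\[
XY+YX=\begin{pmatrix}2A&B\\B^\top&{\bf 0}_{n-r}\end{pmatrix}.
\]
Comparing this with $0$, $Y$, and $2Y$ block by block instantly yields the three described eigenspaces: $XY+YX=0$ forces $A=0$ and $B=0$; $XY+YX=Y$ forces $A=0$ and $C=0$, leaving $B$ free; and $XY+YX=2Y$ forces $B=0$ and $C=0$, leaving $A$ free.

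There is essentially no obstacle here — the only subtlety is remembering that Lemma~\ref{lm:similar=orth_congruent} is available and applicable over $\CC$, which is exactly what lets us skip the usual real spectral-theorem argument and identify symmetric idempotents up to orthogonal congruence purely through their Jordan form.
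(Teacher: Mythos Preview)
Your proposal is correct and follows exactly the same approach as the paper: the paper's proof simply states that the first statement holds by Lemma~\ref{lm:similar=orth_congruent} and that the second statement follows easily, which is precisely the route you spell out in detail.
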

\begin{proof} 
The first statement holds by Lemma~\ref{lm:similar=orth_congruent}. The second statement follows easily.
\end{proof}

\begin{prop}\label{prop:embedding_product}
Let $\As_1,\As_2$ be Jordan algebras. Then any embedding of $\As_1\times\As_2$ into $\S^n$ is congruent to $\Diag(\Ls_1,\Ls_2)$ for some embeddings $\Ls_i$ of $\As_i$ into $\S^{n_i}$ with $n_1+n_2=n$.
\end{prop}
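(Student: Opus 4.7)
The plan is to reduce to a setting where we can apply Lemma~\ref{lm:idempotent} to the images of the units of $\As_1$ and $\As_2$, and then read off the block decomposition from the Peirce-type eigenspaces of a single idempotent.

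First, after replacing the embedding by a congruent copy, I would assume that the ambient Jordan product on $\S^n$ is $\bullet_{{\bf 1}_n}=(XY+YX)/2$, since any regular subspace is congruent to one containing ${\bf 1}_n$. Let $u_1,u_2$ denote the units of $\As_1,\As_2$ and let $\varphi\colon\As_1\times\As_2\to\S^n$ be the injective Jordan algebra morphism with image $\Ls$; set $E_1:=\varphi(u_1,0)$ and $E_2:=\varphi(0,u_2)$. Since $\varphi$ sends the unit $(u_1,u_2)$ to ${\bf 1}_n$, we have $E_1+E_2={\bf 1}_n$, and from $(u_i,0)^2=(u_i,0)$ and $(u_1,0)\cdot(0,u_2)=0$, the matrix $E_1$ (and hence $E_2={\bf 1}_n-E_1$) is a symmetric idempotent, of some rank $r$.

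Next, by Lemma~\ref{lm:idempotent}, there is an orthogonal congruence after which $E_1=\Diag({\bf 1}_r,{\bf 0}_{n-r})$ and therefore $E_2=\Diag({\bf 0}_r,{\bf 1}_{n-r})$. The key step is now to locate $\varphi(\As_1\times\{0\})$ and $\varphi(\{0\}\times\As_2)$ using the eigenspace description from Lemma~\ref{lm:idempotent}. For any $x\in\As_1$, writing $Y:=\varphi(x,0)$, the identity $(x,0)=(x,0)\cdot(u_1,0)$ gives $YE_1+E_1Y=2Y$, so by Lemma~\ref{lm:idempotent} we obtain $Y\in\Diag(\S^r,{\bf 0}_{n-r})$. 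Symmetrically, $\varphi(\{0\}\times\As_2)\subseteq\Diag({\bf 0}_r,\S^{n-r})$.

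Setting $n_1:=r$, $n_2:=n-r$ and letting $\Ls_i\subseteq\S^{n_i}$ be the projection of $\varphi(\As_i)$ onto the corresponding diagonal block, we get $\Ls=\Diag(\Ls_1,\Ls_2)$. Each $\Ls_i$ contains ${\bf 1}_{n_i}$ (the projection of $E_i$) and is the image of $\As_i$ under the injective linear map $\varphi|_{\As_i}$; since the restriction of $\bullet_{{\bf 1}_n}$ to block-diagonal matrices is the block-wise $\bullet_{{\bf 1}_{n_i}}$, each $\Ls_i$ is an embedding of $\As_i$ into $\S^{n_i}$, completing the proof. The only point requiring care is the verification that $\varphi(\As_i)$ lies entirely in a single diagonal block, and this is precisely what the $0$- and $2$-eigenspace computation in Lemma~\ref{lm:idempotent} delivers once $E_1$ has been diagonalized.
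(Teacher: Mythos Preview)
Your proof is correct and follows essentially the same approach as the paper's: reduce to the unit ${\bf 1}_n$, diagonalize the idempotent image of $(u_1,0)$ via Lemma~\ref{lm:idempotent}, and then use the $2$-eigenspace (resp.\ $0$-eigenspace) description from the same lemma to force $\varphi(\As_1\times 0)$ and $\varphi(0\times\As_2)$ into the respective diagonal blocks. Your write-up is in fact a bit more explicit than the paper's about which eigenspace relation is being used at each step.
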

\begin{proof}
Let $\As_1,\As_2$ be Jordan algebras and let $\iota\colon \As_1\times\As_2\to\S^n$ be an injective morphism of Jordan algebras. By Lemma~\ref{lm:idempotent}, we can assume that $\iota$ sends the unit of $\As_1$ to $\Diag({\bf 1}_{n_1},{\bf 0}_{n_2})$ and the unit of $\As_2$ to $\Diag({\bf 0}_{n_1},{\bf 1}_{n_2})$ for some integers $n_1,n_2\geq 1$ adding up to $n$. Lemma~\ref{lm:idempotent} now also shows that $\iota(\As_1\times\,0)=\Diag(\Ls_1,{\bf 0}_{n_2})$ and $\iota(0\times\As_1)=\Diag({\bf 0}_{n_1},\Ls_2)$ where $\Ls_i$ is an embedding of $\As_i$ into $\S^{n_i}$.
\end{proof}

By the proposition, in order to classify embeddings of Jordan algebras of dimension $m$ into $\S^n$, it suffices to classify embeddings of the indecomposable Jordan algebras of dimension $\leq m$ into $\S^{n'}$ for all integers $n'\leq n$. We restrict to $m\leq 4$ and consider the indecomposable Jordan algebras one-by-one. In some cases, we also restrict to low $n$.

\subsection{The Jordan algebra $\CC$} 
An embedding of $\CC$ into $\S^n$ is of the form $\CC U$ where $U\in\S^n$ is an invertible matrix.

\begin{prop}\label{prop:classify_CC}
Every embedding of $\CC$ is congruent to $\CC{\bf 1}_n$.
\end{prop}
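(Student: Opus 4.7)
The plan is to reduce to a classical fact: over $\CC$, two symmetric matrices are congruent if and only if they have the same rank, so every invertible element of $\S^n$ is congruent to $\mathbf{1}_n$.

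Given an embedding $\CC U \subseteq \S^n$ with $U$ invertible, I would first diagonalize $U$ by congruence. Concretely, by a standard argument (Gauss/Lagrange reduction, or Takagi's decomposition), there exists $P_1 \in \GL(n)$ with $P_1 U P_1^\top = \Diag(\lambda_1, \ldots, \lambda_n)$, and since $U$ is invertible each $\lambda_i \neq 0$. Next, working over $\CC$, pick square roots $\mu_i$ with $\mu_i^2 = \lambda_i^{-1}$ and set $P_2 := \Diag(\mu_1, \ldots, \mu_n)$, so that $P_2 \Diag(\lambda_1, \ldots, \lambda_n) P_2^\top = \mathbf{1}_n$. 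Taking $P := P_2 P_1$ gives $P U P^\top = \mathbf{1}_n$, hence $P \cdot (\CC U) = \CC \mathbf{1}_n$.

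There is no real obstacle here; the only thing worth noting is that the result relies on $\CC$ being algebraically closed (to extract square roots), which is assumed throughout the paper. Since the rest of the paper classifies embeddings up to congruence rather than orthogonal congruence, there is no need to worry about a finer invariant.
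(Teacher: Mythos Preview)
Your argument is correct and essentially identical to the paper's: the paper simply invokes the fact that all invertible symmetric matrices over $\CC$ are congruent, while you spell out the standard proof of that fact via diagonalization and rescaling by square roots.
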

\begin{proof}
All invertible matrices in $\S^n$ are congruent. Hence $\CC U$ is congruent to~$\CC{\bf 1}_n$.
\end{proof}

\subsection{The Jordan algebra $\CC[x]/(x^m)$}
An embedding of $\CC[x]/(x^m)$ into $\S^n$ is of the form 
\[
\CC\{U,X,X^{\bullet_U2},\ldots,X^{\bullet_Um-1}\}
\]
where $U\in\S^n$ is an invertible matrix and $X\in\S^n$ satisfies $X^{\bullet_U(m-1)}\neq{\bf 0}_n$ and $X^{\bullet_Um}={\bf 0}_n$.

\begin{prop}\label{prop:classify_CC[x]/x^m} 
Every embedding of $\CC[x]/(x^m)$ is congruent to
\[
\Diag\left({\bf 1}_{k_m}\otimes\begin{pmatrix}x_m&x_{m-1}&\ldots &x_2&x_1\\x_{m-1}&&\rddots&\rddots\\\vdots&\rddots&\rddots\\x_2&\rddots\\x_1\end{pmatrix},\ldots,{\bf 1}_{k_2}\otimes\begin{pmatrix}x_2&x_1\\x_1\end{pmatrix},x_1{\bf 1}_{k_1}\right)
\]
for some integers $k_1,\ldots,k_{m-1}\geq 0$ and $k_m\geq 1$ such that $\sum_{i=1}^m ik_i =n$.
\end{prop}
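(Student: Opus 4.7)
The plan is to reduce to the case $U={\bf 1}_n$, apply Jordan canonical form to the generator, invoke Lemma~\ref{lm:similar=orth_congruent} to pass to a convenient symmetric representative of the similarity class, and finally verify by a block-by-block computation (in the ${\bf J}$-picture) that the resulting embedding has the claimed Hankel form.

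First I would observe that by congruence one can send the invertible matrix $U$ to ${\bf 1}_n$, so the embedding becomes $\CC\{{\bf 1}_n, X, X^2, \ldots, X^{m-1}\}$ with the Jordan product reducing to the standard symmetrized product and $X \in \S^n$ satisfying $X^{m-1} \neq {\bf 0}_n$ and $X^m = {\bf 0}_n$. Over $\CC$, such a nilpotent $X$ has Jordan canonical form $\bigoplus_{i=1}^m J_i^{\oplus k_i}$ with uniquely determined integers $k_i \geq 0$ satisfying $k_m \geq 1$ and $\sum_{i=1}^m i k_i = n$, where $J_i$ denotes the standard nilpotent Jordan block of size $i$. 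By Lemma~\ref{lm:similar=orth_congruent}, $X$ is orthogonally congruent to any symmetric matrix with the same Jordan type, so it suffices to exhibit one convenient symmetric representative of each type whose embedding matches the claimed normal form.

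For the explicit verification I would pass to the $\Jo_{\bf J}$-picture via Proposition~\ref{prop:J_to_I}, taking $U = {\bf J}_n$. For a single size-$i$ block, define $X_i \in \S^i$ to be the symmetric Hankel matrix $(X_i)_{jk} = \delta_{j+k,\, i}$ (a single $1$ on the second anti-diagonal). A direct computation shows that $X_i {\bf J}_i$ is the standard nilpotent Jordan block of size $i$, confirming that $X_i$ has the single-block-of-size-$i$ Jordan type, and that $X_i^{\bullet_{{\bf J}_i} \ell}$ is the Hankel matrix with a single $1$ on the $(\ell+1)$-st anti-diagonal. Linear combinations $x_1 {\bf J}_i + x_2 X_i + \ldots + x_i X_i^{\bullet_{{\bf J}_i}(i-1)}$ then exhaust the Hankel block $M_i$ in the statement. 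Assembling $k_i$ copies of $M_i$ block-diagonally, and noting that the $k_1$ size-$1$ blocks (where the nilpotent part necessarily vanishes) contribute only the scalar multiples collected into $x_1{\bf 1}_{k_1}$, produces the claimed form.

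The main obstacle is the inductive power computation of $X_i^{\bullet_{{\bf J}_i} \ell}$ together with the index bookkeeping for the Kronecker arrangement ${\bf 1}_{k_i} \otimes M_i$: the anti-diagonal pattern interacts cleanly with $\bullet_{{\bf J}_i}$, but care is needed to match the indexing convention of the Hankel block $M_i$. Once that is in place, uniqueness of the partition $(k_1,\ldots,k_m)$ as an invariant of the embedding follows from uniqueness of the Jordan canonical form of $X$, so distinct data produce non-congruent embeddings.
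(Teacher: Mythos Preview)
Your proposal is correct and follows essentially the same approach as the paper: reduce to $U={\bf 1}_n$, read off the Jordan type of the nilpotent generator, use Proposition~\ref{prop:J_to_I} to recognize the Hankel normal form as $\CC\{{\bf 1}_n,Y,\dots,Y^{m-1}\}$ for a symmetric $Y$ of that same Jordan type, and then invoke Lemma~\ref{lm:similar=orth_congruent} to conclude. The only difference is that you spell out the anti-diagonal power computation $X_i^{\bullet_{{\bf J}_i}\ell}$ explicitly, whereas the paper leaves that verification implicit.
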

\begin{proof}
After a congruence, we may assume that $U={\bf 1}_n$. Now $X^{m-1}\neq{\bf 0}_n$ and $X^m={\bf 0}_n$. So there exist unique integers $k_1,\ldots,k_{m-1}\geq 0$ and $k_m\geq 1$ such that the Jordan normal form of $X$ has $k_i$ blocks of size $i\times i$. Using Proposition~\ref{prop:J_to_I}, we see that
\[
\Diag\left({\bf 1}_{k_m}\otimes\begin{pmatrix}x_m&x_{m-1}&\dots &x_2&x_1\\x_{m-1}&&\rddots&\rddots\\\vdots&\rddots&\rddots\\x_2&\rddots\\x_1\end{pmatrix},\ldots,{\bf 1}_{k_2}\otimes\begin{pmatrix}x_2&x_1\\x_1\end{pmatrix},x_1{\bf 1}_{k_1}\right)
\]
is congruent to $\CC\{{\bf 1}_n,Y,Y^2,\ldots Y^{m-1}\}$ for a $Y\in\S^n$ with the same Jordan normal form. Hence $X,Y$ are orthogonally congruent and hence so are $\CC\{{\bf 1}_n,X,X^2,\ldots, X^{m-1}\},\CC\{{\bf 1}_n,Y,Y^2,\ldots, Y^{m-1}\}$.
\end{proof}

\subsection{The Jordan algebra $\Js^2_0$}
An embedding of $\Js^2_0$ into $\S^n$ is of the form $\CC\{U,X,Y\}$ where $U\in\S^n$ is an invertible matrix and $X,Y\in\S^n$ satisfy $X\bullet_UX=X\bullet_UY=Y\bullet_UY={\bf 0}_n$.

\begin{de}
A subspace $\Ls\subseteq\S^n$ is called {\em square-zero} when $X^2=0$ for all matrices $X\in\Ls$.
\end{de}

\begin{prop}\label{prop:classify_J20}
Every embedding of $\Js_0^2$ is congruent to 
\[
\CC{\bf 1}_n\oplus \Ps
\]
for some square-zero pencil $\Ps\subseteq\S^n$.
\end{prop}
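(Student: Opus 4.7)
The plan is to reduce to the case $U = \mathbf{1}_n$, translate the Jordan-algebra relations into matrix identities, and then verify that the resulting pencil is square-zero.

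First, I would use the fact (cf.\ Proposition~\ref{prop:classify_CC}) that any invertible matrix in $\S^n$ is congruent to $\mathbf{1}_n$. Thus, after applying a suitable congruence, I may assume the embedding has the form $\Ls = \CC\{\mathbf{1}_n, X, Y\}$, where $U = \mathbf{1}_n$ is the image of the unit of $\Js_0^2$ and $X, Y$ are the images of a basis of the ``square-zero part''. Under this normalization, the Jordan product $\bullet_U$ becomes the symmetrized matrix product $X \bullet Y = \frac{1}{2}(XY + YX)$, and the defining relations of $\Js_0^2$ translate into
\[
X^2 = 0, \qquad XY + YX = 0, \qquad Y^2 = 0.
\]

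Next, set $\Ps := \CC\{X, Y\}$. For any scalar combination $Z = \alpha X + \beta Y$, expanding and using the three identities above yields
\[
Z^2 = \alpha^2 X^2 + \alpha\beta(XY + YX) + \beta^2 Y^2 = 0,
\]
so $\Ps$ is a square-zero subspace of $\S^n$. Finally, since $\mathbf{1}_n$ is invertible while every element of $\Ps$ is nilpotent, we have $\mathbf{1}_n \notin \Ps$, and the three-dimensionality of $\Ls$ together with the linear independence of $\{\mathbf{1}_n, X, Y\}$ forces $\dim \Ps = 2$ and the sum $\CC\mathbf{1}_n + \Ps$ to be direct. This gives $\Ls = \CC\mathbf{1}_n \oplus \Ps$ with $\Ps$ a square-zero pencil, as required.

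There is no real obstacle here: the entire content of the statement is that the defining relations of the abstract Jordan algebra $\Js_0^2$ are precisely what one needs to certify the square-zero condition on a whole pencil, and the passage from $U$ to $\mathbf{1}_n$ already appeared in the proof of Proposition~\ref{prop:classify_CC[x]/x^m}.
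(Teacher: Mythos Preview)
Your proof is correct and follows essentially the same approach as the paper's: reduce to $U=\mathbf{1}_n$ by congruence, read off $X^2=XY+YX=Y^2=\mathbf{0}_n$, and set $\Ps=\CC\{X,Y\}$. The paper's version is simply more terse—it does not spell out the verification that every linear combination squares to zero or the directness of the sum—but the logical content is identical.
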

\begin{proof}
By applying a congruence, we may assume that $U={\bf 1}_n$. Then we get $X^2=XY+YX=Y^2={\bf 0}_n$. So $\Ls=\CC{\bf 1}_n\oplus \Ps$ for the square-zero pencil $\Ps:=\CC\{X,Y\}$.
\end{proof}

Next, we wish to classify square-zero pencils in $\S^n$. We do this for $n\leq 7$ using the following lemma.

\begin{lm}\label{lm:orbits_isotropic_spaces}
Let $n\geq 2$ and $1\leq k\leq n/2$ be integers. Then the set
\[
\{(v_1,\ldots,v_k)\in\CC^{n\times k}\mid v_1,\ldots,v_k\mbox{ linearly independent},\forall i,j:v_i^\top v_j=0\}
\]
forms a single $\O(n)$-orbit.
\end{lm}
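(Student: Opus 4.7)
The plan is to show that every such tuple $(v_1, \ldots, v_k)$ lies in the $\O(n)$-orbit of the canonical tuple $f_i := (e_i + \sqrt{-1}\, e_{n+1-i})/\sqrt{2}$ for $i=1,\ldots,k$; linear independence and mutual isotropy of these $f_i$ follow from $i+j\leq 2k\leq n<n+1$, which makes every Kronecker delta $\delta_{i,\,n+1-j}$ in the cross terms vanish.

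For an arbitrary tuple $(v_1,\ldots,v_k)$, I would first construct a hyperbolic extension. Non-degeneracy of the standard bilinear form on $\CC^n$ together with linear independence of the $v_i$ allows one to pick $\tilde u_1,\ldots,\tilde u_k\in\CC^n$ satisfying $v_i^\top\tilde u_j = \delta_{ij}$. Replacing these by
\[
u_i := \tilde u_i - \tfrac{1}{2}\sum_{j=1}^k (\tilde u_i^\top \tilde u_j)\, v_j
\]
preserves the duality $v_i^\top u_j = \delta_{ij}$ (because $v_l^\top v_j = 0$), while the symmetry of the Gram matrix $(\tilde u_i^\top \tilde u_j)$ forces $u_i^\top u_j = 0$. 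The $2k$ vectors $v_1,\ldots,v_k,u_1,\ldots,u_k$ are then linearly independent, and on $H := \spann(v_1,\ldots,v_k,u_1,\ldots,u_k)$ the restricted form has Gram matrix $\bigl(\begin{smallmatrix}0 & I_k\\ I_k & 0\end{smallmatrix}\bigr)$, so $\CC^n = H \oplus H^\perp$ with the form non-degenerate on $H^\perp$. Running the same construction on the canonical tuple yields dual vectors $g_i := (e_i - \sqrt{-1}\,e_{n+1-i})/\sqrt{2}$ together with the complement $\spann(e_{k+1},\ldots,e_{n-k})$ carrying the standard form.

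To conclude, I would define $g \in \GL_n$ by $g(f_i) := v_i$, $g(g_i) := u_i$, and by sending the standard basis $e_{k+1},\ldots,e_{n-k}$ of $\spann(e_{k+1},\ldots,e_{n-k})$ to any orthonormal basis of $H^\perp$. Since source and target bases have identical Gram matrices, $g$ preserves the standard form, i.e.\ $g \in \O(n)$, and it maps $(f_1,\ldots,f_k)$ to $(v_1,\ldots,v_k)$ as required. The one step requiring real care is the existence of an orthonormal basis of $H^\perp$: this is the classification of non-degenerate complex symmetric bilinear forms up to isometry by dimension, obtained by Gram--Schmidt using that $\CC$ is quadratically closed. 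I expect this to be the main but entirely standard potential obstacle, and it is vacuous when $n = 2k$.
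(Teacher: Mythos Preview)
Your proof is correct and complete; the hyperbolic extension $u_i := \tilde u_i - \tfrac12\sum_j(\tilde u_i^\top\tilde u_j)v_j$ does exactly what you claim, and the final step is a standard Witt-type extension of an isometry.

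The paper's proof follows the same overall architecture---build a dual isotropic tuple $x_1,\ldots,x_k$ to the $v_i$, then use transitivity of $\O(n)$ on suitably structured frames---but obtains the $x_i$ differently. Rather than picking arbitrary duals and then correcting them by Gram--Schmidt, the paper observes that the complex-conjugate subspace $\overline V=\CC\{\overline{v_1},\ldots,\overline{v_k}\}$ is automatically totally isotropic (since $\overline{v_i}^\top\overline{v_j}=\overline{v_i^\top v_j}=0$) and pairs non-degenerately with $V$ via the Hermitian inner product; the dual basis $x_1,\ldots,x_k$ taken inside $\overline V$ therefore satisfies $x_i^\top x_j=0$ with no correction needed. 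The paper then passes to the genuinely orthonormal $2k$-frame $(v_i\pm x_i)/\sqrt2$ and invokes transitivity of $\O(n)$ on orthonormal $2k$-frames.

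Your approach has the advantage of being field-independent (it works over any field of characteristic $\neq 2$ where non-degenerate symmetric forms diagonalise), whereas the paper's conjugation trick is a shortcut specific to $\CC$ that avoids the Gram--Schmidt step entirely. Both arguments ultimately rely on the same ``complete an orthonormal basis of $H^\perp$'' step that you correctly flagged.
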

\begin{proof}
Let $(v_1,\ldots,v_k),(w_1,\ldots,w_k)$ be any two tuples in this set. Take $V=\CC\{v_1,\ldots,v_k\}$ and let $\overline{V}=\{\overline{v}\mid v\in V\}$ be its conjugate subspace. Then $\overline{V}=\CC\{x_1,\ldots,x_k\}$ for some vectors $x_1,\ldots,x_k\in\CC^n$ such that $v_i^\top x_j=\delta_{ij}$. Similarly, define $y_1,\ldots,y_k\in\CC^n$ such that $w_i^\top y_j=\delta_{ij}$. The sets
\[
\left\{\frac{v_1+x_1}{\sqrt 2},\frac{v_1-x_1}{\sqrt 2},\ldots,\frac{v_k+x_k}{\sqrt 2},\frac{v_k-x_k}{\sqrt 2}\right\}\mbox{ and }\left\{\frac{w_1+y_1}{\sqrt 2},\frac{w_1-y_1}{\sqrt  2},\ldots,\frac{w_k+y_k}{\sqrt 2},\frac{w_k-y_k}{\sqrt 2}\right\}
\]
are both orthonormal. So there exists a matrix in $\O(n)$ sending the first set to the second. This matrix sends $v_i$ to $w_i$.
\end{proof}

Write $k=\lfloor n/2\rfloor$. When $n$ is odd, we view $\S^{2k}$ as the subspace of $\S^n$ consisting of all symmetric matrices whose last column/row is zero. 

\begin{prop}\label{prop:squarezero_pencil}
Let $B\in\S^2$ be the square zero matrix from Example~\ref{ex:J_to_I}. If $4\leq n\leq 7$, then any square-zero pencil in $\S^n$ is orthogonally congruent to $\Ps\otimes B\subseteq\S^{2k}$ for some pencil $\Ps\subseteq\S^k$. Let $\Ps,\Ps'\subseteq\S^k$ be congruent pencils. Then $\Ps\otimes B,\Ps'\otimes B$ are orthogonally congruent.
\end{prop}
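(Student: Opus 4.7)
The plan is to show that for a square-zero pencil $\Ls=\CC\{X,Y\}\subseteq\S^n$ with $4\le n\le 7$ the subspace $\mathrm{Im}(X)+\mathrm{Im}(Y)$ is isotropic, and then to normalize it to the standard maximal isotropic subspace $\CC^k\otimes v_B\subseteq\CC^{2k}\subseteq\CC^n$, where $v_B=(1,i)^\top$ spans $\ker B$. Once this is done, every symmetric matrix in $\S^n$ with image in this isotropic subspace equals $P\otimes B$ for a unique $P\in\S^k$, so the pencil takes the desired form $\Ps\otimes B$.

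First, the square-zero condition on the pencil expands to $X^2=Y^2=0$ and $XY+YX=0$; in particular $XY$ is skew-symmetric. Pick $X$ of maximum rank $r$ in $\Ls$; since $X^2=0$ forces $r\le k:=\lfloor n/2\rfloor$, we have $r\le 3$. By Lemma~\ref{lm:similar=orth_congruent} and the fact that $B$ is similar to a single $2\times2$ nilpotent Jordan block, I may assume $X=\Diag(I_r\otimes B,0_{n-2r})$. Writing $Y$ in the corresponding block form, the anticommutation $XY+YX=0$ forces the top-left block to be $Y_{11}=P\otimes B+Q\otimes N$ with $P\in\S^r$ and skew-symmetric $Q\in\CC^{r\times r}$, where $N=\begin{pmatrix}0&1\\-1&0\end{pmatrix}$ satisfies $BN+NB=0$, and forces the top-right block $Y_{12}=W\otimes v_B$ for some $W\in\CC^{r\times(n-2r)}$, since $\ker B=\CC v_B$.

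The key step, and the one that forces $n\le 7$, is the analysis of the top-left block of $Y^2=0$. Using $BN=-iB$, $NB=iB$, and $N^2=-I_2$, a direct computation gives
\[
Y_{11}^2+Y_{12}Y_{12}^\top=\bigl(-i[P,Q]+WW^\top\bigr)\otimes B-Q^2\otimes I_r=0,
\]
which separates into $Q^2=0$ and $WW^\top=i[P,Q]$. Now $Q$ is skew, so $Q^2=0$ implies $\mathrm{Im}(Q)\subseteq\ker Q=\mathrm{Im}(Q)^\perp$ is isotropic in $\CC^r$; combined with the evenness of $\rk(Q)$ and $r\le 3$, this forces $Q=0$. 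For $r\ge 4$ this implication genuinely fails (isotropic images of dimension $2$ are available), which is exactly why the statement is restricted to $n\le 7$. With $Q=0$, a quick block computation shows $XY=0$, so $\mathrm{Im}(Y)\subseteq\ker X=\mathrm{Im}(X)^\perp$, and combining this with the individual isotropy of $\mathrm{Im}(X)$ and $\mathrm{Im}(Y)$ makes $U:=\mathrm{Im}(X)+\mathrm{Im}(Y)$ isotropic.

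To finish, extend a basis of $U$ to a basis of a $k$-dimensional isotropic subspace of $\CC^n$ and apply Lemma~\ref{lm:orbits_isotropic_spaces} to find $H\in\O(n)$ mapping it to the standard basis $(e_1\otimes v_B,\ldots,e_k\otimes v_B)$ of $\CC^k\otimes v_B$. After conjugating by $H$, both $X$ and $Y$ are symmetric with image in $\CC^k\otimes v_B$; a change of basis argument as in Proposition~\ref{prop:J_to_I} (using $T=I_k\otimes[v_B\,|\,v_B']$ with $v_B'=(1,-i)^\top$) shows that every such symmetric matrix is uniquely of the form $P\otimes B$ with $P\in\S^k$, so $\Ls=\Ps\otimes B$ with $\Ps\subseteq\S^k$. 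For the final statement, given $\Ps'=g\Ps g^\top$ with $g\in\GL(k)$, the map $e_i\otimes v_B\mapsto(ge_i)\otimes v_B$ sends one tuple of orthogonal isotropic vectors to another and so, by Lemma~\ref{lm:orbits_isotropic_spaces}, extends to some $H\in\O(2k)$; writing $P\otimes B=(I_k\otimes v_B)P(I_k\otimes v_B)^\top$ gives $H(P\otimes B)H^\top=(gPg^\top)\otimes B$ for every $P\in\S^k$, whence $H(\Ps\otimes B)H^\top=\Ps'\otimes B$.
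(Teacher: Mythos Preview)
Your proof is correct and takes a genuinely different, more conceptual route than the paper's. The paper normalizes one generator to $Y=\Diag(\mathbf{1}_r\otimes B,\mathbf{0}_{n-2r})$ and then, for each pair $(n,r)\in\{(4,2),(5,2),(6,3),(6,2),(7,3),(7,2)\}$, verifies \emph{by computer} that the remaining generator $X$ (subject to $X^2=XY+YX=0$ and the rank bound) must lie in $\S^k\otimes B$ up to a further orthogonal congruence; some of these cases split into several components that are handled separately. You instead isolate the single structural reason the result holds: writing $Y_{11}=P\otimes B+Q\otimes N$, the top-left block of $Y^2=0$ forces $Q^2=0$, and a skew-symmetric nilpotent matrix of size $\leq 3$ must vanish. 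From $Q=0$ you get $XY=0$ directly, hence $\mathrm{Im}(X)+\mathrm{Im}(Y)$ is isotropic, and one application of Lemma~\ref{lm:orbits_isotropic_spaces} normalizes everything at once. This avoids the case analysis entirely, requires no computer verification, and makes transparent exactly why $n\leq 7$ (equivalently $r\leq 3$) is the threshold---precisely the point at which nonzero skew-symmetric square-zero matrices first appear, as Remark~\ref{re:sz_pencils_n=8} illustrates.

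For the second statement your argument is again cleaner: the paper factors $g\in\GL(k)$ as a product of an orthogonal and a diagonal matrix and treats the two cases separately (using that $\O(2)$ acts transitively on the isotropic line $\CC v_B$ for the diagonal part), whereas you observe that $(ge_i\otimes v_B)_{i}$ is another isotropic tuple and invoke Lemma~\ref{lm:orbits_isotropic_spaces} once to produce the required $H\in\O(2k)$. The identity $P\otimes B=(I_k\otimes v_B)P(I_k\otimes v_B)^\top$ then gives $H(P\otimes B)H^\top=(gPg^\top)\otimes B$ immediately. One small remark: your reference to Proposition~\ref{prop:J_to_I} for the claim that a symmetric matrix with image in $\CC^k\otimes v_B$ equals some $P\otimes B$ is a bit loose---that proposition is about the specific matrices $Q_n$---but the claim itself follows by the short direct computation you sketch with $T=I_k\otimes[v_B\,|\,v_B']$, so there is no gap.
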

\begin{proof}
Let $\Ps:=\CC\{X,Y\}\subseteq\S^n$ be a square-zero pencil. Suppose that $4\leq n\leq 7$. By changing the basis $X,Y$, we assume that the rank $r\leq n/2$ of $Y$ is maximal in~$\Ps$. When $X,Y$ both have rank $1$, then $X+Y$ has rank $2$. Hence $r\geq 2$. By applying an orthogonal congruence, we may assume that $Y=\Diag({\bf 1}_r\otimes B,{\bf 0}_{n-2r})$. We now check the conditions $X^2=XY+YX={\bf 0}_n$ and $\rk(\lambda X+\mu Y)\leq r$ for all $\lambda,\mu\in\CC$ by computer:
\begin{itemize}
\item For $(n,r)=(4,2)$, we find that $X=Z\otimes B$ for some $Z\in\S^2$.
\item For $(n,r)=(5,2)$, we find that $X=\Diag(Z\otimes B,0)$ for some $Z\in\S^2$.
\item For $(n,r)=(6,3)$, we find that $X=Z\otimes B$ for some $Z\in\S^3$.
\item For $(n,r)=(6,2)$, we find that the variety of $X$ satisfying these conditions has $5$ components $X_1,X_2,X_3,X_4,X_5$. The component $X_1$ consists of matrices of the form $X=\Diag(Z\otimes B,{\bf 0}_2)$ for some $Z\in\S^2$. Acting with the matrix $\Diag({\bf 1}_4,-1,1)$ permutes $X_2,X_3$ and acting with the matrix $\Diag({\bf 1}_5,-1)$ permutes $X_4,X_5$. The components $X_2,X_4$ consist of matrices $Z\otimes B$ for $Z\in\S^3$ (with additional conditions on~$Z$).
\item For $(n,r)=(7,3)$, we find that $X=\Diag(Z\otimes B,0)$ for some $Z\in\S^3$.
\item For $(n,r)=(7,2)$, we find that the variety of $X$ satisfying these conditions has $3$ components. One component consists of matrices of the form $X=\Diag(Z\otimes B,{\bf 0}_3)$ for some $Z\in\S^2$. Write $v=(1,i)^\top$. Then the $2$ other components consist of matrices of the form
\[
\begin{pmatrix}
dB&eB&av&bv&cv\\
eB&fB&\pm av&\pm bv&\pm cv\\
av^\top&\pm av^\top\\
bv^\top&\pm bv^\top\\
cv^\top&\pm cv^\top
\end{pmatrix}
\]
such that $f\pm2ie-d=0$ and $a^2+b^2+c^2=0$. Acting with $\Diag({\bf 1}_4,\O(3))$, we reduce to the case where $(a,b,c)=(0,0,0)$ or $(a,b,c)=(1,i,0)$ by Lemma~\ref{lm:orbits_isotropic_spaces}. Now, in both cases, the matrix is of the form $X=\Diag(Z\otimes B,0)$ for some $Z\in\S^3$.
\end{itemize}
Write $k=\lfloor n/2\rfloor$. In all cases, we see that after an orthogonal congruence $\Ps$ is of the form $\Ps'\otimes B\subseteq\S^{2k}\subseteq\S^n$ for some pencil $\Ps\subseteq\S^k$.\bigskip

Let $\Ps,\Ps'\subseteq\S^k$ be pencils. If $P\in\GL(n)$ is an orthogonal matrix, then $P\otimes{\bf 1}_2$ is as well. So if $\Ps,\Ps'$ are orthogonally congruent, then so are $\Ps\otimes B,\Ps'\otimes B$. Next, suppose that $\Ps'=P\Ps P^{\top}$ where $P=\Diag(\lambda_1,\ldots,\lambda_n)\in\GL(n)$ is a diagonal matrix. As $B=(1,i)^{\top}(1,i)$, it follows that 
\[
\Ps'\otimes B=Q(\Ps\otimes B)Q^{\top}
\]
where $Q=\Diag(Q_1,\ldots,Q_n)$ and $Q_i\in\O(2)$ such that $Q_i(1,i)^{\top}=\lambda_i(1,i)^{\top}$. By Lemma~\ref{lm:orbits_isotropic_spaces}, such matrices $Q_i$ exist. So also in this case, the pencils $\Ps\otimes B,\Ps'\otimes B$ are orthogonally congruent. As $\O(n)$ and the diagonal matrices generate $\GL(n)$, we see that the proposition holds.
\end{proof}

\begin{re}\label{re:sz_pencils_n=8}
The proposition does not hold for $n=8$. Indeed, consider the embedding $\Ls=\CC\{U,X,Y\}$ of $\Js^2_0$ where 
\[
U=\begin{pmatrix}
&&&{\bf J}_2\\
&&{\bf J}_2\\
&{\bf J}_2\\
{\bf J}_2
\end{pmatrix},
X=\begin{pmatrix}
{\bf 1}_2\\
&{\bf 1}_2\\
&&{\bf 0}_2\\
&&&{\bf 0}_2
\end{pmatrix},
Y=\begin{pmatrix}
&&&B\\
&&-B\\
&-B\\
B
\end{pmatrix}
\]
and let $P\in\GL_n$ be such that $PUP^\top={\bf 1}_8$. Then $\Ps=P\CC\{X,Y\}P^\top$ is a square-zero pencil. We have $XU^{-1}Y\neq{\bf 0}_8$ and hence $X'Y'\neq{\bf 0}_8$ for $X'=PXP^\top$ and $Y'=PYP^\top$. This is not possible if $\Ps$ is orthogonally congruent to $\Ps'\otimes B$ for some pencil $\Ps'\subseteq\S^4$.
\end{re}

\begin{que}
Is it possible to classify the embeddings of $\Js^2_0$ into $\S^n$ for general $n$?
\end{que}

Using the proposition, we see that to classify square-zero pencils for $n\leq 7$, it suffices to classify pencils in $\S^k$ up to congruence for $k\leq 3$.

\begin{prop}\label{prop:pencilsS2}
Every pencil in $\S^2$ is congruent one of
\[
\begin{pmatrix}x\\&y\end{pmatrix},\begin{pmatrix}y&x\\x\end{pmatrix}
\]
\end{prop}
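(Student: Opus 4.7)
The plan has three steps. \textbf{Regularity.} First I would observe that every pencil in $\S^2$ must contain an invertible matrix: the determinant $\det\begin{pmatrix}a&b\\b&c\end{pmatrix}=ac-b^2$ is a non-degenerate quadratic form on the three-dimensional space $\S^2$, and a non-degenerate quadratic form on $\CC^3$ has Witt index $1$, so no two-dimensional subspace is totally isotropic. Hence any pencil $\Ps\subseteq\S^2$ contains some invertible matrix $U$.

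\textbf{Normalization.} By Proposition~\ref{prop:classify_CC}, a congruence takes $U$ to ${\bf 1}_2$, so I may assume $\Ps=\CC\{{\bf 1}_2,X\}$ for some $X\in\S^2\setminus\CC{\bf 1}_2$. Replacing $X$ by $X-\tfrac12\mathrm{tr}(X)\,{\bf 1}_2$, I may additionally take $\mathrm{tr}(X)=0$, so Cayley--Hamilton gives $X^2=-\det(X)\,{\bf 1}_2$.

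\textbf{Case split on $\det(X)$.} If $\det(X)\neq 0$, then $X$ has distinct eigenvalues $\pm\sqrt{-\det(X)}$ and is therefore similar to $a\,\Diag(1,-1)$ for $a=\sqrt{-\det(X)}$. Lemma~\ref{lm:similar=orth_congruent} upgrades the similarity to orthogonal congruence, so $\Ps$ becomes $\spann({\bf 1}_2,\Diag(1,-1))$, the space of all diagonal $2\times 2$ matrices, which is the first form. If instead $\det(X)=0$ (and $X\neq 0$), then $X$ is a nonzero symmetric nilpotent, hence similar, and by Lemma~\ref{lm:similar=orth_congruent} orthogonally congruent, to the matrix $B=\begin{pmatrix}1&i\\i&-1\end{pmatrix}$ of Example~\ref{ex:J_to_I}. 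Thus $\Ps$ is orthogonally congruent to $\CC{\bf 1}_2\oplus\CC B$, which by Example~\ref{ex:J_to_I} is congruent to $\Ps_{2,1}=\spann({\bf J}_2,\Diag(1,0))$, matching the second form.

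The only nontrivial step will be the regularity argument, and even that is mild; the rest follows directly from the normal-form lemmas already proved.
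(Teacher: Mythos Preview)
Your proof is correct. The paper's own proof is simply a one-line citation to \cite[Examples~1.2]{FMS:pencil_old_new}, so your argument is a genuinely different and more self-contained route. You use a Witt-index argument to establish regularity, then apply the normal-form machinery already developed in this paper (Proposition~\ref{prop:classify_CC}, Lemma~\ref{lm:similar=orth_congruent}, Example~\ref{ex:J_to_I}) to finish. The paper's approach buys brevity by outsourcing to a known classification of pencils of quadrics; yours buys independence from that external reference and shows that the result already follows from the tools set up in Section~\ref{sec:irr_embeddings}.
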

\begin{proof}
This is \cite[Examples 1.2]{FMS:pencil_old_new}.
\end{proof}

\begin{samepage}
\begin{prop}\label{prop:pencilsS3}
We have the following:
\begin{enumerate}
\item[(1)] Every regular pencil in $\S^3$ is congruent one of
\[
\begin{pmatrix}
x\\&y\\&&x+y
\end{pmatrix},\begin{pmatrix}
x&y\\y\\&&x
\end{pmatrix},\begin{pmatrix}
&y&x\\y&x\\x
\end{pmatrix},\begin{pmatrix}
x\\&x\\&&y
\end{pmatrix},\begin{pmatrix}
y&x\\x\\&&x
\end{pmatrix}.
\]

\item[(2)] Every singular pencil in $\S^3$ is congruent one of
\[
\begin{pmatrix}
&x&y\\x\\y
\end{pmatrix},\begin{pmatrix}
x\\&y\\&&0
\end{pmatrix},\begin{pmatrix}
y&x\\x\\&&0
\end{pmatrix}.
\]
\end{enumerate}
\end{prop}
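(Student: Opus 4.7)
The plan is to classify pencils in $\S^3$ under congruence by splitting into the regular (part~(1)) and singular (part~(2)) cases, using a blend of Jordan normal form analysis and the Kronecker--Weierstrass theory for symmetric pencils.

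For part~(1), I will first choose a basis of $\Ls$ with an invertible generator $X$, and apply a congruence so that $X = {\bf 1}_3$. Lemma~\ref{lm:similar=orth_congruent} then identifies the orthogonal congruence orbit of the second generator $Y$ with its similarity orbit, so $Y$ is classified by its Jordan form. There remain two freedoms: the affine change $Y \mapsto aY + b{\bf 1}_3$ (another element of the pencil modulo $X$) and the choice of which invertible member of $\Ls$ is normalized to ${\bf 1}_3$; together these realize the full $\GL(2)$-action on bases of $\Ls$. Invariantly, $\Ls$ determines the binary cubic form $\det(\lambda X + \mu Y)$ up to $\GL(2)$-action on $(\lambda,\mu)$ and scaling, together with the rank of $\lambda_0 X + \mu_0 Y$ at each multiple root $[\lambda_0:\mu_0]$ (rank $0$ is excluded, for then $X,Y$ would be proportional). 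The three $\GL(2)$-orbits of nonzero binary cubics correspond to root patterns $(1,1,1)$, $(2,1)$, $(3)$, and at each multiple root the rank is $1$ or $2$, producing exactly five cases. A case-by-case inspection, using Lemma~\ref{lm:similar=orth_congruent} to realize each Jordan structure symmetrically (for instance $\Diag(B,\lambda)$ with $B$ the rank-$1$ nilpotent of Example~\ref{ex:J_to_I}), matches these five cases to the five listed normal forms.

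For part~(2), I split on whether the matrices of $\Ls$ admit a common null vector. If such a vector exists, a congruence moves it to $e_3$, so every element of $\Ls$ has the block form $\Diag(\cdot, 0)$ with a symmetric $2\times 2$ upper-left block. The resulting $2$-dimensional pencil $\Ls' \subseteq \S^2$ is classified by Proposition~\ref{prop:pencilsS2}, yielding the second and third listed normal forms. If no common null vector exists, I will show $\Ls$ is congruent to the first listed form. The main tool is the rational kernel map $\varphi \colon \PP(\Ls) \to \PP^2$, $[\ell] \mapsto \ker(\ell)$, which is nonconstant by assumption. I will argue that the image of $\varphi$ is a line in $\PP^2$ (equivalent to saying that the only singular symmetric $3\times 3$ Kronecker block without a common null vector has minimal index~$1$). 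After a further congruence placing that line at $\{v_1 = 0\}$, a short direct computation forces $X$ and $Y$ into the first normal form.

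The principal obstacle is the no-common-null-vector subcase of part~(2), where an elementary proof that $\varphi$ has image a line (avoiding the Kronecker--Weierstrass classification) requires care. I would approach it by selecting a minimal-rank element $X \in \Ls$, arguing that its null vector $v$ must be isotropic (forced by the symmetry of $X$ together with the symmetry of another generator $Y$ and the absence of a common null vector), and then aligning $v$ with a standard isotropic vector by orthogonal congruence and reducing $(X,Y)$ to the first canonical form by direct inspection.
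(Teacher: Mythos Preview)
Your proposal is correct in outline but diverges from the paper's proof in both parts, and your final step in part~(2) needs sharpening.

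For part~(1) the paper gives no argument at all: it simply cites \cite[Examples~1.3]{FMS:pencil_old_new}. Your Segre-symbol approach (normalize an invertible member to ${\bf 1}_3$, classify the second generator by Jordan type via Lemma~\ref{lm:similar=orth_congruent}, then account for the residual $\GL(2)$-action on bases) is a standard and correct way to recover that classification directly.

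For part~(2) the paper takes a more elementary route than you do. Rather than splitting on the existence of a common null vector and invoking a kernel map, the paper picks any rank-$2$ element $X$ (one exists since a pencil in $\S^3$ cannot consist entirely of rank-$\le 1$ matrices), normalizes it to $\Diag({\bf J}_2,0)$ by congruence, writes the second generator $Y$ with unknown entries, and imposes $\det(\lambda X+Y)\equiv 0$. The vanishing of the $(3,3)$ entry and a few $3\times 3$ minors forces $Y$ into two shapes: either the last row/column of $Y$ vanishes (whence $\Ps=\Diag(\Ps',0)$ and Proposition~\ref{prop:pencilsS2} finishes), or $\Ps$ is congruent to the first normal form. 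This replaces your kernel-map/Kronecker argument with a two-line computation.

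One point in your sketch is underspecified: in the no-common-null subcase you speak of the null vector $v$ of a minimal-rank $X$ being ``isotropic'' and then aligning it with a ``standard isotropic vector by orthogonal congruence.'' Under $\GL_3$-congruence there is no ambient quadratic form, so ``isotropic'' must mean $v^\top Y v=0$ for the other generator $Y$. That does follow (expand $\det(X+tY)$ to first order in $t$ after putting $v$ last), but the subsequent ``orthogonal congruence'' step is out of place here---you are free to use arbitrary congruence, and indeed that is what you need to move $v$ where you want. Once you make this precise, your argument reduces to essentially the same computation the paper does directly.
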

\end{samepage}
\begin{proof}
Part (1) is \cite[Examples 1.3]{FMS:pencil_old_new}. For part (2), let $\Ps\subseteq\S^3$ be a singular pencil. So every matrix in $\Ps$ has rank $\leq 2$. As $\Ps\subseteq\S^3$ and $\dim(\Ps)\geq 2$, the pencil $\Ps$ must contain a matrix of rank $\geq2$. So $\Ps$ contains a matrix $X$ of rank $2$. After congruence, we may assume that $X=\Diag({\bf J}_2,0)$. Let $X,Y$ be a basis of $\Ps$. Since no linear combination of $X,Y$ has rank~$3$, we have
\[
Y=\begin{pmatrix}
a&e&b\\
e&c&d\\
b&d&0
\end{pmatrix}
\]
for $a,b,c,d,e\in\CC$ with $bd=ad=bc=0$. By subtracting a multiple of $X$, we may assume that $e=0$. We find that either $d=b=0$, $d=c=0$ or $b=a=0$. The last two cases are congruent. When the last column/row of $Y$ is zero, we have $\Ps=\Diag(\Ps',0)$ for some pencil $\Ps'\subseteq\S^2$. Otherwise, we find that $\Ps$ is congruent to 
\[
\begin{pmatrix}
&x&y\\x\\y
\end{pmatrix}.
\]
\end{proof}

\subsection{The Jordan algebra $\Js^2_1$}
An embedding of $\Js^2_1$ into $\S^n$ is of the form $\CC\{X,Y,V\}$ where $U=X+Y\in\S^n$ is an invertible matrix and $X,Y,V\in\S^n$ satisfy 
\[
\begin{array}{ccc}
X\bullet_UX=X,&X\bullet_UY={\bf 0}_n,&X\bullet_UV=V/2,\\
&Y\bullet_UY=Y,&Y\bullet_UV=V/2,\\
&&V\bullet_UV={\bf 0}_n.
\end{array}
\]

\begin{lm}\label{lm:anti-diag_square-zero}
Let $1\leq r\leq n/2$ be an integer and consider matrices of the form
\[
Y=\begin{pmatrix}{\bf 0}_r&Z\\Z^\top&{\bf 0}_{n-r}\end{pmatrix}
\]
for $Z\in\CC^{r\times(n-r)}\setminus\{{\bf 0}_{r\times(n-r)}\}$. Then we have $Y^2=0$ if and only if $Y$ lies in the $\Diag(\O(r),\O(n-r))$-orbit of the matrix
\[
\Diag\left(\begin{pmatrix}{\bf 0}_r&\Diag({\bf 1}_k\otimes B,{\bf 0}_{r-2k})\\\Diag({\bf 1}_k\otimes B,{\bf 0}_{r-2k})&{\bf 0}_r\end{pmatrix},{\bf 0}_{n-2r}\right)
\]
where $k=\rk(Z)$.
\end{lm}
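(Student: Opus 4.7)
The plan is to compute $Y^2$ directly, recast $Y^2=0$ as isotropy statements on the column and row spaces of $Z$, and then apply Lemma~\ref{lm:orbits_isotropic_spaces} to a rank-factorization of $Z$ to reach the stated normal form.

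A block computation gives
\[
Y^2 = \Diag(ZZ^\top,\, Z^\top Z),
\]
so $Y^2=0$ is equivalent to $ZZ^\top=0$ together with $Z^\top Z=0$. Geometrically, $Z^\top Z=0$ says that the column space $C\subseteq\CC^r$ of $Z$ is totally isotropic for the standard symmetric bilinear form, and $ZZ^\top=0$ says the same for the row space $R\subseteq\CC^{n-r}$. Since any totally isotropic subspace of $\CC^m$ has dimension at most $\lfloor m/2\rfloor$ and $r\leq n-r$, this forces $k:=\rk(Z)\leq r/2$, which is exactly what is required for the canonical form to make sense.

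Next I would write a rank-$k$ factorization $Z=\sum_{i=1}^k u_iv_i^\top$ in which $(u_1,\ldots,u_k)$ is a basis of $C$ and $(v_1,\ldots,v_k)$ is a basis of $R$ (such a factorization exists for any rank-$k$ matrix). Both tuples are then linearly independent and mutually isotropic. Let $s_i\in\CC^r$ be the vector with $(1,i)^\top$ in coordinates $2i-1,2i$ and zero elsewhere, and let $t_i\in\CC^{n-r}$ be the analogous vector; the tuples $(s_1,\ldots,s_k)$ and $(t_1,\ldots,t_k)$ are linearly independent and mutually isotropic. By Lemma~\ref{lm:orbits_isotropic_spaces} there exist $P\in\O(r)$ and $Q\in\O(n-r)$ with $Pu_i=s_i$ and $Qv_i=t_i$, so
\[
PZQ^\top \;=\; \sum_{i=1}^k (Pu_i)(Qv_i)^\top \;=\; \sum_{i=1}^k s_it_i^\top \;=\; ST^\top,
\]
where $S$, $T$ are the matrices with columns $s_i$, $t_i$.

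Finally, using $(1,i)^\top(1,i)=B$ one checks directly that $ST^\top$ is the $r\times(n-r)$ matrix with $\mathbf{1}_k\otimes B$ in its upper-left $2k\times 2k$ corner and zeros elsewhere; reassembling this into $Y$ and repartitioning $n=r+r+(n-2r)$ shows that the resulting matrix is precisely the stated canonical form. The converse implication, that the canonical form satisfies $Y^2=0$, is immediate from $B^2=\mathbf{0}_2$, which gives $(\mathbf{1}_k\otimes B)^2=\mathbf{0}_{2k}$. I do not expect a real obstacle; the only delicate point is the bookkeeping in translating between the $(r,n-r)$ block structure of $Y$ and the $(r,r,n-2r)$ structure of the target, to confirm that the identification $PZQ^\top=ST^\top$ reproduces the claimed matrix on the nose.
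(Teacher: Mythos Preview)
Your proof is correct and follows essentially the same route as the paper's: both compute $Y^2=\Diag(ZZ^\top,Z^\top Z)$, take a rank-$k$ factorization $Z=\sum_i u_iv_i^\top$, deduce that the factor tuples are linearly independent and pairwise isotropic, and invoke Lemma~\ref{lm:orbits_isotropic_spaces} twice to normalize them. The only cosmetic difference is that you phrase the isotropy as ``the column and row spaces of $Z$ are totally isotropic,'' whereas the paper expands $ZZ^\top=\sum_{i,j}(w_i^\top w_j)v_iv_j^\top$ and reads off $w_i^\top w_j=0$ from the linear independence of the rank-one matrices $v_iv_j^\top$; these are equivalent observations.
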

\begin{proof}
We have $Y^2=0$ if and only if $ZZ^{\top}={\bf 0}_r$ and $Z^{\top}Z={\bf 0}_{n-r}$. Take $1\leq k=\rk(Z)$ and write $Z=v_1w_1^\top+\ldots+v_kw_k^\top$ with $v_1,\ldots,v_k\in\CC^r$ and $w_1,\ldots,w_k\in\CC^{n-r}$. Note that $v_1,\ldots, v_k$ and $w_1,\ldots, w_k$ are both linearly independent. Therefore we have
\[
{\bf 0}_r=ZZ^\top=\sum_{i,j=1}^k w_i^\top w_j\cdot v_iv_j^\top,\quad {\bf 0}_{n-r}=Z^\top Z=\sum_{i,j=1}^k v_i^\top v_j\cdot w_iw_j^\top
\]
and so $w_i^\top w_j=v_i^\top v_j=0$ for all $i,j$. Applying Lemma~\ref{lm:orbits_isotropic_spaces} to $(v_1,\ldots,v_k)$ and $(w_1,\ldots,w_k)$, we find that $Y$ indeed lies in the stated $\Diag(\O(r),\O(n-r))$-orbit.
\end{proof}

\begin{prop}\label{prop:classify_J21}
Every embedding of $\Js_1^2$ is congruent to 
\[
\Diag\left(\begin{pmatrix}x{\bf J}_r&z\Diag({\bf 1}_k,{\bf 0}_{r-k})\\z\Diag({\bf 1}_k,{\bf 0}_{r-k})&y{\bf J}_r\end{pmatrix},y {\bf 1}_{n-2r}\right)
\]
for some integers $2\leq r\leq n/2$ and $1\leq k\leq r/2$.
\end{prop}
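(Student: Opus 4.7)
My approach is to reduce the embedding to a concrete canonical form with respect to the basepoint $U=\mathbf{1}_n$, and then verify this form is congruent to the one in the statement. After applying a congruence so that $U=\mathbf{1}_n$, the Jordan conditions become $X^2=X$, $XY+YX=0$, $V^2=0$, $XV+VX=V$, together with $Y=\mathbf{1}_n-X$ (so the relations involving $Y$ are automatic). Setting $r:=\rk X$, I may swap $X$ and $Y$ (which does not change the subspace $\CC\{X,Y,V\}$) and so assume $r\leq n/2$. Lemma~\ref{lm:idempotent} then puts $X=\Diag(\mathbf{1}_r,\mathbf{0}_{n-r})$ and $Y=\Diag(\mathbf{0}_r,\mathbf{1}_{n-r})$, and forces $V=\left(\begin{smallmatrix}0&Z\\Z^\top&0\end{smallmatrix}\right)$ for some $Z\in\CC^{r\times(n-r)}$. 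The equation $V^2=0$ translates to $ZZ^\top=0$ and $Z^\top Z=0$, so Lemma~\ref{lm:anti-diag_square-zero} yields an $\O(r)\times\O(n-r)$-congruence that preserves both $X$ and $Y$ and brings $V$ into the form $\Diag\!\left(\left(\begin{smallmatrix}\mathbf{0}_r&D\\D&\mathbf{0}_r\end{smallmatrix}\right),\mathbf{0}_{n-2r}\right)$ with $D=\Diag(\mathbf{1}_k\otimes B,\mathbf{0}_{r-2k})$ and $k=\rk Z$. Since $V\neq 0$ we have $k\geq 1$, which combined with $k\leq r/2$ forces $r\geq 2$.

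It remains to produce a congruence of the block form $P=\Diag(P_1,P_1,\mathbf{1}_{n-2r})$, with $P_1\in\GL_r$, that sends the triple $(X,Y,V)$ to the three basis matrices of the target subspace (corresponding to setting $x=1$, $y=1$, or $z=1$ in the displayed form). Computing the three congruences explicitly shows that this amounts to finding a single $P_1\in\GL_r$ satisfying simultaneously $P_1P_1^\top=\mathbf{J}_r$ (which converts the $\mathbf{1}_r$-blocks of $X$ and $Y$ into $\mathbf{J}_r$-blocks) and $P_1DP_1^\top=\Diag(\mathbf{1}_k,\mathbf{0}_{r-k})$ (which normalises the off-diagonal block of $V$). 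Writing $D=\sum_{j=1}^k v_jv_j^\top$ with $v_j=e_{2j-1}+ie_{2j}$, and using $B=(1,i)^\top(1,i)$, the second condition reduces to requiring $P_1v_j=e_j$ for $j=1,\ldots,k$.

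The main obstacle is the simultaneous realisation of these two conditions on $P_1$. My plan is to fix an auxiliary $P_0\in\GL_r$ with $P_0P_0^\top=\mathbf{J}_r$ (for instance $P_0=Q_r^\top$ from Proposition~\ref{prop:J_to_I}, since $Q_r^\top Q_r=\mathbf{J}_r$) and to search for $P_1$ in the form $P_1=P_0O$ with $O\in\O(r)$; this automatically guarantees the first condition. The second condition becomes $Ov_j=P_0^{-1}e_j$ for $j=1,\ldots,k$. Both $(v_1,\ldots,v_k)$ and $(P_0^{-1}e_1,\ldots,P_0^{-1}e_k)$ are linearly independent tuples of pairwise orthogonal isotropic vectors in $\CC^r$: for the second tuple this is because $P_0^{-\top}P_0^{-1}=\mathbf{J}_r^{-1}=\mathbf{J}_r$ has $(i,j)$-entry $\delta_{i+j,r+1}$, which vanishes for all $i,j\in\{1,\ldots,k\}$ thanks to the bound $k\leq r/2$. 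Lemma~\ref{lm:orbits_isotropic_spaces} then supplies an $O\in\O(r)$ with the required action on $(v_1,\ldots,v_k)$, producing the desired $P_1$ and completing the reduction to the form in the statement.
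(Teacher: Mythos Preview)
Your proof is correct and follows essentially the same approach as the paper's: normalize $U=\mathbf{1}_n$, diagonalize the idempotents via Lemma~\ref{lm:idempotent}, and then apply Lemma~\ref{lm:anti-diag_square-zero} to put $V$ into canonical form. The only difference is in the final step: the paper simply cites Proposition~\ref{prop:J_to_I} (the explicit matrix $Q_r$, together with the computation of Example~\ref{ex:J_to_I}, already sends $\mathbf{1}_r\mapsto\mathbf{J}_r$ and $\Diag(\mathbf{1}_k\otimes B,\mathbf{0}_{r-2k})\mapsto\Diag(\mathbf{1}_k,\mathbf{0}_{r-k})$ up to permutation and scaling), whereas you reconstruct such a $P_1$ more abstractly by correcting an arbitrary $P_0$ with $P_0P_0^\top=\mathbf{J}_r$ via Lemma~\ref{lm:orbits_isotropic_spaces}. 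Your route is slightly longer but makes the existence of the simultaneous congruence fully explicit; the paper's route is shorter but relies on the reader unpacking Example~\ref{ex:J_to_I}.
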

\begin{proof}
After a congruence, we may assume that $X=\Diag({\bf 1}_r,{\bf 0}_{n-r})$ and $Y=\Diag({\bf 0}_r,{\bf 1}_{n-r})$ for some integer $1\leq r\leq n-1$. By switching $X,Y$, we may assume that $r\leq n/2$. Now 
\[
V=\begin{pmatrix}{\bf 0}_r&Z\\Z^\top&{\bf 0}_{n-r}\end{pmatrix}
\]
for some in $Z\in\CC^{r\times(n-r)}\setminus\{{\bf 0}_{r\times(n-r)}\}$. So using Lemma~\ref{lm:anti-diag_square-zero} and Proposition~\ref{prop:J_to_I}, we are done.
\end{proof}

\subsection{The Jordan algebra $\Js^2_2$}
An embedding of $\Js^2_2$ into $\S^n$ is of the form $\CC\{X,Y,Z\}$ where $U=X+Y\in\S^n$ is an invertible matrix and $X,Y,Z\in\S^n$ satisfy 
\[
\begin{array}{ccc}
X\bullet_UX=X,&X\bullet_UY={\bf 0}_n,&X\bullet_UV=V/2,\\
&Y\bullet_UY=Y,&Y\bullet_UV=V/2,\\
&&V\bullet_UV=U.
\end{array}
\]

\begin{prop}\label{prop:classify_J22}
The Jordan algebra $\Js^2_2$ has no embeddings when $n$ is odd. When $n$ is even, every embedding of $\Js_2^2$ is congruent to ${\bf 1}_{n/2}\otimes\S^2$.
\end{prop}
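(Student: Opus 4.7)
The plan is to follow the same template as the proofs of Propositions~\ref{prop:classify_CC[x]/x^m}, \ref{prop:classify_J20}, and \ref{prop:classify_J21}: reduce to $U={\bf 1}_n$ by an initial congruence, place $X$ in a standard idempotent form, use the remaining Jordan relations to pin down $V$, and then normalize $V$ using the residual symmetry that preserves what has already been fixed.

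After a congruence we may assume $U=X+Y={\bf 1}_n$. The identity $X^2=X$ together with $Y={\bf 1}_n-X$ (which makes $X\bullet_U Y={\bf 0}_n$ automatic) allows us to invoke Lemma~\ref{lm:idempotent} and assume $X=\Diag({\bf 1}_r,{\bf 0}_{n-r})$ and $Y=\Diag({\bf 0}_r,{\bf 1}_{n-r})$; the extreme values $r=0$ and $r=n$ force $V={\bf 0}_n$ by the same lemma, contradicting injectivity, so $1\leq r\leq n-1$. The condition $X\bullet_U V=V/2$, i.e.\ $XV+VX=V$, then forces via Lemma~\ref{lm:idempotent} that
\[
V=\begin{pmatrix}{\bf 0}_r&Z\\Z^\top&{\bf 0}_{n-r}\end{pmatrix}
\]
for some $Z\in\CC^{r\times(n-r)}$.

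The decisive relation is $V\bullet_U V=U$, which with $U={\bf 1}_n$ reads $V^2={\bf 1}_n$. A direct block computation gives $ZZ^\top={\bf 1}_r$ and $Z^\top Z={\bf 1}_{n-r}$. In particular $Z$ has a left and right inverse, so $r=n-r$, which proves the non-existence statement when $n$ is odd and yields $r=n/2$ and $Z\in\O(r)$ when $n$ is even.

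In the even case the residual congruence group $\Diag(\O(r),\O(r))$ preserves $X$ and $Y$ and acts on $V$ by $Z\mapsto PZQ^\top$. Taking $P={\bf 1}_r$ and $Q=Z$ (which lies in $\O(r)$) sends $Z$ to $ZZ^\top={\bf 1}_r$, so after one more congruence we may assume $V=\left(\begin{smallmatrix}{\bf 0}_r&{\bf 1}_r\\{\bf 1}_r&{\bf 0}_r\end{smallmatrix}\right)$. The span $\CC\{X,Y,V\}$ is then the space of matrices $\left(\begin{smallmatrix}a{\bf 1}_r&c{\bf 1}_r\\c{\bf 1}_r&b{\bf 1}_r\end{smallmatrix}\right)$, and a permutation matrix interleaving the two halves of the standard basis identifies this with ${\bf 1}_{n/2}\otimes\S^2$. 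No serious obstacle is expected: in contrast to the $\Js^2_1$ case, the condition $ZZ^\top={\bf 1}_r$ already cuts out a single $\Diag(\O(r),\O(r))$-orbit, so Lemma~\ref{lm:orbits_isotropic_spaces} is not even needed.
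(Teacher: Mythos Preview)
Your proof is correct and self-contained. The paper, by contrast, does not prove this proposition at all: it simply cites \cite[Theorem~5.3]{BES:JordanSpaces}. So your approach is genuinely different in that you actually give an argument, and moreover one that uses only the tools already developed in this paper (Lemma~\ref{lm:idempotent} for the idempotent normal form and the block structure of $V$). The key step---that $V^2={\bf 1}_n$ forces $ZZ^\top={\bf 1}_r$ and $Z^\top Z={\bf 1}_{n-r}$, hence $r=n-r$---is clean and immediately gives the parity obstruction, after which the residual $\Diag(\O(r),\O(r))$-action trivially normalizes $Z\in\O(r)$. One minor quibble: when you rule out $r\in\{0,n\}$, it is simpler to note that $X$ (resp.\ $Y$) would itself be zero, contradicting injectivity directly, rather than routing through $V={\bf 0}_n$; but your argument is also valid.
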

\begin{proof}
This is part of \cite[Theorem 5.3]{BES:JordanSpaces}.
\end{proof}

\subsection{The Jordan algebra $\Js^3_0$}
An embedding of $\Js^3_0$ into $\S^n$ is of the form $\CC\{U,X,Y,Z\}$ where $U\in\S^n$ is an invertible matrix and $X,Y,Z\in\S^n$ satisfy 
\[
X\bullet_UX=X\bullet_UY=X\bullet_UZ=Y\bullet_UY=Y\bullet_UZ=Z\bullet_UZ={\bf 0}_n.
\]

\begin{prop}\label{prop:classify_J30}
Every embedding of $\Js_0^3$ is congruent to $\CC{\bf 1}_n\oplus \Ls$ for some square-zero net $\Ls\subseteq\S^n$.
\end{prop}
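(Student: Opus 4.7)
The plan is to mimic the proof of Proposition~\ref{prop:classify_J20} almost verbatim, adding just the extra verification that $\Ls$ is three-dimensional.

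First I would apply a congruence to normalize the unit. Since $U \in \S^n$ is invertible, there exists $P \in \GL(n)$ with $PUP^\top = {\bf 1}_n$, and after replacing the embedding by $P(\cdot)P^\top$ we may assume $U = {\bf 1}_n$. The Jordan product then becomes $A \bullet_U B = \tfrac{1}{2}(AB+BA)$, so the defining relations reduce to
\[
X^2 = Y^2 = Z^2 = XY+YX = XZ+ZX = YZ+ZY = {\bf 0}_n.
\]

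Next I would set $\Ls := \CC\{X,Y,Z\}$ and observe that for any $W = \alpha X + \beta Y + \gamma Z \in \Ls$,
\[
W^2 = \alpha^2 X^2 + \beta^2 Y^2 + \gamma^2 Z^2 + \alpha\beta(XY+YX) + \alpha\gamma(XZ+ZX) + \beta\gamma(YZ+ZY) = {\bf 0}_n,
\]
so $\Ls$ is a square-zero subspace. The embedding $\CC\{U,X,Y,Z\} = \CC{\bf 1}_n \oplus \Ls$ is then of the claimed form, provided $\dim \Ls = 3$. For this, note that $X,Y,Z$ are square-zero matrices, hence nilpotent, hence traceless, while ${\bf 1}_n$ has trace $n \neq 0$; so the linear independence of $\{U,X,Y,Z\}$ (which holds by injectivity of the embedding) forces the linear independence of $\{X,Y,Z\}$. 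Thus $\Ls$ is a square-zero net, as required.

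There is no real obstacle here: the proof is a routine adaptation of Proposition~\ref{prop:classify_J20}, the only substantive point beyond that argument being the trace observation that rules out ${\bf 1}_n \in \Ls$ and thus guarantees $\dim \Ls = 3$.
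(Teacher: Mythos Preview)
Your proof is correct and follows exactly the same approach as the paper's own argument: normalize $U$ to ${\bf 1}_n$ by a congruence and read off that $\Ls=\CC\{X,Y,Z\}$ is square-zero. You have simply filled in details the paper omits (the expansion of $W^2$ and the trace argument ensuring $\dim\Ls=3$), so there is nothing to add.
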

\begin{proof}
After a congruence, we may assume that $U={\bf 1}_n$. Now the embedding equals $\CC{\bf 1}_n\oplus \Ls$ for the square-zero net $\Ls:=\CC\{X,Y,Z\}$.
\end{proof}

Write $k=\lfloor n/2\rfloor$.

\begin{prop}\label{prop:squarezero_net}
If $4\leq n\leq 5$, then any square-zero net in $\S^n$ is orthogonally congruent to $\S^2\otimes B\subseteq\S^{2k}\subseteq\S^n$. 
\end{prop}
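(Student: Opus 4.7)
The plan is to mimic the proof of Proposition~\ref{prop:squarezero_pencil}: normalize a well-chosen element of $\Ls$, then compute the space of all possible companions. The starting observation is that any $2$-dimensional subspace of $\S^n$ contains a matrix of rank $\geq 2$: if $A=aa^\top$ and $C$ spanned a subspace with every combination of rank $\leq 1$, then letting $t\to\infty$ in $A+tC$ forces $C$ of rank $\leq 1$, say $C=cc^\top$, and then $aa^\top+tcc^\top$ having rank $\leq 1$ for all $t$ forces $a,c$ proportional, contradicting independence. Applied to $\Ls$, and combined with the fact that any square-zero matrix in $\S^n$ has rank at most $n/2\leq 2$, this shows $\Ls$ contains an $X$ of rank exactly $2$. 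The Jordan form of $X$ has two $2\times 2$ nilpotent blocks and $n-4$ zero blocks, so by Lemma~\ref{lm:similar=orth_congruent} we may assume $X=\Diag({\bf 1}_2\otimes B,{\bf 0}_{n-4})$.

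For any $Y\in\Ls$, expanding $(X+tY)^2={\bf 0}_n$ in $t$ yields $Y^2={\bf 0}_n$ and $XY+YX={\bf 0}_n$. The core remaining task is to show that
\[
\Cs:=\{Y\in\S^n\mid Y^2={\bf 0}_n,\ XY+YX={\bf 0}_n\}=\S^2\otimes B\subseteq\S^4\subseteq\S^n.
\]
Since $\Ls\subseteq\Cs$ and both are $3$-dimensional, this forces $\Ls=\S^2\otimes B$. To compute $\Cs$, I would write $Y$ in block form adapted to the block decomposition of $X$, and exploit the identities $B^2={\bf 0}_2$ and $\ker B=\mathrm{im}\,B=\spann((1,i)^\top)$ coming from $B=(1,i)^\top(1,i)$. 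The anti-commutator equation $XY+YX={\bf 0}_n$ then forces each $2\times 2$ diagonal block of $Y$ to lie in $\CC B$, puts the off-diagonal $2\times 2$ block in a $2$-parameter family, and (when $n=5$) constrains the extra column to lie in $\ker B$.

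The main obstacle is the second collapse using $Y^2={\bf 0}_n$. The anti-commutator condition alone produces a centralizer of dimension $4$ in $\S^4$, one dimension more than $\S^2\otimes B$. To cut this down, one observes that the constraint $Y_{12}Y_{12}^\top={\bf 0}_2$ applied to the parametrized off-diagonal block forces it to be a multiple of $B$. For $n=5$, the constraint $w^\top w=0$ is automatic from $w\in\spann((1,i)^\top)^{\oplus 2}$, so $Y^2={\bf 0}_n$ gives $\lambda^2=0$, hence $\lambda=0$; and since the top-left $\S^4$ block of $Y$ already squares to zero, the upper-left block of $Y^2$ reduces to $ww^\top={\bf 0}_4$, hence $w=0$. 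This establishes $Y=S\otimes B$ for some $S\in\S^2$, yielding the desired inclusion $\Ls\subseteq\S^2\otimes B$ and hence equality by dimension.
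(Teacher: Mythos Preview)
Your strategy matches the paper's: normalize a rank-$2$ element of $\Ls$ to $X=\Diag({\bf 1}_2\otimes B,{\bf 0}_{n-4})$ and then pin down the remaining elements. The paper completes this step by computer verification, whereas you attempt it by hand. Your $n=4$ argument is correct.

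There is, however, a genuine gap in your $n=5$ case. You assert that ``the top-left $\S^4$ block of $Y$ already squares to zero,'' but this has not been established at that point: from $Y^2={\bf 0}_5$ the top-left $4\times4$ block of $Y^2$ equals $Y_4^2+ww^\top$, not $Y_4^2$. You cannot simply import the $n=4$ conclusion $Y_4\in\S^2\otimes B$, because that conclusion relied on $Y_4^2={\bf 0}_4$, which is exactly what is missing here. The gap is closable. Writing $v=(1,i)^\top$ and $\bar v=(1,-i)^\top$, the anticommutator constraint gives $Y_{12}=\mu_1 vv^\top+\mu_2(v\bar v^\top-\bar v v^\top)$, and one computes that the diagonal $2\times2$ blocks of $Y_4^2$ are $\pm4\mu_1\mu_2 B-4\mu_2^2{\bf 1}_2$. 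On the other hand $w\in\spann(v)\oplus\spann(v)$ forces $ww^\top\in\S^2\otimes B$. Since $B$ and ${\bf 1}_2$ are linearly independent in $\S^2$, the equation $Y_4^2+ww^\top={\bf 0}_4$ forces $\mu_2=0$; then $Y_4\in\S^2\otimes B$, $Y_4^2={\bf 0}_4$, and finally $ww^\top={\bf 0}_4$, giving $w=0$ as you wanted. (Alternatively, the off-diagonal block of $Y^2={\bf 0}_5$ yields $Y_4w=0$, and one checks directly that $Y_4w=2\mu_2(bv_1-av_2)$ for $w=av_1+bv_2$, again forcing $\mu_2=0$ or $w=0$.)
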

\begin{proof}
Let $\Ls:=\CC\{X,Y,Z\}$ be a square-zero net in $\S^n$. If $X,Y$ has rank $1$, then $X+Y$ has rank $2$. So by changing basis, we may assume that the rank of $X$ is $2$. By applying an orthogonal congruence, we may assume that $X={\bf 1}_2\otimes B$ when $n=4$ and $X=\Diag({\bf 1}_2\otimes B,0)$ when $n=5$. In this case, we verify by computer that $\Ls=\Ls'\otimes B$  when $n=4$ and $\Ls=\Diag(\Ls'\otimes B,0)$ when $n=5$ for some net $\Ls'\subseteq\S^2$. Clearly, it follows that $\Ls'=\S^2$.
\end{proof}

\subsection{The Jordan algebra $\Js^3_1$}
An embedding of $\Js^3_1$ into $\S^n$ is of the form $\CC\{X,Y,V,W\}$ where $U=X+Y\in\S^n$ is an invertible matrix and $X,Y,V,W\in\S^n$ satisfy 
\[
\begin{array}{cccc}
X\bullet_UX=X,&X\bullet_UY={\bf 0}_n,&X\bullet_UV=V/2,&X\bullet_UW=W/2,\\
&Y\bullet_UY=Y,&Y\bullet_UV=V/2,&Y\bullet_UW=W/2,\\
&&V\bullet_UV={\bf 0}_n,&V\bullet_UW={\bf 0}_n,\\
&&&W\bullet_UW={\bf 0}_n.
\end{array}
\]

\begin{prop}\label{prop:classify_J31}
Every embedding of $\Js_1^3$ is congruent to
\[
\begin{pmatrix}x{\bf 1}_r\\&y{\bf 1}_{n-r}\end{pmatrix} +\left\{\begin{pmatrix}&Z\\Z^\top\end{pmatrix}\,\middle|\, Z\in  \Ps\right\}
\]
for some integer $2\leq r\leq n/2$ and some pencil $\Ps\subseteq\CC^{r\times(n-r)}$ such that $ZZ^\top={\bf 0}_r$ and $Z^\top Z={\bf 0}_{n-r}$ for all $Z\in\Ps$. If $r=2$, then $n\geq 6$ and we may take
\[
\Ps=(1~i)^\top(x,ix,y,iy,0,\ldots,0).
\]
\end{prop}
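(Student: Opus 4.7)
The plan is to mirror the proof of Proposition~\ref{prop:classify_J21}. After a congruence assume $U={\bf 1}_n$, so that $\bullet_U$ becomes the standard Jordan product. Since $X,Y$ are complementary orthogonal idempotents, Lemma~\ref{lm:idempotent} lets me further assume
\[
X=\Diag({\bf 1}_r,{\bf 0}_{n-r}),\qquad Y=\Diag({\bf 0}_r,{\bf 1}_{n-r}),
\]
with $r\leq n/2$ after possibly swapping $X,Y$. The eigenvalue equations $XV+VX=V$ and $XW+WX=W$, combined with the second case of Lemma~\ref{lm:idempotent}, force $V$ and $W$ to have the antidiagonal block form $\begin{pmatrix}&Z\\Z^\top&\end{pmatrix}$ with $Z\in\CC^{r\times(n-r)}$, yielding the shape claimed in the proposition with $\Ps:=\CC\{Z_V,Z_W\}$.

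Next I translate the remaining Jordan relations $V^2=W^2=VW+WV={\bf 0}_n$ into conditions on $\Ps$. A direct block computation shows they are equivalent to $ZZ^\top={\bf 0}_r$ and $Z^\top Z={\bf 0}_{n-r}$ for every $Z\in\Ps$. Linear independence of $V,W$ forces $\dim\Ps=2$, so $\Ps$ is genuinely a pencil; and the bound $r\geq2$ follows because for $r=1$ the rank-one matrix $Z^\top Z\in\S^{n-1}$ vanishes only when $Z=0$, so no nontrivial such pencil exists.

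For the final assertion about $r=2$, I write any $Z\in\Ps$ as $\binom{a^\top}{b^\top}$ with $a,b\in\CC^{n-2}$. The relation $Z^\top Z=aa^\top+bb^\top={\bf 0}_{n-2}$ forces $a,b$ to be linearly dependent, and then $ZZ^\top={\bf 0}_2$ gives $Z=(1,\pm i)^\top a^\top$ with $a^\top a=0$; in particular every nonzero element of $\Ps$ has rank one. A short computation rules out both signs occurring inside $\Ps$: if $Z_1=(1,i)^\top a_1^\top$ and $Z_2=(1,-i)^\top a_2^\top$ both lie in $\Ps$ with $a_1,a_2$ nonzero, then either $a_1,a_2$ are linearly independent and $Z_1+Z_2$ has rank two, or $a_2=\mu a_1$ and $(Z_1+Z_2)^\top(Z_1+Z_2)=4\mu\,a_1a_1^\top$ is nonzero. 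After possibly swapping $(1,i)$ with $(1,-i)$ I may therefore write $\Ps=(1,i)^\top A^\top$ where $A\subseteq\CC^{n-2}$ is a two-dimensional totally isotropic subspace, which exists precisely when $n\geq6$. Finally, Lemma~\ref{lm:orbits_isotropic_spaces} lets me conjugate by $\Diag({\bf 1}_2,Q)$ with $Q\in\O(n-2)$, preserving $X$ and $Y$, to bring $A$ into the standard form $\CC\{(1,i,0,\ldots,0),(0,0,1,i,0,\ldots,0)\}$, producing the canonical pencil in the statement.

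The main obstacle is the $r=2$ analysis, where one must recognise that the pencil consists of rank-one matrices with a single isotropic column direction and then invoke Lemma~\ref{lm:orbits_isotropic_spaces} to normalise the row spaces; the rest is essentially block-matrix bookkeeping driven by Lemma~\ref{lm:idempotent}.
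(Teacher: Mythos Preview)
Your proof is correct and follows essentially the same approach as the paper: normalise $U={\bf 1}_n$, diagonalise the idempotents via Lemma~\ref{lm:idempotent}, read off the block form of $V,W$, and for $r=2$ identify $\Ps$ with $(1,i)^\top$ times a $2$-dimensional totally isotropic subspace of $\CC^{n-2}$, then normalise via Lemma~\ref{lm:orbits_isotropic_spaces}. Your $r=2$ analysis is in fact more thorough than the paper's, which simply asserts the form $(1~i)^\top U$ without justifying why a single sign $(1,i)$ suffices for the whole pencil; your argument explicitly ruling out mixed signs via the rank and the cross-term $(Z_1+Z_2)^\top(Z_1+Z_2)=4\mu\,a_1a_1^\top$ fills that gap.
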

\begin{proof}
After a congruence, we may assume that $X=\Diag({\bf 1}_r,{\bf 0}_{n-r})$ and $Y=\Diag({\bf 0}_r,{\bf 1}_{n-r})$ for some integer $1\leq r\leq n-1$. By switching $X,Y$, we may assume that $r\leq n/2$. Now we see that $\CC\{V,W\}$ is of the form
\[
\left\{\begin{pmatrix}&Z\\Z^\top\end{pmatrix}\,\middle|\, Z\in  \Ps\right\}
\]
for some pencil $\Ps\subseteq\CC^{r\times(n-r)}$ such that $ZZ^\top={\bf 0}_k$ and $Z^\top Z={\bf 0}_{n-r}$ for all $Z\in\Ps$. When $r=1$, this is not possible. When $r=2$, we get $\Ps^\top=(1~i)^\top U$ where $U\subseteq\CC^{n-2}$ is a $2$-dimensional subspace such that $v^\top v=0$ for all $v\in U$. This is only possible when $n-2\geq 4$. And, by Lemma~\ref{lm:orbits_isotropic_spaces}, we may assume that $U$ is spanned by $e_1+ie_2$ and $e_3+ie_4$.
\end{proof}

\subsection{The Jordan algebra $\Js^3_2$}
An embedding of $\Js^3_2$ into $\S^n$ is of the form $\CC\{X,Y,V,W\}$ where $U=X+Y\in\S^n$ is an invertible matrix and $X,Y,V,W\in\S^n$ satisfy 
\[
\begin{array}{cccc}
X\bullet_UX=X,&X\bullet_UY={\bf 0}_n,&X\bullet_UV=V/2,&X\bullet_UW=W/2,\\
&Y\bullet_UY=Y,&Y\bullet_UV=V/2,&Y\bullet_UW=W/2,\\
&&V\bullet_UV=U,&V\bullet_UW={\bf 0}_n,\\
&&&W\bullet_UW={\bf 0}_n.
\end{array}
\]

\begin{prop}\label{prop:classify_J32}
The Jordan algebra $\Js^3_2$ has no embeddings when $n$ is odd. When $n$ is even, every embedding of $\Js_2^3$ is congruent to 
\[
\begin{pmatrix}x{\bf J}_{n/2}&\!z{\bf J}_{n/2}+w\Diag\!\left(\!{\bf 1}_k\!\otimes\!\begin{pmatrix}&1\\-1\!\!\!\!\!\end{pmatrix}\!,{\bf 0}_{n/2-2k}\!\right)\!\\\!z{\bf J}_{n/2}-w\Diag\!\left(\!{\bf 1}_k\!\otimes\!\begin{pmatrix}&1\\-1\!\!\!\!\!\end{pmatrix}\!\!,{\bf 0}_{n/2-2k}\!\right)\!
&y{\bf J}_{n/2}\end{pmatrix}
\]
for some integer $1\leq k\leq n/8$.
\end{prop}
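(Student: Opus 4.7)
The plan is to pass to the ${\bf 1}$-form via Proposition~\ref{prop:J_to_I}, so that it suffices to classify ${\bf 1}$-form embeddings up to orthogonal congruence. After a first congruence I may assume $U = {\bf 1}_n$, and by Lemma~\ref{lm:idempotent} a further orthogonal congruence brings $X, Y$ into the form $X = \Diag({\bf 1}_r, {\bf 0}_{n-r})$, $Y = \Diag({\bf 0}_r, {\bf 1}_{n-r})$. The $\tfrac{1}{2}$-eigenvalue conditions on $V, W$ (from $X \bullet_U V = V/2$, $Y \bullet_U V = V/2$, and likewise for $W$) combined with Lemma~\ref{lm:idempotent} then force
\[
V = \begin{pmatrix} {\bf 0}_r & V' \\ (V')^\top & {\bf 0}_{n-r} \end{pmatrix}, \quad W = \begin{pmatrix} {\bf 0}_r & W' \\ (W')^\top & {\bf 0}_{n-r} \end{pmatrix}
\]
for some $V', W' \in \CC^{r \times (n-r)}$.

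Next, the relation $V \bullet_U V = U$ becomes $V^2 = {\bf 1}_n$, which is equivalent to $V'(V')^\top = {\bf 1}_r$ and $(V')^\top V' = {\bf 1}_{n-r}$. This forces $V'$ to be square and $r = n/2 =: m$, so $n$ must be even and $V' \in \O(m)$. The residual congruence group fixing $U, X, Y$ consists of matrices $\Diag(P, Q)$ with $P, Q \in \O(m)$ and acts by $V' \mapsto P V' Q^\top$; taking $P = {\bf 1}_m$ and $Q = V'$ normalizes $V' = {\bf 1}_m$. The remaining freedom is then $\{\Diag(P, P) : P \in \O(m)\}$.

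With $V = \bigl(\begin{smallmatrix} 0 & {\bf 1}_m \\ {\bf 1}_m & 0\end{smallmatrix}\bigr)$ fixed, the conditions $V \bullet_U W = 0$ and $W \bullet_U W = 0$ translate by a direct block computation into $W' + (W')^\top = 0$ and $W'(W')^\top = 0$; combined with skew-symmetry the latter becomes $(W')^2 = 0$. Thus $W'$ is a skew-symmetric matrix with $(W')^2 = 0$, acted on by $P \in \O(m)$ via $W' \mapsto P W' P^\top$. By Lemma~\ref{lm:similar=orth_congruent}, two skew-symmetric matrices are orthogonally congruent iff they are similar, and for nilpotents of square zero the similarity class is determined by the rank. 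Skew-symmetry forces this rank to be even, $\rk(W') = 2k$, and $\mathrm{im}(W') \subseteq \ker(W')$ gives $2k \le m - 2k$, hence $k \le n/8$. Linear independence of the embedding forces $W \ne 0$, hence $W' \ne 0$, hence $k \ge 1$. This classifies the ${\bf 1}$-form orbits by a single integer $k$ with $1 \le k \le n/8$.

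It remains to identify this ${\bf 1}$-form representative with the stated ${\bf J}$-form one. Translating back through Proposition~\ref{prop:J_to_I} (combined with a permutation re-blocking and, if needed, an element of the residual $\Diag(P,P)$ action) produces the $x{\bf J}_m$, $y{\bf J}_m$ and $z{\bf J}_m$ blocks on the diagonal and antidiagonal, and realizes the rank-$2k$ skew square-zero $W'$ as the ${\bf J}$-form block $\Diag\bigl({\bf 1}_k\otimes\bigl(\begin{smallmatrix}0&1\\-1&0\end{smallmatrix}\bigr),{\bf 0}_{m-2k}\bigr)$ appearing in the statement. This final matching is the main bookkeeping obstacle, but it reduces to a direct calculation with the explicit $Q_n$ of Proposition~\ref{prop:J_to_I} together with the observation that a skew representative of the prescribed rank is unique up to $\O(m)$-congruence.
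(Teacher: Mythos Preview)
Your proof is correct and follows essentially the same route as the paper. The only cosmetic difference is that the paper packages your first several steps (normalizing $U,X,Y$, showing $n$ is even, and normalizing $V$) as a single appeal to the already-established classification of embeddings of $\Js^2_2$ (Proposition~\ref{prop:classify_J22}), whereas you unpack that argument inline; after that, both proofs analyze $W'$ as a nonzero skew-symmetric square-zero $m\times m$ matrix and invoke Lemma~\ref{lm:similar=orth_congruent} to reduce to the rank invariant $2k$.
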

\begin{proof}
For $X,Y,V$ to satisfy the required relations, we need $2\mid n$. In this case, we may assume that 
\[
xX+yY+zV=\begin{pmatrix}x{\bf 1}_{m}&z{\bf 1}_{m}\\z{\bf 1}_{m}&y{\bf 1}_{m}\end{pmatrix}
\]
where $n=2m$. Now we find that
\[
W=\begin{pmatrix}&P\\P^\top\end{pmatrix}
\]
for some $P\in\CC^{m\times m}$ such that $P^\top=-P$ and $P^2={\bf 0}_m$. The rank of $P$ is therefore even. Denote it by $2k$. Then $1\leq k\leq m/4$ as $1\leq\rk(P)\leq m/2$. As $P^2={\bf 0}_m$, the Jordan normal form of $P$ is uniquely determined by $k$. For a fixed $k$, all $P$ are orthogonally congruent. So acting with $\{\Diag(Q,Q)\mid Q\in O(m)\}$, we get a single congruence-orbit. Acting with $\Diag(Q_{n/2}^\top,Q_{n/2}^\top)$ as in Proposition~\ref{prop:J_to_I}, we get the required form.
\end{proof}

\subsection{The Jordan algebra $\Js^3_3$}
An embedding of $\Js^3_3$ into $\S^n$ is of the form $\CC\{X,Y,V,W\}$ where $U=X+Y\in\S^n$ is an invertible matrix and $X,Y,V,W\in\S^n$ satisfy 
\[
\begin{array}{cccc}
X\bullet_UX=X,&X\bullet_UY={\bf 0}_n,&X\bullet_UV=V/2,&X\bullet_UW=W/2,\\
&Y\bullet_UY=Y,&Y\bullet_UV=V/2,&Y\bullet_UW=W/2,\\
&&V\bullet_UV=U,&V\bullet_UW={\bf 0}_n,\\
&&&W\bullet_UW=U.
\end{array}
\]

\begin{prop}\label{prop:classify_J33}
The Jordan algebra $\Js^3_3$ has no embeddings when $4\nmid n$. When $4\mid n$, every embedding of $\Js_3^3$ is congruent to 
\[
\begin{pmatrix}x{\bf 1}_{n/2}&z{\bf 1}_{n/2}+w{\bf 1}_{n/4}\otimes\begin{pmatrix}&1\\-1\end{pmatrix}\\z{\bf 1}_{n/2}-w{\bf 1}_{n/4}\otimes\begin{pmatrix}&1\\-1\end{pmatrix}
&y{\bf 1}_{n/2}\end{pmatrix}.
\]
\end{prop}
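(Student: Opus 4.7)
The plan is to mimic the proof of Proposition~\ref{prop:classify_J32}, reducing the embedding to a canonical form via successive congruences. The main new feature is that $W\bullet_U W = U$ (rather than $W\bullet_U W = {\bf 0}_n$ as for $\Js^3_2$) will force the skew-symmetric matrix that ends up encoding $W$ to square to $-{\bf 1}$ instead of $0$, and a trace argument will then force $4\mid n$.

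First, apply a congruence so that $U={\bf 1}_n$. Then $X,Y$ are complementary symmetric idempotents, so by Lemma~\ref{lm:idempotent} we may assume $X=\Diag({\bf 1}_r,{\bf 0}_{n-r})$ and $Y=\Diag({\bf 0}_r,{\bf 1}_{n-r})$. The relations $X\bullet_U V=V/2$ and $X\bullet_U W=W/2$ become $XV+VX=V$ and $XW+WX=W$, which by Lemma~\ref{lm:idempotent} place both $V$ and $W$ in the anti-block form $\begin{pmatrix}&Z\\Z^\top&\end{pmatrix}$ with blocks $Z_V,Z_W\in\CC^{r\times(n-r)}$. The condition $V\bullet_U V=U$ becomes $V^2={\bf 1}_n$, yielding $Z_V Z_V^\top={\bf 1}_r$ and $Z_V^\top Z_V={\bf 1}_{n-r}$; both force $Z_V$ to be square, so $r=n-r$, hence $n=2m$ is even and $Z_V\in\O(m)$. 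Likewise $Z_W\in\O(m)$. The block-diagonal group $\Diag(Q_1,Q_2)$ with $Q_1,Q_2\in\O(m)$ preserves $X$ and $Y$ and acts on $Z_V$ by $Z_V\mapsto Q_1 Z_V Q_2^\top$; choosing $Q_1=Z_V^\top$ and $Q_2={\bf 1}_m$ normalizes $Z_V={\bf 1}_m$. The residual symmetry is then $\{\Diag(Q,Q):Q\in\O(m)\}$, acting on $Z_W$ by $Z_W\mapsto Q Z_W Q^\top$.

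Now $V\bullet_U W={\bf 0}_n$ gives $Z_W+Z_W^\top=0$, so $Z_W$ is skew-symmetric; combined with $Z_W Z_W^\top={\bf 1}_m$ (from $W^2={\bf 1}_n$) this forces $Z_W^2=-{\bf 1}_m$. Hence $Z_W$ is diagonalizable with eigenvalues $\pm i$, and since $\tr(Z_W)=0$ (by skew-symmetry) the multiplicities of $+i$ and $-i$ must agree. Thus $m=2\ell$ is even and $4\mid n$. All such $Z_W$ share the same Jordan form, so by the skew-symmetric case of Lemma~\ref{lm:similar=orth_congruent} they form a single orbit under the residual symmetry. The matrix ${\bf 1}_{n/4}\otimes\begin{pmatrix}&1\\-1&\end{pmatrix}$ is a skew-symmetric square root of $-{\bf 1}_{n/2}$ and so provides the representative matching the stated formula.

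The main obstacle is the parity/trace argument that establishes $4\mid n$, together with correctly invoking the skew-symmetric analogue of Lemma~\ref{lm:similar=orth_congruent} to upgrade a similarity equivalence to an orthogonal congruence by an element of the residual subgroup $\{\Diag(Q,Q):Q\in\O(m)\}$; the remaining steps are routine matrix algebra paralleling the $\Js^3_2$ case.
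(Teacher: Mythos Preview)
Your proof is correct and follows essentially the same route as the paper's: normalize $X,Y,V$ to the $\Js^2_2$ canonical form (forcing $n=2m$), deduce that $W$ has off-diagonal block $P$ with $P^\top=-P$ and $P^2=-{\bf 1}_m$, conclude $2\mid m$, and use Lemma~\ref{lm:similar=orth_congruent} in its skew-symmetric version to reduce to a single $\Diag(Q,Q)$-orbit. Your write-up simply spells out more of the intermediate steps (the trace argument for $4\mid n$ and the explicit citation of Lemma~\ref{lm:similar=orth_congruent}) than the paper does.
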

\begin{proof}
For $X,Y,V$ to satisfy the required relations, we need $2\mid n$. In this case, we may assume that 
\[
xX+yY+zV=\begin{pmatrix}x{\bf 1}_{m}&z{\bf 1}_{m}\\z{\bf 1}_{m}&y{\bf 1}_{m}\end{pmatrix}
\]
where $n=2m$. Now we find that
\[
W=\begin{pmatrix}&P\\P^\top\end{pmatrix}
\]
for some $P\in\CC^{m\times m}$ such that $P^\top=-P$ and $P^2=-{\bf 1}_m$. These conditions can only be fulfilled when $2\mid m$. Assume this is the case. Then the Jordan normal form of $P$ is unique. Hence all such $P$ are orthogonally congruent. So acting with $\{\Diag(Q,Q)\mid Q\in O(m)\}$, we get
\[
W=\begin{pmatrix}&{\bf 1}_{n/4}\otimes\begin{pmatrix}&1\\-1\end{pmatrix}\\-{\bf 1}_{n/4}\otimes\begin{pmatrix}&1\\-1\end{pmatrix}\end{pmatrix}.
\]
\end{proof}

\subsection{The Jordan algebra $\Es_1$}
An embedding of $\Es_1$ into $\S^n$ is of the form $\CC\{X,Y,V,W\}$ where $U=X+Y\in\S^n$ is an invertible matrix and $X,Y,V,W\in\S^n$ satisfy 
\[
\begin{array}{cccc}
X\bullet_UX=X,&X\bullet_UY={\bf 0}_n,&X\bullet_UV=V,&X\bullet_UW=W/2,\\
&Y\bullet_UY=Y,&Y\bullet_UV={\bf 0}_n,&Y\bullet_UW=W/2,\\
&&V\bullet_UV={\bf 0}_n,&V\bullet_UW={\bf 0}_n,\\
&&&W\bullet_UW=V.
\end{array}
\]

\begin{prop}\label{prop:classify_E1}
If $n=4$, then every embedding of $\Es_1$ is congruent to one of
\[
\begin{pmatrix}v&w&x\\w&y\\x\\&&&x\end{pmatrix},\begin{pmatrix}v&w&x\\w&y\\x\\&&&y\end{pmatrix}.
\]
If $n=5$, then every embedding of $\Es_1$ is congruent to one of
\[
\begin{pmatrix}v&w&x\\w&y\\x\\&&&x\\&&&&x\end{pmatrix},\begin{pmatrix}v&w&x\\w&y\\x\\&&&x\\&&&&y\end{pmatrix},\begin{pmatrix}v&w&x\\w&y\\x\\&&&y\\&&&&y\end{pmatrix}.
\]
\end{prop}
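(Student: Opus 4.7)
My plan is to reduce to a canonical form by first fixing the idempotents $X,Y$, then classifying the remaining data $V,W$. After a congruence I may assume $U=X+Y={\bf 1}_n$, and then by Lemma~\ref{lm:idempotent} I may take $X=\Diag({\bf 1}_r,{\bf 0}_{n-r})$ and $Y=\Diag({\bf 0}_r,{\bf 1}_{n-r})$ for some $1\leq r\leq n-1$ after a further orthogonal congruence. The residual symmetry is $\O(r)\times\O(n-r)$ together with the rescaling $W\mapsto\lambda W$, which forces $V=W^2\mapsto\lambda^2 V$.

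Next I use the eigenspace decomposition from Lemma~\ref{lm:idempotent}. The relations $X\bullet V=V$ and $Y\bullet V={\bf 0}_n$ force $V=\Diag(V',{\bf 0}_{n-r})$ for some $V'\in\S^r$, and $X\bullet W=W/2=Y\bullet W$ forces $W=\begin{pmatrix}{\bf 0}_r&Z\\Z^\top&{\bf 0}_{n-r}\end{pmatrix}$ for some $Z\in\CC^{r\times(n-r)}$. Substituting into $W^2=V$ and $V^2={\bf 0}_n$ yields $V'=ZZ^\top$ and $Z^\top Z={\bf 0}_{n-r}$; the remaining condition $V\bullet W={\bf 0}_n$ is then automatic from $Z^\top Z=0$. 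Since the columns of $Z$ are mutually isotropic vectors in $\CC^r$, one has $\rk(Z)\leq r/2$; combined with $\rk(Z)\leq n-r$ and $n\in\{4,5\}$, this forces $\rk(Z)\leq 1$, and then $V\neq {\bf 0}_n$ forces $\rk(Z)=1$.

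Writing $Z=vw^\top$ with $v\in\CC^r$ a nonzero isotropic vector (so $r\geq 2$) and $w\in\CC^{n-r}$ nonzero, one has $V'=(w^\top w)\,vv^\top$, so $V\neq{\bf 0}_n$ becomes $w^\top w\neq 0$. By Lemma~\ref{lm:orbits_isotropic_spaces} applied to $v$, together with transitivity of $\O(n-r)\times\CC^*$ on $\{w\in\CC^{n-r}:w^\top w\neq 0\}$, one obtains a unique orbit for each $r\in\{2,\ldots,n-1\}$. Since $r=\rk(X)$ is a congruence-invariant, distinct values of $r$ give distinct orbits; this produces $2$ orbits when $n=4$ (namely $r\in\{2,3\}$) and $3$ orbits when $n=5$ (namely $r\in\{2,3,4\}$), matching the counts in the statement. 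The final step is to match each value of $r$ to one of the listed forms, by observing that those matrices are direct sums of the standard embedding $\Es_1\hookrightarrow\S^3$ with $n-3$ one-dimensional summands arising from the two Jordan algebra quotients $\Es_1\twoheadrightarrow\CC$ (sending $X\mapsto 1$ or $Y\mapsto 1$), where the number of $\CC X$-summands equals $r-2$. The main obstacle is this explicit identification, which reduces to writing down a congruence $P\in\GL_n$ built from a $Q_3$-type block (as in Proposition~\ref{prop:J_to_I}) acting on the $\Es_1$-part and the identity on the remaining coordinates.
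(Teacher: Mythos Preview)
Your argument is correct and follows essentially the same route as the paper's proof: normalize $X=\Diag({\bf 1}_r,{\bf 0}_{n-r})$ and $Y=\Diag({\bf 0}_r,{\bf 1}_{n-r})$, derive $W=\begin{pmatrix}0&Z\\Z^\top&0\end{pmatrix}$ with $Z^\top Z=0$ and $ZZ^\top\neq 0$, show $\rk(Z)=1$ for $n\leq 5$, write $Z=vw^\top$ with $v$ isotropic and $w$ anisotropic, and conclude a single $\O(r)\times\O(n-r)$-orbit for each $r\geq 2$. Your rank bound via the isotropic column-space (giving $\rk(Z)\leq\lfloor r/2\rfloor$) and your remark that $r=\rk(X)$ distinguishes the orbits are slightly more explicit than the paper, which simply asserts ``from $P^\top P={\bf 0}_{n-r}$ and $n\leq 5$ it follows that $P=vw^\top$'' and does not discuss non-congruence of the listed forms; otherwise the two proofs are the same.
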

\begin{proof}
After applying a congruence, we may assume that $X=\Diag({\bf 1}_r,{\bf 0}_{n-r})$, $Y=\Diag({\bf 0}_r,{\bf 1}_{n-r})$ for some $1\leq r\leq n-1$. Now we see that $V=\Diag(PP^\top,{\bf 0}_{n-r})$ and 
\[
W=\begin{pmatrix}
&P\\P^\top\end{pmatrix}
\]
for some matrix $P\in\CC^{r\times(n-r)}$ such that $P^\top P={\bf 0}_{n-r}$ and $PP^\top\neq{\bf 0}_r$. From $P^\top P={\bf 0}_{n-r}$ and $n\leq 5$, it follows that $P=vw^\top$ for some $v\in\CC^r\setminus\{0\}$ and $w\in\CC^{n-r}\setminus\{0\}$ such that $v^\top v=0$ and $w^\top w\neq0$. Hence $r\geq 2$. For fixed $n,r$, we have a single $\Diag(\O(r),\O(n-r))$-orbit. So for every $(n,r)$, we get one embedding up to congruence.
\end{proof}

\subsection{The Jordan algebra $\Es_2$}
An embedding of $\Es_2$ into $\S^n$ is of the form $\CC\{X,Y,V,W\}$ where $U=X+Y\in\S^n$ is an invertible matrix and $X,Y,V,W\in\S^n$ satisfy 
\[
\begin{array}{cccc}
X\bullet_UX=X,&X\bullet_UY={\bf 0}_n,&X\bullet_UV=V,&X\bullet_UW=W/2,\\
&Y\bullet_UY=Y,&Y\bullet_UV={\bf 0}_n,&Y\bullet_UW=W/2,\\
&&V\bullet_UV={\bf 0}_n,&V\bullet_UW={\bf 0}_n,\\
&&&W\bullet_UW={\bf 0}_n.
\end{array}
\]

\begin{prop}\label{prop:classify_E2}
If $n=4$, then every embedding of $\Es_2$ is congruent to $\Es_2$. If $n=5$, then every embedding of $\Es_2$ is congruent to one of
\[
\begin{pmatrix}v&x&w\\x\\w&&&y\\&&y\\&&&&x\end{pmatrix},\begin{pmatrix}v&x&w\\x\\w&&&y\\&&y\\&&&&y\end{pmatrix}.
\]
\end{prop}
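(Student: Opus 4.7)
The plan is to follow the template of Propositions~\ref{prop:classify_J21}, \ref{prop:classify_J31}, and \ref{prop:classify_E1}. After a congruence I may assume $U={\bf 1}_n$, so that $\bullet_U$ is the symmetric product $(A,B)\mapsto(AB+BA)/2$. The relations $X^2=X$, $Y^2=Y$, $XY={\bf 0}_n$, $X+Y={\bf 1}_n$ make $X,Y$ a complementary pair of symmetric idempotents, and Lemma~\ref{lm:idempotent} then lets me arrange $X=\Diag({\bf 1}_r,{\bf 0}_{n-r})$ and $Y=\Diag({\bf 0}_r,{\bf 1}_{n-r})$ for some $1\leq r\leq n-1$, with a residual $\Diag(\O(r),\O(n-r))$-action still available for further normalization.

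The Peirce-type consequence of Lemma~\ref{lm:idempotent} then pins down the shapes of $V$ and $W$: the conditions $XV+VX=2V$ and $YV+VY={\bf 0}_n$ force $V=\Diag(V',{\bf 0}_{n-r})$ for some $V'\in\S^r$, while $XW+WX=W=YW+WY$ forces $W=\begin{pmatrix}0&P\\P^\top&0\end{pmatrix}$ for some $P\in\CC^{r\times(n-r)}$. A direct block-matrix computation then converts the remaining relations into $V'^2={\bf 0}_r$ (from $V^2=0$), $PP^\top={\bf 0}_r$ together with $P^\top P={\bf 0}_{n-r}$ (from $W^2=0$), and $V'P={\bf 0}_{r\times(n-r)}$ (from $V\bullet_U W=0$). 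Injectivity of the embedding forces $V'\neq 0$ and $P\neq 0$.

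Next I would run a case analysis on $r$. The extreme cases are ruled out immediately: $r=1$ would require a nonzero square-zero element of $\S^1$, which does not exist, and $r=n-1$ would require a nonzero $P\in\CC^{(n-1)\times 1}$ with $PP^\top={\bf 0}_{n-1}$, forcing $P=0$. This leaves $r=2$ when $n=4$ and $r\in\{2,3\}$ when $n=5$. In each remaining case, $V'\in\S^r$ is a nonzero square-zero symmetric matrix with $r\leq 3$, so Lemma~\ref{lm:similar=orth_congruent} normalizes it via $\O(r)$ to either $B$ (when $r=2$) or $\Diag(B,0)$ (when $r=3$), where $B$ is the matrix of Example~\ref{ex:J_to_I}. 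Since $\ker V'$ is then spanned by $(1,i)^\top$ (resp.\ by $(1,i,0)^\top$ together with $(0,0,1)^\top$), the condition $V'P={\bf 0}$ expresses the columns of $P$ in terms of a single $\CC^{n-r}$-parameter $q$, plus a tail in the non-$B$ part of $\ker V'$ when $r=3$.

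The remaining constraints $PP^\top={\bf 0}_r$ and $P^\top P={\bf 0}_{n-r}$ kill the tail (when $r=3$) and force $q$ to be a nonzero isotropic vector in $\CC^{n-r}$. Lemma~\ref{lm:orbits_isotropic_spaces} then collapses this locus to a single $\O(n-r)$-orbit, yielding exactly one embedding per admissible $r$, and Proposition~\ref{prop:J_to_I} translates the resulting normal forms of $(X,Y,V,W)$ into the matrix presentations in the statement. The hard part will be verifying that the residual symmetry group really does collapse each case to a single orbit rather than several; this is precisely what Lemma~\ref{lm:orbits_isotropic_spaces} is tailored to provide, and it cleanly handles both isotropic-vector normalizations ($q\in\CC^2$ for $n=4$ and $q\in\CC^3$ for $n=5$) that arise here.
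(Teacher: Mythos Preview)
Your proposal is correct and follows essentially the same route as the paper: after normalizing $X,Y$ to complementary diagonal idempotents, both arguments reduce the remaining relations to $V'^2=0$, $PP^\top=P^\top P=0$, $V'P=0$ on the blocks, deduce $2\leq r\leq n-2$, and collapse each admissible $(n,r)$ to a single $\Diag(\O(r),\O(n-r))$-orbit using Lemma~\ref{lm:orbits_isotropic_spaces}. The only difference is cosmetic---the paper packages the block analysis as ``$P=vv^\top$, $Q=vw^\top$ with $v,w$ isotropic'' in one stroke, whereas you normalize $V'$ first and then solve for the off-diagonal block---and your closing parenthetical should also list $q\in\CC^2$ for the case $(n,r)=(5,3)$.
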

\begin{proof}
After applying a congruence, we may assume that $X=\Diag({\bf 1}_r,{\bf 0}_{n-r})$, $Y=\Diag({\bf 0}_r,{\bf 1}_{n-r})$ for some $1\leq r\leq n-1$.  Now we see that $V=\Diag(P,{\bf 0}_{n-r})$ and 
\[
W=\begin{pmatrix}
&Q\\Q^\top\end{pmatrix}
\]
for some nonzero matrices $P\in\CC^{r\times r}$ and $Q\in\CC^{r\times(n-r)}$ such that $P^2=QQ^\top={\bf 0}_r$, $Q^\top Q={\bf 0}_{n-r}$ and $PQ={\bf 0}_{r\times(n-r)}$. This shows that $2\leq r\leq n-2$. Since $n\leq5$, it follows that $P=vv^\top$ and $Q=vw^\top$ for some vectors $v\in\CC^r\setminus\{0\}$ and $w\in\CC^{n-r}\setminus\{0\}$ such that $v^\top v=w^\top w=0$. For fixed $n,r$, we have a single $\Diag(\O(r),\O(n-r))$-orbit. So for every $(n,r)$, we get one embedding up to congruence.
\end{proof}

\subsection{The Jordan algebra $\Es_3$}
An embedding of $\Es_3$ into $\S^n$ is of the form $\CC\{U,X,Y,Z\}$ where $U\in\S^n$ is an invertible matrix and $X,Y,Z\in\S^n$ satisfy 
\[
\begin{array}{cccc}
X\bullet_UX=Y,&X\bullet_UY={\bf 0}_n,&X\bullet_UZ={\bf 0}_n,\\
&Y\bullet_UY={\bf 0}_n,&Y\bullet_UZ={\bf 0}_n,\\
&&Z\bullet_UZ=Y.
\end{array}
\]

\begin{prop}\label{prop:classify_E3}
If $n=4$, then every embedding of $\Es_3$ is congruent to $\Es_3$. If $n=5$, then every embedding of $\Es_3$ is congruent to one of
\[
\begin{pmatrix}y&x&u&z\\x&u\\u\\z&&&u\\&&&&u\end{pmatrix},\begin{pmatrix}y&x&u&&z\\x&u\\u\\&&&x&u\\z&&&u\end{pmatrix}.
\]
\end{prop}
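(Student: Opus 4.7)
After a congruence I assume $U = \mathbf{1}_n$, so that $-\bullet_U-$ becomes the ordinary symmetric matrix product. The six relations defining an embedding of $\Es_3$ then collapse to
\[
X^2 = Y, \quad X^3 = 0, \quad Z^2 = Y, \quad XZ + ZX = 0,
\]
with the remaining two ($Y^2 = 0$ and $YZ + ZY = 0$) automatic: $Y^2 = X^4 = 0$, while $XZ = -ZX$ combined with $X^2 = Y$ gives $YZ = X^2Z = -XZX = ZX^2 = ZY$, forcing $YZ = 0$. Injectivity of the embedding forces $Y = X^2 \neq 0$, so $X \in \S^n$ is symmetric and nilpotent of index exactly three.

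Consequently the Jordan type of $X$ as an ordinary matrix must be a partition of $n$ whose largest part equals $3$: for $n = 4$ this forces the type $(3,1)$, and for $n = 5$ it leaves $(3,1,1)$ or $(3,2)$. Since orthogonal congruence preserves Jordan type, these cases give pairwise non-congruent embeddings (the $n=5$ cases being further distinguished by whether the subspace contains a rank-$3$ element in the $X$-direction). By Lemma~\ref{lm:similar=orth_congruent}, within each case I may fix $X$ in a canonical symmetric form, taken as a direct sum of the symmetric Jordan blocks appearing in Proposition~\ref{prop:classify_CC[x]/x^m}.

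Given $X$ in standard form, the remaining task is to classify admissible $Z \in \S^n$ modulo $\mathrm{Stab}_{\O(n)}(X)$. The anti-commutation $XZ+ZX=0$ is linear in $Z$ and respects the kernel filtration $\ker X \subset \ker X^2 \subset \CC^n$, while $X^2Z = YZ = 0$ forces $\mathrm{im}(Z) \subseteq \ker X^2$; writing $Z$ in a basis compatible with the Jordan decomposition of $X$ turns these conditions into a linear system cutting out a small-dimensional family of candidates. The quadratic identity $Z^2 = X^2$ then pins $Z$ down up to a residual isotropic-vector freedom, which is absorbed by $\mathrm{Stab}_{\O(n)}(X)$ via Lemma~\ref{lm:orbits_isotropic_spaces}, producing the single canonical form for $n=4$ and the two forms claimed for $n=5$.

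The step I expect to be the main obstacle is the $n=5$, type $(3,2)$ case, where $X$ does not split as a direct sum of blocks on which $Z$ also splits, so the coupled relations must be solved on the full $5 \times 5$ matrix simultaneously, and the stabilizer of $X$ (which mixes the two Jordan blocks nontrivially) has to be used carefully to normalize $Z$. The bookkeeping is of the same flavour as the isotropic-vector cleanups in Propositions~\ref{prop:classify_E1} and~\ref{prop:classify_E2}, and I expect it to go through by analogous direct computation.
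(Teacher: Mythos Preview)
Your approach is essentially the same as the paper's: both first normalize the $\CC[x]/(x^3)$-subalgebra $\CC\{U,X,Y\}$ to one of finitely many standard forms indexed by the Jordan type of $X$ (the paper does this by quoting Proposition~\ref{prop:classify_CC[x]/x^m} directly, you do the equivalent step with $U=\mathbf 1_n$ and Lemma~\ref{lm:similar=orth_congruent}), and then solve the remaining constraints for $Z$ modulo the residual symmetry. One small miscalibration: the case you flag as the main obstacle, Jordan type $(3,2)$, is in fact the easy one in the paper's computation---$Z$ is forced to a two-parameter family immediately---whereas the $(3,1,1)$ case requires the extra $\Diag(\mathbf 1_3,\O(2))$-action to reduce the isotropic parameters, so your anticipated difficulty is reversed.
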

\begin{proof}
Note that $\CC\{U,X,Y\}$ is an embedding of $\CC[x]/(x^3)$. So after applying a congruence, we may assume that
\[
uU+xX+yY\in\left\{\begin{pmatrix}y&x&u\\x&u\\u\\&&&u\end{pmatrix},\begin{pmatrix}y&x&u\\x&u\\u\\&&&u\\&&&&u\end{pmatrix},\begin{pmatrix}y&x&u\\x&u\\u\\&&&x&u\\&&&u\end{pmatrix}\right\}.
\]
When it is the first element, then it is easy to verify that
\[
Z=\begin{pmatrix}a&&&b\\\\\\b\end{pmatrix}
\]
for some $a,b\in\CC$ with $b\neq0$. After scaling and changing the basis, we get $(a,b)=(0,1)$. In the second case, we get
\[
Z=\begin{pmatrix}a&&&b&c\\\\\\b&&&d&e\\c&&&e&f\end{pmatrix}
\]
for some $a,b,c,d,e,f\in\CC$. The condition $Z^{\bullet2}=Y$ gives $b^2+c^2=1$. Acting with $\Diag({\bf1}_3,\O(2))$, we may assume that $(b,c)=(1,0)$. Now $Z^{\bullet2}=Y$ gives $d=e=f=0$. After changing the basis, we also get $a=0$. In the third case, it is easy to check that 
\[
Z=\begin{pmatrix}a&&&&b\\\\\\\\b\end{pmatrix}
\]
for some $a,b\in\CC$ with $b\neq0$. After scaling and changing the basis, we get $(a,b)=(0,1)$. 
\end{proof}

\subsection{The Jordan algebra $\Es_4$}
An embedding of $\Es_4$ into $\S^n$ is of the form $\CC\{U,X,Y,Z\}$ where $U\in\S^n$ is an invertible matrix and $X,Y,Z\in\S^n$ satisfy 
\[
\begin{array}{cccc}
X\bullet_UX=Y,&X\bullet_UY={\bf 0}_n,&X\bullet_UZ={\bf 0}_n,\\
&Y\bullet_UY={\bf 0}_n,&Y\bullet_UZ={\bf 0}_n,\\
&&Z\bullet_UZ={\bf 0}_n.
\end{array}
\]

\begin{prop}\label{prop:classify_E4}
If $n=4$, then $\Es_4$ has no embedding. If $n=5$, then every embedding of $\Es_4$ is congruent to one of
\[
\begin{pmatrix}y&x&u\\x&u\\u\\&&&z&u\\&&&u\end{pmatrix},\begin{pmatrix}y&x&u&z\\x&u\\u\\z&&&&u\\&&&u\end{pmatrix}.
\]
\end{prop}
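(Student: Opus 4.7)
The approach mirrors the strategy used for $\Es_3$ in Proposition~\ref{prop:classify_E3}. The relations defining $\Es_4$ force $X\bullet_UX=Y$, $X\bullet_UY={\bf 0}_n$ and $Y\bullet_UY={\bf 0}_n$, so $\CC\{U,X,Y\}$ is a sub-Jordan-algebra isomorphic to $\CC[x]/(x^3)$. By Proposition~\ref{prop:classify_CC[x]/x^m} with $m=3$, after a congruence we may assume $uU+xX+yY$ is in one of the normal forms indexed by tuples $(k_1,k_2,k_3)$ satisfying $k_1+2k_2+3k_3=n$ and $k_3\geq 1$. For $n=4$ the only such tuple is $(1,0,1)$; for $n=5$ the tuples are $(2,0,1)$ and $(0,1,1)$.

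For each normalized $(U,X,Y)$ the fourth generator $Z$ must be a symmetric matrix satisfying the linear conditions $XU^{-1}Z+ZU^{-1}X={\bf 0}_n$ and $YU^{-1}Z+ZU^{-1}Y={\bf 0}_n$, the quadratic condition $ZU^{-1}Z={\bf 0}_n$, and linear independence from $\CC\{U,X,Y\}$. I would solve the two linear systems first, absorb the $\CC Y$-component via $Z\leadsto Z-\alpha Y$, then intersect with the square-zero variety and normalize the resulting family using the stabilizer of $(U,X,Y)$ in $\GL_n$. A crucial additional symmetry is that because $X\bullet_UZ=Z\bullet_UZ={\bf 0}_n$, the replacement $X\leadsto X+\alpha Z$ preserves all defining relations of $\Es_4$; this changes the Jordan type of $X$ as a matrix and hence can move $\CC\{U,X,Y\}$ between the $(2,0,1)$ and $(0,1,1)$ normal forms, which is what prevents a naive case analysis from overcounting orbits.

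For $n=4$ a direct computation in the $(1,0,1)$ form shows that every symmetric $Z$ satisfying the two linear conditions already lies in $\CC Y\subseteq\CC\{U,X,Y\}$, so no linearly independent fourth generator exists and $\Es_4$ has no embedding. For $n=5$ two orbits survive all normalizations: one in which $Z$ is supported in a block disjoint from the non-trivial support of $X$ and $Y$ (yielding the first listed embedding), and one in which $Z$ couples the two blocks via an isotropic direction (yielding the second). The main obstacle is correctly handling the interplay between the two available normal forms for $\CC\{U,X,Y\}$ and the $X\leadsto X+\alpha Z$ reparameterization: without this symmetry the $(2,0,1)$ analysis appears to yield a third orbit whose $Z$ has both lower-block and off-block components, and one must verify that this third family is absorbed by the other two after an appropriate redefinition of the $\CC[x]/(x^3)$-generator.
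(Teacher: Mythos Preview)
Your outline matches the paper's strategy: normalize $\CC\{U,X,Y\}$ via Proposition~\ref{prop:classify_CC[x]/x^m} and then solve for $Z$ case by case. Two differences in execution are worth flagging. First, for $n=4$ the two linear conditions alone do not force $Z\in\CC Y$; a three-parameter family survives them, and it is the quadratic condition $Z\bullet_UZ={\bf 0}_4$ that kills the remaining freedom---a minor slip. Second, for $n=5$ the paper does \emph{not} use your $X\leadsto X+\alpha Z$ reparametrization to merge the mixed candidate: it asserts that the $(0,1,1)$ form admits no admissible $Z$ at all, works entirely inside the $(2,0,1)$ form, and eliminates the $(b,d)=(1,1)$ case by exhibiting an explicit $5\times5$ congruence sending it to the $(0,1)$ case. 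Your instinct that the two normal forms interact is in fact well founded---the paper's assertion about $(0,1,1)$ is not literally correct (e.g.\ $Z=e_4e_4^\top$ satisfies all relations there)---but the embeddings arising that way coincide, after a basis change in the abstract algebra, with ones already listed, so the two-orbit conclusion stands. Note, however, that your substitution applied to the mixed $(2,0,1)$ candidate lands in the $(0,1,1)$ form rather than reducing within $(2,0,1)$, so to close the argument you would still need either an explicit congruence (as the paper gives) or a full analysis of $(0,1,1)$ followed by matching.
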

\begin{proof}
Take $n\in\{4,5\}$. Note that $\CC\{U,X,Y\}$ is an embedding of $\CC[x]/(x^3)$. So after applying a congruence, we may assume that
\[
uU+xX+yY\in\left\{\begin{pmatrix}y&x&u\\x&u\\u\\&&&u\end{pmatrix},\begin{pmatrix}y&x&u\\x&u\\u\\&&&&u\\&&&u\end{pmatrix},\begin{pmatrix}y&x&u\\x&u\\u\\&&&x&u\\&&&u\end{pmatrix}\right\}.
\]
In the first and last cases, it is easy to check that there is no $Z$ linearly independent from $Y$ with the required properties. So we assume we are in the second case. Now we have
\[
Z=\begin{pmatrix}a&&&b&c\\\\\\b&&&d&e\\c&&&e&f\end{pmatrix}
\]
for some $a,b,c,d,e,f\in\CC$. Changing basis gives $a=0$. The equation $Z^{\bullet 2}={\bf 0}_5$ yields
\[
bc = cd + be = ce + bf = de = e^2 + df = ef = 0.
\]
From the last two equations, we see that $e=0$ and so $bc = cd = bf = df = 0$. As $Z$ must be nonzero, we have $(b,d)\neq0$ or $(c,f)\neq0$. Permuting the last two rows/columns, we may assume that $(b,d)\neq(0,0)$. The equations now give $(c,f)=0$. So
\[
Z=\begin{pmatrix}&&&b\\\\\\b&&&d\\&&&&0\end{pmatrix}
\]
Acting with matrices of the form $\Diag(\lambda,1,\lambda^{-1},\mu,\mu^{-1})$, we see that we are free to scale $b,d$ independently. Hence we get $(b,d)\in\{(1,0),(0,1),(1,1)\}$. We have
\[
\begin{pmatrix}
1&&&-1&\\
&1&&&\\
&&1&&\\
&&&1&\\
&&1&&1
\end{pmatrix}
\begin{pmatrix}y+z&x&u&z\\x&u\\u\\z&&&z&u\\&&&u\end{pmatrix}
\begin{pmatrix}
1&&&-1&\\
&1&&&\\
&&1&&\\
&&&1&\\
&&1&&1
\end{pmatrix}^\top=
\begin{pmatrix}y&x&u\\x&u\\u\\&&&z&u\\&&&u\end{pmatrix}
\]
and therefore the Jordan spaces  for $(b,d)=(1,0),(1,1)$ are congruent.
\end{proof}

\section{Degenerations between Jordan spaces}\label{sec:appendix_degenerations}

Fix integers $n,m\geq 1$ and let $(x_1,\ldots,x_n)$ be an $m$-tuple of formal symbols. Then the map 
\[
\Ls\mapsto (x_1,\ldots,x_n)\Ls(x_1,\ldots,x_n)^\top
\]
is a bijection $\Gr(m,\S^n)\to\Gr(m,\CC[x_1,\ldots,x_n]_2)$. Note that $\GL_n$ acts on $\Gr(m,\S^n)$ by congruence, on $\Gr(m,\CC[x_1,\ldots,x_n]_2)$ coordinate transformation and that this bijection is in fact a morphism of $\GL_n$-sets. Below, we will find degenerations in $\Gr(m,\S^n)$ by finding the equivalent degenerations in $\Gr(m,\CC[x_1,\ldots,x_n]_2)$.

\subsection{Families of degenerations between Jordan nets}\label{ssec:degen_nets}

We make the following identifications:
\begin{align*}
&A^{(1)}_{k_1,k_2,k_3}&=\spann&(a_1^2+\ldots+a_{k_1+k_2+k_3}^2,b_1^2+\ldots+b_{k_2+k_3}^2,c_1^2+\ldots+c_{k_3}^2)\\
&A^{(2)}_{r,k_1,k_2}&=\spann&(a_1^2+\ldots+a_r^2,2b_1c_1+\ldots+2b_{k_2}c_{k_2}+d_1^2+\ldots+d_{k_1}^2,b_1^2+\ldots+b_{k_2}^2)\\
&A^{(3)}_{k_1,k_2,k_3}&= \spann&((2a_1c_1+b_1^2)+\ldots+(2a_{k_3}c_{k_3}+b_{k_3}^2)+2d_1e_1+\ldots+2d_{k_2}e_{k_2}+f_1^2+\ldots+f_{k_1}^2,\\
&&&\hspace*{110pt} 2a_1b_1+\ldots+2a_{k_3}b_{k_3}+d_1^2+\ldots+d_{k_2}^2,a_1^2+\ldots+a_{k_3}^2)
\end{align*}

\begin{prop}\label{prop:degen_general_netsA}
We have the following degenerations:
\begin{itemize}
\item[$\mathrm{(a)}$] $A^{(1)}_{k_1,k_2,k_3}\to A^{(2)}_{k_3,k_1,k_2+k_3}$.
\item[$\mathrm{(b)}$] $A^{(1)}_{k_1,k_2,k_3}\to A^{(2)}_{k_2+k_3,k_1+k_2,k_3}$.
\item[$\mathrm{(c)}$] $A^{(1)}_{k_1,k_2,k_3}\to A^{(2)}_{k_1+k_2+k_3,k_2,k_3}$.
\item[$\mathrm{(d)}$] If $k_2>1$, then $A^{(2)}_{r,k_1,k_2}\to A^{(2)}_{r,k_1+2,k_2-1}$.
\item[$\mathrm{(e)}$] If $r\leq k_2$, then $A^{(2)}_{r,k_1,k_2}\to A^{(3)}_{k_1,k_2-r,r}$.
\item[$\mathrm{(f)}$] If $k_2\leq r\leq k_1+k_2$, then $A^{(2)}_{r,k_1,k_2}\to A^{(3)}_{k_1+k_2-r,r-k_2,k_2}$.
\item[$\mathrm{(g)}$] If $r\geq k_1+k_2$, then $A^{(2)}_{r,k_1,k_2}\to A^{(3)}_{r-(k_1+k_2),k_1,k_2}$.
\item[$\mathrm{(h)}$] If $k_3>1$, then $A^{(3)}_{k_1,k_2,k_3}\to A^{(3)}_{k_1+1,k_2+1,k_3-1}$.
\item[$\mathrm{(i)}$] If $k_2>0$, then $A^{(3)}_{k_1,k_2,k_3}\to A^{(3)}_{k_1+2,k_2-1,k_3}$.
\item[$\mathrm{(j)}$] If $k_1>0$ and $k_3>1$, then $A^{(3)}_{k_1,k_2,k_3}\to A^{(3)}_{k_1-1,k_2+2,k_3-1}$.
\end{itemize}
\end{prop}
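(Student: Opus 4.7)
The plan is, for each of the ten claimed degenerations, to exhibit an explicit one-parameter family in the orbit of the source that converges to the target. Working in the dual picture of quadratic forms through the $\GL_n$-equivariant bijection $\Gr(m,\S^n)\cong\Gr(m,\CC[x_1,\ldots,x_n]_2)$ set up at the start of Section~\ref{sec:appendix}, each family is produced by a substitution $x\mapsto M(t)x$ with $M(t)\in\GL_n(\CC[t^{\pm1}])$ followed by a $t$-dependent change of basis of the three generators. One then reads off the $t\to 0$ limit and checks that it spans the target subspace. By Definition~\ref{de:degeneration_jordan_space} together with Remark~\ref{re:over_CC((t))}, producing such an $M(t)$ certifies the degeneration.

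The central identity is a \emph{cross-term promotion}: under the substitution $\alpha = b/t + tc$, $\beta = b$, which has determinant $-t$ and is therefore invertible over $\CC[t^{\pm1}]$, one computes
\[
\alpha^2 = \tfrac{1}{t^2}b^2 + 2bc + t^2 c^2, \qquad \beta^2 = b^2,
\]
so $\alpha^2 - \tfrac{1}{t^2}\beta^2 = 2bc + t^2 c^2 \to 2bc$ while $\beta^2 = b^2$ is preserved. Hence the pair of squares $(\alpha^2,\beta^2)$ degenerates to $(2bc, b^2)$, which is the local rearrangement that converts an $A^{(1)}$-block into an $A^{(2)}$-block. Applying this substitution block-by-block on an appropriate range of indices, with the remaining variables fixed by the identity, yields the degenerations (a), (b), (c); the three variants correspond to which of the three generators of $A^{(1)}_{k_1,k_2,k_3}$ supplies the $\alpha$ variable.

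For (e), (f), (g), which cross from $A^{(2)}$ to $A^{(3)}$, the target pattern $2ac + b^2$ must be manufactured from the $A^{(2)}$ generators. The relevant move is $a \mapsto a + tc$ on a sub-block of the $a$-variables: this turns $a^2$ into $a^2 + 2tac + t^2c^2$, and subtracting $t^2$ times a suitably chosen generator to cancel the $c^2$ part and rescaling by $1/t$ gives a family whose limit is $2ac$; combining with an existing $b^2$-term from another generator produces the required $2ac + b^2$. The three sub-cases distribute the $r$ copies of $a^2$ among the size-$3$ blocks, size-$2$ blocks and size-$1$ blocks of the target $A^{(3)}$-partition, depending on whether $r \leq k_2$, $k_2 \leq r \leq k_1+k_2$, or $r \geq k_1+k_2$.

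The intra-family moves (d), (h), (i), (j) are direct variants of the same mechanism: each trades one part of a partition for another by applying a localized substitution together with a basis rescaling on a single block. In every case the heart of the argument is the explicit construction of $M(t)$ and the subsequent verification. The main obstacle is not conceptual but combinatorial bookkeeping---ensuring that $M(t)$ is invertible over $\CC[t^{\pm1}]$ in each case, keeping the many index ranges straight, and confirming that after the $t$-dependent basis change the $t^0$ coefficient spans exactly the target. Once the cross-term-promotion substitution and the $A^{(3)}$-producing substitution above are in hand, each of the ten degenerations reduces to a routine polynomial computation.
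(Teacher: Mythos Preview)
Your approach is the same as the paper's: for each item you construct an explicit one-parameter family of coordinate substitutions, perform a $t$-dependent change of basis on the three generators, and take $t\to 0$. The cross-term promotion you describe for (a)--(d) matches the paper's substitutions essentially verbatim.

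Two places where your outline is thinner than the actual work required:

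For (e)--(g), the move ``$a\mapsto a+tc$ on a sub-block of the $a$-variables'' is not quite right as stated. In $A^{(2)}_{r,k_1,k_2}$ the $a$-variables appear only as $\sum a_j^2$ in the first generator, and there is no $\sum c_j^2$ generator available to cancel the $t^2c^2$ term your substitution produces. The paper instead acts on the $b$-variables, sending $b_j\mapsto a_j+t^2b_j$ (and simultaneously $c_j\mapsto b_j+t^2c_j$) on the first $r$ indices; this works because $\sum b_j^2$ \emph{is} a generator, so the unwanted $\sum a_j^2$ part of $(a_j+t^2b_j)^2$ can be subtracted off. The remaining $b_j$'s with $j>r$ are sent to scaled copies of new variables. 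The mechanism is the same cross-term promotion, but it must be applied to the right variables.

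For (h) and especially (j), ``direct variants'' undersells what is needed. Case (h) converts a single size-$3$ block $(2ac+b^2,\,2ab,\,a^2)$ into a size-$2$ block plus a size-$1$ block simultaneously across all three generators; this forces a substitution of the shape $a\mapsto td$, $b\mapsto t^{-1}d/2+f$, $c\mapsto -t^{-3}d/8-t^{-2}f/2+t^{-1}e$, where the coefficients in the $c$-substitution are chosen so that the singular terms in $2ac+b^2$ cancel \emph{exactly} rather than merely in the limit. Case (j) is harder still: one size-$3$ block together with one $f^2$ term must be converted into two size-$2$ blocks, and the paper uses complex isotropic combinations $x_\pm=d_{k_2+1}\pm id_{k_2+2}$, $y_\pm=e_{k_2+1}\pm ie_{k_2+2}$ in a coupled substitution across four target variables. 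These are still ``explicit substitutions,'' so your framework is correct, but they are not routine bookkeeping---you should expect to have to solve for the coefficients rather than write them down.
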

\begin{proof}
We note that (a)-(d) also follow from the degenerations between orbits of pencils found in \cite{FMS:pencil_old_new}.
(a)
For $t\neq0$, we send 
\[
a_j\mapsto\left\{\begin{array}{cl}b_j+t^2c_j&\mbox{if $j\leq k_2+k_3$}\\td_{j-(k_2+k_3)}&\mbox{if $j>k_2+k_3$}\end{array}\right\}, b_j\mapsto b_j,c_j\mapsto a_j,
\]
we substract the second form from the first, we divide the first form by $t^2$ and let $t\to0$. We get
\[
2b_1c_1+\ldots+2b_{k_2+k_3}c_{k_2+k_3}+d_1^2+\ldots+d_{k_1}^2,~b_1^2+\ldots+b_{k_2+k_3}^2,~a_1^2+\ldots+a_{k_3}^2
\]
which is $A^{(2)}_{k_3,k_1,k_2+k_3}$. 

(b)
For $t\neq0$, we send 
\[
a_j\mapsto\left\{\begin{array}{cl}b_j+t^2c_j&\mbox{if $j\leq k_3$}\\td_{j-k_3}&\mbox{if $j>k_3$}\end{array}\right\}, b_j\mapsto a_j,c_j\mapsto b_j,
\]
we substract the third form from the first, we divide the first form by $t^2$ and let $t\to0$. We get
\[
2b_1c_1+\ldots+2b_{k_3}c_{k_3}+d_1^2+\ldots+d_{k_1+k_2}^2,~a_1^2+\ldots+a_{k_2+k_3}^2,~b_1^2+\ldots+b_{k_3}^2
\]
which is $A^{(2)}_{k_2+k_3,k_1+k_2,k_3}$. 

(c)
For $t\neq0$, we send 
\[
a_j\mapsto a_j,b_j\mapsto\left\{\begin{array}{cl}b_j+t^2c_j&\mbox{if $j\leq k_3$}\\td_{j-k_3}&\mbox{if $j>k_3$}\end{array}\right\}, c_j\mapsto b_j,
\]
we substract the third form from the second, we divide the second form by $t^2$ and let $t\to0$. We get
\[
a_{1}^2+\ldots+a_{k_1+k_2+k_3}^2,~2b_1c_1+\ldots+2b_{k_3}c_{k_3}+d_1^2+\ldots+d_{k_2}^2,~b_1^2+\ldots+b_{k_3}^2
\]
which is $A^{(2)}_{k_1+k_2+k_3,k_2,k_3}$. 

(d)
If $k_2>1$, for $t\neq0$, we send
\[
a_j\mapsto a_j,b_j\mapsto\left\{\begin{array}{cl}b_j&\mbox{if $j<k_2$}\\tx_+&\mbox{if $j=k_2$}\end{array}\right\}, c_j\mapsto\left\{\begin{array}{cl}t^2c_j&\mbox{if $j<k_2$}\\tx_-/2&\mbox{if $j=k_2$}\end{array}\right\},d_j\mapsto td_j
\]
where $x_{\pm}=d_{k_1+1}\pm i d_{k_1+2}$, we divide the second form by $t^2$ and let $t\to0$. We get
\[
a_1^2+\ldots+a_r^2,~2b_1c_1+\ldots+2b_{k_2-1}c_{k_2-1}+d_1^2+\ldots+d_{k_1+2}^2,~b_1^2+\ldots+b_{k_2-1}^2.
\]
which is $A^{(2)}_{r,k_1+2,k_2-1}$.

(e)
If $r\leq k_2$, for $t\neq0$, we send 
\[
a_j\mapsto a_j,b_j\mapsto\left\{\begin{array}{cl}a_j+t^2b_j&\mbox{if $j\leq r$}\\td_{j-r}&\mbox{if $j>r$}\end{array}\right\}, c_j\mapsto\left\{\begin{array}{cl}b_j+t^2c_j&\mbox{if $j\leq r$}\\t^{-1}d_{j-r}/2+te_{j-r}&\mbox{if $j>r$}\end{array}\right\},d_j\mapsto tf_j,
\]
we subtract the first form from the third, we divide the third form by $t^2$, we subtract the third form from the second, we divide the second form by $t^2$ and we let $t\to0$. We get
\[
a_1^2+\ldots+a_r^2,~(2a_1c_1+b_1^2)+\ldots+(2a_rc_r+b_r^2)+2d_1e_1+\ldots+2d_{k_2-r}e_{k_2-r}+f_1^2+\ldots+f_{k_1}^2,
\]
\[
2a_1b_1+\ldots+2a_rb_r+d_1^2+\ldots+d_{k_2-r}^2
\]
which is $A^{(3)}_{k_1,k_2-r,r}$.

(f)
If $k_2\leq r\leq k_1+k_2$, for $t\neq0$, we send 
\[
a_j\mapsto\left\{\begin{array}{cl}a_j-t^2b_j&\mbox{if $j\leq k_2$}\\itd_{j-k_2}&\mbox{if $j>k_2$}\end{array}\right\},b_j\mapsto a_j, c_j\mapsto b_j+t^2c_j,d_j\mapsto\left\{\begin{array}{cl}d_j+t^2e_j&\mbox{if $j\leq r-k_2$}\\tf_{j-(r-k_2)}&\mbox{if $j>r-k_2$}\end{array}\right\},
\]
we subtract the third form from the first, we divide the first form by $t^2$, we add the first form to the second, we divide the second form by $t^2$ and we let $t\to0$. We get
and get
\[
2a_1b_1+\ldots+2a_{k_2}b_{k_2}+d_1^2+\ldots+d_{r-k_2}^2,
\]
\[
(2a_1c_1+b_1^2)+\ldots+(2a_{k_2}c_{k_2}+b_{k_2}^2)+2d_1e_1+\ldots+2d_{r-k_2}e_{r-k_2}+f_1^2+\ldots +d_{k_1+k_2-r}^2,~a_1^2+\ldots+a_{k_2}^2
\]
which is $A^{(3)}_{k_1+k_2-r,r-k_2,k_2}$.

(g)
If $r\geq k_1+k_2$, for $t\neq0$, we send 
\[
a_j\mapsto\left\{\begin{array}{cl}a_j+t^2b_j+t^4c_j&\mbox{if $j\leq k_2$}\\td_{j-k_2}+t^3e_{j-k_2}&\mbox{if $k_2<j\leq k_1+k_2$}\\t^2f_{j-(k_1+k_2)}&\mbox{if $j>k_1+k_2$}\end{array}\right\},b_j\mapsto a_j, c_j\mapsto b_j,d_j\mapsto d_j,
\]
we subtract the third form from the first, we divide the first form by $t^2$, we subtract the second form from the first, we divide the first form by $t^2$ again and we let $t\to 0$. We get
\[
(2a_1c_1+b_1^2)+\ldots+(2a_{k_2}c_{k_2}+b_{k_2}^2)+2d_1e_1+\ldots+2d_{k_1}e_{k_1}+f_1^2+\ldots+f_{r-(k_1+k_2)}^2,
\]
\[
2a_1b_1+\ldots+2a_{k_2}b_{k_2}+d_1^2+\ldots+d_{k_1}^2,~a_1^2+\ldots+a_{k_2}^2
\]
which is $A^{(3)}_{r-(k_1+k_2),k_1,k_2}$.

(h)
If $k_3>1$, for $t\neq0$, we send
\[
a_{k_3}\mapsto td_{k_2+1},b_{k_3}\mapsto t^{-1}d_{k_2+1}/2+f_{k_1+1}, c_{k_3}\mapsto -t^{-3}d_{k_2+1}/8-t^{-2}f_{k_1+1}/2+t^{-1}e_{k_2+1}
\]
 and we let $t\to0$. We get 
\[
(2a_1c_1+b_1^2)+\ldots+(2a_{k_3-1}c_{k_3-1}+b_{k_3-1}^2)+(2d_{k_2+1}e_{k_2+1}+f_{k_1+1}^2)+2d_1e_1+\ldots+2d_{k_2}e_{k_2}+f_1^2+\ldots+f_{k_1}^2,
\]
\[
2a_1b_1+\ldots+2a_{k_3-1}b_{k_3-1}+d_{k_2+1}^2+d_1^2+\ldots+d_{k_2}^2,~a_1^2+\ldots+a_{k_3-1}^2.
\]
which is $A^{(3)}_{k_1+1,k_2+1,k_3-1}$.

(i)
If $k_2>0$, for $t\neq0$, we send
\[
d_{k_2}\mapsto tx_+,e_{k_2}\mapsto t^{-1}x_-/2
\]
where $x_{\pm}=f_{k_1+1}\pm if_{k_1+2}$ and we let $t\to0$. We get
\[
(2a_1c_1+b_1^2)+\ldots+(2a_{k_3}c_{k_3}+b_{k_3}^2)+2d_1e_1+\ldots+2d_{k_2-1}e_{k_2-1}+(f_{k_1+1}^2+f_{k_1+2}^2)+f_1^2+\ldots+f_{k_1}^2,
\]
\[
2a_1b_1+\ldots+2a_{k_3}b_{k_3}+d_1^2+\ldots+d_{k_2-1}^2,~a_1^2+\ldots+a_{k_3}^2.
\]
which is $A^{(3)}_{k_1+2,k_2-1,k_3}$.

(j)
If $k_1>0$ and $k_3>1$, for $t\neq0$, we send
\[
a_{k_3}\mapsto tx_+,b_{k_3}\mapsto t^{-1}x_-/2,c_{k_3}\mapsto t^{-1}y_-/2, f_{k_1}\mapsto i(t^{-1}x_-/2-ty_+)
\]
where $x_{\pm}=d_{k_2+1}\pm id_{k_2+2}, y_{\pm}=e_{k_2+1}\pm ie_{k_2+2}$ and we let $t\to0$. We get
\[
(2a_1c_1+b_1^2)+\ldots+(2a_{k_3-1}c_{k_3-1}+b_{k_3-1}^2)+2d_1e_1+\ldots+2d_{k_2+2}e_{k_2+2}+f_1^2+\ldots+f_{k_1-1}^2,
\]
\[
2a_1b_1+\ldots+2a_{k_3-1}b_{k_3-1}+d_1^2+\ldots+d_{k_2+2}^2,~a_1^2+\ldots+a_{k_3-1}^2
\]
which is $A^{(3)}_{k_1-1,k_2+2,k_3-1}$.
\end{proof}

Next, note that $B^{(1)}_{n/2}$ is congruent to
\[
\begin{pmatrix}xJ_{n/2}&z{\bf 1}_{n/2}\\z{\bf 1}_{n/2}&yJ_{n/2}\end{pmatrix}.
\]
We make the following identifications:
\begin{align*}
&B^{(1)}_{n/2}&=\spann&(a_1a_{n/2}+\ldots+a_{n/2}a_1,b_1b_{n/2}+\ldots+b_{n/2}b_1,a_1b_1+\ldots+a_{n/2}b_{n/2})\\
&B^{(2)}_{k,\ell_1,\ell_2}&=\spann&(a_1a_{\ell_2}+\ldots+a_{\ell_2}a_1,b_1b_{\ell_2}+\ldots+b_{\ell_2}b_1+c_1^2+\ldots+c_{\ell_1}^2,a_1b_1+\ldots+a_kb_k)
\end{align*}

\begin{prop}\label{prop:degen_general_netsB}
We have the following degenerations:
\begin{itemize}
\item[$\mathrm{(a)}$] If $2\mid n$ and for $1\leq k\leq n/4$, then $B^{(1)}_{n/2}\to B^{(2)}_{k,0,n/2}$.
\item[$\mathrm{(b)}$] If $k>1$, then $B^{(2)}_{k,\ell_1,\ell_2}\to B^{(2)}_{k-1,\ell_1,\ell_2}$.
\end{itemize}
\end{prop}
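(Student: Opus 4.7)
My plan is to prove the two degenerations separately by exhibiting, in each case, an explicit one-parameter family of congruences together with (if needed) a rescaling of the third basis element, written in the quadratic-form picture used in the rest of Appendix~\ref{ssec:degen_nets}. Both families will be given by a diagonal substitution in the variables, which immediately corresponds to a matrix $P(t)\in\GL_n(\CC[t^{\pm1}])$, and possibly a basis rescaling $Q(t)=\Diag(1,1,t^{a})\in\GL_3(\CC[t^{\pm1}])$. Writing the basis forms of $B^{(1)}_{n/2}$ as $F_1=\sum a_ia_{n/2+1-i}$, $F_2=\sum b_ib_{n/2+1-i}$, $F_3=\sum_{i=1}^{n/2}a_ib_i$ and those of $B^{(2)}_{k,\ell_1,\ell_2}$ analogously, the two constructions reduce to making the first two forms invariant while cutting off the third at the desired index.

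For (b) I will use the substitution $a_k\mapsto ta_k$, $a_{\ell_2+1-k}\mapsto t^{-1}a_{\ell_2+1-k}$, leaving every other variable fixed. Because $k\leq\ell_2/2$ the two affected indices are distinct, so $F_1$'s pair $(a_k,a_{\ell_2+1-k})$ is preserved (factor $t\cdot t^{-1}=1$) and every other pair is untouched. The form $F_2$ involves no $a$'s, so it is preserved trivially. Since $\ell_2+1-k>k$, the variable $a_{\ell_2+1-k}$ does not appear in $F_3=\sum_{i=1}^k a_ib_i$, and the only change to $F_3$ is $a_kb_k\mapsto ta_kb_k$, which vanishes at $t=0$. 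The resulting limit is exactly $B^{(2)}_{k-1,\ell_1,\ell_2}$; no basis change of the Jordan net is needed, i.e., $Q=\mathbf{1}_3$.

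For (a) I will apply the substitution $b_i\mapsto t^{-1}b_i$ for $i\leq k$, $b_i\mapsto tb_i$ for $i\geq n/2-k+1$, and $b_i\mapsto b_i$ otherwise, and rescale the third basis element by $t$. The substitution is a bona fide element of $\GL_{n/2}(\CC[t^{\pm1}])$ precisely because the two scaled index-blocks $\{1,\dots,k\}$ and $\{n/2-k+1,\dots,n/2\}$ are disjoint — which is equivalent to $k\leq n/4$. In $F_2$, each paired couple $(i,n/2+1-i)$ satisfies $e_i+e_{n/2+1-i}=0$ (either $-1+1$ or $0+0$), so $F_2$ is invariant; $F_1$ is untouched; and $F_3$ transforms to $t^{-1}\sum_{i\leq k}a_ib_i+\sum_{k<i\leq n/2-k}a_ib_i+t\sum_{i\geq n/2-k+1}a_ib_i$. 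Multiplying this by $t$ (the $Q(t)=\Diag(1,1,t)$ basis change) and letting $t\to0$ yields $\sum_{i=1}^k a_ib_i$, so the three limits span $B^{(2)}_{k,0,n/2}$.

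The only conceptual point that needs care is tracking the numerology in (a): one must check that the weight assignment $e_i\in\{-1,0,1\}$ can be made consistently, and this is where the hypothesis $k\leq n/4$ enters (otherwise the same $b_i$ would be assigned incompatible weights $-1$ and $+1$). Once this is in place, verifying invariance of $F_1,F_2$ and computing the limit of $F_3$ are straightforward bookkeeping; the remaining steps — writing out $P(t)$ and $Q(t)$ explicitly and confirming that every entry of $(Y_1,Y_2,Y_3)$ lies in $\CC[t]$ — are routine.
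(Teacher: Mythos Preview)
Your argument is correct and follows essentially the same approach as the paper: both proofs exhibit the degeneration via a diagonal one-parameter substitution in the $a$- or $b$-variables, chosen so that the two ``anti-diagonal'' quadratic forms are preserved (up to a global power of $t$) while the bilinear form $\sum a_ib_i$ gets truncated in the limit. The only differences are cosmetic: for (a) the paper scales the $a_i$ with weights $0,1,2$ and divides the first form by $t^2$, whereas you scale the $b_i$ with weights $-1,0,1$ and multiply the third form by $t$; for (b) the paper again uses a three-level weighting on all the $a_i$ and rescales the first form, while you touch only the single pair $a_k,a_{\ell_2+1-k}$ with opposite weights and need no rescaling at all. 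Your variant for (b) is in fact slightly cleaner.
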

\begin{proof}
(a)
If $2\mid n$ and $1\leq k\leq n/4$, for $t\neq0$, we send
\[
a_i\mapsto\left\{\begin{array}{cl}a_i&\mbox{if $i\leq k$}\\ta_i&\mbox{if $k<i\leq n/2-k$}\\t^2a_i&\mbox{if $i>n/2-k$}\end{array}\right\},b_i\mapsto b_i,
\]
divide the first form by $t^2$ and let $t\to0$. We get
\[
a_1a_{n/2}+\ldots+a_{n/2}a_1,b_1b_{n/2}+\ldots+b_{n/2}b_1,a_1b_1+\ldots+a_kb_k
\]
which is $B^{(2)}_{k,0,n/2}$.

(b)
If $k>1$, for $t\neq0$, we send
\[
a_i\mapsto\left\{\begin{array}{cl}a_i&\mbox{if $i<k$}\\ta_i&\mbox{if $k\leq i\leq \ell_2-k+1$}\\t^2a_i&\mbox{if $i>\ell_2-k+1$}\end{array}\right\},b_i\mapsto b_i,c_i\mapsto c_i,
\]
divide the first form by $t^2$ and let $t\to0$. We get
\[
a_1a_{\ell_2}+\ldots+a_{\ell_2}a_1,~b_1b_{\ell_2}+\ldots+b_{\ell_2}b_1+c_1^2+\ldots+c_{\ell_1}^2,~a_1b_1+\ldots+a_{k-1}b_{k-1}
\]
which is $B^{(2)}_{k-1,\ell_1,\ell_2}$.
\end{proof}

\subsection{Jordan nets in $\S^5$}

We identify a Jordan net $\Ls\subseteq\S^5$ with the associated net of quadrics $(a,b,c,d,e)\Ls(a,b,c,d,e)^\top$. This gives the following list:

\begin{tabular}{lllll}
$A^{(1)}_{2,0,1}$&\!\!\!\!= $\spann(a^2+b^2+c^2,d^2,e^2)$,&&$A^{(3)}_{2,0,1}$&\!\!\!\!= $\spann(2ac+b^2+d^2+e^2,ab,a^2)$,\\
$A^{(1)}_{0,1,1}$&\!\!\!\!= $\spann(a^2+b^2,c^2+d^2,e^2)$,&&$A^{(3)}_{0,1,1}$&\!\!\!\!= $\spann(2ac+b^2+2de,2ab+d^2,a^2)$,\\
$A^{(2)}_{1,2,1}$&\!\!\!\!= $\spann(a^2,2bc+d^2+e^2,b^2)$,&&$B^{(2)}_{1,1,2}$&\!\!\!\!= $\spann(ab,2cd+e^2,ac)$,\\
$A^{(2)}_{1,0,2}$&\!\!\!\!= $\spann(a^2,be+cd,b^2+c^2)$,&&$C_{5,1}\phantom{^{(1)}}$&\!\!\!\!= $\spann(2ae+2bd+c^2,a^2,b^2)$,\\
$A^{(2)}_{2,1,1}$&\!\!\!\!= $\spann(a^2+b^2,2cd+e^2,c^2)$,&&$C_{5,2}\phantom{^{(1)}}$&\!\!\!\!= $\spann(2ae+2bd+c^2,ab,a^2)$,\\
$A^{(2)}_{3,0,1}$&\!\!\!\!= $\spann(a^2+b^2+c^2,de,d^2)$
\end{tabular}

\begin{prop}\label{prop:degen_nets_into_S5}
We have the following degenerations:\vspace{-10pt}
\end{prop}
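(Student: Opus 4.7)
The plan is to treat each claimed degeneration by exhibiting an explicit $t$-parameter family of coordinate changes on $(a,b,c,d,e)$, following the same template as Propositions~\ref{prop:degen_general_netsA} and~\ref{prop:degen_general_netsB}: apply a substitution $v \mapsto M(t)v$ with $M(t)\in\GL_5(\CC[t^{\pm1}])$ to the basis quadrics of the source net, then take suitable $\CC[t^{\pm1}]$-linear combinations of the resulting three quadrics to clear the polar parts, and finally let $t\to 0$. Since the edges of Theorem~\ref{thm:diagramNetsS5} that stay within the $A$-orbits or within the $B$-orbits are already instances of Proposition~\ref{prop:degen_general_netsA} or~\ref{prop:degen_general_netsB} specialised to the relevant parameters, the heart of the proof is the remaining degenerations, all of which land in a $C$-orbit: namely $A^{(2)}_{1,2,1}\to C_{5,1}$, $A^{(3)}_{0,1,1}\to C_{5,1}$, $B^{(2)}_{1,1,2}\to C_{5,2}$, $A^{(3)}_{2,0,1}\to C_{5,2}$, and $C_{5,1}\to C_{5,2}$.

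For each such degeneration the basic strategy is the same. The target $C_{5,i}$ consists of the invertible element $2ae+2bd+c^2$ together with two square-zero matrices (of ranks $1,1$ for $C_{5,1}$ and $1,2$ for $C_{5,2}$), so one looks for a substitution that, after taking appropriate linear combinations, sends the invertible generator of the source to $2ae+2bd+c^2$ and sends the square-zero pair of the source to the square-zero pair of the target. The main trick is to introduce negative powers of $t$ whose dominant polar parts cancel pairwise: for $C_{5,1}\to C_{5,2}$ I would try $b\mapsto b+a/t,\;e\mapsto e-d/t$, because under this substitution the two contributions $\pm 2ad/t$ in $2ae+2bd+c^2$ cancel, so $J_5$ is preserved exactly, while $b^2$ becomes $b^2+2ab/t+a^2/t^2$; subtracting $a^2/t^2$ (which is available since $a^2$ is in the net) and rescaling by $t$ yields $tb^2+2ab\to 2ab$ as $t\to 0$, giving $C_{5,2}$ on the nose. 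The same cancellation principle guides the other cases.

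The hardest cases will be $A^{(3)}_{2,0,1}\to C_{5,2}$ and $B^{(2)}_{1,1,2}\to C_{5,2}$, because the invertible generators of source and target are non-degenerate quadratic forms whose matrices differ by a genuinely non-trivial congruence, so the substitution has to simultaneously bring the source unit to $J_5$ \emph{in the limit} and move the square-zero generators to the prescribed supports. My plan here is to compose a $t$-independent linear change of variables, chosen to bring $U$ to the form $2ae+2bd+c^2$ (using that any two non-degenerate quadrics on $\CC^5$ are congruent), with a subsequent $t$-dependent substitution built out of strictly upper-triangular operators in the $O(J_5)$-direction (as in the $C_{5,1}\to C_{5,2}$ argument above) to slide the square-zero pencil into the correct position while keeping $J_5$ fixed. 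Once a candidate $M(t)$ is in hand, the verification is a routine computation of $M(t)^\top X_i M(t)$ for the three source generators $X_i$, followed by the choice of $Q(t)\in\GL_3(\CC[t^{\pm1}])$ absorbing the polar parts; the sole genuine obstacle is guessing the right $M(t)$, and the $O(J_5)$-stabiliser analysis of the square-zero matrices in $C_{5,1}$ and $C_{5,2}$ suggests the correct ansatz in each case.
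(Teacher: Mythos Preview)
Your template matches the paper's exactly, and your explicit computation for $C_{5,1}\to C_{5,2}$ is correct; the paper uses the equivalent substitution $(a,b,c,d,e)\mapsto(a,a+t^2b,tc,d,-d+t^2e)$.

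The gap is that the proposition asserts five degenerations and you have carried out only one. You concede that ``the sole genuine obstacle is guessing the right $M(t)$'', but that guess \emph{is} the proof: a plan that postpones it to a heuristic is not yet an argument. Moreover, your heuristic for the ``hard'' cases rests on a misreading of the sources. Neither $A^{(3)}_{2,0,1}$ nor $B^{(2)}_{1,1,2}$ is of the form $\CC U\oplus(\text{square-zero pencil})$: the former is an embedding of $\CC[x]/(x^3)$, in which $(ab)^{\bullet_U 2}=a^2\neq0$, and the latter is an embedding of $\Js^2_1$, in which $ab$ and $2cd+e^2$ are (after scaling) complementary idempotents rather than nilpotents. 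So ``slide the square-zero pair of the source into position inside $O({\bf J}_5)$'' does not describe what actually has to happen in these cases. The paper does not factor through any such normalisation; it simply writes down one $M(t)$ per case---for instance $(a,b,c,d,e)\mapsto(a-t^2d,\,b-t^2e,\,a,\,b,\,tc)$ for $B^{(2)}_{1,1,2}\to C_{5,2}$ and $(a,b,c,d,e)\mapsto(a,\,b,\,t^2e,\,tc,\,i(b-t^2d))$ for $A^{(3)}_{2,0,1}\to C_{5,2}$---and checks the limit directly. These substitutions are of the same order of complexity as the one you already found; to finish the proof you need to exhibit the remaining four and verify them.
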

\begin{multicols}{2}
\begin{enumerate}
\item[$\mathrm{(a)}$] $A^{(2)}_{1,2,1}\to C_{5,1}$.
\item[$\mathrm{(b)}$] $A^{(3)}_{2,0,1}\to C_{5,2}$.
\item[$\mathrm{(c)}$] $A^{(3)}_{0,1,1}\to C_{5,1}$.
\end{enumerate}
\columnbreak
\begin{enumerate}
\item[$\mathrm{(d)}$] $B^{(2)}_{1,1,2}\to C_{5,2}$.
\item[$\mathrm{(e)}$] $C_{5,1}\to C_{5,2}$.
\end{enumerate}
\end{multicols}
\begin{proof}
(a)
For $t\neq0$, we apply the coordinate transformation 
\[
(a,b,c,d,e)\to(a,b,d,c,t^{-1}a+te)
\]
to go from $A^{(2)}_{1,2,1}$ to
\[
\spann(a^2,2bd+c^2+(t^{-1}a+te)^2,b^2)=\spann(2ae+2bd+c^2+t^2e^2,a^2,b^2)
\]
and hence get $\spann(2ae+2bd+c^2,a^2,b^2)=C_{5,1}$ when $t\to0$.

(b)
For $t\neq0$, we apply the coordinate transformation 
\[
(a,b,c,d,e)\to(a,b,t^2e,tc,i(b-t^2d))
\]
to go from $A^{(3)}_{2,0,1}$ to
\[
\spann(2t^2ae+b^2+t^2c^2-(b-t^2d)^2,ab,a^2)=\spann(2ae+c^2+2bd-t^2d^2,ab,a^2)
\]
and hence get $\spann(2ae+c^2+2bd,ab,a^2)=C_{5,2}$ when $t\to0$.

(c)
For $t\neq0$, we apply the coordinate transformation 
\[
(a,b,c,d,e)\to(a,tc,t^2e,b,t^2d)
\]
to go from $A^{(3)}_{0,1,1}$ to
\[
\spann(2t^2ae+t^2c^2+2t^2bd,2tac+b^2,a^2)=\spann(2ae+c^2+2bd,b^2+2tac,a^2)
\]
and hence get $\spann(2ae+c^2+2bd,b^2,a^2)=C_{5,1}$ when $t\to0$.

(d)
For $t\neq0$, we apply the coordinate transformation 
\[
(a,b,c,d,e)\to(a-t^2d,b-t^2e,a,b,tc)
\]
to go from $B^{(2)}_{1,1,2}$ to
\[
\spann((a-t^2d)(b-t^2e),2ab+t^2c^2,(a-t^2d)a)=\spann(2bd+2ae+c^2-2t^2de,ab+t^2c^2/2,a^2-t^2ad)
\]
and hence get $\spann(2bd+2ae+c^2,ab,a^2)=C_{5,2}$ when $t\to0$.

(e)
For $t\neq0$, we apply the coordinate transformation 
\[
(a,b,c,d,e)\to(a,a+t^2b,tc,d,-d+t^2e)
\]
to go from $C_{5,1}$ to
\[
\spann(2a(-d+t^2e)+2(a+t^2b)d+t^2c^2,a^2,(a+t^2b)^2)=\spann(2ae+2bd+c^2,a^2,ab+t^2b^2/2)
\]
and hence get $\spann(2bd+2ae+c^2,ab,a^2)=C_{5,2}$ when $t\to0$.
\end{proof}

\subsection{Jordan nets in $\S^6$}
We identify a Jordan net $\Ls\subseteq\S^6$ with the associated net of quadrics $(a,b,c,d,e,f)\Ls(a,b,c,d,e,f)^\top$. This gives the following list:

\begin{tabular}{lllll}
$A^{(1)}_{3,0,1}$&\!\!\!\!= $\spann(a^2+b^2+c^2+d^2,e^2,f^2)$,&&$A^{(3)}_{3,0,1}$&\!\!\!\!= $\spann(2ac+b^2+d^2+e^2+f^2,ab,a^2)$,\\
$A^{(1)}_{1,1,1}$&\!\!\!\!= $\spann(a^2+b^2+c^2,d^2+e^2,f^2)$,&&$A^{(3)}_{1,1,1}$&\!\!\!\!= $\spann(2ac+b^2+2de+f^2,2ab+d^2,a^2)$,\\
$A^{(1)}_{0,0,2}$&\!\!\!\!= $\spann(a^2+b^2,c^2+d^2,e^2+f^2)$,&&$A^{(3)}_{0,0,2}$&\!\!\!\!= $\spann(2ac+b^2+2df+e^2,ab+de,a^2+d^2)$,\\
$A^{(2)}_{1,3,1}$&\!\!\!\!= $\spann(a^2,2bc+d^2+e^2+f^2,b^2)$,&&$C_{6,1}$&\!\!\!\!= $\spann(af+be+cd,a^2+c^2,b^2+c^2)$,\\
$A^{(2)}_{1,1,2}$&\!\!\!\!= $\spann(a^2,2bc+2de+f^2,b^2+d^2)$,&&$C_{6,2}$&\!\!\!\!= $\spann(af+be+cd,a^2+c^2,ab)$,\\
$A^{(2)}_{2,2,1}$&\!\!\!\!= $\spann(a^2+b^2,2cd+e^2+f^2,c^2)$,&&$C_{6,3}$&\!\!\!\!= $\spann(af+be+cd,2ac+b^2,ab)$,\\
$A^{(2)}_{2,0,2}$&\!\!\!\!= $\spann(a^2+b^2,cd+ef,c^2+e^2)$,&&$C_{6,4}$&\!\!\!\!= $\spann(af+be+cd,a^2+b^2,c^2)$,\\
$A^{(2)}_{3,1,1}$&\!\!\!\!= $\spann(a^2+b^2+c^2,2de+f^2,d^2)$,&&$C_{6,5}$&\!\!\!\!= $\spann(af+be+cd,2ab+c^2,a^2)$,\\
$A^{(2)}_{4,0,1}$&\!\!\!\!= $\spann(a^2+b^2+c^2+d^2,ef,e^2)$,&&$C_{6,6}$&\!\!\!\!= $\spann(af+be+cd,ab,ac)$,\\
$B^{(2)}_{1,2,2}$&\!\!\!\!= $\spann(ab,2cd+e^2+f^2,ac)$,&&$C_{6,7}$&\!\!\!\!= $\spann(af+be+cd,a^2,b^2)$,\\
$B^{(2)}_{1,0,3}$&\!\!\!\!= $\spann(2ac+b^2,2df+e^2,ad)$,&&$C_{6,8}$&\!\!\!\!= $\spann(af+be+cd,ab,a^2)$,\\
$B^{(1)}_3$&\!\!\!\!= $\spann(a^2+b^2+c^2,d^2+e^2+f^2,ad+be+cf)$\hspace*{-100pt}
\end{tabular}

\begin{prop}\label{prop:degen_nets_into_S6}
We have the following degenerations:\vspace{-10pt}
\end{prop}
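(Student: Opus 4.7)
The plan is to verify each degeneration in the list individually by producing an explicit one-parameter family of substitutions on the variables $(a,b,c,d,e,f)$ together with a basis change on the three generating quadrics, exactly as in the proof of Proposition~\ref{prop:degen_nets_into_S5}. Working on the net-of-quadrics side allows us to package the matrices $P(t)$ and $Q(t)$ of Definition~\ref{de:degeneration_jordan_space} as a variable substitution followed by taking linear combinations of the three quadrics and rescaling by suitable powers of $t$ before passing to $t \to 0$.

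I would organize the degenerations into three groups of increasing difficulty. First, the degenerations $C_{6,i} \to C_{6,j}$ among the $\Js^2_0$-orbits follow the same pattern as $C_{5,1} \to C_{5,2}$ in Proposition~\ref{prop:degen_nets_into_S5}(e): each is obtained by a rescaling of the form $x \mapsto x + t^2 x'$ or $x \mapsto t x$ on one or two variables combined with dividing one of the quadrics by $t^{2}$, tailored to collapse a rank-$2$ or rank-$4$ quadric without destroying the rank-$6$ form $af+be+cd$. Second, the mixed degenerations $A^{(2)}_{*} \to C_{6,i}$, $A^{(3)}_{*} \to C_{6,i}$ and $B^{(2)}_{*} \to C_{6,i}$ are handled by direct generalizations of parts (a)--(d) of Proposition~\ref{prop:degen_nets_into_S5}: one introduces a term $t^{-1}\alpha + t\beta$ into a chosen variable so that the divergent $t^{-2}\alpha^2$ is absorbed by subtracting a multiple of one of the other quadrics, and, after dividing by $t^2$, the surviving cross term $2\alpha\beta$ produces the required rank-$6$ component.

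Third, the degenerations $A^{(1)}_{*} \to C_{6,i}$ from embeddings of three mutually orthogonal idempotents require more care, because the source net has no common variable linking its three rank-$2$ quadrics. Here I would use a substitution that simultaneously introduces compatible cross terms across all six variables, for example one of the shape
\[
(a,b,c,d,e,f) \mapsto (a,\; b,\; c,\; d + t^{-1} c,\; e + t^{-1} b,\; f + t^{-1} a),
\]
so that each of the three starting quadrics contributes one of the three monomials $af$, $be$, $cd$ to the limit $af+be+cd$, after the divergent $t^{-2}$ contributions are cancelled by a $3\times 3$ basis change on the net and a rescaling by $t^{2}$.

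The main obstacle will be the case $A^{(1)}_{0,0,2} \to C_{6,1}$, precisely because of this lack of a common variable and because two \emph{independent} rank-$2$ quadrics $a^2+c^2$ and $b^2+c^2$ must emerge in the limit in addition to $af+be+cd$. Tuning the coefficients of the substitution so that all three limits are simultaneously correct and linearly independent requires balancing the scaling weights of the six variables carefully and will likely need a two-step basis change on the net. Once each family is written down, the verification that the limiting net equals the target $C_{6,i}$ is a mechanical check, reducing the proof to a list of coordinate substitutions of the same style as Proposition~\ref{prop:degen_nets_into_S5}.
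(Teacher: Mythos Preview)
Your plan matches the paper's proof exactly in spirit: the paper too simply lists, for each of the nineteen degenerations, one explicit coordinate substitution on $(a,b,c,d,e,f)$ followed by a basechange over $\CC[t,t^{-1}]$ and a limit $t\to0$, invoking the $\S^5$ proposition as the template. Your three-tier organisation and identification of $A^{(1)}_{0,0,2}\to C_{6,1}$ as the delicate case are both accurate.

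One concrete caution: the sample substitution you sketch for that case, $(a,b,c,d,e,f)\mapsto(a,b,c,d+t^{-1}c,e+t^{-1}b,f+t^{-1}a)$, does not work as written. After it, the second quadric $c^2+d^2$ acquires a divergent $t^{-2}c^2$ that cannot be cancelled against the other two transformed quadrics (neither contains a $c^2$ term at order $t^{-2}$), so no $3\times3$ basis change removes all poles while producing $af+be+cd$. The paper avoids $t^{-1}$ entirely here: it uses
\[
(a,b,c,d,e,f)\mapsto(c,\,b,\,a,\,c+td,\,b+te,\,i(a-tf)),
\]
so that the three transformed quadrics already limit to $b^2+c^2$ and $a^2+c^2$, and a single combination $(\text{Q}_2+\text{Q}_3-\text{Q}_1)/(2t)$ limits to $af+be+cd$. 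Once you abandon the $t^{-1}$ heuristic for this case, the rest of your plan goes through unchanged.
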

\begin{multicols}{2}
\begin{enumerate}[(a)]
\item $A^{(1)}_{0,0,2}\to C_{6,1}$
\item $A^{(2)}_{1,3,1}\to C_{6,7}$
\item $A^{(2)}_{1,1,2}\to C_{6,4}$
\item $A^{(2)}_{2,2,1}\to C_{6,4}$
\item $A^{(2)}_{2,0,2}\to C_{6,2}$
\item $B^{(2)}_{1,2,2}\to C_{6,6}$
\item $B^{(2)}_{1,0,3}\to C_{6,4}$
\item $A^{(3)}_{3,0,1}\to C_{6,8}$
\item $A^{(3)}_{1,1,1}\to C_{6,5}$
\item $A^{(3)}_{0,0,2}\to C_{6,3}$
\item $C_{6,1}\to C_{6,2}$ 
\item $C_{6,2}\to C_{6,3}$
\item $C_{6,2}\to C_{6,4}$
\item $C_{6,3}\to C_{6,5}$
\item $C_{6,3}\to C_{6,6}$
\item $C_{6,4}\to C_{6,5}$
\item $C_{6,5}\to C_{6,7}$
\item $C_{6,6}\to C_{6,8}$
\item $C_{6,7}\to C_{6,8}$
\end{enumerate}
\end{multicols}
\begin{proof}
The proof follows the same structure as the proof of Proposition~\ref{prop:degen_nets_into_S5}. With $t\neq0$, we first apply a coordinate transformation:

\begin{enumerate}[(a)]
\item $(a,b,c,d,e,f)\mapsto(c,b,a,c+td,b+te,i(a-tf))$
\item $(a,b,c,d,e,f)\mapsto(a,b,e,t^{-1}a+tf,it^{-1}c,t^{-1}c+td)$
\item $(a,b,c,d,e,f)\mapsto(c,b,e,a,f,t^{-1}c+td)$
\item $(a,b,c,d,e,f)\mapsto(a,b,c,d,t^{-1}b+te,t^{-1}a+tf)$
\item $(a,b,c,d,e,f)\mapsto(a+b+t(e+f),i(a-b)+ti(e-f),a,b,c,-td)$
\item $(a,b,c,d,e,f)\mapsto(a,b,c,td,a+b+t(f+e)/2,i(a-b)-ti(f-e)/2)$
\item $(a,b,c,d,e,f)\mapsto(a+ib,c+td,t(f-ie)/2,a-ib,c,-t(f+ie)/2)$
\item $(a,b,c,d,e,f)\mapsto(a,b,t^2f,i(b-t^2e),t(d+c)/\sqrt{2},ti(d-c)/\sqrt{2})$
\item $(a,b,c,d,e,f)\mapsto(a,b,tf,c,td,i(b-te))$
\item $(a,b,c,d,e,f)\mapsto(b,-2ic+tb+t^2id/2,ti(2c-f)+t^2(e-b/2),ib+ta,2c,tf)$
\item $(a,b,c,d,e,f)\mapsto(a+tb,a,c,td,-e+tf,e)$
\item $(a,b,c,d,e,f)\mapsto(a,b,ia+tib+t^2ic,-id,-td+t^2e,-d+t^2f)$
\item $(a,b,c,d,e,f)\mapsto(t(a-ib),a+ib,c,t^2d,t^2(f-ie)/2,t(f+ie)/2)$
\item $(a,b,c,d,e,f)\mapsto(a,a+tc,-tc/2+t^2b,e,e/2+td,-e/2-td+t^2f)$
\item $(a,b,c,d,e,f)\mapsto(a,tb,c,td,e,tf)$
\item $(a,b,c,d,e,f)\mapsto(a+t^2b,tc,a,-e+t^2f,td,e)$
\item $(a,b,c,d,e,f)\mapsto(ta,c,b,te,td,f)$
\item $(a,b,c,d,e,f)\mapsto(a,a+tc,b,te,d,-d+tf)$
\item $(a,b,c,d,e,f)\mapsto(a,a+tb,c,td,e,-e+tf)$
\end{enumerate}
Afterwards we apply a basechange over $\CC[t,t^{-1}]$ and let $t\to0$ to obtain the degeneration.
\end{proof}

\subsection{Jordan webs in $\S^4$}
We identify a Jordan web $\Ls\subseteq\S^4$ with the associated web of quadrics $(a,b,c,d,)\Ls(a,b,c,d)^\top$. This gives the following list:

\begin{tabular}{lllll}
$A^{(1)}_{0,0,0,1}$&\!\!\!\!= $\spann(a^2,b^2,c^2,d^2)$,&&$C^{(1)}_1$&\!\!\!\!= $\spann(a^2+b^2,c^2+d^2,ac+bd,ad-bc)$,\\
$A^{(2)}_{0,1,0,1}$&\!\!\!\!= $\spann(a^2,b^2,cd,c^2)$,&&$E^{(1)}_{4,1}$&\!\!\!\!= $\spann(2ac+d^2,b^2,ab,a^2)$,\\
$A^{(3)}_{0,1,0,1}$&\!\!\!\!= $\spann(ab,a^2,cd,c^2)$,&&$E^{(1)}_{4,2}$&\!\!\!\!= $\spann(ac,b^2+d^2,ab,a^2)$,\\
$A^{(4)}_{1,0,0,1}$&\!\!\!\!= $\spann(a^2,2bd+c^2,bc,b^2)$,&&$E^{(2)}_4$&\!\!\!\!= $\spann(ab,cd,ac,a^2)$,\\
$A^{(5)}_{0,0,0,1}$&\!\!\!\!= $\spann(ad+bc,2ac+b^2,ab,a^2)$,&&$E^{(3)}_4$&\!\!\!\!= $\spann(2ac+b^2+d^2,ad,ab,a^2)$,\\
$B^{(1)}_{2,1}$&\!\!\!\!= $\spann(a^2+b^2,c^2,d^2,cd)$,&&$F_4$&\!\!\!\!= $\spann(ad+bc,a^2,b^2,ab)$
\end{tabular}

\begin{prop}\label{prop:degen_webs_into_S4}
We have the following degenerations:\vspace{-10pt}
\end{prop}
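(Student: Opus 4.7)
The plan is to prove each listed degeneration $\Ls\to\Ls'$ by exhibiting an explicit one-parameter family, following the template set by the proofs of Propositions~\ref{prop:degen_nets_into_S5} and~\ref{prop:degen_nets_into_S6}. For a fixed $t\in\CC^*$, I would apply a linear coordinate change $(a,b,c,d)\mapsto(a(t),b(t),c(t),d(t))$ over $\CC[t,t^{\pm1}]$ to the web of quadrics representing $\Ls$, then perform a basechange of the four spanning quadrics over $\CC[t,t^{\pm1}]$ to clear all negative powers of $t$, and finally take $t\to 0$ to recover the quadrics representing $\Ls'$.

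The degenerations that stay inside the $A$-series (e.g.\ $A^{(1)}_{0,0,0,1}\!\to\! A^{(2)}_{0,1,0,1}\!\to\! A^{(3)}_{0,1,0,1},A^{(4)}_{1,0,0,1}$ and $A^{(4)}_{1,0,0,1},A^{(3)}_{0,1,0,1}\!\to\! A^{(5)}_{0,0,0,1}$) should fall out immediately by re-using the transformations from Proposition~\ref{prop:degen_general_netsA} on three of the generators and carrying the fourth generator passively. The degenerations between the $E$-type orbits ($E^{(1)}_{4,2}\to E^{(2)}_{4}$, $E^{(2)}_{4}\to E^{(3)}_{4}$, $A^{(5)}_{0,0,0,1}\to F_4$, $E^{(1)}_{4,1}\to F_4$, and $A^{(5)}_{0,0,0,1}\to E^{(3)}_{4}$) should be doable by a single substitution involving $t$: for instance $E^{(2)}_4\to E^{(3)}_4$ is obtained by sending $d\mapsto c+td$ and $b\mapsto b+te$ (suitably) to turn the square-zero quadric $cd$ into $2ac+b^2+d^2$ modulo lower-order terms that can be absorbed by basechange, and $E^{(1)}_{4,1}\to F_4$ by a similar one-term deformation that breaks the quadric $2ac+d^2$ in favor of $ad+bc$ after clearing poles.

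The key obstacles will be the two cross-series degenerations $B^{(1)}_{2,1}\to E^{(1)}_{4,1}$ and $C^{(1)}_{1}\to F_4$, since here the underlying Jordan algebras ($\CC\times\Js^2_2$ respectively $\Js^3_3$) both degenerate to $\Es_1$, but the quadrics from the source and target look quite different. For $B^{(1)}_{2,1}\to E^{(1)}_{4,1}$, the source contains the pencil $(c^2,d^2,cd)$, and a natural attempt is $(a,b,c,d)\mapsto(c,d,tb,t^{-1}a+tc)$ (or a variant with an extra $ta$), so that $a^2+b^2\mapsto c^2+d^2$ stays rank-two while $c^2$ picks up a singular term $t^{-2}a^2+2ac+t^2c^2$ whose pole is cancelled against a rescaling of $d^2$; with luck this lands on $\spann(2ac+d^2,b^2,ab,a^2)=E^{(1)}_{4,1}$. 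For $C^{(1)}_1\to F_4$, I would exploit the fact that $C^{(1)}_1$ already contains the rank-two quadric $ad-bc$ that matches $ad+bc$ in $F_4=\spann(ad+bc,a^2,b^2,ab)$, and deform $a^2+b^2$, $c^2+d^2$, $ac+bd$ via $c\mapsto tc+\alpha a,\,d\mapsto td+\beta b$ with the constants chosen so that the principal parts match $a^2,ab,b^2$.

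The systematic difficulty at every step is just bookkeeping: after writing down the coordinate transformation one must check that some $\GL_4(\CC[t,t^{\pm1}])$ basechange on the four spanning quadrics cancels every negative power of $t$ and evaluates to the target quadrics at $t=0$. The hardest part is therefore finding the right ansatz for the two cross-series cases above; once an ansatz is in hand, verifying the degeneration is a routine (if tedious) computation that I would carry out item by item in the proof.
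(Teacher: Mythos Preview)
Your plan matches the paper's proof exactly in structure: for each item the paper lists a single explicit coordinate change over $\CC[t^{\pm1}]$, performs a basechange on the four quadrics, and takes $t\to 0$.

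However, your proposed ansatz for $B^{(1)}_{2,1}\to E^{(1)}_{4,1}$, namely $(a,b,c,d)\mapsto(c,d,tb,t^{-1}a+tc)$, does not work. The images of the four generators are
\[
c^2+d^2,\quad t^2b^2,\quad t^{-2}a^2+2ac+t^2c^2,\quad ab+t^2bc,
\]
and the only source of $a^2$ with which to cancel the $t^{-2}a^2$ pole is the third quadric itself; any basechange that clears that pole kills the third quadric entirely, leaving a three-dimensional span in the limit. The paper instead uses $(a,b,c,d)\mapsto(a+t^2c,td,a,b)$, which sends $(c^2,d^2,cd)$ directly to $(a^2,b^2,ab)$ and lets $a^2+b^2\mapsto a^2+2t^2ac+t^4c^2+t^2d^2$ supply the rank-$3$ quadric $2ac+d^2$ after subtracting $a^2$ and dividing by $t^2$.

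Also, your sketch for $E^{(2)}_4\to E^{(3)}_4$ refers to a variable $e$ that does not exist in $\S^4$; the paper's substitution there is the more elaborate $(a,b,c,d)\mapsto\bigl(a,\,b+id,\,a+t(b-id),\,(b+id)+2tc\bigr)$, which is needed because the target quadric $2ac+b^2+d^2$ has rank $3$ while every generator of $E^{(2)}_4$ has rank at most $2$. Apart from these two ans\"atze, your outline is sound and coincides with the paper's method.
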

\begin{multicols}{2}
\begin{enumerate}[(a)]
\item $A^{(1)}_{0,0,0,1}\to A^{(2)}_{0,1,0,1}$
\item $A^{(2)}_{0,1,0,1}\to A^{(3)}_{0,1,0,1}$
\item $A^{(2)}_{0,1,0,1}\to A^{(4)}_{1,0,0,1}$
\item $A^{(3)}_{0,1,0,1}\to A^{(5)}_{0,0,0,1}$
\item $A^{(4)}_{1,0,0,1}\to A^{(5)}_{0,0,0,1}$
\item $A^{(5)}_{0,0,0,1}\to E^{(3)}_4$
\item $A^{(5)}_{0,0,0,1}\to F_4$
\item $B^{(1)}_{2,1}\to E^{(1)}_{4,1}$
\item $C^{(1)}_1\to F_4$
\item $E^{(1)}_{4,1}\to F_4$
\item $E^{(1)}_{4,2}\to E^{(2)}_4$
\item $E^{(2)}_4\to E^{(3)}_4$
\end{enumerate}
\end{multicols}
\begin{proof}
The proof follows the same structure as the proof of Proposition~\ref{prop:degen_nets_into_S5}. With $t\neq0$, we first apply a coordinate transformation:

\begin{enumerate}[(a)]
\item $(a,b,c,d)\mapsto(a,b,c,c+td)$
\item $(a,b,c,d)\mapsto(a,a+tb,c,d)$
\item $(a,b,c,d)\mapsto(a,b+tc+t^2d,b,c)$
\item $(a,b,c,d)\mapsto(a,b,a+tb+t^2c,b+2tc+t^2d)$
\item $(a,b,c,d)\mapsto(a+tb+t^2c+t^3d,a,b,c)$
\item $(a,b,c,d)\mapsto(a,t(b+id),b-id,2tc)$
\item $(a,b,c,d)\mapsto(a,b,tc,td)$
\item $(a,b,c,d)\mapsto(a+t^2c,td,a,b)$
\item $(a,b,c,d)\mapsto(a,-tc,b,td)$
\item $(a,b,c,d)\mapsto(a,b,td,b+tc)$
\item $(a,b,c,d)\mapsto(a,ic,b,c+td)$
\item $(a,b,c,d)\mapsto(a,b+id,a+t(b-id),(b+id)+2tc)$
\end{enumerate}
Afterwards we apply a basechange over $\CC[t,t^{-1}]$ and let $t\to0$ to obtain the degeneration.
\end{proof}

\end{document}